\newcommand{\arXiv}[1]{\href{http://arxiv.org/abs/#1}{\tt arXiv:\nolinkurl{#1}}}
\newcommand{\arxiv}[1]{\href{http://arxiv.org/abs/#1}{\tt arXiv:\nolinkurl{#1}}}
\newcommand{\googlebooks}[1]{(preview at \href{http://books.google.com/books?id=#1}{google books})}
\definecolor{dark-red}{rgb}{0.7,0.25,0.25}
\definecolor{dark-blue}{rgb}{0.15,0.15,0.55}
\definecolor{medium-blue}{rgb}{0,0,.8}
\definecolor{DarkGreen}{RGB}{0,150,0}
\newcommand{\hashdef}[2]{\@namedef{#1}{#2}}
\newcommand{\hashlookup}[1]{\@nameuse{#1}}
\newcommand{\pathtographs}{../../graphs/}}%
\newcommand{\pathtographs}{diagrams/graphs/}}
\newcommand{\bigraph}[1]{{\hspace{-3pt}\begin{array}{c}%
  \raisebox{-2.5pt}{\includegraphics[height=6mm]{\pathtographs \hashlookup{#1}}}%
\end{array}\hspace{-3pt}}}
\theoremstyle{plain}
\newtheorem{thm}{Theorem}[section]
\newtheorem*{thm*}{Theorem}
\newtheorem{lem}[thm]{Lemma}
\newtheorem{fact}[thm]{Fact}
\newtheorem{prop}[thm]{Proposition}
\newtheorem{step}{Step}
\newtheorem*{quest*}{Question}
\theoremstyle{definition}
\newtheorem{defn}[thm]{Definition}
\newtheorem{assumption}[thm]{Assumption}
\newtheorem{nota}[thm]{Notation}
\newtheorem{ex}[thm]{Example}
\newtheorem{rem}[thm]{Remark}
\newtheorem*{rem*}{Remark}
\DeclareMathOperator{\coeff}{coeff}
\DeclareMathOperator{\id}{id}
\DeclareMathOperator{\spann}{span}
\DeclareMathOperator{\Tr}{Tr}
\newcommand{\D}{\displaystyle}
\newcommand{\comment}[1]{}
\newcommand{\hs}{\hspace{.07in}}
\newcommand{\be}{\begin{enumerate}[(1)]}
\newcommand{\ee}{\end{enumerate}}
\newcommand{\Z}{\mathbb{Z}}
\newcommand{\Q}{\mathbb{Q}}
\newcommand{\C}{\mathbb{C}}
\newcommand{\set}[2]{\left\{#1 \middle| #2\right\}}
\newcommand{\cA}{\mathcal{A}}
\newcommand{\cC}{\mathcal{C}}
\newcommand{\cF}{\mathcal{F}}
\newcommand{\cG}{\mathcal{G}}
\newcommand{\cL}{\mathcal{L}}
\newcommand{\cP}{\mathcal{P}}
\newcommand{\cQ}{\mathcal{Q}}
\newcommand{\cR}{\mathcal{R}}
\newcommand{\cT}{\mathcal{T}}
\newcommand{\cX}{\mathcal{X}}
\newcommand{\cY}{\mathcal{Y}}
\newcommand{\gA}{{\textgoth{A}}}
\newcommand{\fB}{{\mathfrak{B}}}
\newcommand{\noshow}[1]{}
\newcommand{\MR}[1]{}
\newcommand{\traincirc}[1]{\underset{#1}{\circ}}
\newcommand{\jw}[1]{f^{(#1)}}
\newcommand{\norm}[1]{\left\|#1\right\|}
\tikzstyle{shaded}=[fill=red!10!blue!20!gray!30!white]
\tikzstyle{unshaded}=[fill=white]
\tikzstyle{empty box}=[circle, draw, thick, fill=white, opaque, inner sep=2mm]
\tikzstyle{annular}=[scale=.7, inner sep=1mm, baseline]
\tikzstyle{rectangular}=[scale=.75, inner sep=1mm, baseline=-.1cm]
\newcommand{\rt}{\cR\cT}
\newcommand{\rtac}{\cR\cT\hspace{-.05cm}\cA\cC}
\newcommand{\TwoTrain}[3]{
\begin{tikzpicture}[baseline = -.2cm]
	\clip (-.5,-.8)--(-.5,.7)--(2.4,.7)--(2.4,-.8);
	\draw (0,0)--(1.6,0)--(1.6,-1)--(0,-1)--(0,0);
	\node at (.8,.15) {{\scriptsize{$n-#1$}}};
	\node at (.4,-.6) {{\scriptsize{$n+#1$}}};
	\node at (2,-.6) {{\scriptsize{$n+#1$}}};
	\draw[thick, unshaded] (0,0) circle (.4);
	\node at (0,0) {$#2$};
	\draw[thick, unshaded] (1.6,0) circle (.4);
	\node at (1.6,0) {$#3$};
	\node at (1.6,.55) {$\star$};
	\node at (0,.55) {$\star$};
\end{tikzpicture}}
\newcommand{\BoxTwoTrain}[2]{
\begin{tikzpicture}[baseline = -.7cm]
	\clip (-.5,-1.8)--(-.5,.7)--(2.4,.7)--(2.4,-1.8);
	\draw (0,0)--(1.6,0)--(1.6,-2)--(0,-2)--(0,0);
	\node at (.8,.15) {{\scriptsize{$n-2$}}};
	\node at (.4,-.6) {{\scriptsize{$n+2$}}};
	\node at (2,-.6) {{\scriptsize{$n+2$}}};
	\draw[thick, unshaded] (0,0) circle (.4);
	\node at (0,0) {$#1$};
	\draw[thick, unshaded] (1.6,0) circle (.4);
	\node at (1.6,0) {$#2$};
	\node at (1.6,.55) {$\star$};
	\node at (0,.55) {$\star$};
	\draw[thick, unshaded] (-.4,-1.5)--(-.4,-.8)--(2,-.8)--(2,-1.5)--(-.4,-1.5);
	\node at (.8,-1.15) {$f^{(2 n+4)}$};
	\node at (-.55,-1.15) {$\star$};
\end{tikzpicture}}
\newcommand{\ThreeTrain}[3]{
\begin{tikzpicture}[baseline = -.2cm]
	\clip (-.5,-.8)--(-.5,.7)--(4,.7)--(4,-.8);
	\draw (0,0)--(3.2,0);
	\draw (0,0)--(0,-.8);
	\draw (1.6,0)--(1.6,-.8);
	\draw (3.2,0)--(3.2,-.8);
	\node at (.8,.15) {{\scriptsize{$n-1$}}};
	\node at (2.4,.15) {{\scriptsize{$n-1$}}};
	\node at (.4,-.6) {{\scriptsize{$n+1$}}};
	\node at (1.8,-.6) {{\scriptsize{$2$}}};
	\node at (3.6,-.6) {{\scriptsize{$n+1$}}};
	\draw[thick, unshaded] (0,0) circle (.4);
	\node at (0,0) {$#1$};
	\node at (0,.55) {$\star$};
	\draw[thick, unshaded] (1.6,0) circle (.4);
	\node at (1.6,0) {$#2$};
	\node at (1.6,.55) {$\star$};
	\draw[thick, unshaded] (3.2,0) circle (.4);
	\node at (3.2,0) {$#3$};
	\node at (3.2,.55) {$\star$};
\end{tikzpicture}}
\newcommand{\BoxThreeTrain}[3]{
\begin{tikzpicture}[baseline = -.7cm]
	\clip (-.5,-1.8)--(-.5,.7)--(4,.7)--(4,-1.8);
	\draw (0,0)--(3.2,0);
	\draw (0,0)--(0,-1.8);
	\draw (1.6,0)--(1.6,-1.8);
	\draw (3.2,0)--(3.2,-1.8);
	\node at (.8,.15) {{\scriptsize{$n-1$}}};
	\node at (2.4,.15) {{\scriptsize{$n-1$}}};
	\node at (.4,-.6) {{\scriptsize{$n+1$}}};
	\node at (1.8,-.6) {{\scriptsize{$2$}}};
	\node at (3.6,-.6) {{\scriptsize{$n+1$}}};
	\draw[thick, unshaded] (0,0) circle (.4);
	\node at (0,0) {$#1$};
	\node at (0,.55) {$\star$};
	\draw[thick, unshaded] (1.6,0) circle (.4);
	\node at (1.6,0) {$#2$};
	\node at (1.6,.55) {$\star$};
	\draw[thick, unshaded] (3.2,0) circle (.4);
	\node at (3.2,0) {$#3$};
	\node at (3.2,.55) {$\star$};
	\draw[thick, unshaded] (-.4,-1.5)--(-.4,-.8)--(3.6,-.8)--(3.6,-1.5)--(-.4,-1.5);
	\node at (1.6,-1.15) {$f^{(2 n+4)}$};
	\node at (-.55,-1.15) {$\star$};
\end{tikzpicture}}
\newcommand{\JellyfishSquared}[2]{
\begin{tikzpicture}[baseline = -.3cm]
	\filldraw[shaded] (-.9,-.8)--(-.9,0) arc (180:0:.9cm)--(.9,-.8);
	\filldraw[unshaded] (-.7,-.8)--(-.7,0) arc (180:0:.7cm)--(.7,-.8);
	\draw (0,0)--(0,-.8);
	\node at (.2,-.6) {{\scriptsize{$2 #1$}}};
	\draw[thick, unshaded] (0,0) circle (.4);
	\node at (0,0) {$#2$};
	\node at (-.25,-.5) {$\star$};
\end{tikzpicture}
}
\newcommand{\BoxJellyfishSquared}[2]{
\begin{tikzpicture}[baseline = -.7cm]
	\filldraw[shaded] (-.9,-1.8)--(-.9,0) arc (180:0:.9cm)--(.9,-1.8);
	\filldraw[unshaded] (-.7,-1.8)--(-.7,0) arc (180:0:.7cm)--(.7,-1.8);
	\draw (0,0)--(0,-1.8);
	\node at (.2,-.6) {{\scriptsize{$2 #1$}}};
	\draw[thick, unshaded] (0,0) circle (.4);
	\node at (0,0) {$#2$};
	\node at (-.25,-.5) {$\star$};
	\draw[thick, unshaded] (-1,-1.5)--(-1,-.8)--(1,-.8)--(1,-1.5)--(-1,-1.5);	
	\node at (0,-1.15) {$f^{(2 #1+4)}$};
	\node at (-1.15,-1.15) {$\star$};
\end{tikzpicture}
}
\newcommand{\nbox}[5]{
	\draw[thick, #1] ($#2+(-.4,-.4)+(-#3,0)$) -- ($#2+(-.4,.4)+(-#3,0)$) -- ($#2+(.4,.4)+(#4,0)$) -- ($#2+(.4,-.4)+(#4,0)$) -- ($#2+(-.4,-.4)+(-#3,0)$); 
	\coordinate (a) at ($#2+(-#3,0)$);
	\coordinate (b) at ($#2+(#4,0)$);
	\node at ($1/2*(a)+1/2*(b)$) {$#5$};
}
\newcommand{\TL}{\cT\hspace{-.08cm}\cL}
\begin{document}
\title{
Calculating two-strand jellyfish relations
}
\author{David Penneys and Emily Peters}
\date{\today}
\maketitle
\begin{abstract}
We construct subfactors where one of the principal graphs is a spoke graph using an algorithm which computes two-strand jellyfish relations.
One of the subfactors we construct is a $3^{\Z/4}$ subfactor known to Izumi, which has not previously appeared in the literature.
To do so, we provide a systematic treatment of the space of second annular consequences, which is analogous to Jones' treatment of the space of first annular consequences in his quadratic tangles article. 

This article is the natural followup to two recent articles on spoke subfactor planar algebras and the jellyfish algorithm.
Work of Bigelow-Penneys explains the connection between spoke subfactor planar algebras and the jellyfish algorithm, and work of Morrison-Penneys automates the construction of subfactors where both principal graphs are spoke graphs using one-strand jellyfish.
\end{abstract}
\section{Introduction}

Every subfactor planar algebra embeds in the graph planar algebra (first defined in \cite{MR1865703}) of its principal graph \cite{MR2812459,tvc}. 
Thus one can construct a subfactor planar algebra by finding candidate generators in the appropriate graph planar algebra, and then showing the planar algebra they generate is a subfactor planar algebra with the correct principal graph. 

By now, these methods have been used to construct a large handful of examples, some new and some well known, including the $E_6$ and $E_8$ subfactors \cite{MR1929335}, group-subgroup subfactors \cite{MR2511128}, the Haagerup subfactor \cite{MR2679382}, the extended Haagerup subfactor \cite{MR2979509}, the Izumi-Xu 2221 subfactor \cite{1102.2052}, and certain spoke subfactors, e.g., 4442 \cite{1208.3637}.
These techniques have also been used to prove uniqueness results \cite{MR2979509,1102.2052} and obstructions to possible principal graphs \cite{MR2679382,1302.5148}.

Early applications of the embedding theorem to construct or obstruct subfactors were mostly ad hoc. Recent work of Bigelow-Penneys \cite{1208.1564} has explained why some of the previous constructions work and how they fit into the same family of examples.
If the principal graph of a subfactor is a spoke graph with simple arms connected to one central vertex, the planar algebra can be constructed using two-strand jellyfish relations. If both graphs are spokes, one can use one-strand relations, which are easier to compute. Recent work of Morrison-Penneys \cite{1208.3637} found an algorithm to compute these one-strand relations, provided one has the generators in the graph planar algebra. 

This article is the natural followup to \cite{1208.1564,1208.3637}. 
The main result of this article is an algorithm to find two-strand jellyfish relations for a subfactor planar algebra for which one of the principal graphs is a spoke graph, provided we are given the generators in a graph planar algebra.
This article's most interesting application of this algorithm is the following theorem.

\begin{thm*}
There exists a $3^{\Z/4}$ subfactor with principal graphs
$$
\left(
\bigraph{bwd1v1v1v1p1p1v1x0x0p0x1x0p0x0x1v1x0x0p0x1x0p0x0x1duals1v1v1x2x3v1x3x2},
\bigraph{bwd1v1v1v1p1p1v1x0x0p0x1x0p0x1x0v1x0x0duals1v1v1x2x3v1}
\right).
$$
\end{thm*}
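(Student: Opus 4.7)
The plan is to realize this subfactor planar algebra explicitly as a planar subalgebra of the graph planar algebra $\cG(\Gamma_+)$ of the first (spoke) principal graph $\Gamma_+$, via the embedding theorem and the main two-strand jellyfish algorithm of this paper. First I would compute $\cG(\Gamma_+)$, identify the Perron--Frobenius eigenvalue (which gives the loop parameter $\delta$), and produce bases for the low box spaces by enumerating loops on $\Gamma_+$ of the relevant length. Next I would search for candidate generators $S_1,\ldots,S_r$ sitting in the appropriate box space that are uncappable lowest weight vectors and carry the $\Z/4$ symmetry suggested by the name $3^{\Z/4}$; concretely, this amounts to solving a finite system of polynomial equations in the loop coordinates encoding the rotational eigenvector, uncappability, and normalization conditions.

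With generators in hand, I would feed them into this paper's two-strand jellyfish algorithm, combined with the systematic description of the space of second annular consequences announced in the introduction, to compute a complete list of quadratic relations expressing each product $S_i \cdot S_j$ (joined by two strands) as a linear combination of Temperley--Lieb diagrams and single generators attached to fewer through-strands. These relations, together with the spoke structure of $\Gamma_+$, let one reduce an arbitrary closed diagram built from the $S_i$ by repeatedly pulling generators to the outer boundary and evaluating; this yields finite-dimensional box spaces whose dimensions match those predicted by $\Gamma_+$.

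Finally, I would verify that the planar subalgebra $\cP \subset \cG(\Gamma_+)$ generated by $S_1,\ldots,S_r$ is a subfactor planar algebra with the advertised pair of principal graphs. Sphericality, the $*$-structure, and positive-definiteness of the Markov trace are inherited from $\cG(\Gamma_+)$; sphericality of $\cP$ plus the dimension match forces the principal graph on one side to equal $\Gamma_+$, and the dual principal graph then falls out by computing the bimodule decomposition of a single strand acting on the $n$-box space, using the explicit formulas the two-strand relations provide. The main obstacle I expect is the very first step: locating a $\Z/4$-symmetric uncappable generator inside $\cG(\Gamma_+)$ in closed form, since for $3^G$-type examples the relevant polynomial systems have historically been stubborn, and proving solvability over a suitable algebraic number field (so that the resulting planar algebra is genuinely positive, not merely approximately so) is delicate. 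Once such generators exist, the algorithm of this paper is designed to turn them mechanically into the full relation set that pins down the subfactor.
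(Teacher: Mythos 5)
Your outline matches the paper's strategy at the top level (find generators in a graph planar algebra, run the two-strand jellyfish algorithm to get evaluability, then pin down the principal graphs), but it diverges from the paper at exactly the step you flag as the main obstacle, and that divergence matters. The paper does \emph{not} locate the $3^{\Z/4}$ generators in the graph planar algebra of the spoke graph $\Gamma_+$: as noted in the introduction and Appendix B.3, the generators $A,B$ live in the graph planar algebra of the graph 2D2 (a graph with a central diamond, not a spoke graph), obtained by first constructing a 2D2 subfactor from a flat bi-unitary connection and then equivariantizing; only a posteriori, via the embedding theorem, does the planar algebra also embed in the $3^{\Z/4}$ GPA. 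So the polynomial-system search you propose inside $\cG(\Gamma_+)$ is precisely the step the authors sidestep, and your proposal leaves it unresolved. Two smaller corrections: the two-strand jellyfish relations produced by the algorithm are not relations writing products $S_iS_j$ as Temperley--Lieb plus single generators; they express $f\cdot j^2(S)$ (the generator with two strands wrapped around, sitting on a Jones--Wenzl $\jw{2n+4}$) as linear combinations of \emph{reduced trains}, which necessarily include the cubic trains $P\traincirc{n-1}Q\traincirc{n-1}R$ as well as the quadratic ones $P\traincirc{n-2}Q$ --- this is the whole reason the second annular consequences machinery is needed. And the paper's determination of the graphs (Theorem 6.4) does not proceed by a bimodule decomposition of a single strand: it computes the minimal projections past the branch, compares their traces against Frobenius--Perron data and graph norms to force $\Gamma_+$, and then identifies the dual graph by finding a univalent vertex at depth 4 on the dual side and enumerating the finitely many compatible graph partners, using $\rho^2=\id$ on $\spann\{A,B,\jw{4}\}$ to fix the dual data. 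Your closing claim that ``sphericality of $\cP$ plus the dimension match forces the principal graph'' is too quick; supertransitivity comes from the existence of the two-strand relations themselves (Lemma 6.1), and the branch structure requires the explicit trace computations.
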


\begin{rem*}
Interestingly, we construct this $3^{\Z/4}$ subfactor planar algebra in a graph planar algebra not coming from one of these principal graphs (see Appendix \ref{generators:3333ZMod4}).
(Of course, by the embedding theorem, it is also a planar subalgebra of the $3^{\Z/4}$ graph planar algebra.)
\end{rem*}

The motivation for this article is to systematically study a conjectural infinite family of $3^G$ spoke subfactors for certain finite abelian groups $G$, first studied by Izumi \cite{MR1832764}, and later by Evans-Gannon \cite{MR2837122}. 
A $3^G$ subfactor has principal graph consisting of $|G|$ spokes of length 3, and the dual data is determined by the inverse law of the group $G$.
In fact, Izumi has an unpublished construction of a $3^{\Z/4}$ subfactor using Cuntz algebras analogous to his treatment for odd order $G$ in \cite{MR1832764}. 
Moreover, he shows such a subfactor is unique, which our approach does not attempt to show.
In theory, all $3^G$ subfactors can be constructed using two-strand jellyfish \cite{1208.1564}. 
The major hurdle is finding the generators in the graph planar algebra. Once given the generators, the machinery of this article produces the two-strand relations.

The foundation for this article, which underlies the previously discussed constructions and obstructions, is Jones' annular tangles point of view. 
Each unitary planar algebra can be orthogonally decomposed into irreducible annular Temperley-Lieb modules. 
In doing so, we seem to lose a lot of information, namely the action of higher genus tangles. 
However, we find ourselves in the simpler situation of analyzing irreducible annular Temperley-Lieb modules, which have been completely classified \cite{MR1659204,MR1929335}. 
Such a module is generated by a single low-weight rotational eigenvector.
This perspective is particularly useful for small index subfactors, which can only have a few small low-weight vectors.

This article is also a natural followup to Jones' exploration of quadratic tangles \cite{MR2972458}.
There are necessarily strong quadratic relations among the few smallest low-weight generators of a subfactor planar algebra of small modulus.
In \cite{MR2972458}, Jones studies the space of first annular consequences of the low-weight vectors to find explicit formulas for these relations.
We provide an analogous systematic treatment of the space of second annular consequences of a set of low-weight generators of a subfactor planar algebra. 
Studying this space was fruitful in Peters' planar algebra construction of the Haagerup subfactor \cite{MR2679382}.

While we do not provide any explicit quadratic relations in this article, such relations surely exist and will provide strong obstructions for subfactor planar algebras.
In particular, such an understanding of the space of second annular consequences may be helpful in deriving obstructions in the case where only one of the principal graphs is a spoke graph \cite{1208.1564}.

\subsection{Outline}

In Section \ref{sec:Background}, we give the necessary background for this article, including 
conventions for graph planar algebras, 
tetrahedral structure constants,
the jellyfish algorithm,
and reduced trains.
In Section \ref{sec:SecondAC}, we give a basis for the second annular consequences of a low-weight element when $\delta>2$.

In Section \ref{sec:Formulas}, we analyze the space of reduced trains, in particular their projections to Temperley-Lieb and annular consequences.
We then calculate many pairwise inner products of such trains and their projections. 
In Section \ref{sec:ComputingJellyfishRelations}, we provide the algorithm for computing two-strand jellyfish relations given generators in our graph planar algebras. 

In Section \ref{sec:relations}, we provide the results of applying the algorithm from Section \ref{sec:ComputingJellyfishRelations} to construct the $3^{\Z/3}$, $3^{\Z/2\times \Z/2}$, and $3^{\Z/4}$ subfactor planar algebras.
We compute the principal graphs of our examples in Section \ref{sec:PrincipalGraphs}.

Finally, we have two appendices where we record the data necessary for the above computations. The generators are specified in Appendix  \ref{sec:Generators} via their values on collapsed loops, and we give the moments and tetrahedral structure constants for our generators in Appendix \ref{sec:MomentsAndTetrahedrals}.

\subsection{Acknowledgements}
The authors would like to thank Masaki Izumi and Scott Morrison for many helpful conversations.
Some of this work was completed when David Penneys visited Scott Morrison at The Australian National University and Masaki Izumi at Kyoto University. He would like to thank both of them for supporting those trips and for their hospitality.

David Penneys was supported in part by the Natural Sciences and Engineering Research Council of Canada.
Emily Peters was supported by an NSF RTG grant at Northwestern University, DMS-0636646.  David Penneys and Emily Peters were both supported by DOD-DARPA grant HR0011-12-1-0009.

\subsection{The {\tt FusionAtlas} (adapted from \cite{1208.3637})} 
This article relies on some substantial calculations. In particular, our efforts to find the generators in the various graph planar algebras made use of a variety of techniques, some ad-hoc, some approximate, and some computationally expensive. This article essentially does not address that work. Instead, we merely present the discovered generators and verify some relatively easy facts about them. In particular, the proofs presented in this article rely on the computer in a much weaker sense. We need to calculate certain numbers of the form $\Tr(PQRS)$, where $P,Q,R,S$ are rather large martrices, and the computer does this for us. We also entered all the formulas derived in this article into {\tt Mathematica}, and had the computer automatically evaluate the various quantities which appear in our derivation of jellyfish relations. As a reader may be interested in seeing these programs, we include a brief instruction on finding and running these programs.

The {\tt arXiv} sources of this article contain in the {\tt code} subdirectory a number of files, including:
\begin{itemize}
\item {\tt Generators.nb}, which reconstructs the generators from our terse descriptions of them in Appendix \ref{sec:Generators}.
\item {\tt TwoStrandJellyfish.nb}, which calculates the requisite moments and tetrahedral structure constants of these generators, and performs the linear algebra necessary to derive the jellyfish relations.
\item {\tt GenerateLaTeX.nb}, which typesets each subsection of Section \ref{sec:relations} for each planar algebra, and many mathematical expressions in Appendices \ref{sec:Generators} and \ref{sec:MomentsAndTetrahedrals}.
\end{itemize}

The {\tt Mathematica} notebook {\tt Generators.nb} can be run by itself. The final cells of that notebook write the full generators to the disk; this must be done before running {\tt TwoStrandJellyfish.nb}. The {\tt TwoStrandJellyfish.nb} notebook relies on the {\tt FusionAtlas}, a substantial body of code the authors have developed along with Scott Morrison, Noah Snyder, and James Tener to perform calculations with subfactors and fusion categories. To obtain a local copy, you first need to ensure that you have {\tt Mercurial}, the distributed version control system, installed on your machine. With that, the command 
\begin{quote}
{\tt hg clone https://bitbucket.org/fusionatlas/fusionatlas}
\end{quote}
will create a local directory called {\tt fusionatlas} containing the latest version. In the {\tt TwoStrandJellyfish.nb} notebook, you will then need to adjust the paths appearing in the first input cell to ensure that your local copy is included. After that, running the entire notebook reproduces all the calculations described below.

We invite any interested readers to contact us with questions or queries about the use of these notebooks or the {\tt FusionAtlas} package.

\section{Background}\label{sec:Background}

We now give the background material for the calculations that occur in the later sections. We refer the reader to \cite{MR2679382,MR2979509,MR2972458,JonesPANotes} for the definition of a (subfactor) planar algebra.

\begin{nota}
When we draw planar diagrams, we often suppress the external boundary disk. 
In this case, the external boundary is assumed to be a large rectangle whose distinguished interval contains the upper left corner. 
We draw one string with a number next to it instead of drawing that number of parallel strings. 
We shade the diagrams as much as possible, but if the parity is unknown, we  often cannot know how to shade them. 
Finally, projections are usually drawn as rectangles with the same number of strands emanating from the top and bottom, while other elements may be drawn as circles.
\end{nota}

Some parts of this introduction are adapted from \cite{1208.3637,1208.1564}.

\subsection{Working in graph planar algebras}\label{sec:GPAEmbedding}

Graph planar algebras, defined in \cite{MR1865703}, have proven to be a fruitful place to work because of the following theorem. 
Strictly speaking, our constructions do not rely on this theorem.
However, it motivates our search for generators in the appropriate graph planar algebra.

\begin{thm}[\cite{MR2812459,tvc}]
Every subfactor planar algebra embeds in the graph planar algebra of its principal graph.
\end{thm}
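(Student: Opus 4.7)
The plan is to produce a planar algebra homomorphism $\Phi: P \to GPA(\Gamma)$ and then deduce injectivity from the fact that a subfactor planar algebra is simple, in the sense that its only closed planar ideal is zero (so any nonzero planar algebra homomorphism out of $P$ is automatically an embedding). Throughout, I will think of $\Gamma$ as the bipartite principal graph of $P$, with vertices at depth $n$ indexed by minimal central projections of $P_{n,\pm}$ and edges recording the Bratteli multiplicities between consecutive $P_{n,\pm}$.

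First I would set up a convenient pictorial/categorical model. Realize $P$ as the 2-box planar algebra of a unitary 2-category generated by a single bimodule (equivalently, pick a choice of minimal projections at each vertex of $\Gamma$ and a system of matrix units along each edge, normalized by the Frobenius–Perron eigenvector so that traces of minimal projections give the entries of the spherical weights used in $GPA(\Gamma)$). Concretely, this data gives, for every length-$2n$ loop $\ell = (v_0 \to v_1 \to \cdots \to v_{2n} = v_0)$ on $\Gamma$, a distinguished "loop element" $w_\ell \in P_n$ obtained by stacking the chosen matrix units along $\ell$. The proposed embedding is then the matrix-coefficient map
\[
\Phi: P_n \to GPA(\Gamma)_n, \qquad \Phi(x)(\ell) := \tau_n\bigl(x\, w_\ell^*\bigr)\cdot c_\ell,
\]
where $\tau_n$ is the normalized trace on $P_n$ and $c_\ell$ is a fixed Frobenius–Perron normalization depending only on the endpoints of the edges of $\ell$, chosen to match the conventions of $GPA(\Gamma)$.

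Next I would verify that $\Phi$ intertwines all planar tangles. By a standard generators-and-relations argument for the planar operad, it suffices to check compatibility with a few basic tangles: multiplication/stacking, the cap and cup, the Jones projection, and the rotation. For each of these, the corresponding operation in $GPA(\Gamma)$ has a loop-by-loop description, and the key calculation is to recognize the loop-by-loop output as the same matrix coefficient, which ultimately reduces to the behavior of the chosen matrix units under contraction of adjacent edges—this is exactly the content of the biunitarity/Frobenius–Perron relation that defines $GPA(\Gamma)$. Once $\Phi$ is shown to be a planar algebra homomorphism, injectivity follows since $\Phi$ restricted to $P_0 = \mathbb{C}$ is the identity, so $\ker \Phi$ is a proper planar ideal of the simple planar algebra $P$, hence zero.

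The main obstacle will be bookkeeping the normalization constants $c_\ell$ consistently so that $\Phi$ is simultaneously a $*$-algebra map on each $P_n$ \emph{and} respects the non-algebraic planar operations (caps, rotations, Jones projections); getting one of these for free is easy, getting all of them simultaneously is the real content. A secondary obstacle is the choice of matrix units along edges: different choices give unitarily equivalent embeddings, but one must check that at least one coherent choice exists, which is essentially an exercise in Ocneanu's commuting square / biunitary connection formalism applied to the principal graph of $P$. Once these two points are handled cleanly, the theorem follows from the simplicity argument at the end.
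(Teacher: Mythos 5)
First, a point of order: the paper does not prove this theorem. It is quoted with citations to \cite{MR2812459,tvc} and used only as motivation (the paper states explicitly that its constructions do not rely on it), so there is no in-paper proof to compare against; I am judging your sketch against the arguments in those references.

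Your overall architecture --- build a candidate map, verify it on a generating set of tangles, then get injectivity for free from simplicity (any planar ideal of a planar algebra with positive definite trace pairing containing $x\neq 0$ contains $\Tr(x^*x)\in\C^\times$) --- is sound, and the injectivity step in particular is exactly how one argues in the literature. The genuine gap is in the definition of $\Phi$. A loop $\ell$ of length $2n$ based at a vertex $v$ of depth $2k>0$ does not index an element $w_\ell\in\cP_{n,+}$: the path/matrix-unit model attaches to such a loop a matrix unit in a \emph{shifted} relative commutant (concretely, in a hom space where $k$ extra strands have been added on the left), not in $\cP_{n,+}$ itself. Consequently $\tau_n(x\,w_\ell^*)$ is undefined for those loops, and the correct formula must first apply the left-inclusion tangle to $x$ before pairing. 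If you only use loops based at the distinguished vertex $\star$, you recover precisely the ``cut down at $\star$'' map recorded in Section~\ref{sec:Checking} of this paper: that map is a $*$-algebra isomorphism onto its image and commutes with partial traces, but it is \emph{not} a planar algebra map (it does not commute with rotation), which is exactly why the paper uses it only as a computational shortcut and not as a proof device. The hard content of the embedding theorem lives in making the left-shifted components cohere --- equivalently, in your ``check the rotation tangle'' step, which is where the biunitary-connection/flatness input (finite depth, \cite{MR2812459}) or the more topological argument of \cite{tvc} is actually consumed. Your normalization constants $c_\ell$ cannot absorb this, since it is a structural mismatch of where $w_\ell$ lives rather than a scalar discrepancy. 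In short: right skeleton and a correct injectivity argument, but the map is not yet defined on most of $\cG\cP\cA(\Gamma)_n$, and the step you defer as ``bookkeeping'' is the theorem.
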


In \cite[Section 2.2]{1208.3637}, it was observed that many of Jones' quadratic tangles \cite{MR2972458} formulas for subfactor planar algebras hold for certain collections of elements in unitary, spherical, shaded $*$-planar algebras which are not necessarily evaluable (see Theorem \ref{thm:QTMoreGeneral}). 
The main example of such a planar algebra is the graph planar algebra of a finite bipartite graph.
We give the necessary definitions and discuss our conventions for working in such planar algebras in this subsection.

\begin{defn}\label{defn:UnitarySpherical}
A shaded planar $*$-algebra is \underline{evaluable} if $\dim(\cP_{n,\pm})<\infty$ for all $n\geq 0$, and $\cP_{0,\pm}\cong \C$ via the map that sends the empty diagram to 1.

Suppose $\cP_\bullet$ is a shaded planar $*$-algebra which is not necessarily evaluable. 
We call $\cP_\bullet$ \underline{unitary} if for all $n\geq 0$, the $\cP_{0,\pm}$-valued sesquilinear form on $\cP_{n,\pm}$ given by $\langle x,y\rangle=\Tr(y^*x)$ is positive definite (in the operator-valued sense).

We call such a planar algebra \underline{spherical} if any time we have a closed diagram in $\cP_\bullet$ which equals a scalar multiple of the empty diagram, then performing isotopy on a sphere still gives us the same scalar multiple of the appropriate empty diagram.
\end{defn}

\begin{rem}
The above is only one possible definition of unitarity for a planar $*$-algebra.
One might also want to require the existence of a faithful state on $\cP_{0,\pm}$ which induces a $C^*$-algebra structure on the algebras $\cP_{n,\pm}$ in the usual GNS way.
However, the above frugal definition is sufficient for our purposes, since the following theorem holds.
\end{rem}

\begin{thm}
Suppose $\cP_\bullet$ is a spherical, unitary, shaded planar $*$-algebra which is not necessarily evaluable.
If $\cQ_\bullet\subset \cP_\bullet$ is an evaluable planar $*$-subalgebra, then $\cQ_\bullet$ is a subfactor planar algebra.
\end{thm}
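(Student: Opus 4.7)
The plan is to verify each defining property of a subfactor planar algebra for $\cQ_\bullet$, checking that each is inherited from $\cP_\bullet$ together with the evaluability of $\cQ_\bullet$. Recall a subfactor planar algebra is a shaded spherical evaluable planar $*$-algebra whose inner product $\langle x,y\rangle=\Tr(y^*x)$ is positive definite. The shaded planar $*$-structure and evaluability are immediate: the first because $\cQ_\bullet\subset\cP_\bullet$ is assumed to be a planar $*$-subalgebra, the second by hypothesis.

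Next I would verify sphericality. The key observation is that every closed diagram built from elements of $\cQ_\bullet$ is also a closed diagram built from elements of $\cP_\bullet$, and its value lives in $\cQ_{0,\pm}\subset\cP_{0,\pm}$. Since evaluability gives $\cQ_{0,\pm}\cong\C$ via the empty diagram, any such value is automatically a scalar multiple of the empty diagram in $\cQ$, hence the same holds in $\cP$ via the inclusion. The sphericality of $\cP_\bullet$ then says that applying a spherical isotopy yields the same scalar multiple of the empty diagram in $\cP$, and since the inclusion $\cQ_{0,\pm}\hookrightarrow\cP_{0,\pm}$ identifies empty diagrams, the same scalar works in $\cQ$.

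The final, and in my view the one requiring the most care, step is to pass from the operator-valued positivity of $\cP_\bullet$ to scalar positivity on $\cQ_\bullet$. For $x\in\cQ_{n,\pm}$, the quantity $\Tr(x^*x)$ is produced by a single planar tangle, so it is the same element whether computed in $\cQ$ or in $\cP$; it lies in $\cQ_{0,\pm}$, which by evaluability is the one-dimensional space spanned by the empty diagram. Under the inclusion $\cQ_{0,\pm}\hookrightarrow\cP_{0,\pm}$, the empty diagram maps to the unit of $\cP_{0,\pm}$. Thus if $x\neq 0$, the hypothesis that $\langle\cdot,\cdot\rangle$ is positive definite on $\cP_{n,\pm}$ in the operator-valued sense implies $\Tr(x^*x)$ is a nonzero positive element of $\cP_{0,\pm}$, which under the isomorphism $\cQ_{0,\pm}\cong\C$ corresponds to a strictly positive real number. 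Hence the scalar form on $\cQ_\bullet$ is positive definite, completing the verification.

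The main obstacle is this last step: one must be careful about what ``positive definite in the operator-valued sense'' really buys us and ensure that the inclusion $\cQ_{0,\pm}\hookrightarrow\cP_{0,\pm}$ preserves the notion of positivity (which it does because it is a unital $*$-algebra inclusion and positivity in $\cP_{0,\pm}$ pulls back to positivity in any unital $*$-subalgebra). Everything else amounts to unpacking definitions.
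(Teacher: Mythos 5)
Your proposal is correct and follows essentially the same route as the paper's proof: sphericality of $\cQ_\bullet$ is inherited from $\cP_\bullet$ via evaluability, and the operator-valued positive definiteness descends to a scalar positive definite inner product once $\cQ_{0,\pm}$ is identified with $\C$ by sending the empty diagram to $1$. Your version simply spells out the details that the paper leaves implicit.
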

\begin{proof}
Since $\cQ_\bullet$ is evaluable, sphericality of $\cQ_\bullet$ follows from sphericality of $\cP_\bullet$.
Now, the sesquilinear form $\langle x,y\rangle = \Tr(y^*x)$ on $\cQ_{n,\pm}$ is operator-valued positive definite. 
Since $\cQ_\bullet$ is evaluable, by identifying the appropriate empty diagram with $1\in\C$, we get a positive definite inner product.
\end{proof}

\begin{nota}
Recall that the Fourier transform $\cF$ is given by
$$
\cF=
\begin{tikzpicture}[baseline = 0cm]
	\clip (0,0) circle (1cm);
	\draw[] (0,0) circle (1cm);
	\draw[] (0,0)--(-10:1.1cm) arc (-10:30:1.1cm) --(0,0); 
	\draw[] (0,0)--(70:1.1cm) arc (70:110:1.1cm) --(0,0); 
	\draw[] (0,0)--(150:1.1cm) arc (150:190:1.1cm) --(0,0); 
	\draw[] (0,0)--(230:1.1cm) arc (230:310:1.1cm) --(0,0); 
	\draw[thick, unshaded] (0,0) circle (.4cm);
	\node at (90:.52) {$\star$};
	\node at (130:.86) {$\star$};
	\node at (0,-.6) {$\cdots$};
	\node at (0,0) {};
	\draw[ultra thick] (0,0) circle (1cm);
\end{tikzpicture}\,.
$$
For a rotational eigenvector $S\in \cP_{n,\pm}$ corresponding to an eigenvalue $\omega_S=\sigma_S^2$, we define another rotational eigenvector $\check{S}\in \cP_{n,\mp}$ by $\check{S}=\sigma_S^{-1}\cF(S)$. Note that $\cF(\check{S})=\sigma_S S$, so $\check{\check{S}}=S$.
\end{nota}

\begin{defn}
Suppose $\cP_\bullet$ is a unitary, spherical, shaded planar $*$-algebra with modulus $\delta>2$ which is not necessarily evaluable.
A finite  set $\fB\subset \cP_{n,+}$ is called a \underline{set of minimal generators for $\cQ_\bullet$} if the elements of $\fB$ generate the planar $*$-subalgebra $\cQ_\bullet\subset\cP_\bullet$ and are linearly independent, self-adjoint, low-weight eigenvectors for the rotation, i.e, for all $S\in\fB$,
\begin{itemize}
\item $S=S^*$,
\item $S$ is uncappable, and
\item $\rho(S)=\omega_S S$ for some $n$-th root of unity $\omega_S$.
\end{itemize}
In the sequel, when we refer to a set of minimal generators without mentioning $\cQ_\bullet$, assume that $\cQ_\bullet$ is the planar $*$-subalgebra generated by $\fB$.

Given a set of minimal generators $\fB$, we get a set of dual minimal generators $\check{\fB}=\set{\check{S}}{S\in\fB}$. 

We say a set of minimal generators $\fB$ has \underline{scalar moments} if $\Tr(R),\Tr(RS),\Tr(RST)$ and $\Tr(\check{R}),\Tr(\check{R}\check{S}),\Tr(\check{R}\check{S}\check{T})$ are scalar multiples of the empty diagram in $\cP_{0,+}$ and $\cP_{0,-}$ respectively for each $R,S,T\in\fB$. 

If a set of minimal generators $\fB$ has scalar moments, we say $\fB$ is 
\begin{itemize}
\item
\underline{orthogonal} if $\langle S,T\rangle =\Tr(ST)=0$ if $S\neq T$ for all $S,T\in \fB$, and
\item
\underline{orthonormal} if $\fB$ is orthogonal and $\Tr(S^2)=\langle S,S\rangle = 1$ for all $S\in\fB$.
\end{itemize}
\end{defn}

The point of working with sets of minimal generators is the following theorem, first observed in \cite{1208.3637}.

\begin{thm}[{\cite[Theorem 2.5]{1208.3637}}]\label{thm:QTMoreGeneral}
All the formulas of Section 4 of  \cite{MR2972458} hold in any unitary, spherical, shaded planar $*$-algebra with modulus $\delta>2$ for any orthonormal set of minimal generators $\fB$ with scalar moments.
\end{thm}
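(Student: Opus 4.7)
The plan is to verify, essentially by inspection of Jones' Section 4 arguments in \cite{MR2972458}, that every derivation there uses only the ingredients supplied by our hypotheses, never requiring full evaluability of $\cP_\bullet$. Since the statement is a meta-theorem transplanting an existing body of formulas into a more general setting, I would proceed not by rederiving any individual identity but by cataloguing the structural features Jones uses and checking each against the hypotheses in our definitions.

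First I would make a list of the ingredients Jones' derivations actually invoke. These are: (i) sphericality, used to push strings around closed diagrams; (ii) that each $S\in\fB$ is self-adjoint, uncappable, and a rotational eigenvector of low weight, so that caps, rotations, and adjoints interact combinatorially as expected; (iii) orthonormality of $\fB$ under the trace pairing, so that projecting onto the span of $\fB$ is computed by inner products; and (iv) that the one-, two-, and three-term moments $\Tr(R), \Tr(RS), \Tr(RST)$ and their duals are scalars. Nowhere in Jones' manipulations does one actually need that \emph{every} closed diagram in $\cP_\bullet$ reduces to a scalar — only that the specific closed diagrams arising from the algebraic manipulation of expressions in $\fB\cup\check\fB$ do.

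The main obstacle to formalising this is the conceptual shift from scalar-valued to $\cP_{0,\pm}$-valued traces. In Jones' subfactor setting every closed diagram equals a complex number, while here closed diagrams are a priori only elements of $\cP_{0,\pm}$, which need not be one-dimensional. The key observation resolving this is that whenever one expands a product of generators using Temperley-Lieb and annular combinatorics, the closed components that arise are of two kinds: powers of $\delta$ coming from closed loops (always scalar), and moments of generators of the forms listed in (iv) above (scalar by the scalar-moments assumption). Consequently each equation Jones writes between scalars lifts to an equation between scalar multiples of the empty diagram, which is the correct analogue in our setting.

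It remains to check that Jones' occasional use of positive definiteness — typically to conclude $x=0$ from $\langle x,x\rangle=0$ — still works. This is immediate because the sesquilinear form is operator-valued positive definite by our definition of unitarity, and operator-valued positive definiteness implies faithfulness in exactly the sense needed. With sphericality, low-weight/eigenvector combinatorics, orthonormality, scalar moments, and operator-valued positive definiteness all in hand, each formula in Jones' Section 4 can be reproduced verbatim, interpreting every "scalar" in Jones as "scalar multiple of the empty diagram in $\cP_{0,\pm}$". The verification is organizational rather than conceptual, and was already sketched in \cite{1208.3637}; I would simply indicate which of Jones' lemmas invoke which of the four ingredients above and leave the routine diagrammatic details to the reader.
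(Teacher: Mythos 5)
The paper does not prove this statement itself---it imports it verbatim from \cite[Theorem 2.5]{1208.3637}, where the argument is precisely the verification-by-inspection you describe: one observes that every closed diagram arising in Jones' Section 4 manipulations is either a power of $\delta$ or one of the moments $\Tr(R)$, $\Tr(RS)$, $\Tr(RST)$ (or their duals), all scalar by hypothesis, so each identity lifts to an equation between scalar multiples of the empty diagram. Your proposal is correct and takes essentially the same approach, including the key points about the $\cP_{0,\pm}$-valued trace and operator-valued positive definiteness.
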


\begin{assumption}\label{assume:Generators}
For the rest of the article, unless otherwise specified, we assume $\cP_\bullet$ is a unitary, spherical, shaded $*$-planar algebra with modulus $\delta>2$ which is not necessarily evaluable, and $\fB\subset \cP_{n,+}$ is an orthogonal set of minimal generators with scalar moments.
\end{assumption}

Since we do not assume our generators in $\fB$ are orthonormal, our formulas will differ slightly in appearance than those of \cite{MR2972458} and \cite{1208.3637}.

\begin{rem}
For diagram evaluation, it is useful to have our standard equations for our set of minimal generators in one place. For $S\in\fB$,
\begin{align*}
S&=S^*
&
\cF^2&=\rho
&
\rho(S)&=\omega_S S
&
\cF(S)&=\sigma_S\check{S}
\\
\check{S}&=\check{S}^*
&
\sigma_S^2&=\omega_S
&
\rho(\check{S})&=\omega_S \check{S}
&
\cF(\check{S})&=\sigma_S S.
\end{align*}
When moving $\star$ on the distinguished interval of a generator, the resulting diagram is multiplied by some exponent of $\sigma_S$:
\begin{itemize}
\item
if you shift $\star$ counterclockwise by one strand, multiply by $\sigma_S$ and switch $\check{}\,$:
$$
\begin{tikzpicture}[baseline = 0cm]
	\draw[thick, unshaded] (0,0) circle (1cm);
	\draw (0,0)--(-10:1cm);
	\draw (30:1cm) --(0,0); 
	\draw (0,0)--(70:1cm);
	\draw (110:1cm) --(0,0); 
	\draw (0,0)--(150:1cm);
	\draw (190:1cm) --(0,0); 
	\draw (0,0)--(230:1cm);
	\draw (310:1cm) --(0,0); 
	\draw[thick, unshaded] (0,0) circle (.4cm);
	\node at (90:.52) {$\star$};
	\node at (0,-.6) {$\cdots$};
	\node at (0,0) {$S$};
	\node at (90:.88) {$\star$};
\end{tikzpicture}
=
\sigma_S\,
\begin{tikzpicture}[baseline = 0cm]
	\draw[thick, unshaded] (0,0) circle (1cm);
	\draw (0,0)--(-10:1cm);
	\draw (30:1cm) --(0,0); 
	\draw (0,0)--(70:1cm);
	\draw (110:1cm) --(0,0); 
	\draw (0,0)--(150:1cm);
	\draw (190:1cm) --(0,0); 
	\draw (0,0)--(230:1cm);
	\draw (310:1cm) --(0,0); 
	\draw[thick, unshaded] (0,0) circle (.4cm);
	\node at (130:.52) {$\star$};
	\node at (0,-.6) {$\cdots$};
	\node at (0,0) {$\check{S}$};
	\node at (90:.88) {$\star$};
\end{tikzpicture}
$$
\item
if you shift $\star$ clockwise by one strand, multiply by $\sigma_S^{-1}$ and switch $\check{}\,$:
$$
\begin{tikzpicture}[baseline = 0cm]
	\draw[thick, unshaded] (0,0) circle (1cm);
	\draw (0,0)--(-10:1cm);
	\draw (30:1cm) --(0,0); 
	\draw (0,0)--(70:1cm);
	\draw (110:1cm) --(0,0); 
	\draw (0,0)--(150:1cm);
	\draw (190:1cm) --(0,0); 
	\draw (0,0)--(230:1cm);
	\draw (310:1cm) --(0,0); 
	\draw[thick, unshaded] (0,0) circle (.4cm);
	\node at (90:.52) {$\star$};
	\node at (0,-.6) {$\cdots$};
	\node at (0,0) {$S$};
	\node at (90:.88) {$\star$};
\end{tikzpicture}
=
\sigma_S^{-1}\,
\begin{tikzpicture}[baseline = 0cm]
	\draw[thick, unshaded] (0,0) circle (1cm);
	\draw (0,0)--(-10:1cm);
	\draw (30:1cm) --(0,0); 
	\draw (0,0)--(70:1cm);
	\draw (110:1cm) --(0,0); 
	\draw (0,0)--(150:1cm);
	\draw (190:1cm) --(0,0); 
	\draw (0,0)--(230:1cm);
	\draw (310:1cm) --(0,0); 
	\draw[thick, unshaded] (0,0) circle (.4cm);
	\node at (50:.52) {$\star$};
	\node at (0,-.6) {$\cdots$};
	\node at (0,0) {$\check{S}$};
	\node at (90:.88) {$\star$};
\end{tikzpicture}
$$
\end{itemize}
\end{rem}

Using notation from \cite{MR2972458}, for $P,Q,R\in\fB$, we set
\begin{align*}
a_R^{PQ}&=\Tr(PQR) \text{ and}\\
b_R^{PQ}&=\Tr(\check{P}\check{Q}\check{R}).
\end{align*}

\begin{rem}\label{rem:SpanAlgebras}
Once we have determined our set of minimal generators $\fB$ has scalar moments, the next thing to do is to verify that the complex spans of $\fB\cup\{\jw{n}\}$ and $\check{\fB}\cup\{\jw{n}\}$ form algebras under the usual multiplication. If this is the case, for $P,Q\in\fB$, we necessarily have
\begin{equation}\label{eqn:ClosedUnderMultiplication}
PQ = 
\frac{\Tr(PQ)}{[n+1]} \jw{n}+\sum_{R\in\fB} \frac{a^{PQ}_R}{\|R\|^2}R.
\end{equation}
Immediately, we get that all higher moments of $\fB,\check{\fB}$ are scalars, as well as certain tetrahedral structure constants (see Remark \ref{rem:ReduceTerahedrals} and Example \ref{ex:ReduceTetrahedral}). For example, we have that
\begin{equation}\label{eqn:QuarticMoment}
\Tr(PQRS)=\frac{\Tr(PQ)\Tr(RS)}{[n+1]}+\sum_{T\in \fB} \frac{a^{PQ}_T}{\|T\|^2} a^{RS}_T. 
\end{equation}
for $P,Q,R,S\in\fB$.
\end{rem}

\begin{assumption}\label{assume:SpanAlgebras}
We now assume the complex spans of $\fB\cup\{\jw{n}\}$ and $\check{\fB}\cup\{\jw{n}\}$ form algebras under the usual multiplication.
\end{assumption}

\begin{rem}
The assumptions of this subsection are significant.
A randomly chosen subset of a graph planar algebra will not satisfy Assumption \ref{assume:Generators}.
Given an orthogonal set of minimal generators $\fB$ with scalar moments, it is still possible it will not satisfy Assumption \ref{assume:SpanAlgebras}.
For example, if we start with a $\fB$ satisfying Assumptions \ref{assume:Generators} and \ref{assume:SpanAlgebras} and we discard one element, the resulting set together with $\jw{n}$ may not span an algebra.
\end{rem}

\subsection{Tetrahedral structure constants}\label{sec:TetrahedralAndLopsided}

We will also need the tetrahedral structure constants defined in \cite[Section 3.3]{quadraticPreprint}.

\begin{defn} For $P,Q,R,S\in\fB$, we define
$$
\Delta_{a,b}(P,Q,R\mid S)=
\begin{tikzpicture}[baseline = -.2cm, scale=1.5]
	\node (tp) [circle, unshaded, thick, draw] at (0,1) {$Q$};
	\node (md) [circle, unshaded, thick, draw] at (0,-.1) {$S^{\vee b}$};
	\node (bl) [circle, unshaded, thick, draw]  at (-1,-.8) {$P$};
	\node (br) [circle, unshaded, thick, draw]  at (1,-.8) {$R$};
	\draw (bl)-- node [below] {{\scriptsize{$b$}}} (br);
	\draw (bl)-- node [above] {{\scriptsize{$c$}}} (md);
	\draw (bl)-- node [left] {{\scriptsize{$a$}}} (tp);
	\draw (br)-- node [above] {{\scriptsize{$a$}}} (md);
	\draw (br)-- node [right] {{\scriptsize{$c$}}} (tp);
	\draw (md)-- node [left] {{\scriptsize{$b$}}} (tp);
	\node [above=-3pt] at (tp.north) {$\star$};
	\node [below=-3pt] at (md.south) {$\star$};
	\node [left=-3pt] at (bl.west) {$\star$};
	\node [right=-3pt] at (br.east) {$\star$};
\end{tikzpicture}
$$
where $c=2n-a-b$, and 
$$
S^{\vee b}=
\begin{cases}
S & \text{if $b$ is even}\\
\check{S} & \text{if $b$ is odd.}\\
\end{cases}
$$
Note that the $\Delta_{a,b}(P,Q,R\mid S)$ for $P,Q,R,S\in\fB$ determine all the tetrahedral structure constants by  \cite[Section 3.3]{quadraticPreprint}.
\end{defn}

\begin{rem}\label{rem:ReduceTerahedrals}
For this article, we only need the following tetrahedral structure constants:
\begin{itemize}
\item
$\Delta_{n-1,2}(P,Q,R\mid S)$
\item
$\Delta_{n,1}(P,Q,R\mid S)$
\item
$\Delta_{n-1,1}(P,Q,R \mid S)
=
\overline{\Delta_{n,1}(R,Q,P\mid S)}.
$
\end{itemize}
By Assumption \ref{assume:SpanAlgebras}, 
we can express the second and third tetrahedral structure constants above in terms of the moments and chiralities of $\fB,\check{\fB}$, since one of $a,b,c\geq n$. We do this computation in Example \ref{ex:ReduceTetrahedral}. 
Thus for convenience, we will just write $\Delta(P,Q,R\mid S)$ instead of $\Delta_{n-1,2}(P,Q,R\mid S)$, and we will only write  subscripts  $a,b$ if $a\neq n-1$ or $b\neq 2$.
For each of our planar algebras in this article, we give the tetrahedral structure constants $\Delta(P,Q,R\mid S)$ in Appendix \ref{sec:MomentsAndTetrahedrals}.
\end{rem}

\begin{ex}\label{ex:ReduceTetrahedral}
\begin{align*}
\Delta_{n,1}&(P,Q,R\mid S)\\
&=
\begin{tikzpicture}[baseline = 0cm, scale=1.5]
	\node (tp) [circle, unshaded, thick, draw] at (0,1) {$Q$};
	\node (md) [circle, unshaded, thick, draw] at (0,-.1) {$\check{S}$};
	\node (bl) [circle, unshaded, thick, draw]  at (-1,-.8) {$P$};
	\node (br) [circle, unshaded, thick, draw]  at (1,-.8) {$R$};
	\draw (bl)-- node [below] {{\scriptsize{$1$}}} (br);
	\draw (bl)-- node [right] {{\scriptsize{$n-1$}}} (md);
	\draw (bl)-- node [left] {{\scriptsize{$n$}}} (tp);
	\draw (br)-- node [above] {{\scriptsize{$n$}}} (md);
	\draw (br)-- node [right] {{\scriptsize{$n-1$}}} (tp);
	\draw (md)-- node [left] {{\scriptsize{$1$}}} (tp);
	\node [above=-3pt] at (tp.north) {$\star$};
	\node [right=1pt] at (md.south) {$\star$};
	\node [left=-3pt] at (bl.west) {$\star$};
	\node [right=-3pt] at (br.east) {$\star$};
\end{tikzpicture}
\\
&=
\sigma_R^{-1}
\begin{tikzpicture}[baseline = -.1cm]
	\node (lt) [circle, unshaded, thick, draw] at (-2,0) {$PQ$};
	\node (md) [circle, unshaded, thick, draw] at (0,0) {$\check{S}$};
	\node (rt) [circle, unshaded, thick, draw]  at (2,0) {$\check{R}$};
	\draw (lt)-- node [below] {{\scriptsize{$n$}}} (md);
	\draw (md)-- node [below] {{\scriptsize{$n$}}} (rt);
	\draw (rt) to [in= -45,out=-135] node [above] {{\scriptsize{$1$}}} (lt);
	\draw (rt) to [in= 45,out=135] node [below] {{\scriptsize{$n-1$}}} (lt);
	\node [left=-3pt] at (lt.west) {$\star$};
	\node [below=-3pt] at (md.south) {$\star$};
	\node [below=1pt] at (rt.west) {$\star$};
\end{tikzpicture}
\\
&=
\sigma_R^{-1}\frac{\Tr(PQ)\Tr(\check{R}\check{S})}{[n+1]}\underset{\in\jw{n}}{\coeff}\left(
\begin{tikzpicture}[baseline = -.1cm]
	\draw[thick] (-.8,-.6)--(-.8,.6)--(.6,.6)--(.6,-.6)--(-.8,-.6);
	\draw (0,.6) arc (-180:0:.2cm);
	\draw (.4,-.6)--(-.4,.6);
	\draw (-.4,-.6) arc (180:0:.2cm);
	\node at (-.42,0) {{\scriptsize{$n-2$}}};
\end{tikzpicture}
\,
\right)
+
\sum_{T\in\fB}
\sigma_R^{-1}
\frac{a_T^{PQ}}{\|T\|^2}
\begin{tikzpicture}[baseline = -.1cm]
	\node (lt) [circle, unshaded, thick, draw] at (-1,0) {$T$};
	\node (rt) [circle, unshaded, thick, draw]  at (1,0) {$\check{R}\check{S}$};
	\draw (lt)-- node [above] {{\scriptsize{$n$}}} (rt);
	\draw (rt) to [in= -45,out=-135] node [above] {{\scriptsize{$1$}}} (lt);
	\draw (rt) to [in= 45,out=135] node [above] {{\scriptsize{$n-1$}}} (lt);
	\node [left=-3pt] at (lt.west) {$\star$};
	\node [below=1pt] at (rt.west) {$\star$};
\end{tikzpicture}
\\
&=
(-1)^{n-1}\sigma_R^{-1}\frac{\Tr(PQ)\Tr(\check{R}\check{S})}{[n][n+1]}
+
\sum_{T\in\fB}
\sigma_T\sigma_R^{-1}\frac{a_T^{PQ}b_T^{RS}}{\|T\|^2}.
\end{align*}
In the above calculation, we used Equation \eqref{eqn:ClosedUnderMultiplication} for the third equality, and we used the formula in Section \ref{sec:SecondDualBasis} for the coefficient in the Jones-Wenzl idempotent appearing in the fourth line.
By symmetry, we get
\begin{align*}
\Delta_{n-1,1}(P,Q,R \mid S)
&=
\overline{\Delta_{n,1}(R,Q,P\mid S)}
\\
&=
(-1)^{n-1}\sigma_P\frac{\Tr(QR)\Tr(\check{P}\check{S})}{[n][n+1]}
+
\sum_{T\in\fB}
\sigma_T^{-1}\sigma_P\frac{a_T^{QR}b_T^{SP}}{\|T\|^2}.
\end{align*}
\end{ex}

\begin{lem}
We have the following symmetries:
\begin{align*}
\Delta(P,Q,R\mid S)
&=
\overline{\Delta(R,Q,P\mid S)}\\
&=
\omega_P\omega_R^{-1} \Delta(R, S, P\mid Q)\\
&=
\omega_P\omega_R^{-1} \overline{\Delta(P, S, R\mid Q)}
\\
&=
\sigma_P^{1-n}\sigma_Q^{n-1}\sigma_R^{n-1}\sigma_S^{1-n} \Delta(Q^{\vee (n-1)},P^{\vee (n-1)}, S^{\vee (n-1)}\mid R^{\vee (n-1)})\\
&=
\sigma_P^{1-n}\sigma_Q^{n-1}\sigma_R^{n-1}\sigma_S^{1-n} \overline{\Delta(S^{\vee (n-1)},P^{\vee (n-1)}, Q^{\vee (n-1)}\mid R^{\vee (n-1)})}\\
&=
\sigma_P^{1-n}\sigma_Q^{n+1}\sigma_R^{n-1}\sigma_S^{-1-n} \Delta(S^{\vee (n-1)},R^{\vee (n-1)}, Q^{\vee (n-1)}\mid P^{\vee (n-1)})\\
&=
\sigma_P^{1-n}\sigma_Q^{n+1}\sigma_R^{n-1}\sigma_S^{-1-n} \overline{\Delta(Q^{\vee (n-1)},R^{\vee (n-1)}, S^{\vee (n-1)}\mid P^{\vee (n-1)})}\\
\end{align*}
\end{lem}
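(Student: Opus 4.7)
The plan is to verify each of the seven identities as a symmetry of the closed tetrahedral diagram defining $\Delta(P,Q,R\mid S)$. Together these symmetries realize the dihedral group of order $8$ that preserves the edge-labelling of the tetrahedron, namely the two opposite ``$2$-edges'' $PR$ and $QS$ and the four ``$(n-1)$-edges'' $PQ$, $PS$, $QR$, $RS$. Each identity will then follow from a spherical isotopy of the tetrahedral diagram combined with the $\star$-shifting rules recorded earlier in this subsection.

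First I would handle $\Delta(P,Q,R\mid S) = \overline{\Delta(R,Q,P\mid S)}$. Applying the $*$-operation to the closed tetrahedral diagram reflects the picture; since each generator is self-adjoint, the resulting diagram is another tetrahedron, now with $P$ and $R$ interchanged while $Q$ and $S$ (which lie on the reflection axis) are fixed. Since $\Tr(x^*)=\overline{\Tr(x)}$, this yields the complex conjugate identity, with no $\sigma$- or $\omega$-factors because sphericality lets us return the $\star$-markers to their standard positions.

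Next I would establish $\Delta(P,Q,R\mid S) = \omega_P\omega_R^{-1}\Delta(R,S,P\mid Q)$ via the $180^\circ$ spatial rotation of the tetrahedron that swaps the two $2$-edges, sending $(P,Q,R,S) \mapsto (R,S,P,Q)$. In the planar projection only the $\star$-markers on $P$ and $R$ travel around their vertices by a full turn, while those on $Q$ and $S$ return to their original positions; using $\rho(S)=\omega_S S$ this produces the factor $\omega_P\omega_R^{-1}$. The identity involving $\overline{\Delta(P,S,R\mid Q)}$ is then the composition of this with the first one.

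For the final four identities involving $X^{\vee(n-1)}$, I would apply the Fourier transform $\cF$ exactly $n-1$ times at each of the four vertices, which corresponds to rotating each generator by $n-1$ strands. Iterating $\cF(S)=\sigma_S\check{S}$ together with $\check{\check{S}}=S$ gives $\cF^{n-1}(S)=\sigma_S^{n-1}S^{\vee(n-1)}$, and similarly for the other generators. These simultaneous half-rotations interchange the roles of the $2$-edges and the $(n-1)$-edges of the tetrahedron, realizing the remaining coset of tetrahedral symmetries. Combining this operation with the reflections and rotations used above then yields identities $4$--$7$. The hard part will be purely bookkeeping: for each identity one must carefully verify that the cumulative $\star$-shifts at $P$, $Q$, $R$, $S$ match the announced exponents (e.g.\ the difference between $\sigma_P^{1-n}$ and $\sigma_S^{-1-n}$ tracks the relative direction of each vertex's shift under the composite symmetry), a computation that is tedious but entirely mechanical once the geometry is fixed.
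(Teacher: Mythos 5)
Your proposal is correct and is essentially an expanded version of the paper's argument: the paper's entire proof reads ``Immediate from drawing diagrams using unitarity and sphericality of $\cP_\bullet$,'' and your organization of the seven identities as the non-identity elements of the order-$8$ dihedral stabilizer of the pair of $2$-edges --- adjoint/reflection for the conjugations, the $180^\circ$ rotation for the $\omega_P\omega_R^{-1}$ factor, and $\cF^{n-1}$ at each vertex with $\star$-shift bookkeeping for identities $4$--$7$ --- is exactly the diagram-drawing the authors have in mind. One small wording quibble: the symmetries underlying identities $4$--$7$ still carry $2$-edges to $2$-edges (they must, being symmetries of the edge-labelled tetrahedron); the role of the Fourier transforms is to re-standardize the $\star$-positions after the vertices move, not to exchange the two edge types.
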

\begin{proof}
Immediate from drawing diagrams using unitarity and sphericality of $\cP_\bullet$.
\end{proof}
%
%
%


\begin{rem}
As in \cite{1205.2742,1208.3637}, when doing calculations in the graph planar algebra, we work with the lopsided convention rather than the spherical convention (see \cite{1205.2742}).
The lopsided convention treats shaded and unshaded contractible loops differently, which has the advantage that there are fewer square roots, so arithmetic is easier.

The translation map $\natural \colon \cP_\bullet^{spherical}\to \cP_\bullet^{lopsided}$ between the conventions from \cite{1205.2742} is not a planar algebra map, but it commutes with the action of the planar operad up to a scalar. 
We determine the scalar by first drawing the tangle in a standard rectangular form where each box has the same number of strings attached to the top and bottom.
We then get one factor of $\delta^{\pm 1}$ for each critical point which is shaded above, and the power of $\delta$ corresponds to the sign of the critical point:
$$
\begin{tikzpicture}[baseline = -.15cm]
	\fill[unshaded] (-.5,-.5)--(-.5,.25)--(.5,.25)--(.5,-.5);
	\draw[shaded] (.25,.25)--(.25,0) arc (0:-180:.25cm) --(-.25,.25);
\end{tikzpicture}
\longleftrightarrow \delta
\hspace{1.5cm}
\begin{tikzpicture}[baseline = -.15cm]
	\clip (-.5,-.25)--(-.5,.5)--(.5,.5)--(.5,-.25);
	\fill[shaded] (-.5,-.5)--(-.5,.5)--(.5,.5)--(.5,-.5);
	\draw[unshaded] (-.25,-.5)--(-.25,0) arc (180:0:.25cm) --(.25,-.5);
\end{tikzpicture}
\longleftrightarrow \delta^{-1}.
$$
\end{rem}

Correction factors for the lopsided convention for the Fourier transform and the trace were worked out in \cite[Examples 2.6 and 2.7]{1208.3637}, and we work out another correction factor in the following example.

\begin{ex}
We will work out the correction factors for the lopsided convention when calculating $\Delta(P,Q,R\mid S)$.
We have
$$
\Delta(P,Q,R\mid S)
=
\Tr\left(
\begin{tikzpicture}[baseline = -.1cm]
	\fill[shaded] (1,1.8)--(1,2.6) -- (1.5,2.6) -- (1,1.8);
	\draw (1,1.8)--(.5,2.6);
	\draw (1,1.8)--(1,2.6);
	\draw (1,1.8)--(1.5,2.6);
	\draw (0,-1.8)--(.5,-2.6);
	\fill[shaded] (0,.2) -- (.2,.2) .. controls ++(-60:.7cm) and ++(270:.8cm) .. (1,1.8)  .. controls ++(315:1.5cm) and ++(60:.5cm) .. (0,-.5) -- (0,.6);
	\draw[] (0,.2) -- (.2,.2) .. controls ++(-60:.7cm) and ++(270:.8cm) .. (1,1.8);
	\draw (1,1.8)  .. controls ++(315:1.5cm) and ++(60:.5cm) .. (0,-.5);
	\fill[shaded] (0,-.8) .. controls ++(-60:.4cm) and ++(90:1.3cm) .. (1.5,-2.6) -- (1,-2.6) .. controls ++(90:.8cm) and ++(60:.5cm) .. (.2,-1.4) -- (0,-1.4) -- (0,-.6);
	\draw (0,-.8) .. controls ++(-60:.4cm) and ++(90:1.3cm) .. (1.5,-2.6);
	\draw (1,-2.6) .. controls ++(90:.8cm) and ++(60:.5cm) .. (.2,-1.4);
	\draw (0,-2.6)--(0,2.6);
	\draw (0,.8)--(1,1.6);
	\nbox{unshaded}{(1,1.8)}{0}{0}{S}
	\nbox{unshaded}{(0,.6)}{0}{0}{R}
	\nbox{unshaded}{(0,-.6)}{0}{0}{Q}
	\nbox{unshaded}{(0,-1.8)}{0}{0}{P}
	\node at  (-.2,1.8) {{\scriptsize{$2$}}};
	\node at  (-.4,0) {{\scriptsize{$n-1$}}};
	\node at  (-.4,-1.2) {{\scriptsize{$n-1$}}};
	\node at  (-.2,-2.4) {{\scriptsize{$2$}}};
	\node at  (.5,-2.65) {{\scriptsize{$n-2$}}};
	\node at  (.5,2.65) {{\scriptsize{$n-2$}}};
	\node at  (.45,1.2) {{\scriptsize{$n-2$}}};
\end{tikzpicture}
\right),
$$
where the shading assumes $n$ is even.
The above diagram contributes a factor of $\delta^{-1}$, and the trace tangle contributes no factors of $\delta$. When $n$ is odd, the above diagram contribues a factor of $\delta$, and the trace tangle contributes a factor of $\delta$. (See \cite[Example 2.6]{1208.3637} as well.) Hence we have the formula
$$
\Delta(P,Q,R\mid S)
=\natural \Delta(P,Q,R\mid S)
=
\begin{cases}
\delta^{-1}\Delta(\natural P,\natural Q,\natural R\mid \natural S) &\text{if $n$ is even}\\
\delta^2\Delta(\natural P,\natural Q,\natural R\mid \natural S)&\text{if $n$ is odd.}
\end{cases}
$$
\end{ex}

\begin{assumption}\label{assume:Tetrahedral}
For the rest of the article, we assume that for all $P,Q,R,S\in \fB$, the tetrahedral structure constants $\Delta(P,Q,R\mid S)$ are scalar multiples of the empty diagram.
\end{assumption}

\subsection{The jellyfish algorithm and reduced trains}\label{sec:JellyfishAlgorithm}

The \underline{jellyfish algorithm} was invented in \cite{MR2979509} to construct the extended Haagerup subfactor planar algebra with principal graphs
$$
\left(\bigraph{bwd1v1v1v1v1v1v1v1p1v1x0p0x1v1x0p0x1duals1v1v1v1v1x2v2x1}, \bigraph{bwd1v1v1v1v1v1v1v1p1v1x0p1x0duals1v1v1v1v1x2}\right).
$$
One uses the jellyfish algorithm to evaluate closed diagrams on a set of minimal generators. There are two ingredients:
\begin{enumerate}
\item[(1)] The generators in $\fB\subset \cP_{n,+}$ satisfy \underline{jellyfish relations}, i.e., for each generator $S,T$,
$$
j(\check{S})=
\begin{tikzpicture}[baseline = -.3cm]
	\filldraw[shaded] (-.7,-.8)--(-.7,0) arc (180:0:.7cm)--(.7,-.8);
	\draw (0,0)--(0,-.8);
	\node at (.2,-.6) {{\scriptsize{$2n$}}};
	\draw[thick, unshaded] (0,0) circle (.4);
	\node at (0,0) {$\check{S}$};
	\node at (-.25,-.5) {$\star$};
\end{tikzpicture}
\,,\,\,\,
j^2(T)= 
\JellyfishSquared{n}{T}
$$
can be written as linear combinations of \underline{trains}, which are diagrams where any region meeting the distinguished interval of a generator meets the distinguished interval of the external disk, i.e.,
$$
\begin{tikzpicture}[baseline = -.3cm]
	\draw (-2,-1.2)--(3,-1.2);
	\draw (-1,0)--(-1,-1);
	\draw (0,0)--(0,-1);
	\draw (2,0)--(2,-1);
	\filldraw[thick, unshaded] (-1.5,-.8)--(-1.5,-1.6)--(2.5,-1.6)--(2.5,-.8)--(-1.5,-.8);
	\draw[thick, unshaded] (-1,0) circle (.4);
	\draw[thick, unshaded] (0,0) circle (.4);	
	\draw[thick, unshaded] (2,0) circle (.4);
	\node at (-1.7,-.8) {$\star$};
	\node at (-1.3,.5) {$\star$};	
	\node at (-.3,.5) {$\star$};
	\node at (1.7,.5) {$\star$};
	\node at (1,0) {$\cdots$};
	\node at (.5,-1.2) {$\cT$};
	\node at (-1,0) {$S_1$};
	\node at (0,0) {$S_2$};
	\node at (2,0) {$S_\ell$};
	\node at (-1.7,-1.4) {{\scriptsize{$k$}}};
	\node at (2.7,-1.4) {{\scriptsize{$k$}}};
	\node at (-1.2,-.6) {{\scriptsize{$2n$}}};
	\node at (-.2,-.6) {{\scriptsize{$2n$}}};
	\node at (1.8,-.6) {{\scriptsize{$2n$}}};
\end{tikzpicture}
$$
where $S_1,\dots, S_\ell\in \fB$, and $\cT$ is a single Temperley-Lieb diagram.

\item[(2)] The generators in $\fB$ are uncappable and together with the Jones-Wenzl projection $f^{(n)}$ form an algebra under the usual multiplication 
$$
ST=
\begin{tikzpicture}[baseline = .8cm]
	\draw (0,2.2)--(0,-.4);	
	\filldraw[unshaded,thick] (0,1.5) circle (.4cm);
	\node at (0,1.5) {$T$};
	\node at (-.55,1.5) {$\star$};
	\filldraw[unshaded, thick] (0,.3) circle (.4cm);
	\node at (0,.3) {$S$};
	\node at (-.55,.3) {$\star$};
	\node at (-.2,.9) {{\scriptsize{$n$}}};
	\node at (-.2,-.2) {{\scriptsize{$n$}}};
	\node at (-.2,2) {{\scriptsize{$n$}}};
\end{tikzpicture}
=
\sum_{R}
\alpha^R_{S,T}
\begin{tikzpicture}[baseline = -.1cm]
	\draw (0,.8)--(0,-.8);
	\filldraw[unshaded,thick] (0,0) circle (.4cm);
	\node at (0,0) {$R$};
	\node at (-.55,0) {$\star$};
	\node at (-.2,.6) {{\scriptsize{$n$}}};
	\node at (-.2,-.6) {{\scriptsize{$n$}}};
\end{tikzpicture}\,.
$$
(Note that the {\texttt{Mathematica}} package {\texttt{FusionAtlas}} also multiplies in this order; reading from left to right in products corresponds to reading from bottom to top in planar composites.)
\end{enumerate}
Given these two ingredients, one can evaluate any closed diagram using the following two step process.
\begin{enumerate}
\item[(1)] Pull all generators $S$ to the outside of the diagram using the jellyfish relations, possibly getting diagrams with more $S$'s.
\item[(2)] Use uncappability and the algebra property to iteratively reduce the number of generators. Any non-zero train which is a closed diagram is either a Temperley-Lieb diagram, has a capped generator, or has two generators $S,T$ connected by at least $n$ strings, giving $ST$.
\end{enumerate}

Section \ref{sec:ComputingJellyfishRelations} is devoted to our procedure for computing the jellyfish relations necessary for the first part of the jellyfish algorithm. The second part is rather easy, and amounts to verifying Equation \eqref{eqn:ClosedUnderMultiplication} (see the beginning of Section \ref{sec:relations}).

\begin{defn}\label{defn:ReducedTrain}
A $\fB$-train is called \underline{reduced} if no two generators are connected by more than $n-1$ strands, and no generator is connected to itself.
\end{defn}

\begin{ex}
In $\cP_{n+1,+}$, the set of reduced trains is given by
$$
\set{
P\traincirc{n-1}Q 
=
\TwoTrain{1}{P}{Q}
}
{P,Q\in\fB}.
$$
\end{ex}

To describe the reduced trains in $\cP_{n+2,+}$, we introduce the following notation.

\begin{defn}
Let $C_i[P\traincirc{n-1} Q]\in \cP_{n+2,+}$ for $i=1,\dots, 2n+3$ be the reduced train obtained from $P\traincirc{n-1}Q$ by putting $C_i$ underneath, where $C_i$ is the diagram given by
$$
C_i=
\begin{tikzpicture}[baseline = -.1cm]
	\draw[thick] (-.6,-.4)--(-.6,.4)--(.6,.4)--(.6,-.4)--(-.6,-.4);
	\draw (-.4,-.4)--(-.4,.4);
	\draw (.4,-.4)--(.4,.4);
	\draw (-.2,-.4) arc (180:0:.2cm);
	\node at (-.2,-.6) {{\scriptsize{$i$}}};
	\node at (-.4,.6) {{\scriptsize{$i-1$}}};
\end{tikzpicture}\,.
$$
This can be thought of as multiplying $C_i$ by $P\traincirc{n-1}Q$ for a fixed arrangement of boundary strings.
For example, we have
$$
C_1[P\traincirc{n-1}Q] 
=
\begin{tikzpicture}[baseline = -.2cm]
	\clip (-.65,-.8)--(-.65,.7)--(2.4,.7)--(2.4,-.8);
	\draw (-.6,-.8) arc (180:0:.2cm);
	\draw (0,0)--(1.6,0)--(1.6,-1)--(0,-1)--(0,0);
	\node at (.8,.15) {{\scriptsize{$n-1$}}};
	\node at (.4,-.6) {{\scriptsize{$n+1$}}};
	\node at (2,-.6) {{\scriptsize{$n+1$}}};
	\draw[thick, unshaded] (0,0) circle (.4);
	\node at (0,0) {$P$};
	\draw[thick, unshaded] (1.6,0) circle (.4);
	\node at (1.6,0) {$Q$};
	\node at (1.6,.55) {$\star$};
	\node at (0,.55) {$\star$};
\end{tikzpicture}
\text{ and }
C_{n+2}[P\traincirc{n-1}Q] 
=
\begin{tikzpicture}[baseline = -.2cm]
	\clip (-.8,-.8)--(-.8,.7)--(2.4,.7)--(2.4,-.8);
	\draw (.6,-.8) arc (180:0:.2cm);
	\draw (0,0)--(1.6,0)--(1.6,-1)--(0,-1)--(0,0);
	\node at (.8,.15) {{\scriptsize{$n-1$}}};
	\node at (-.4,-.6) {{\scriptsize{$n+1$}}};
	\node at (2,-.6) {{\scriptsize{$n+1$}}};
	\draw[thick, unshaded] (0,0) circle (.4);
	\node at (0,0) {$P$};
	\draw[thick, unshaded] (1.6,0) circle (.4);
	\node at (1.6,0) {$Q$};
	\node at (1.6,.55) {$\star$};
	\node at (0,.55) {$\star$};
\end{tikzpicture}.
$$
for $P,Q,R\in\fB$.
\end{defn}

\begin{ex}
In $\cP_{n+2,+}$, we have many more reduced trains. 
First, we have those annular consequences of the $P\traincirc{n-1} Q$'s which are still trains in $\cP_{n+2,+}$. These are exactly the $C_i[P\traincirc{n-1}Q]$ for $i=1,\dots, 2n+3$.

Now the only reduced trains which are non-zero when we put a copy of $\jw{2n+4}$ underneath are 
$$
P\underset{n-2}{\circ}  Q=
\TwoTrain{2}{P}{Q}
\,,\,
P\underset{n-1}{\circ}  Q\underset{n-1}{\circ}  R=
\ThreeTrain{P}{Q}{R}
$$
for $P,Q,R\in\fB$.
\end{ex}

\subsection{The second annular basis}\label{sec:SecondAC}

Given a nonzero low-weight rotational eigenvector $R\in \cP_{n,+}$, the space $\gA_{n+2}(R)\subset \cP_{n+2,+}$ of second annular consequences of $R$ is spanned by diagrams with two cups on the outer boundary.
We now describe a distinguished basis of $\gA_{n+2}(R)$ when $\delta>2$ along the lines of \cite{MR1929335,MR2972458}.

\begin{defn}
The element $\cup_{i,j}(R)\in \gA_{n+2}(R)$ is the annular consequence of $R$ given in the following diagrams, where each row consists of $2n+4$ elements. 
\begin{align*}
\underset{\cup_{-1,-1}(R)}{
\begin{tikzpicture}[baseline = 0cm]
	\clip (0,0) circle (1.2cm);
	\draw[unshaded] (0,0) circle (1.2cm);
	\draw[shaded] (90:2) circle (1.3cm);
	\draw[unshaded] (90:1.2) circle (.3cm);
	\draw[shaded] (0,0)--(150:1.3cm) arc (150:180:1.3cm) --(0,0); 
	\draw[shaded] (0,0)--(210:1.3cm) arc (210:240:1.3cm) --(0,0); 
	\draw[shaded] (0,0)--(30:1.3cm) arc (30:0:1.3cm) --(0,0); 
	\draw[shaded] (0,0)--(-30:1.3cm) arc (-30:-60:1.3cm) --(0,0); 
	\draw[thick, unshaded] (0,0) circle (.4cm);
	\node at (90:.52) {$\star$};
	\node at (90:1.05) {$\star$};
	\node at (0,-.8) {$\cdots$};
	\node at (0,0) {$R$};
	\draw[ultra thick] (0,0) circle (1.2cm);
\end{tikzpicture}
}
\,,\,
\underset{\cup_{-1,0}(R)}{
\begin{tikzpicture}[baseline = 0cm]
	\clip (0,0) circle (1.2cm);
	\draw[shaded] (0,0) circle (1.2cm);
	\draw[unshaded] (90:2) circle (1.3cm);
	\draw[shaded] (90:1.2) circle (.3cm);
	\draw[unshaded] (0,0)--(150:1.3cm) arc (150:180:1.3cm) --(0,0); 
	\draw[unshaded] (0,0)--(210:1.3cm) arc (210:240:1.3cm) --(0,0); 
	\draw[unshaded] (0,0)--(30:1.3cm) arc (30:0:1.3cm) --(0,0); 
	\draw[unshaded] (0,0)--(-30:1.3cm) arc (-30:-60:1.3cm) --(0,0); 
	\draw[thick, unshaded] (0,0) circle (.4cm);
	\node at (90:.52) {$\star$};
	\node at (115:1.05) {$\star$};
	\node at (0,-.8) {$\cdots$};
	\node at (0,0) {$\check{R}$};
	\draw[ultra thick] (0,0) circle (1.2cm);
\end{tikzpicture}
}
\,,\,
\underset{\cup_{-1,1}(R)}{
\begin{tikzpicture}[baseline = 0cm]
	\clip (0,0) circle (1.2cm);
	\draw[unshaded] (0,0) circle (1.2cm);
	\draw[shaded] (90:2) circle (1.3cm);
	\draw[unshaded] (90:1.2) circle (.3cm);
	\draw[shaded] (0,0)--(150:1.3cm) arc (150:180:1.3cm) --(0,0); 
	\draw[shaded] (0,0)--(210:1.3cm) arc (210:240:1.3cm) --(0,0); 
	\draw[shaded] (0,0)--(30:1.3cm) arc (30:0:1.3cm) --(0,0); 
	\draw[shaded] (0,0)--(-30:1.3cm) arc (-30:-60:1.3cm) --(0,0); 
	\draw[thick, unshaded] (0,0) circle (.4cm);
	\node at (90:.52) {$\star$};
	\node at (135:1.05) {$\star$};
	\node at (0,-.8) {$\cdots$};
	\node at (0,0) {$R$};
	\draw[ultra thick] (0,0) circle (1.2cm);
\end{tikzpicture}
}
\,,\dots,\,
\underset{\cup_{-1,2n+2}(R)}{
\begin{tikzpicture}[baseline = 0cm]
	\clip (0,0) circle (1.2cm);
	\draw[shaded] (0,0) circle (1.2cm);
	\draw[unshaded] (90:2) circle (1.3cm);
	\draw[shaded] (90:1.2) circle (.3cm);
	\draw[unshaded] (0,0)--(150:1.3cm) arc (150:180:1.3cm) --(0,0); 
	\draw[unshaded] (0,0)--(210:1.3cm) arc (210:240:1.3cm) --(0,0); 
	\draw[unshaded] (0,0)--(30:1.3cm) arc (30:0:1.3cm) --(0,0); 
	\draw[unshaded] (0,0)--(-30:1.3cm) arc (-30:-60:1.3cm) --(0,0); 
	\draw[thick, unshaded] (0,0) circle (.4cm);
	\node at (90:.52) {$\star$};
	\node at (65:1.05) {$\star$};
	\node at (0,-.8) {$\cdots$};
	\node at (0,0) {$\check{R}$};
	\draw[ultra thick] (0,0) circle (1.2cm);
\end{tikzpicture}
}
\,,\displaybreak[1]\\
\underset{\cup_{0,0}(R)}{
\begin{tikzpicture}[baseline = 0cm]
	\clip (0,0) circle (1.2cm);
	\draw[shaded] (0,0) circle (1.2cm);
	\draw[unshaded] (90:1.2) circle (.3cm);
	\draw[unshaded] (45:1.2) circle (.3cm);
	\draw[unshaded] (0,0)--(130:1.3cm) arc (130:160:1.3cm) --(0,0); 
	\draw[unshaded] (0,0)--(190:1.3cm) arc (190:220:1.3cm) --(0,0); 
	\draw[unshaded] (0,0)--(10:1.3cm) arc (10:-20:1.3cm) --(0,0); 
	\draw[unshaded] (0,0)--(-50:1.3cm) arc (-50:-80:1.3cm) --(0,0); 
	\draw[thick, unshaded] (0,0) circle (.4cm);
	\node at (90:.52) {$\star$};
	\node at (90:1.05) {$\star$};
	\node at (-.5,-.8) {$\cdot$};
	\node at (-.3,-.85) {$\cdot$};
	\node at (-.1,-.9) {$\cdot$};
	\node at (0,0) {$\check{R}$};
	\draw[ultra thick] (0,0) circle (1.2cm);
\end{tikzpicture}
}
\,,\,
\underset{\cup_{0,1}(R)}{
\begin{tikzpicture}[baseline = 0cm]
	\clip (0,0) circle (1.2cm);
	\draw[unshaded] (0,0) circle (1.2cm);
	\draw[shaded] (90:1.2) circle (.3cm);
	\draw[shaded] (45:1.2) circle (.3cm);
	\draw[shaded] (0,0)--(130:1.3cm) arc (130:160:1.3cm) --(0,0); 
	\draw[shaded] (0,0)--(190:1.3cm) arc (190:220:1.3cm) --(0,0); 
	\draw[shaded] (0,0)--(10:1.3cm) arc (10:-20:1.3cm) --(0,0); 
	\draw[shaded] (0,0)--(-50:1.3cm) arc (-50:-80:1.3cm) --(0,0); 
	\draw[thick, unshaded] (0,0) circle (.4cm);
	\node at (90:.52) {$\star$};
	\node at (117:1.05) {$\star$};
	\node at (-.5,-.8) {$\cdot$};
	\node at (-.3,-.85) {$\cdot$};
	\node at (-.1,-.9) {$\cdot$};
	\node at (0,0) {$R$};
	\draw[ultra thick] (0,0) circle (1.2cm);
\end{tikzpicture}
}
\,,\,
\underset{\cup_{0,2}(R)}{
\begin{tikzpicture}[baseline = 0cm]
	\clip (0,0) circle (1.2cm);
	\draw[shaded] (0,0) circle (1.2cm);
	\draw[unshaded] (90:1.2) circle (.3cm);
	\draw[unshaded] (45:1.2) circle (.3cm);
	\draw[unshaded] (0,0)--(130:1.3cm) arc (130:160:1.3cm) --(0,0); 
	\draw[unshaded] (0,0)--(190:1.3cm) arc (190:220:1.3cm) --(0,0); 
	\draw[unshaded] (0,0)--(10:1.3cm) arc (10:-20:1.3cm) --(0,0); 
	\draw[unshaded] (0,0)--(-50:1.3cm) arc (-50:-80:1.3cm) --(0,0); 
	\draw[thick, unshaded] (0,0) circle (.4cm);
	\node at (90:.52) {$\star$};
	\node at (145:1.05) {$\star$};
	\node at (-.5,-.8) {$\cdot$};
	\node at (-.3,-.85) {$\cdot$};
	\node at (-.1,-.9) {$\cdot$};
	\node at (0,0) {$\check{R}$};
	\draw[ultra thick] (0,0) circle (1.2cm);
\end{tikzpicture}
}
\,,\dots,\,
\underset{\cup_{0,2n+3}(R)}{
\begin{tikzpicture}[baseline = 0cm]
	\clip (0,0) circle (1.2cm);
	\draw[unshaded] (0,0) circle (1.2cm);
	\draw[shaded] (90:1.2) circle (.3cm);
	\draw[shaded] (45:1.2) circle (.3cm);
	\draw[shaded] (0,0)--(130:1.3cm) arc (130:160:1.3cm) --(0,0); 
	\draw[shaded] (0,0)--(190:1.3cm) arc (190:220:1.3cm) --(0,0); 
	\draw[shaded] (0,0)--(10:1.3cm) arc (10:-20:1.3cm) --(0,0); 
	\draw[shaded] (0,0)--(-50:1.3cm) arc (-50:-80:1.3cm) --(0,0); 
	\draw[thick, unshaded] (0,0) circle (.4cm);
	\node at (90:.52) {$\star$};
	\node at (67:1.05) {$\star$};
	\node at (-.5,-.8) {$\cdot$};
	\node at (-.3,-.85) {$\cdot$};
	\node at (-.1,-.9) {$\cdot$};
	\node at (0,0) {$R$};
	\draw[ultra thick] (0,0) circle (1.2cm);
\end{tikzpicture}
}
\,,\displaybreak[1]\\
\underset{\cup_{1,0}(R)}{
\begin{tikzpicture}[baseline = 0cm]
	\clip (0,0) circle (1.2cm);
	\draw[shaded] (0,0) circle (1.2cm);
	\draw[unshaded] (90:1.2) circle (.3cm);
	\draw[unshaded] (0,0)--(130:1.3cm) arc (130:160:1.3cm) --(0,0); 
	\draw[unshaded] (0,0)--(190:1.3cm) arc (190:220:1.3cm) --(0,0); 
	\draw[unshaded] (0,0)--(55:1.3cm) arc (55:-20:1.3cm) --(0,0); 
	\draw[unshaded] (0,0)--(-50:1.3cm) arc (-50:-80:1.3cm) --(0,0); 
	\draw[thick, unshaded] (0,0) circle (.4cm);
	\draw[shaded] (20:1.2) circle (.3cm);
	\node at (90:.52) {$\star$};
	\node at (90:1.05) {$\star$};
	\node at (-.5,-.8) {$\cdot$};
	\node at (-.3,-.85) {$\cdot$};
	\node at (-.1,-.9) {$\cdot$};
	\node at (0,0) {$\check{R}$};
	\draw[ultra thick] (0,0) circle (1.2cm);
\end{tikzpicture}
}
\,,\,
\underset{\cup_{1,1}(R)}{
\begin{tikzpicture}[baseline = 0cm]
	\clip (0,0) circle (1.2cm);
	\draw[unshaded] (0,0) circle (1.2cm);
	\draw[shaded] (90:1.2) circle (.3cm);
	\draw[shaded] (0,0)--(130:1.3cm) arc (130:160:1.3cm) --(0,0); 
	\draw[shaded] (0,0)--(190:1.3cm) arc (190:220:1.3cm) --(0,0); 
	\draw[shaded] (0,0)--(55:1.3cm) arc (55:-20:1.3cm) --(0,0); 
	\draw[shaded] (0,0)--(-50:1.3cm) arc (-50:-80:1.3cm) --(0,0); 
	\draw[thick, unshaded] (0,0) circle (.4cm);
	\draw[unshaded] (20:1.2) circle (.3cm);
	\node at (90:.52) {$\star$};
	\node at (117:1.05) {$\star$};
	\node at (-.5,-.8) {$\cdot$};
	\node at (-.3,-.85) {$\cdot$};
	\node at (-.1,-.9) {$\cdot$};
	\node at (0,0) {$R$};
	\draw[ultra thick] (0,0) circle (1.2cm);
\end{tikzpicture}
}
\,,\,
\underset{\cup_{1,2}(R)}{
\begin{tikzpicture}[baseline = 0cm]
	\clip (0,0) circle (1.2cm);
	\draw[shaded] (0,0) circle (1.2cm);
	\draw[unshaded] (90:1.2) circle (.3cm);
	\draw[unshaded] (0,0)--(130:1.3cm) arc (130:160:1.3cm) --(0,0); 
	\draw[unshaded] (0,0)--(190:1.3cm) arc (190:220:1.3cm) --(0,0); 
	\draw[unshaded] (0,0)--(55:1.3cm) arc (55:-20:1.3cm) --(0,0); 
	\draw[unshaded] (0,0)--(-50:1.3cm) arc (-50:-80:1.3cm) --(0,0); 
	\draw[thick, unshaded] (0,0) circle (.4cm);
	\draw[shaded] (20:1.2) circle (.3cm);
	\node at (90:.52) {$\star$};
	\node at (145:1.05) {$\star$};
	\node at (-.5,-.8) {$\cdot$};
	\node at (-.3,-.85) {$\cdot$};
	\node at (-.1,-.9) {$\cdot$};
	\node at (0,0) {$\check{R}$};
	\draw[ultra thick] (0,0) circle (1.2cm);
\end{tikzpicture}
}
\,,\dots,\,
\underset{\cup_{1,2n+3}(R)}{
\begin{tikzpicture}[baseline = 0cm]
	\clip (0,0) circle (1.2cm);
	\draw[unshaded] (0,0) circle (1.2cm);
	\draw[shaded] (90:1.2) circle (.3cm);
	\draw[shaded] (0,0)--(130:1.3cm) arc (130:160:1.3cm) --(0,0); 
	\draw[shaded] (0,0)--(190:1.3cm) arc (190:220:1.3cm) --(0,0); 
	\draw[shaded] (0,0)--(55:1.3cm) arc (55:-20:1.3cm) --(0,0); 
	\draw[shaded] (0,0)--(-50:1.3cm) arc (-50:-80:1.3cm) --(0,0); 
	\draw[thick, unshaded] (0,0) circle (.4cm);
	\draw[unshaded] (20:1.2) circle (.3cm);
	\node at (90:.52) {$\star$};
	\node at (67:1.05) {$\star$};
	\node at (-.5,-.8) {$\cdot$};
	\node at (-.3,-.85) {$\cdot$};
	\node at (-.1,-.9) {$\cdot$};
	\node at (0,0) {$R$};
	\draw[ultra thick] (0,0) circle (1.2cm);
\end{tikzpicture}
}
\,,
\\
\vdots\hspace{2.5cm}\vdots\hspace{2.7cm}\vdots\hspace{3.4cm}\vdots\hspace{1.2cm}
\end{align*}
The index $i$ refers to how many through strings separate the two cups (counting clockwise from the cup at 12 o'clock in the above diagrams), where $i=-1$ denotes two nested cups.
The $j$ refers to the number of strings separating the external boundary interval at 12 o'clock from the interval for the external $\star$, counting counterclockwise (and subtract 1 for nested cups).
Note that $n+k$ strings separating the cups is the same as a rotation (up to switching the shading) of $n-k$ strings separating the cups.

The \underline{second annular basis} of $\gA_{n+2}(R)$ the set of $\cup_{i,j}(R)$ such that $-1\leq i\leq n$, and
$$
j\in 
\begin{cases}
\{-1,0,\dots, 2n+2\} &\text{if }i=-1\\
\{0,\dots, 2n+3\} &\text{if }-1<i<n\\
\{0,\dots, n+1\} &\text{if }i=n.
\end{cases}
$$
If $i=n$, the $n+2$ elements corresponding to $j=0,\dots,n+1$ are as follows
$$
\underset{\cup_{n,0}(R)}{
\begin{tikzpicture}[baseline = 0cm]
	\clip (0,0) circle (1.2cm);
	\draw[shaded] (0,0) circle (1.2cm);
	\draw[unshaded] (90:1.2) circle (.3cm);
	\draw[unshaded] (0,0)--(130:1.3cm) arc (130:410:1.3cm) --(0,0);  
	\draw (0,0)--(160:1.3cm); 
	\draw (0,0)--(210:1.3cm); 
	\draw (0,0)--(240:1.3cm);  
	\draw (0,0)--(20:1.3cm); 
	\draw (0,0)--(-30:1.3cm); 
	\draw (0,0)--(-60:1.3cm); 
	\draw[unshaded] (270:1.2) circle (.3cm);
	\draw[thick, unshaded] (0,0) circle (.4cm);
	\node at (90:.52) {$\star$};
	\node at (90:1.05) {$\star$};
	\node at (0:.85cm) {$\vdots$};
	\node at (180:.85cm) {$\vdots$};
	\node at (0,0) {$\check{R}$};
	\draw[ultra thick] (0,0) circle (1.2cm);
\end{tikzpicture}
}
\,,\,
\underset{\cup_{n,1}(R)}{
\begin{tikzpicture}[baseline = 0cm]
	\clip (0,0) circle (1.2cm);
	\draw[unshaded] (0,0) circle (1.2cm);
	\draw[shaded] (90:1.2) circle (.3cm);
	\draw[shaded] (0,0)--(160:1.3cm) arc (160:130:1.3cm) --(0,0); 
	\draw[shaded] (0,0)--(50:1.3cm) arc (50:20:1.3cm) --(0,0); 
	\draw (0,0)--(210:1.3cm); 
	\draw (0,0)--(240:1.3cm);  
	\draw (0,0)--(-30:1.3cm); 
	\draw (0,0)--(-60:1.3cm); 
	\draw[unshaded] (270:1.2) circle (.3cm);
	\draw[thick, unshaded] (0,0) circle (.4cm);
	\node at (90:.52) {$\star$};
	\node at (115:1.05) {$\star$};
	\node at (0:.85cm) {$\vdots$};
	\node at (180:.85cm) {$\vdots$};
	\node at (0,0) {$\check{R}$};
	\draw[ultra thick] (0,0) circle (1.2cm);
\end{tikzpicture}
}
\,,\,
\underset{\cup_{n,2}(R)}{
\begin{tikzpicture}[baseline = 0cm]
	\clip (0,0) circle (1.2cm);
	\draw[shaded] (0,0) circle (1.2cm);
	\draw[unshaded] (90:1.2) circle (.3cm);
	\draw[unshaded] (0,0)--(130:1.3cm) arc (130:410:1.3cm) --(0,0);  
	\draw (0,0)--(160:1.3cm); 
	\draw (0,0)--(210:1.3cm); 
	\draw (0,0)--(240:1.3cm);  
	\draw (0,0)--(20:1.3cm); 
	\draw (0,0)--(-30:1.3cm); 
	\draw (0,0)--(-60:1.3cm); 
	\draw[unshaded] (270:1.2) circle (.3cm);
	\draw[thick, unshaded] (0,0) circle (.4cm);
	\node at (90:.52) {$\star$};
	\node at (145:1.05) {$\star$};
	\node at (0:.85cm) {$\vdots$};
	\node at (180:.85cm) {$\vdots$};
	\node at (0,0) {$\check{R}$};
	\draw[ultra thick] (0,0) circle (1.2cm);
\end{tikzpicture}
}
\,,\dots,\,
\underset{\cup_{n,n+1}(R)}{
\begin{tikzpicture}[baseline = 0cm]
	\clip (0,0) circle (1.2cm);
	\draw[unshaded] (0,0) circle (1.2cm);
	\draw[unshaded] (90:1.2) circle (.3cm);
	\draw[unshaded] (0,0)--(130:1.3cm) arc (130:410:1.3cm) --(0,0);  
	\draw[shaded] (0,0)--(210:1.3cm) arc (210:240:1.3cm) --(0,0);
	\draw[shaded] (0,0)--(-60:1.3cm) arc (-60:-30:1.3cm) --(0,0);
	\draw (0,0)--(160:1.3cm); 
 	\draw (0,0)--(20:1.3cm); 
	\draw[shaded] (270:1.2) circle (.3cm);
	\draw[thick, unshaded] (0,0) circle (.4cm);
	\node at (90:.52) {$\star$};
	\node at (247.5:1.05) {$\star$};
	\node at (0:.85cm) {$\vdots$};
	\node at (180:.85cm) {$\vdots$};
	\node at (0,0) {$R'$};
	\draw[ultra thick] (0,0) circle (1.2cm);
\end{tikzpicture}
}
$$
where the shading on the bottom in the first 3 pictures depends on the parity of $n$, as does the shading on the top of the final picture, and whether $R'$ is $R$ or $\check{R}$.
\end{defn}

\begin{rem}
Note that $\cup_{-1,-1}(R)=j^2(R)=\JellyfishSquared{n}{R}$\,.
\end{rem}

Recall that the inner product is defined by $\langle x,y\rangle=\Tr(x^*y)$, which is the same as connecting all strings of $x^*$ and $y$. Computing inner products amongst the $\cup_{i,j}(R)$'s amounts to examining the relative positions of caps along the interface between the two diagrams. Since $R$ is uncappable, the entire diagram is zero if a cap from one of the $\cup_{i,j}(R)$'s reaches the other copy of $R$.

It is easy to see that pairing $\cup_{i,j}(R)$ with $\cup_{i',k}(R)$ is non-zero only if $|i-i'|< 3$. 
When the scalar is non-zero differs for the cases $i=-1$ and $i\geq 0$, and there are some exceptional cases when $i=n-1,n$.
\begin{itemize}
\item
When $i=-1$, there are exactly 5, 3, and 2 ways of getting a nonzero scalar when pairing $\cup_{-1,j}(R)$ with $\cup_{i',k}(R)$ for $i'=-1,0,1$ respectively, corresponding to the following relative positions of caps along the interface:
\begin{align*}
&
\begin{tikzpicture}[baseline = 0cm]
	\draw (-1.1,0)--(.8,0);
	\draw (-.6,0) arc (180:0:.15cm);
	\draw (-.9,0) arc (180:0:.45cm);
	\draw (.3,0) arc (0:-180:.15cm);
	\draw (.6,0) arc (0:-180:.45cm);
\end{tikzpicture}
\,,\,
\begin{tikzpicture}[baseline = 0cm]
	\draw (-1.1,0)--(.5,0);
	\draw (-.6,0) arc (180:0:.15cm);
	\draw (-.9,0) arc (180:0:.45cm);
	\draw (0,0) arc (0:-180:.15cm);
	\draw (.3,0) arc (0:-180:.45cm);
\end{tikzpicture}
\,,\,
\begin{tikzpicture}[baseline = 0cm]
	\draw (-1.1,0)--(.2,0);
	\draw (-.6,0) arc (180:0:.15cm);
	\draw (-.9,0) arc (180:0:.45cm);
	\draw (-.3,0) arc (0:-180:.15cm);
	\draw (0,0) arc (0:-180:.45cm);
\end{tikzpicture}
\,,\,
\begin{tikzpicture}[baseline = 0cm]
	\draw (-1.4,0)--(.2,0);
	\draw (-.6,0) arc (180:0:.15cm);
	\draw (-.9,0) arc (180:0:.45cm);
	\draw (-.6,0) arc (0:-180:.15cm);
	\draw (-.3,0) arc (0:-180:.45cm);
\end{tikzpicture}
\,,\,
\begin{tikzpicture}[baseline = 0cm]
	\draw (-1.7,0)--(.2,0);
	\draw (-.6,0) arc (180:0:.15cm);
	\draw (-.9,0) arc (180:0:.45cm);
	\draw (-.9,0) arc (0:-180:.15cm);
	\draw (-.6,0) arc (0:-180:.45cm);
\end{tikzpicture}
&&
i'=-1
\\
&
\begin{tikzpicture}[baseline = 0cm]
	\draw (-1.1,0)--(.5,0);
	\draw (-.9,0) arc (180:0:.15cm);
	\draw (-.3,0) arc (180:0:.15cm);
	\draw (0,0) arc (0:-180:.15cm);
	\draw (.3,0) arc (0:-180:.45cm);
\end{tikzpicture}
\,,\,
\begin{tikzpicture}[baseline = 0cm]
	\draw (-1.1,0)--(.2,0);
	\draw (-.9,0) arc (180:0:.15cm);
	\draw (-.3,0) arc (180:0:.15cm);
	\draw (-.3,0) arc (0:-180:.15cm);
	\draw (0,0) arc (0:-180:.45cm);
\end{tikzpicture}
\,,\,
\begin{tikzpicture}[baseline = 0cm]
	\draw (-1.4,0)--(.2,0);
	\draw (-.9,0) arc (180:0:.15cm);
	\draw (-.3,0) arc (180:0:.15cm);
	\draw (-.6,0) arc (0:-180:.15cm);
	\draw (-.3,0) arc (0:-180:.45cm);
\end{tikzpicture}
&&
i'=0
\\
&
\begin{tikzpicture}[baseline = 0cm]
	\draw (-1.1,0)--(.5,0);
	\draw (-.9,0) arc (180:0:.15cm);
	\draw (-.3,0)--(-.3,.3);
	\draw (0,0) arc (180:0:.15cm);
	\draw (0,0) arc (0:-180:.15cm);
	\draw (.3,0) arc (0:-180:.45cm);
\end{tikzpicture}
\,,\,
\begin{tikzpicture}[baseline = 0cm]
	\draw (-1.1,0)--(.5,0);
	\draw (-.9,0) arc (180:0:.15cm);
	\draw (-.3,0)--(-.3,.3);
	\draw (0,0) arc (180:0:.15cm);
	\draw (-.3,0) arc (0:-180:.15cm);
	\draw (0,0) arc (0:-180:.45cm);
\end{tikzpicture}
&&
i'=1.
\end{align*}
\item
For $0\leq i\leq n-2$, there are exactly 3, 2, and 1 ways of getting a nonzero scalar when pairing $\cup_{i,j}(R)$ with $\cup_{i',k}(R)$ for $i'=i,i+1,i+2$ respectively. The relative positions of caps corresponding to the case $i=0$ are as follows:
\begin{align*}
&
\begin{tikzpicture}[baseline = 0cm]
	\draw (-1.1,0)--(.5,0);
	\draw (-.9,0) arc (180:0:.15cm);
	\draw (-.3,0) arc (180:0:.15cm);
	\draw (-.3,0) arc (0:-180:.15cm);
	\draw (.3,0) arc (0:-180:.15cm);
\end{tikzpicture}
\,,\,
\begin{tikzpicture}[baseline = 0cm]
	\draw (-1.1,0)--(.2,0);
	\draw (-.9,0) arc (180:0:.15cm);
	\draw (-.3,0) arc (180:0:.15cm);
	\draw (-.6,0) arc (0:-180:.15cm);
	\draw (0,0) arc (0:-180:.15cm);
\end{tikzpicture}
\,,\,
\begin{tikzpicture}[baseline = 0cm]
	\draw (-1.4,0)--(.2,0);
	\draw (-.9,0) arc (180:0:.15cm);
	\draw (-.3,0) arc (180:0:.15cm);
	\draw (-.9,0) arc (0:-180:.15cm);
	\draw (-.3,0) arc (0:-180:.15cm);
\end{tikzpicture}
&&
i'=0
\\
&
\begin{tikzpicture}[baseline = 0cm]
	\draw (-1.1,0)--(.5,0);
	\draw (-.9,0) arc (180:0:.15cm);
	\draw (-.3,0)--(-.3,.3);
	\draw (0,0) arc (180:0:.15cm);
	\draw (-.3,0) arc (0:-180:.15cm);
	\draw (.3,0) arc (0:-180:.15cm);
\end{tikzpicture}
\,,\,
\begin{tikzpicture}[baseline = 0cm]
	\draw (-1.1,0)--(.5,0);
	\draw (-.9,0) arc (180:0:.15cm);
	\draw (-.3,0)--(-.3,.3);
	\draw (0,0) arc (180:0:.15cm);
	\draw (-.6,0) arc (0:-180:.15cm);
	\draw (0,0) arc (0:-180:.15cm);
\end{tikzpicture}
&&
i'=1
\\
&
\begin{tikzpicture}[baseline = 0cm]
	\draw (-1.1,0)--(.8,0);
	\draw (-.9,0) arc (180:0:.15cm);
	\draw (-.3,0)--(-.3,.3);
	\draw (0,0)--(0,.3);
	\draw (.3,0) arc (180:0:.15cm);
	\draw (-.3,0) arc (0:-180:.15cm);
	\draw (.3,0) arc (0:-180:.15cm);
\end{tikzpicture}
&&
i'=2.
\end{align*}
\item
For $i=n-1$, there is an additional way of getting a nonzero scalar when pairing $\cup_{n-1,j}(R)$ with $\cup_{n-1,k}(R)$, which makes up for the fact that there is no $\cup_{n+1,k}(R)$. The relative position of caps given by
$$
\begin{tikzpicture}[baseline = 0cm]
	\draw (-1.1,0)--(1.1,0);
	\draw (-.9,0) arc (180:0:.15cm);
	\draw (-.3,0)--(-.3,.3);
	\draw (.3,0)--(.3,.3);
	\draw (.6,0) arc (180:0:.15cm);
	\draw (-.3,0) arc (0:-180:.15cm);
	\draw (.6,0) arc (0:-180:.15cm);
	\draw[very thick] (0,-.3)--(0,.3);
	\node at (0,-.4) {{\scriptsize{$n-1$}}};
\end{tikzpicture}
$$
can be interpreted as $(j-k)\mod (2n+4)\equiv -1$ or $n+2$, depending on the location of the $\star$ above the line. In the former case, the diagram contributes $\sigma^{-1}$, and in the latter, $\sigma^n$.
\item
The case $i=n$ is more subtle. When $i'=n-2$, there are two ways of pairing $\cup_{n,j}(R)$ with $\cup_{n-2,k}(R)$ to get a nonzero scalar, which correspond to the $\star$ placement of
$$
\begin{tikzpicture}[baseline = 0cm]
	\draw (-1.1,0)--(1.1,0);
	\draw (-.9,0) arc (180:0:.15cm);
	\draw (-.3,0)--(-.3,.3);
	\draw (.3,0)--(.3,.3);
	\draw (.6,0) arc (180:0:.15cm);
	\draw (-.3,0) arc (0:-180:.15cm);
	\draw (.6,0) arc (0:-180:.15cm);
	\draw[very thick] (0,-.3)--(0,.3);
	\node at (0,-.4) {{\scriptsize{$n$}}};
\end{tikzpicture}\,,
$$
i.e., $(j-k)\mod (2n+4)\equiv -1$ or $n+1$. In the former case, the diagram contributes a scalar of $\sigma^{-1}$, and in the latter, $\sigma^n \sigma^{-1}$.

When $i'=n-1$, there are four ways to get a nonzero scalar, which correspond to the $\star$ placement of
$$
\begin{tikzpicture}[baseline = 0cm]
	\draw (-.8,0)--(1.1,0);
	\draw (-.6,0) arc (180:0:.15cm);
	\draw (.3,0)--(.3,.3);
	\draw (.6,0) arc (180:0:.15cm);
	\draw (-.3,0) arc (0:-180:.15cm);
	\draw (.6,0) arc (0:-180:.15cm);
	\draw[very thick] (0,-.3)--(0,.3);
	\node at (0,-.4) {{\scriptsize{$n$}}};
\end{tikzpicture}
\text{ and }
\begin{tikzpicture}[baseline = 0cm]
	\draw (-1.1,0)--(.8,0);
	\draw (-.9,0) arc (180:0:.15cm);
	\draw (-.3,0)--(-.3,.3);
	\draw (.3,0) arc (180:0:.15cm);
	\draw (-.3,0) arc (0:-180:.15cm);
	\draw (.6,0) arc (0:-180:.15cm);
	\draw[very thick] (0,-.3)--(0,.3);
	\node at (0,-.4) {{\scriptsize{$n$}}};
\end{tikzpicture}\,.
$$

Finally, when $i'=n$, there are three ways to get a non-zero scalar, corresponding to
$$
\begin{tikzpicture}[baseline = 0cm]
	\draw (-1.1,0)--(1.1,0);
	\draw (-.9,0) arc (180:0:.15cm);
	\draw (-.3,0)--(-.3,.3);
	\draw (.3,0)--(.3,-.3);
	\draw (.3,0) arc (180:0:.15cm);
	\draw (-.3,0) arc (0:-180:.15cm);
	\draw (.9,0) arc (0:-180:.15cm);
	\draw[very thick] (0,-.3)--(0,.3);
	\node at (0,-.4) {{\scriptsize{$n-1$}}};
\end{tikzpicture}
\,,\,
\begin{tikzpicture}[baseline = 0cm]
	\draw (-.8,0)--(.8,0);
	\draw (-.6,0) arc (180:0:.15cm);
	\draw (.3,0) arc (180:0:.15cm);
	\draw (-.3,0) arc (0:-180:.15cm);
	\draw (.6,0) arc (0:-180:.15cm);
	\draw[very thick] (0,-.3)--(0,.3);
	\node at (0,-.4) {{\scriptsize{$n$}}};
\end{tikzpicture}
\,,\,
\begin{tikzpicture}[baseline = 0cm]
	\draw (-1.1,0)--(1.1,0);
	\draw (-.6,0) arc (180:0:.15cm);
	\draw (-.3,0)--(-.3,-.3);
	\draw (.3,0)--(.3,.3);
	\draw (.6,0) arc (180:0:.15cm);
	\draw (-.6,0) arc (0:-180:.15cm);
	\draw (.6,0) arc (0:-180:.15cm);
	\draw[very thick] (0,-.3)--(0,.3);
	\node at (0,-.4) {{\scriptsize{$n-1$}}};
\end{tikzpicture}
$$
(note that the $\star$ placement is determined).
\end{itemize}

The following proposition now follows from the above discussion.

\begin{prop}\label{prop:AnnularInnerProducts}
Assuming $R=R^*$ and $\|R\|^2=\Tr(R^2)=1$, we have the following inner products (linear on the \underline{right}):
$$
\langle \cup_{i',k}(R),\cup_{-1,j}(R)\rangle
=
\begin{array}{c|c||c|c|c|c|c|}
\multicolumn{2}{c||}{} & \multicolumn{5}{|c|}{(j-k)\mod (2n+4)}
\\\cline{3-7}
\multicolumn{2}{c||}{}& -2& -1 &0 & 1 & 2 
\\\hline\hline
&-1 & \omega_R^{-1} & \sigma_R^{-1} & [2]^2 & \sigma_R & \omega_R
\\\cline{2-7}
i' &0 &0 & [2] \sigma_R^{-1} & [2] & [2] \sigma_R & 0
\\\cline{2-7}
&1 & 0 & 0 & 1 & \sigma_R & 0
\\\hline
\end{array}
$$
and is zero otherwise.

For $0\leq i',i\leq n-1$, we have
$$
\langle \cup_{i',k}(R),\cup_{i,j}(R)\rangle
=
\begin{array}{c|c||c|c|c|}
\multicolumn{2}{c||}{} & \multicolumn{3}{|c|}{(j-k)\mod (2n+4)}
\\\cline{3-5}
\multicolumn{2}{c||}{}&  -1 & 0 & 1
\\\hline\hline
&-2 & \sigma_R^{-1} & 0 &  0 
\\\cline{2-5}
&-1 & [2]\sigma_R^{-1} & [2] & 0
\\\cline{2-5}
i'-i&0 & \sigma_R^{-1} & [2]^2 &\sigma_R
\\\cline{2-5}
&1 & 0 & [2] & [2]\sigma_R
\\\cline{2-5}
&2 & 0 & 0 & \sigma_R
\\\hline
\end{array}
$$
and is zero otherwise, with the exception that
$$
\langle\cup_{n-1,k}(R),\cup_{n-1,j}\rangle = \sigma_R^n \text{ if }j-k\equiv n+2\mod 2n+4.
$$

For $i=n$ and $i'<n$, we have
$$
\langle \cup_{i',k}(R),\cup_{n-1,j}(R)\rangle
=
\begin{array}{c|c||c|c|c|c|}
\multicolumn{2}{c||}{} & \multicolumn{4}{|c|}{(j-k)\mod (2n+4)}
\\\cline{3-6}
\multicolumn{2}{c||}{}&  -1 & 0 & n+1 & n+2
\\\hline\hline
i'&n-2 & \sigma_R^{-1} & 0 &  \sigma_R^n\sigma_R^{-1} & 0
\\\cline{3-6}
&n-1 & [2]\sigma_R^{-1} & [2] & [2]\sigma_R^n\sigma_R^{-1} & [2]\sigma_R^n
\\\hline
\end{array}
$$
and is zero otherwise.

Finally, if $i=i'=n$, then we have
$$
\langle \cup_{n,k}(R),\cup_{n,j}(R)\rangle
=
\begin{cases}
\sigma_R^n\sigma_R^{-1} & \text{if }(j-k)\mod (n+2)\equiv -1 \text{ and } j=n+1\\
\sigma_R^{-1} & \text{if }(j-k)\mod (n+2)\equiv -1  \text{ and } j<n+1\\
[2]^2 & \text{if }(j-k)\mod (n+2)\equiv 0 \\
\sigma_R & \text{if }(j-k)\mod (n+2)\equiv 1\text{ and }j>0\\ 
\sigma_R^n\sigma_R& \text{if }(j-k)\mod (n+2)\equiv -1 \text{ and }j=0\\
0 &\text{else.}
\end{cases}
$$
\end{prop}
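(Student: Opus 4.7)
The proof is essentially a careful enumeration based on the discussion immediately preceding the proposition, so the plan is to organize that enumeration into a clean computation. I would begin by writing
$$\langle \cup_{i',k}(R),\cup_{i,j}(R)\rangle = \Tr\bigl(\cup_{i',k}(R)^{*}\cup_{i,j}(R)\bigr),$$
which means stacking the two annular diagrams and closing off all external strings. The resulting closed diagram contains exactly two copies of $R$ (with some combination of $R, \check{R}$ according to parity and rotations), connected along an interface of $2n+4$ strings interrupted by the four caps (two from each annular consequence). Since $R$ is uncappable and low-weight, the diagram is zero unless no cap touches a copy of $R$; this is precisely why the support on $|i-i'|$ is so restricted.

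The next step is to go through the enumerated relative cap configurations, case by case, in the exact order they appear in the text: first $i=-1$ paired against $i'\in\{-1,0,1\}$ (five, three, two configurations), then $0\le i\le n-2$ paired against $i'\in\{i,i+1,i+2\}$ (three, two, one configurations), and finally the boundary cases $i=n-1$ and $i=n$. For each configuration I would read off three pieces of data: (a) the number of closed contractible loops, which gives a power of $\delta$ that combines with the internal Temperley–Lieb to yield the entries $1$, $[2]$, $[2]^2$; (b) the number of through strings, which must match $n$ so that the two copies of $R$ sit against a $\jw{n}$-type configuration and we can apply $\Tr(R^2)=1$ or $\Tr(R\check{R})=\omega_R$ (after the convention $\check{\check R}=R$ and $\cF(R)=\sigma_R\check{R}$); and (c) the chirality factor $\sigma_R^{\pm 1}$ or $\omega_R^{\pm 1}$ incurred when shifting $\star$ across strands to identify the resulting diagram with $\Tr(R^2)$. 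Filling out the tables is then a line-by-line bookkeeping exercise matched against the offsets $(j-k)\bmod (2n+4)$.

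The special rows at $i=n-1$ and $i=n$ require a small extra argument. When $i=i'=n-1$, there is an extra allowed alignment in which the bundle of $n$ through strings is actually an $(n-1)$-string bundle with a cap absorbed, giving the additional value $\sigma_R^n$ at offset $n+2$. For $i=n$, the two cups are no longer independent but are forced by a single block of $n$ through strings, so I would treat the $\cup_{n,j}$ diagrams as rotations of the reduced pattern shown in the last row of displays and carefully track which $(j-k)\bmod (n+2)$ residue gives which alignment. The extra factors $\sigma_R^n$ in the $j=0$ and $j=n+1$ entries come precisely from the fact that a full $n$-fold shift of $\star$ around $R$ produces $\sigma_R^n = \omega_R^{n/2}$ by the standard relations $\rho(R)=\omega_R R$ and $\cF(R)=\sigma_R\check R$.

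The main obstacle is not conceptual but combinatorial: keeping parallel track of three different indexing conventions (the cap offset modulo $2n+4$, the $\star$-position recording $j$, and the through-string count) without sign or chirality errors. I would organize the computation by fixing $R$ self-adjoint with $\|R\|^2=1$, reducing everything to $\Tr(R^2)$, $\Tr(R\check R)$, and then reading chirality factors off the rules in the remark following Assumption~\ref{assume:Generators} (one $\sigma_R^{\pm 1}$ per strand the $\star$ is shifted, with a swap $R\leftrightarrow\check R$). Once this dictionary is fixed, each table entry is a one-line verification, and the zero entries are immediate from uncappability.
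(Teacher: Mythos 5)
Your proposal follows essentially the same route as the paper: the paper's proof of this proposition is precisely the case-by-case enumeration of relative cap positions along the interface that precedes the statement (zero entries from uncappability of $R$, factors of $[2]$ from closed loops formed by matched cups, and chirality factors $\sigma_R^{\pm1}$, $\omega_R^{\pm1}$ from shifting $\star$, with the same special treatment of $i=n-1$ and $i=n$), and the proposition is then asserted to ``follow from the above discussion.'' Your plan reproduces exactly this bookkeeping, so it is correct and not materially different from the paper's argument.
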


\begin{rem}
The concerned reader may wonder if we have missed a case or two amidst this muddle of indices.
Be reassured that we have checked these inner products numerically for the generators of each of our examples directly in the graph planar algebra. 
See Section \ref{sec:Checking} for more details. 
\end{rem}

\begin{rem}\label{rem:DualAnnularBasis}
In this article, we do not give a formula for the dual basis $\widehat{\cup}_{i,j}(R)$ in terms of the $\cup_{i,j}(R)$'s, i.e., the change of basis matrix from the annular basis to the dual annular basis. Instead, we find the dual annular basis for our examples by inverting the matrix of inner products given by Proposition \ref{prop:AnnularInnerProducts}. 

As in \cite[Remark 3.7]{1208.3637}, if $W$ is the matrix of inner products of the $\cup_{i,j}(R)$'s, then the change of basis matrix from the column vectors representing the annular basis $U$ to the column vectors representing the dual basis $\widehat{U}$ is $\overline{W^{-1}}$, i.e., $\overline{W^{-1}} U = \widehat{U}$. (The inner product is linear on the \underline{right}.) If $\widehat{c}$ is the row vector of coefficients in the dual basis for an annular consequence $x$, i.e., $x=\widehat{c}\cdot \widehat{U}$, then the row vector of coefficients in the annular basis is given by $c=\widehat{c}\,\overline{W^{-1}}$.

It would certainly be useful to have a general formula for the dual annular basis in terms of the annular basis. While such a computation is routine, it would be demanding, and we leave it for another time.
\end{rem}

\subsection{Towards computing the second dual annular basis}\label{sec:SecondDualBasis}

While we do not compute the change of basis matrix from the second annular basis to the second dual annular basis here, we record a formula for the two-cup Jones-Wenzl which will be highly instrumental in its future calculation. We use the first and second formulas below for calculations in Section \ref{sec:Formulas}.

\begin{defn}
The \underline{two-cup Jones-Wenzl} 
is the sum of all the terms in the Jones-Wenzl $\jw{k}$ with at most two cups on the top and bottom. 
\end{defn}

A formula for the two-cup Jones-Wenzl in terms of Temperley-Lieb diagrams can be deduced easily from \cite[Proposition 3.8]{MR1446615}, \cite[Introduction, Equations (3)]{MR2375712}, and \cite{morrison} (notice that our sign convention disagrees with that of \cite[Proposition 3.8]{MR1446615} and \cite{morrison}; to go from their equation to ours, all $[2k]$ should be replaced by $-[2k]$).

\begin{fact}\label{fact:2cup}
The coefficients of the diagrammatic basis elements with at most two cups on the top and bottom in the Jones-Wenzl idempotent are as follows.
\begin{align*}
\underset{\in \jw{k}}{\coeff}
\left(
\begin{tikzpicture}[baseline = -.1cm]
	\draw (0,-.8)--(0,.8);
	\node at (-.2,0) {{\scriptsize{$k$}}};
\end{tikzpicture}
\hspace{.2cm}
\right)
&=
1
\displaybreak[1]\\
\underset{\in \jw{k}}{\coeff}
\left(
\begin{tikzpicture}[baseline = -.1cm]
	\draw (-.5,-.8)--(-.5,.8);
	\draw (-.3,.8) arc (-180:0:.2cm);
	\draw (-.3,-.8)--(.3,.8);
	\draw (-.1,-.8) arc (180:0:.2cm);
	\draw (.5,-.8)--(.5,.8);
	\node at (-.7,0) {{\scriptsize{$a$}}};
	\node at (-.2,0) {{\scriptsize{$b$}}};
	\node at (.7,0) {{\scriptsize{$c$}}};
\end{tikzpicture}
\right)
&=
(-1)^{b+1}\frac{[a+1][c+1]}{[k]}
\displaybreak[1]\\
\underset{\in \jw{k}}{\coeff}
\left(
\begin{tikzpicture}[baseline = -.1cm]
	\draw (-1,-.8)--(-1,.8);
	\draw (-.8,.8) arc (-180:0:.2cm);
	\draw (-.8,-.8)--(-.2,.8);
	\draw (-.6,-.8) arc (180:0:.2cm);
	\draw (0,-.8)--(0,.8);
	\node at (-1.2,0) {{\scriptsize{$a$}}};
	\node at (-.7,0) {{\scriptsize{$b$}}};
	\node at (-.2,0) {{\scriptsize{$c$}}};
	\node at (.7,0) {{\scriptsize{$d$}}};
	\node at (1.2,0) {{\scriptsize{$e$}}};
	\draw (1,-.8)--(1,.8);
	\draw (.8,.8) arc (0:-180:.2cm);
	\draw (.8,-.8)--(.2,.8);
	\draw (.6,-.8) arc (0:180:.2cm);
\end{tikzpicture}
\right)
&=
\frac{(-1)^{b+d}[a+1][e+1]}{[k][k-1]}
\left(
[2][a+b+1][d+e+1]+[c+2][k-1]
\right)
\displaybreak[1]\\
\underset{\in \jw{k}}{\coeff}
\left(
\begin{tikzpicture}[baseline = -.1cm]
	\draw (-1,-.8)--(-1,.8);
	\draw (-.8,-.8) arc (180:0:.2cm);
	\draw (-.8,.8)--(-.2,-.8);
	\draw (-.6,.8) arc (-180:0:.2cm);
	\draw (0,-.8)--(0,.8);
	\node at (-1.2,0) {{\scriptsize{$a$}}};
	\node at (-.7,0) {{\scriptsize{$b$}}};
	\node at (-.2,0) {{\scriptsize{$c$}}};
	\node at (.7,0) {{\scriptsize{$d$}}};
	\node at (1.2,0) {{\scriptsize{$e$}}};
	\draw (1,-.8)--(1,.8);
	\draw (.8,.8) arc (0:-180:.2cm);
	\draw (.8,-.8)--(.2,.8);
	\draw (.6,-.8) arc (0:180:.2cm);
\end{tikzpicture}
\right)
&=
\text{ same as above}
\displaybreak[1]\\
\underset{\in \jw{k}}{\coeff}
\left(
\begin{tikzpicture}[baseline = -.1cm]
	\draw (-1,-.8)--(-1,.8);
	\draw (-.8,-.8) arc (180:0:.2cm);
	\draw (-.8,.8)--(-.2,-.8);
	\draw (-.4,.8) arc (-180:0:.2cm);
	\draw (-.6,.8)--(.6,-.8);
	\node at (-1.2,0) {{\scriptsize{$a$}}};
	\node at (-.7,0) {{\scriptsize{$b$}}};
	\node at (-.2,0) {{\scriptsize{$c$}}};
	\node at (.7,0) {{\scriptsize{$d$}}};
	\node at (1.2,0) {{\scriptsize{$e$}}};
	\draw (1,-.8)--(1,.8);
	\draw (.8,.8) arc (0:-180:.2cm);
	\draw (.8,-.8)--(.2,.8);
	\draw (.4,-.8) arc (0:180:.2cm);
\end{tikzpicture}
\right)
&=
\frac{(-1)^{b+d}[a+1][e+1]}{[k][k-1]}
\left(
[2][a+b+2][d+e+2]
\right)
\displaybreak[1]\\
\underset{\in \jw{k}}{\coeff}
\left(
\begin{tikzpicture}[baseline = -.1cm]
	\draw (-.9,-.8)--(-.9,.8);
	\draw (-.7,.8) arc (-180:0:.2cm);
	\draw (-.7,-.8)--(-.1,.8);
	\draw (-.2,-.8) arc (180:0:.2cm);
	\draw (-.3,-.8) arc (180:0:.3cm);
	\node at (-1.1,0) {{\scriptsize{$a$}}};
	\node at (-.6,0) {{\scriptsize{$b$}}};
	\node at (.6,0) {{\scriptsize{$d$}}};
	\node at (1.1,0) {{\scriptsize{$e$}}};
	\draw (.9,-.8)--(.9,.8);
	\draw (.7,.8) arc (0:-180:.2cm);
	\draw (.7,-.8)--(.1,.8);
\end{tikzpicture}
\right)
&=
\frac{(-1)^{b+d+1}[a+1][e+1]}{[k][k-1]}
\left(
[a+b+1][d+e+1]+[k-1]
\right)
\displaybreak[1]\\
\underset{\in \jw{k}}{\coeff}
\left(
\begin{tikzpicture}[baseline = -.1cm]
	\draw (-.8,-.8)--(-.8,.8);
	\draw (-.4,-.8) arc (180:0:.2cm);
	\draw (-.5,-.8) arc (180:0:.3cm);
	\draw (-.4,.8) arc (-180:0:.2cm);
	\draw (-.6,.8)--(.6,-.8);
	\node at (-1,0) {{\scriptsize{$a$}}};
	\node at (-.2,0) {{\scriptsize{$c$}}};
	\node at (.7,0) {{\scriptsize{$d$}}};
	\node at (1.2,0) {{\scriptsize{$e$}}};
	\draw (1,-.8)--(1,.8);
	\draw (.8,.8) arc (0:-180:.2cm);
	\draw (.8,-.8)--(.2,.8);
\end{tikzpicture}
\right)
&=
\frac{(-1)^{d+1}[a+1][e+1]}{[k][k-1]}
\left(
[a+2][d+e+2]
\right)
\displaybreak[1]\\
\underset{\in \jw{k}}{\coeff}
\left(
\begin{tikzpicture}[baseline = -.1cm]
	\draw (-.8,-.8)--(-.8,.8);
	\draw (-.5,-.8) arc (180:0:.2cm);
	\draw (-.6,-.8) arc (180:0:.3cm);
	\draw (-.2,.8) arc (-180:0:.2cm);
	\draw (-.3,.8) arc (-180:0:.3cm);
	\draw (-.6,.8)--(.3,-.8);
	\node at (-1,0) {{\scriptsize{$a$}}};
	\node at (-.3,0) {{\scriptsize{$c$}}};
	\node at (.7,0) {{\scriptsize{$e$}}};
	\draw (.5,-.8)--(.5,.8);
\end{tikzpicture}
\right)
&=
\frac{[a+1][e+1]}{[2][k][k-1]}
\left(
[a+2][e+2]
\right).
\end{align*} 
Note that coefficients for diagrams are invariant under the $\Z/2\oplus \Z/2$ symmetries of the rectangle (horizontal and vertical flipping).
\end{fact}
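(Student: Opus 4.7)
The plan is to deduce these coefficient formulas from the general closed-form expression for the coefficient of a Temperley--Lieb basis diagram in $\jw{k}$, due to Frenkel--Khovanov \cite{MR1446615} and reformulated in \cite{MR2375712, morrison}. That formula expresses $\coeff_D(\jw{k})$ as a signed sum of products of quantum integers indexed by the nesting/interleaving structure of the cups and caps of $D$. The task is then to substitute the specific diagrams of the Fact into that formula and simplify.

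First, I would recover the classical single-cup formula (the second displayed case) as a warmup: using Wenzl's recursion $\jw{k}=\jw{k-1}\otimes 1 - \tfrac{[k-1]}{[k]}(\jw{k-1}\otimes 1)\,e_{k-1}\,(\jw{k-1}\otimes 1)$ and induction on $k$, one extracts the coefficient of the rightmost cap--cup pair, and then invokes the rotational invariance of $\jw{k}$ to slide it to an arbitrary interior position; this yields $(-1)^{b+1}[a+1][c+1]/[k]$.

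Next, I would enumerate the topological types appearing among the two-cup diagrams in the Fact: two disjoint cup--cap pairs, one cup--cap pair nested inside another, ``cross'' configurations where a cap and a cup interleave, and configurations where two cups meet on the same boundary. For each type, applying the general formula produces an expression in quantum integers that simplifies to the displayed compact form using standard $q$-identities (for example $[m+1][n+1]-[m][n]=[m+n+1]$ and the quantum Ptolemy identity $[a+b][a-b]=[a]^2-[b]^2$). The $\mathbb{Z}/2\oplus\mathbb{Z}/2$ symmetry noted at the end of the Fact is automatic from $\jw{k}=(\jw{k})^*$ together with the $\pi$-rotational symmetry of $\jw{k}$, and cuts down the number of independent cases that need to be verified by hand.

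The main obstacle is the algebraic simplification in the configurations where the two cups meet on the same boundary (the last three displayed formulas). There the Frenkel--Khovanov sum has several terms whose collapse to the stated compact form is not immediate; in particular, recognizing the mixed expression $[2][a+b+1][d+e+1]+[c+2][k-1]$ requires grouping the summands into matched pairs and telescoping via repeated quantum integer identities. Once the target form is known from the reference, however, each simplification reduces to a routine $q$-combinatorial manipulation, so the Fact follows in each case by direct computation.
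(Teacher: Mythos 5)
Your proposal matches the paper's own justification: the paper offers no computation for this Fact, only citations to Frenkel--Khovanov's coefficient formula and its reformulations in \cite{MR2375712} and \cite{morrison}, from which it says the displayed cases can be deduced by direct specialization and simplification. The only point to watch is the sign-convention discrepancy the paper explicitly flags (every $[2k]$ in those references must be replaced by $-[2k]$ to match the conventions here); otherwise your plan, including the quantum-integer identities used to collapse the two-cup cases, is exactly the intended route.
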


\section{Projections and inner products of trains}\label{sec:Formulas}

As in the previous section, we continue to use Assumptions \ref{assume:Generators}, \ref{assume:SpanAlgebras}, and \ref{assume:Tetrahedral}.

To derive two-strand jellyfish relations, we need to analyze all reduced $\fB$-trains in $\cP_{n+2,+}$, in particular their projections to $\TL_{n+2,+}$, their projections to the space of second annular consequence of $\fB$, and their pairwise inner products.

We express some projections to Temperley-Lieb and annular consequences in terms of dual bases.  Using our conventions, the formula for these projections is as below: 
\begin{rem}
Suppose $\{v_1,\dots,v_k\}\subset V$ is a basis for the finite dimensional Hilbert space $V$. Let $\{\widehat{v_1},\dots, \widehat{v_k}\}$ be the dual basis $V$, defined by $\langle \widehat{v_i}, v_j \rangle = \delta_{i,j}$, where the inner product is linear on the right. If $x\in V$, we have 
$$
x=\sum_{i=1}^k \langle v_i, x\rangle \widehat{v_i}. 
$$
\end{rem}

In what follows, $P,Q,R,S,T$ are always elements of  $\fB$. We will first need a few results about certain Temperley-Lieb dual basis elements.

\subsection{Some Temperley-Lieb dual basis elements}\label{sec:TLDualBasis}

We now discuss certain elements of the basis which is dual to the usual diagrammatic basis of $\TL_{k}$.

\begin{lem}\label{lem:QuantumIdentity}
If $ a,b \geq 0$ and $a+b=n$, then
$$
[a+2][b+1]-[a+1][b]=[n+2].
$$
\end{lem}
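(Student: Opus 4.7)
This is a purely algebraic identity for quantum integers $[k]=(q^k-q^{-k})/(q-q^{-1})$, where $q+q^{-1}=\delta$. The plan is simply to unwind the definitions; no subfactor input is needed.

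First I would establish the auxiliary identity
\[
[x+1][y]-[x][y-1]=[x+y]
\qquad \text{for all } x\geq 0,\ y\geq 1,
\]
by direct expansion: writing both products in terms of $q^{\pm}$, all four "cross'' terms of the form $q^{\pm(x-y+1)}$ cancel, and what remains is $q^{x+y+1}+q^{-x-y-1}-q^{x+y-1}-q^{-x-y+1}=(q-q^{-1})(q^{x+y}-q^{-x-y})$, which divided by $(q-q^{-1})^2$ gives $[x+y]$. Setting $x=a+1$ and $y=b+1$ then yields
\[
[a+2][b+1]-[a+1][b]=[a+b+2]=[n+2],
\]
as desired.

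As an alternative (and possibly cleaner) route, one can use the Clebsch–Gordan style identity $[m][\ell]=\sum_{k=0}^{\ell-1}[m+\ell-1-2k]$ for $m\geq\ell$. Writing
\[
[a+2][b+1]=\sum_{k=0}^{b}[a+b+2-2k],\qquad [a+1][b]=\sum_{k=0}^{b-1}[a+b-2k],
\]
reindexing the first sum by $k\mapsto k+1$ shows that all but one term cancels, leaving $[a+b+2]=[n+2]$.

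There is essentially no obstacle; the only thing to watch for is getting the indices right so that the telescoping is correct, and ensuring the formulas remain valid at the boundary cases $a=0$ or $b=0$ (which they do, using $[0]=0$ and $[1]=1$).
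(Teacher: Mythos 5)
Your proof is correct. The paper's own proof is exactly your second route: it simply cites the product formula $[k][\ell]=\sum_j [j]$ (sum over $j$ strictly between $|k-\ell|$ and $k+\ell$ of the right parity) and declares the telescoping immediate. Your first route, the direct expansion of $[x+1][y]-[x][y-1]=[x+y]$ in powers of $q$, is a self-contained alternative that avoids quoting the product formula and works uniformly for all $a,b\geq 0$ without worrying about which factor is larger, so either argument is perfectly acceptable here.
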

\begin{proof}
Immediate from the formula
$\D
[k][\ell] = 
%
\sum_{\substack{
|k-\ell|<j<k+\ell
\\ 
j\equiv |k-\ell|+1 \mod 2
}}
[j]
$.
%
\end{proof}

\begin{lem}\label{lem:TLDualBasis}
The element dual to 
$
\begin{tikzpicture}[baseline = -.1cm]
	\draw[thick] (-.6,-.4)--(-.6,.4)--(.6,.4)--(.6,-.4)--(-.6,-.4);
	\draw (-.4,-.4)--(-.4,.4);
	\draw (-.2,.4)--(.4,-.4);
	\draw (-.2,-.4) arc (180:0:.2cm);
	\draw (0,.4) arc (-180:0:.2cm);
	\node at (-.25,0) {{\scriptsize{$a$}}};
	\node at (.35,0) {{\scriptsize{$b$}}};
\end{tikzpicture}
\in \TL_{n+2,+}
$
is given by
$$
\begin{tikzpicture}[baseline = -.1cm]
	\draw[thick] (-.6,-.4)--(-.6,.4)--(.6,.4)--(.6,-.4)--(-.6,-.4);
	\draw (-.4,-.4)--(-.4,.4);
	\draw (-.2,.4)--(.4,-.4);
	\draw (-.2,-.4) arc (180:0:.2cm);
	\draw (0,.4) arc (-180:0:.2cm);
	\node at (-.25,0) {{\scriptsize{$a$}}};
	\node at (.35,0) {{\scriptsize{$b$}}};
\end{tikzpicture}
^{\widehat{\hs\hs}}
=
\frac{[a+1][b+1]}{[n+2]^2}
\underbrace{
\begin{tikzpicture}[baseline = -.7cm]
	\draw (0,1)--(0,.4);
	\draw (-.3,.4)--(-.3,-1);
	\draw (0,-1)--(0,-2.2);
	\draw (1.4,-1.4)--(1.4,-2.2);
	\draw (0,-.2) arc (180:270:.35cm) -- (1.2,-.55) arc (90:0:.45cm);
	\node at (-.4,.8) {{\scriptsize{$n+1$}}};
	\node at (-.4,-2) {{\scriptsize{$a+1$}}};
	\node at (1,-2) {{\scriptsize{$b+1$}}};
	\node at (-.5,-.6) {{\scriptsize{$a$}}};
	\node at (1.7,-.6) {{\scriptsize{$b$}}};
	\filldraw[unshaded,thick] (-.6,.6)--(.6,.6)--(.6,-.2)--(-.6,-.2)--(-.6,.6);
	\node at (0,.2) {$\jw{n+1}$};
	\filldraw[unshaded,thick] (-.6,-1)--(.6,-1)--(.6,-1.8)--(-.6,-1.8)--(-.6,-1);
	\filldraw[unshaded,thick] (.8,-1)--(2,-1)--(2,-1.8)--(.8,-1.8)--(.8,-1);
	\node at (0,-1.4) {$\jw{a+1}$};
	\node at (1.4,-1.4) {$\jw{b+1}$};
	\draw (.4,-.2) arc (180:360:.2cm) -- (.8,1);
	\draw (.4,-1) arc (180:0:.3cm);
\end{tikzpicture}
}_D
-
\frac{(-1)^b [a+1]}{[n+2][n+3]}
\begin{tikzpicture}[baseline = -.1cm]
	\draw (0,.8)--(0,-.8);
	\node at (-.4,.6) {{\scriptsize{$n+2$}}};
	\node at (-.4,-.6) {{\scriptsize{$n+2$}}};
	\filldraw[unshaded,thick] (-.6,.4)--(.6,.4)--(.6,-.4)--(-.6,-.4)--(-.6,.4);
	\node at (0,0) {$\jw{n+2}$};
\end{tikzpicture}.
$$ 
To find the element dual to
$
\begin{tikzpicture}[baseline = -.1cm,yscale=-1]
	\draw[thick] (-.6,-.4)--(-.6,.4)--(.6,.4)--(.6,-.4)--(-.6,-.4);
	\draw (-.4,-.4)--(-.4,.4);
	\draw (-.2,.4)--(.4,-.4);
	\draw (-.2,-.4) arc (180:0:.2cm);
	\draw (0,.4) arc (-180:0:.2cm);
	\node at (-.25,0) {{\scriptsize{$a$}}};
	\node at (.35,0) {{\scriptsize{$b$}}};
\end{tikzpicture}
\in \TL_{n+2,+}
$,
maintain the coefficients and vertically reflect the diagrams in the above formula.
\end{lem}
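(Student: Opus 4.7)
The plan is to verify directly that the element $W$ on the right-hand side satisfies the defining relations of the dual basis element, namely $\langle W, T'\rangle = \delta_{T', T_{a,b}}$ for every diagrammatic basis element $T'\in\TL_{n+2,+}$, where $T_{a,b}$ denotes the two-cup diagram on the left of the formula. Write $W = c_1 D - c_2 \jw{n+2}$ with $c_1 = [a+1][b+1]/[n+2]^2$ and $c_2 = (-1)^b [a+1]/([n+2][n+3])$, where $D$ is the element built from $\jw{n+1}$, $\jw{a+1}$, $\jw{b+1}$ on the right-hand side.

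The first step is to restrict which $T'$ give nontrivial pairings. Jones-Wenzl projectors are uncappable, so $\langle \jw{n+2}, T'\rangle$ vanishes whenever $T'$ has any cup, leaving only $\langle \jw{n+2}, \mathbf{1}\rangle = \tr(\jw{n+2}) = [n+3]$. For $D$, an analogous uncappability analysis against $\jw{n+1}$, $\jw{a+1}$, $\jw{b+1}$ (together with the internal cap already connecting $\jw{a+1}$ to $\jw{b+1}$) shows that $\langle D, T'\rangle$ is nonzero only when $T'$ is either the identity or $T_{a,b}$ itself. The dual basis relations therefore reduce to the two equations $\langle W, T_{a,b}\rangle = 1$ and $\langle W, \mathbf{1}\rangle = 0$.

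The second step is to evaluate the two nontrivial pairings of $D$. Both $\langle D, T_{a,b}\rangle$ and $\langle D, \mathbf{1}\rangle$ are closed Temperley-Lieb diagrams built from three Jones-Wenzls, and I plan to reduce them by iteratively applying the partial-trace identity $\tr_{\text{right}}(\jw{k}) = \tfrac{[k+1]}{[k]}\jw{k-1}$. The expected values are $\langle D, T_{a,b}\rangle = [n+2]^2/([a+1][b+1])$ and $\langle D, \mathbf{1}\rangle = (-1)^b [n+2]/[b+1]$, where the sign $(-1)^b$ arises from sliding the diagonal through-string across $\jw{b+1}$ via the second coefficient formula in Fact \ref{fact:2cup}. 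Substituting these into the two relations from the first step yields the stated $c_1$ and $c_2$ uniquely, completing the verification.

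The main obstacle will be justifying the vanishing in the first step for diagrams $T'$ close to $T_{a,b}$, such as nested cup configurations or diagrams with cups shifted by one position, where a naive uncappability argument might not suffice because of the internal cap between $\jw{a+1}$ and $\jw{b+1}$: such shifts could in principle route through this internal cap without immediately capping off a Jones-Wenzl box. I expect to handle these by tracing any such shift up through $D$ and showing it ultimately forces a cap onto $\jw{n+1}$ at the top, which kills the pairing. Lemma \ref{lem:QuantumIdentity}, giving $[a+2][b+1] - [a+1][b] = [n+2]$, may also be needed to consolidate quantum-integer expressions arising during the partial-trace computation. Finally, the statement about the vertically reflected diagram follows immediately: vertical reflection is a trace-preserving antihomomorphism of $\TL_{n+2,+}$, so taking duals commutes with it, and the coefficients are unchanged because $\jw{n+2}$ is self-adjoint.
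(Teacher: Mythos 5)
Your proposal is correct and follows essentially the same route as the paper: both arguments rest on the observation that $D$ pairs nontrivially only with $1_{n+2}$ and the two-cup diagram, followed by computing exactly the two inner products $\langle D, 1_{n+2}\rangle = (-1)^b[n+2]/[b+1]$ and $\langle D, T_{a,b}\rangle = [n+2]^2/([a+1][b+1])$ via the one-cup Jones--Wenzl coefficients and Lemma \ref{lem:QuantumIdentity}. The only cosmetic difference is that you verify the stated coefficients while the paper solves for them, which amounts to the same linear algebra.
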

\begin{proof}
Note that the middle diagram $D$ in the above equation has non-zero inner product only with $1_{n+2}$ and
$
\begin{tikzpicture}[baseline = -.1cm]
	\draw[thick] (-.6,-.4)--(-.6,.4)--(.6,.4)--(.6,-.4)--(-.6,-.4);
	\draw (-.4,-.4)--(-.4,.4);
	\draw (-.2,.4)--(.4,-.4);
	\draw (-.2,-.4) arc (180:0:.2cm);
	\draw (0,.4) arc (-180:0:.2cm);
	\node at (-.25,0) {{\scriptsize{$a$}}};
	\node at (.35,0) {{\scriptsize{$b$}}};
\end{tikzpicture}
$. We already know that $\widehat{1_{n+2}}=\jw{n+2}/[n+3]$, so we have
$$
\begin{tikzpicture}[baseline = -.1cm]
	\draw[thick] (-.6,-.4)--(-.6,.4)--(.6,.4)--(.6,-.4)--(-.6,-.4);
	\draw (-.4,-.4)--(-.4,.4);
	\draw (-.2,.4)--(.4,-.4);
	\draw (-.2,-.4) arc (180:0:.2cm);
	\draw (0,.4) arc (-180:0:.2cm);
	\node at (-.25,0) {{\scriptsize{$a$}}};
	\node at (.35,0) {{\scriptsize{$b$}}};
\end{tikzpicture}
^{\widehat{\hs\hs}}
=
\frac{1}{\left\langle D, 
\begin{tikzpicture}[baseline = -.1cm]
	\draw[thick] (-.6,-.4)--(-.6,.4)--(.6,.4)--(.6,-.4)--(-.6,-.4);
	\draw (-.4,-.4)--(-.4,.4);
	\draw (-.2,.4)--(.4,-.4);
	\draw (-.2,-.4) arc (180:0:.2cm);
	\draw (0,.4) arc (-180:0:.2cm);
	\node at (-.25,0) {{\scriptsize{$a$}}};
	\node at (.35,0) {{\scriptsize{$b$}}};
\end{tikzpicture}\,
\right\rangle} 
\left(D
-
\langle D, 1_{n+2}\rangle
\frac{\jw{n+2}}{[n+3]}
\right).
$$
A routine calculation computes the necessary inner products. First,
$$
\left\langle D, 1_{n+2}\right\rangle
=
\begin{tikzpicture}[baseline = .5cm,yscale=-1]
	\draw (-.2,0)--(-.2,-1.2);
	\draw (-.4,-.2) arc (0:-180:.2cm) -- (-.8,.6) arc (180:0:.2cm);
	\draw (-1,-1.2)--(-1,.6) arc (180:0:.4cm);
	\draw (.2,0)--(.2,-1.8) arc (0:-180:.3cm);
	\draw (-.6,-1.8) .. controls ++(270:.8cm) and ++(270:1cm) .. (.8,-1.4)--(.8,.6) arc (0:180:.3cm);
	\node at (-.35,-.7) {{\scriptsize{$b$}}};
	\node at (-.6,-.5) {{\scriptsize{$a$}}};
	\node at (.65,-.6) {{\scriptsize{$b$}}};
	\filldraw[unshaded,thick] (-.6,.6)--(.6,.6)--(.6,-.2)--(-.6,-.2)--(-.6,.6);
	\node at (0,.2) {$\jw{n+1}$};
	\filldraw[unshaded,thick] (-1.2,-1)--(0,-1)--(0,-1.8)--(-1.2,-1.8)--(-1.2,-1);
	\node at (-.6,-1.4) {$\jw{b+1}$};
\end{tikzpicture}
=
\frac{(-1)^{b}[n+2]}{[b+1]},
$$
since the only diagram in the top $\jw{b+1}$ which contributes to the closed diagram is 
$
\begin{tikzpicture}[baseline = -.1cm]
	\draw[thick] (-.6,-.4)--(-.6,.4)--(.7,.4)--(.7,-.4)--(-.6,-.4);
	\draw (-.4,.4)--(.2,-.4);
	\draw (-.4,-.4) arc (180:0:.2cm);
	\draw (-.2,.4) arc (-180:0:.2cm);
	\node at (.35,0) {{\scriptsize{$b-1$}}};
\end{tikzpicture}
$ (the coefficient of this diagram in $\jw{b+1}$ is given in Fact \ref{fact:2cup}).
Next, we calculate
\begin{align}
\left\langle D, 
\begin{tikzpicture}[baseline = -.1cm]
	\draw[thick] (-.6,-.4)--(-.6,.4)--(.6,.4)--(.6,-.4)--(-.6,-.4);
	\draw (-.4,-.4)--(-.4,.4);
	\draw (-.2,.4)--(.4,-.4);
	\draw (-.2,-.4) arc (180:0:.2cm);
	\draw (0,.4) arc (-180:0:.2cm);
	\node at (-.25,0) {{\scriptsize{$a$}}};
	\node at (.35,0) {{\scriptsize{$b$}}};
\end{tikzpicture}\,
\right\rangle
&=
\begin{tikzpicture}[baseline = .6cm,yscale=-1]
	\draw (-.4,.6) arc (0:180:.25cm) -- (-.9,-1.8) arc (-180:0:.25cm);
	\draw (1.8,-1.8) arc (-180:0:.2cm) -- (2.2,-1.4) .. controls ++(90:1cm) and ++(0:1cm) .. (.6,1) arc (90:180:.4cm); 
	\draw (-.3,0)--(-.3,-1);
	\draw (0,-.2) arc (180:270:.35cm) -- (1.2,-.55) arc (90:0:.45cm);
	\node at (-.4,-.6) {{\scriptsize{$a$}}};
	\node at (-1,-.6) {{\scriptsize{$a$}}};
	\node at (1,-.4) {{\scriptsize{$b$}}};
	\node at (1.9,-.4) {{\scriptsize{$b$}}};
	\filldraw[unshaded,thick] (-.6,.6)--(.6,.6)--(.6,-.2)--(-.6,-.2)--(-.6,.6);
	\node at (0,.2) {$\jw{n+1}$};
	\filldraw[unshaded,thick] (-.6,-1)--(.6,-1)--(.6,-1.8)--(-.6,-1.8)--(-.6,-1);
	\filldraw[unshaded,thick] (.8,-1)--(2,-1)--(2,-1.8)--(.8,-1.8)--(.8,-1);
	\node at (0,-1.4) {$\jw{a+1}$};
	\node at (1.4,-1.4) {$\jw{b+1}$};
	\draw (.4,-.2) arc (180:360:.2cm) -- (.8,.6) arc (0:180:.2cm);
	\draw (.4,-1) arc (180:0:.3cm);
	\draw (.4,-1.8) arc (-180:0:.3cm);
\end{tikzpicture}
\label{eqn:HardTL1}\\
&=
[n+2]\left(\frac{[a+2]}{[a+1]}-\frac{[b]}{[b+1]}\right)
\label{eqn:HardTL2}\\
&=
\frac{[n+2]^2}{[a+1][b+1]},
\label{eqn:HardTL3}
\end{align}
where Equation \eqref{eqn:HardTL2} follows since the only two terms in the top $\jw{a+1}$ which contribute to the closed diagram are $1_{a+1}$ and 
$
\begin{tikzpicture}[baseline = -.1cm]
	\draw[thick] (-.6,-.4)--(-.6,.4)--(.6,.4)--(.6,-.4)--(-.6,-.4);
	\draw (0,.4) arc (-180:0:.2cm);
	\draw (-.4,-.4)--(-.4,.4);
	\draw (0,-.4) arc (180:0:.2cm);
	\node at (0,0) {{\scriptsize{$a-1$}}};
\end{tikzpicture}
$\,. Equation \eqref{eqn:HardTL3} now follows by Lemma \ref{lem:QuantumIdentity}.

(Note that the value of the diagram that appears in Equation \eqref{eqn:HardTL1} must be symmetric in $a$ and $b$, but the quantity in Equation \eqref{eqn:HardTL2} does not appear symmetric in $a$ and $b$. This gave a hint that some quantum number identity should hold, which motivated Lemma \ref{lem:QuantumIdentity}.)

The last claim is now immediate.
\end{proof}

\comment{
\begin{lem}\label{lem:HardTLDiagrams}
We have the following inner products.
\be
\item
$
\left\langle
\begin{tikzpicture}[baseline = -.6cm]
	\draw (0,1)--(0,.4);
	\draw (-.3,.4)--(-.3,-1);
	\draw (0,-1)--(0,-2.2);
	\draw (1.4,-1.4)--(1.4,-2.2);
	\draw (0,-.2) arc (180:270:.35cm) -- (1.2,-.55) arc (90:0:.45cm);
	\node at (-.4,.8) {{\scriptsize{$n+1$}}};
	\node at (-.4,-2) {{\scriptsize{$a+1$}}};
	\node at (1,-2) {{\scriptsize{$b+1$}}};
	\node at (-.5,-.6) {{\scriptsize{$a$}}};
	\node at (1.7,-.6) {{\scriptsize{$b$}}};
	\filldraw[unshaded,thick] (-.6,.6)--(.6,.6)--(.6,-.2)--(-.6,-.2)--(-.6,.6);
	\node at (0,.2) {$\jw{n+1}$};
	\filldraw[unshaded,thick] (-.6,-1)--(.6,-1)--(.6,-1.8)--(-.6,-1.8)--(-.6,-1);
	\filldraw[unshaded,thick] (.8,-1)--(2,-1)--(2,-1.8)--(.8,-1.8)--(.8,-1);
	\node at (0,-1.4) {$\jw{a+1}$};
	\node at (1.4,-1.4) {$\jw{b+1}$};
	\draw (.4,-.2) arc (180:360:.2cm) -- (.8,1);
	\draw (.4,-1) arc (180:0:.3cm);
\end{tikzpicture}
\,
,
\begin{tikzpicture}[baseline = -.6cm]
	\draw (0,1)--(0,.4);
	\draw (-.3,.4)--(-.3,-1);
	\draw (0,-1)--(0,-2.2);
	\draw (1.4,-1.4)--(1.4,-2.2);
	\draw (0,-.2) arc (180:270:.35cm) -- (1.2,-.55) arc (90:0:.45cm);
	\node at (-.4,.8) {{\scriptsize{$n+1$}}};
	\node at (-.4,-2) {{\scriptsize{$c+1$}}};
	\node at (1,-2) {{\scriptsize{$d+1$}}};
	\node at (-.5,-.6) {{\scriptsize{$c$}}};
	\node at (1.7,-.6) {{\scriptsize{$d$}}};
	\filldraw[unshaded,thick] (-.6,.6)--(.6,.6)--(.6,-.2)--(-.6,-.2)--(-.6,.6);
	\node at (0,.2) {$\jw{n+1}$};
	\filldraw[unshaded,thick] (-.6,-1)--(.6,-1)--(.6,-1.8)--(-.6,-1.8)--(-.6,-1);
	\filldraw[unshaded,thick] (.8,-1)--(2,-1)--(2,-1.8)--(.8,-1.8)--(.8,-1);
	\node at (0,-1.4) {$\jw{c+1}$};
	\node at (1.4,-1.4) {$\jw{d+1}$};
	\draw (.4,-.2) arc (180:360:.2cm) -- (.8,1);
	\draw (.4,-1) arc (180:0:.3cm);
\end{tikzpicture}
\right\rangle
=
\begin{cases}
(-1)^{c-a}\frac{[n+2]^2}{[b+1][c+1]}&\text{if }a<c \\ 
\frac{[n+2]^2}{[a+1][b+1]}&\text{if }a=c\\
(-1)^{a-c}\frac{[n+2]^2}{[a+1][d+1]}&\text{if }a>c \\ 
\end{cases}
$
\item
$
\D
\left\langle
\begin{tikzpicture}[baseline = -.6cm]
	\draw (0,1)--(0,.4);
	\draw (-.3,.4)--(-.3,-1);
	\draw (0,-1)--(0,-2.2);
	\draw (1.4,-1.4)--(1.4,-2.2);
	\draw (0,-.2) arc (180:270:.35cm) -- (1.2,-.55) arc (90:0:.45cm);
	\node at (-.4,.8) {{\scriptsize{$n+1$}}};
	\node at (-.4,-2) {{\scriptsize{$a+1$}}};
	\node at (1,-2) {{\scriptsize{$b+1$}}};
	\node at (-.5,-.6) {{\scriptsize{$a$}}};
	\node at (1.7,-.6) {{\scriptsize{$b$}}};
	\filldraw[unshaded,thick] (-.6,.6)--(.6,.6)--(.6,-.2)--(-.6,-.2)--(-.6,.6);
	\node at (0,.2) {$\jw{n+1}$};
	\filldraw[unshaded,thick] (-.6,-1)--(.6,-1)--(.6,-1.8)--(-.6,-1.8)--(-.6,-1);
	\filldraw[unshaded,thick] (.8,-1)--(2,-1)--(2,-1.8)--(.8,-1.8)--(.8,-1);
	\node at (0,-1.4) {$\jw{a+1}$};
	\node at (1.4,-1.4) {$\jw{b+1}$};
	\draw (.4,-.2) arc (180:360:.2cm) -- (.8,1);
	\draw (.4,-1) arc (180:0:.3cm);
\end{tikzpicture}
\,
,
\begin{tikzpicture}[baseline = .6cm,yscale=-1]
	\draw (0,1)--(0,.4);
	\draw (-.3,.4)--(-.3,-1);
	\draw (0,-1)--(0,-2.2);
	\draw (1.4,-1.4)--(1.4,-2.2);
	\draw (0,-.2) arc (180:270:.35cm) -- (1.2,-.55) arc (90:0:.45cm);
	\node at (-.4,.8) {{\scriptsize{$n+1$}}};
	\node at (-.4,-2) {{\scriptsize{$c+1$}}};
	\node at (1,-2) {{\scriptsize{$d+1$}}};
	\node at (-.5,-.6) {{\scriptsize{$c$}}};
	\node at (1.7,-.6) {{\scriptsize{$d$}}};
	\filldraw[unshaded,thick] (-.6,.6)--(.6,.6)--(.6,-.2)--(-.6,-.2)--(-.6,.6);
	\node at (0,.2) {$\jw{n+1}$};
	\filldraw[unshaded,thick] (-.6,-1)--(.6,-1)--(.6,-1.8)--(-.6,-1.8)--(-.6,-1);
	\filldraw[unshaded,thick] (.8,-1)--(2,-1)--(2,-1.8)--(.8,-1.8)--(.8,-1);
	\node at (0,-1.4) {$\jw{c+1}$};
	\node at (1.4,-1.4) {$\jw{d+1}$};
	\draw (.4,-.2) arc (180:360:.2cm) -- (.8,1);
	\draw (.4,-1) arc (180:0:.3cm);
\end{tikzpicture}
\right\rangle
=
\frac{(-1)^{b+d}[n+2]^2}{[b+1][d+1][n+1]}
$
\ee
\end{lem}
\begin{proof}
\mbox{}
\be
\item
First, if $a=c$, then the inner product is the same as that in Equation \eqref{eqn:HardTL3}. (We get the reflection of the closed diagram in Equation \eqref{eqn:HardTL1}, but since the inner product in question is real, it doesn't matter.)

Now if $a<c$, we also have $d<b$, and the inner product in question is equal to the following closed diagram:
$$
\begin{tikzpicture}[baseline = .6cm,yscale=-1]
	\draw (-.4,.6) arc (0:180:.25cm) -- (-.9,-3.2) arc (-180:0:.25cm);
	\draw (1.4,-1.8) arc (-180:0:.2cm) -- (1.8,-1.4) .. controls ++(90:1cm) and ++(0:1cm) .. (.6,1) arc (90:180:.4cm); 
	\draw (-.3,0)--(-.3,-2.4);
	\draw (.4,-3.2) .. controls ++(270:.4cm) and ++(270:.4cm) .. (1.3,-3.2)-- (1.3,-1.4);
	\draw (0,-.2) arc (180:270:.35cm) -- (.8,-.55) arc (90:0:.45cm);
	\node at (-.4,-1.4) {{\scriptsize{$a$}}};
	\node at (-1,-1.4) {{\scriptsize{$c$}}};
	\node at (1.2,-.5) {{\scriptsize{$b$}}};
	\node at (1.9,-.4) {{\scriptsize{$d$}}};
	\node at (.85,-2.2) {{\scriptsize{$c-a$}}};
	\node at (1.4,-2.8) {{\scriptsize{$1$}}};
	\filldraw[unshaded,thick] (-.6,.6)--(.6,.6)--(.6,-.2)--(-.6,-.2)--(-.6,.6);
	\node at (0,.2) {$\jw{n+1}$};
	\filldraw[unshaded,thick] (-.6,-2.4)--(.6,-2.4)--(.6,-3.2)--(-.6,-3.2)--(-.6,-2.4);
	\filldraw[unshaded,thick] (.4,-1)--(1.6,-1)--(1.6,-1.8)--(.4,-1.8)--(.4,-1);
	\node at (0,-2.8) {$\jw{c+1}$};
	\node at (1,-1.4) {$\jw{b+1}$};
	\draw (.4,-.2) arc (180:360:.2cm) -- (.8,.6) arc (0:180:.2cm);
	\draw (.6,-1) arc (0:180:.25cm) -- (.1,-2.4); 
	\draw (.5,-1.8)--(.5,-2.4);
\end{tikzpicture}.
$$
If $c>a$, there are exactly two diagrams in the top $\jw{c+1}$ which contribute to the inner product:
$$
\begin{tikzpicture}[baseline = -.1cm]
	\draw[thick] (-.6,-.4)--(-.6,.4)--(.6,.4)--(.6,-.4)--(-.6,-.4);
	\draw (-.4,-.4)--(-.4,.4);
	\draw[very thick] (-.2,.4)--(.4,-.4);
	\draw (-.2,-.4) arc (180:0:.2cm);
	\draw (0,.4) arc (-180:0:.2cm);
	\node at (-.25,0) {{\scriptsize{$a$}}};
\end{tikzpicture}
\text{ and }
\begin{tikzpicture}[baseline = -.1cm]
	\draw[thick] (-.6,-.4)--(-.6,.4)--(1,.4)--(1,-.4)--(-.6,-.4);
	\draw (-.4,-.4)--(-.4,.4);
	\draw[very thick] (.2,.4)--(.8,-.4);
	\draw (.2,-.4) arc (180:0:.2cm);
	\draw (.4,.4) arc (-180:0:.2cm);
	\node at (0,0) {{\scriptsize{$a-1$}}};
\end{tikzpicture}\,.
$$
Here, the thick line represents $c-a-1$ and $c-a$ through strings respectively. The first diagram contributes a factor of 
$$
(-1)^{c-a}\frac{[a+1][n+2][b+2]}{[b+1][c+1]},
$$
while the second diagram contributes a factor of
$$
(-1)^{c-a+1}\frac{[a][n+2]}{[c+1]}.
$$
Combined, by Lemma \ref{lem:QuantumIdentity} we get
\begin{align*}
(-1)^{c-a}\frac{[n+2]}{[c+1]}\left(\frac{[a+1][b+2]}{[b+1]}-[a]\right)
&=(-1)^{c-a}\frac{[n+2]}{[c+1]}\left(\frac{[a+1][b+2]-[a][b+1]}{[b+1]}\right)\\
&=(-1)^{c-a}\frac{[n+2]^2}{[b+1][c+1]}.
\end{align*}
Now if $c<a$, then since these inner products are real, just take the conjugate, which switches $a$ with $c$ and $b$ with $d$.
\item
The inner product in question is equal to the following closed diagram
$$
\begin{tikzpicture}[baseline = -.5cm]
	\draw (-.4,1.8)--(-.4,.2);
	\draw (0,-.2)--(-.6,-1);
	\draw (-.4,-1.4)--(-.4,-3);
	\draw (-.2,2.2) arc (180:0:.2cm) -- (.2,.2);
	\draw (-.2,-1) arc (180:0:.2cm) -- (.2,-3);
	\draw (-.4,-.2) arc (0:-180:.2cm) -- (-.8,1.8);
	\draw (-.4,-3.4) arc (0:-180:.2cm) -- (-.8,-1.4);
	\draw (-.6,2.2) .. controls ++(90:1cm) and ++(90:1cm) .. (.8,1.4) -- (.8,-3) .. controls ++(270:1cm) and ++(270:.4cm) .. (0,-3.4);
	\node at (-.6,-.7) {{\scriptsize{$d$}}};
	\node at (.9,-.6) {{\scriptsize{$b$}}};
	\node at (-.3,1) {{\scriptsize{$d$}}};
	\node at (-.3,-2.2) {{\scriptsize{$b$}}};
	\filldraw[unshaded,thick] (-1.2,1.4)--(0,1.4)--(0,2.2)--(-1.2,2.2)--(-1.2,1.4);
	\node at (-.6,1.8) {$\jw{n+1}$};
	\filldraw[unshaded,thick] (-.6,.6)--(.6,.6)--(.6,-.2)--(-.6,-.2)--(-.6,.6);
	\node at (0,.2) {$\jw{d+1}$};
	\filldraw[unshaded,thick] (-1.2,-1)--(0,-1)--(0,-1.8)--(-1.2,-1.8)--(-1.2,-1);
	\node at (-.6,-1.4) {$\jw{n+1}$};
	\filldraw[unshaded,thick] (-.6,-2.6)--(.6,-2.6)--(.6,-3.4)--(-.6,-3.4)--(-.6,-2.6);
	\node at (0,-3) {$\jw{b+1}$};
\end{tikzpicture}\,.
$$
The only diagrams in the $\jw{b+1}$ and the $\jw{d+1}$ which contribute are 
$
\begin{tikzpicture}[baseline = -.1cm]
	\draw[thick] (-.6,-.4)--(-.6,.4)--(.7,.4)--(.7,-.4)--(-.6,-.4);
	\draw (-.4,.4)--(.2,-.4);
	\draw (-.4,-.4) arc (180:0:.2cm);
	\draw (-.2,.4) arc (-180:0:.2cm);
	\node at (.35,0) {{\scriptsize{$b-1$}}};
\end{tikzpicture}
$ 
and
$
\begin{tikzpicture}[baseline = -.1cm]
	\draw[thick] (-.6,-.4)--(-.6,.4)--(.7,.4)--(.7,-.4)--(-.6,-.4);
	\draw (-.4,.4)--(.2,-.4);
	\draw (-.4,-.4) arc (180:0:.2cm);
	\draw (-.2,.4) arc (-180:0:.2cm);
	\node at (.35,0) {{\scriptsize{$d-1$}}};
\end{tikzpicture}
$
respectively, and the entire closed diagram is equal to
$$
\frac{(-1)^{b+d}[n+2]^2}{[b+1][d+1][n+1]}.
$$
\ee
\end{proof}
}

\begin{lem}\label{lem:IPofDualTL}
Suppose $a,b\geq 0$ with $a+b=n$. Let $D_a,D_a^*,\widehat{1_{n+2}}$ be the Temperley-Lieb dual basis elements as follows: 
$$
D_a
=
\begin{tikzpicture}[baseline = -.1cm]
	\draw[thick] (-.6,-.4)--(-.6,.4)--(.6,.4)--(.6,-.4)--(-.6,-.4);
	\draw (-.4,-.4)--(-.4,.4);
	\draw (-.2,.4)--(.4,-.4);
	\draw (-.2,-.4) arc (180:0:.2cm);
	\draw (0,.4) arc (-180:0:.2cm);
	\node at (-.25,0) {{\scriptsize{$a$}}};
	\node at (.35,0) {{\scriptsize{$b$}}};
\end{tikzpicture}
^{\widehat{\hs\hs}}
,\,
D_a^*
=
\begin{tikzpicture}[baseline = -.1cm,yscale=-1]
	\draw[thick] (-.6,-.4)--(-.6,.4)--(.6,.4)--(.6,-.4)--(-.6,-.4);
	\draw (-.4,-.4)--(-.4,.4);
	\draw (-.2,.4)--(.4,-.4);
	\draw (-.2,-.4) arc (180:0:.2cm);
	\draw (0,.4) arc (-180:0:.2cm);
	\node at (-.25,0) {{\scriptsize{$a$}}};
	\node at (.35,0) {{\scriptsize{$b$}}};
\end{tikzpicture}
^{\widehat{\hs\hs}}
,\,\text{ and }
\widehat{1_{n+2}}=
\frac{\jw{n+2}}{[n+3]}\,.
$$
\be
\item
$\D \langle C_i[P\circ Q],\widehat{1_{n+2}} \rangle = 
\begin{cases}
\Tr(PQ)[n+2]^{-1} & \text{if }i=n+2\\
0 & \text{else.}
\end{cases}
$
\item
$\D \langle C_i[P\circ Q],D_a \rangle = 
\begin{cases}
\Tr(PQ)[n+2]^{-1} & \text{if }i-1=a\\
0 & \text{if }i=n+2\\
\frac{(-1)^b[a+1]}{[n+1][n+2]}\Tr(PQ) & \text{if }i=n+3\\
0 & \text{else.}
\end{cases}
$
\item
$\D 
\langle C_i[P\circ Q],D_a^* \rangle 
= 
\langle D_a , C_{2n+4-i}[Q\circ P] \rangle
= 
\langle C_{2n+4-i}[P\circ Q] , D_a\rangle.
$
\ee
\end{lem}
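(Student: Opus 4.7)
The plan is to exploit Lemma \ref{lem:TLDualBasis}, which decomposes $D_a$ as a linear combination of the explicit ``middle diagram'' $D$ and $\jw{n+2}$, together with the uncappability of Jones-Wenzl projections, to reduce each pairing to a diagrammatic case analysis.

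For (1), write $\widehat{1_{n+2}}=\jw{n+2}/[n+3]$ so that
$$\langle C_i[P\circ Q],\widehat{1_{n+2}}\rangle = \frac{1}{[n+3]}\Tr\bigl((C_i[P\circ Q])^*\jw{n+2}\bigr).$$
In the $(n+2,n+2)$-box presentation of $\cP_{n+2,+}$, the only ``cap'' in $C_i[P\circ Q]$ is the one from $C_i$ at boundary positions $(i,i+1)$. For $i\leq n+1$ the cap lies entirely on the top side, and for $i\geq n+3$ entirely on the bottom; either way, uncappability of $\jw{n+2}$ (from the appropriate side via trace cyclicity) forces the pairing to vanish. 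Only $i=n+2$ produces a cap straddling the seam between the top and bottom halves of the boundary, which instead yields the partial-trace identity $\jw{n+2}\mapsto\tfrac{[n+3]}{[n+2]}\jw{n+1}$. The remaining $\jw{n+1}$ is absorbed by the uncappable through-bundles of $P$ and $Q$, producing $\tfrac{[n+3]}{[n+2]}\Tr(PQ)$; division by $[n+3]$ gives the claimed $\Tr(PQ)/[n+2]$.

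For (2), distribute over the decomposition
$$D_a=\frac{[a+1][b+1]}{[n+2]^2}\,D - \frac{(-1)^b[a+1]}{[n+2][n+3]}\,\jw{n+2},$$
handling the $\jw{n+2}$ piece by (1). The piece $\langle C_i[P\circ Q],D\rangle$ is then computed by locating the caps, cups, and internal bridges of $D$: uncappability of its three Jones-Wenzls $\jw{n+1}, \jw{a+1}, \jw{b+1}$ forces this pairing to vanish except when the cap of $C_i$ lines up with one of these features. This picks out the three resonant positions $i=a+1$ (cap of $C_i$ matches the cup of $D$ between $\jw{a+1}$ and $\jw{b+1}$), $i=n+2$ (cap matches the internal bridge), and $i=n+3$ (cap matches the through-string on the right of $D$). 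In each resonant case the closed diagram collapses via Jones-Wenzl partial-trace identities to a specific scalar multiple of $\Tr(PQ)$; adding the $\jw{n+2}$ contribution yields the stated formulas, with the $D$- and $\jw{n+2}$-contributions at $i=n+2$ cancelling exactly to give $0$.

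For (3), first establish the planar identity $(C_i[P\circ Q])^*=C_{2n+4-i}[Q\circ P]$ by vertical reflection of the train: both $P$ and $Q$ are self-adjoint, and the reflection swaps their positions while reversing the cyclic boundary ordering, sending the cap at boundary position $i$ to position $2n+4-i$. Combined with the general Hilbert-space identity $\langle x,y^*\rangle=\langle y, x^*\rangle$ (immediate from trace cyclicity and $\Tr(X^*)=\overline{\Tr(X)}$), this yields the first equality of (3). For the second, observe that the formulas derived in (2) depend on $P,Q$ only through the symmetric quantity $\Tr(PQ)$, so $\langle C_j[P\circ Q],D_a\rangle=\langle C_j[Q\circ P],D_a\rangle$ for every $j$; applying this with $j=2n+4-i$ gives the second equality. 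The main obstacle is the case-by-case analysis in (2): the middle diagram $D$ wires three Jones-Wenzls together through a cup, a cap, and an internal bridge in a nonsymmetric way, and one must carefully track which $i$ produce nonzero pairings and compute the resulting scalars via the partial-trace relations.
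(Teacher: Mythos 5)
Your proposal is correct and follows essentially the same route as the paper: part (1) via uncappability of $\jw{n+2}$ together with a partial trace at $i=n+2$, part (2) by distributing over the decomposition of $D_a$ from Lemma \ref{lem:TLDualBasis} into $D$ and $\jw{n+2}$ and isolating the resonant positions $i=a+1,\,n+2,\,n+3$ (including the exact cancellation at $i=n+2$), and part (3) via the reflection identity $(C_i[P\circ Q])^*=C_{2n+4-i}[Q\circ P]$ combined with the reality and $P\leftrightarrow Q$ symmetry of the values from (2). The only place you are thinner than the paper is the explicit evaluation of the resonant scalars in (2), which there requires expanding $\jw{a+1}$ (resp.\ $\jw{b+1}$) into its two contributing diagrams via Fact \ref{fact:2cup} and invoking the quantum-number identity of Lemma \ref{lem:QuantumIdentity}; your plan correctly identifies where and how those computations enter.
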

\begin{proof}
\mbox{}
\be
\item
First, we have
$$\D
\langle C_i[P\circ Q],\widehat{1_{n+2}} \rangle
=\frac{1}{[n+3]}\langle C_i[P\circ Q],\jw{n+2} \rangle,
$$
which is clearly zero if $i\neq n+2$. When $i=n+2$, it is easy to see we get $\D\frac{\Tr(PQ)}{[n+2]}$.
\item
First, suppose $1\leq i\leq n+1$. Then the inner product in question is given by
$$
\frac{[a+1][b+1]}{[n+2]^2} \langle C_i[P\circ Q] , D\rangle,
$$
where $D$ is the diagram in Lemma \ref{lem:TLDualBasis}. If $i-1\neq a$, then the resulting closed diagram is clearly zero. If $i-1=a$, then we have
$$
\frac{[a+1][b+1]}{[n+2]^2} \langle C_i[P\circ Q],D \rangle
= 
\begin{tikzpicture}[baseline = .6cm]
	\draw (-.2,1.8) .. controls ++(90:1cm) and ++(90:1cm) .. (3.4,1.8);
	\draw (1.6,1.8) .. controls ++(90:.6cm) and ++(90:.6cm) .. (3,1.8);
	\draw (.4,1.8) arc (180:0:.3cm);	
	\draw (.4,1) arc (-180:0:.3cm);	
	\draw (2.6,1.8) arc (0:180:.2cm) -- (2.2,1) .. controls ++(270:.6cm) and ++(30:1cm) .. (.7,0);	
	\draw (.7,0)--(3,0);
	\draw (.7,0)-- (-.2,1);
	\draw (.7,0)-- (1.6,1);
	\draw (3,0)--(3,1);
	\filldraw[unshaded,thick] (2.4,1)--(3.6,1)--(3.6,1.8)--(2.4,1.8)--(2.4,1);
	\filldraw[unshaded,thick] (-.6,1)--(.6,1)--(.6,1.8)--(-.6,1.8)--(-.6,1);
	\filldraw[unshaded,thick] (.8,1)--(2,1)--(2,1.8)--(.8,1.8)--(.8,1);
	\filldraw[unshaded,thick] (.7,0) circle (.4cm);	
	\filldraw[unshaded,thick] (3,0) circle (.4cm);	
	\node at (0,1.4) {$\jw{a+1}$};
	\node at (1.4,1.4) {$\jw{b+1}$};
	\node at (3,1.4) {$\jw{n+1}$};
	\node at (.7,0) {$P$};
	\node at (3,0) {$Q$};
	\node at (1.85,-.15) {{\scriptsize{$n-1$}}};
	\node at (1.5,2.7) {{\scriptsize{$a$}}};
	\node at (1.6,2.2) {{\scriptsize{$b$}}};
	\node at (-.1,.7) {{\scriptsize{$a$}}};
	\node at (1.5,.7) {{\scriptsize{$b$}}};
	\node at (3.4,.7) {{\scriptsize{$n+1$}}};
	\node at (.7,-.55) {$\star$};
	\node at (3,-.55) {$\star$};
\end{tikzpicture}
$$
and the only terms in the $\jw{a+1}$ which contribute to the value are $1_{a+1}$ and 
$
\begin{tikzpicture}[baseline = -.1cm]
	\draw[thick] (-.6,-.4)--(-.6,.4)--(.6,.4)--(.6,-.4)--(-.6,-.4);
	\draw (0,.4) arc (-180:0:.2cm);
	\draw (-.4,-.4)--(-.4,.4);
	\draw (0,-.4) arc (180:0:.2cm);
	\node at (0,0) {{\scriptsize{$a-1$}}};
\end{tikzpicture}
$\,.
This yields, using Lemma \ref{lem:QuantumIdentity} and the formulas in Fact \ref{fact:2cup}, 
$$
\frac{[a+1][b+1]}{[n+2]^2}\left(\frac{[b+2]}{[b+1]}-\frac{[a]}{[a+1]}\right)\Tr(PQ) = \frac{\Tr(PQ)}{[n+2]}.
$$

Second, if $i=n+2$, then both diagrams in the formula for $D_a$ from Lemma \ref{lem:TLDualBasis} contribute to the inner product, and we have
\begin{align*}
\langle C_{n+2}&[P\circ Q] , D_a\rangle\\
& = \frac{[a+1][b+1]}{[n+2]^2} \langle C_{n+2}[P\circ Q] , D\rangle - \frac{(-1)^b[a+1]}{[n+2][n+3]} \langle C_{n+2}[P\circ Q] , \jw{n+2}\rangle\\
& =  \frac{[a+1][b+1]}{[n+2]^2} \langle C_{n+2}[P\circ Q] , D\rangle - \frac{(-1)^b[a+1]}{[n+2]^2}\Tr(PQ)
\end{align*}
by part (1) of this lemma. Now by drawing diagrams, we get 
$$
\langle C_{n+2}[P\circ Q] , D\rangle
=
\begin{tikzpicture}[baseline = .6cm]
	\draw (-.2,1.8) .. controls ++(90:1cm) and ++(90:1cm) .. (3.4,1.8);
	\draw (1.6,1.8) .. controls ++(90:.6cm) and ++(90:.6cm) .. (3,1.8);
	\draw (.4,1.8) arc (180:0:.3cm);	
	\draw (2.6,1.8) arc (0:180:.2cm) -- (2.2,1) arc (0:-180:.2cm);	
	\draw (.7,0)--(3,0);
	\draw (.7,0)-- (-.2,1);
	\draw (.7,0)-- (1.2,1);
	\draw (3,0)--(3,1);
	\filldraw[unshaded,thick] (2.4,1)--(3.6,1)--(3.6,1.8)--(2.4,1.8)--(2.4,1);
	\filldraw[unshaded,thick] (-.6,1)--(.6,1)--(.6,1.8)--(-.6,1.8)--(-.6,1);
	\filldraw[unshaded,thick] (.8,1)--(2,1)--(2,1.8)--(.8,1.8)--(.8,1);
	\filldraw[unshaded,thick] (.7,0) circle (.4cm);	
	\filldraw[unshaded,thick] (3,0) circle (.4cm);	
	\node at (0,1.4) {$\jw{a+1}$};
	\node at (1.4,1.4) {$\jw{b+1}$};
	\node at (3,1.4) {$\jw{n+1}$};
	\node at (.7,0) {$P$};
	\node at (3,0) {$Q$};
	\node at (1.85,-.15) {{\scriptsize{$n-1$}}};
	\node at (1.5,2.7) {{\scriptsize{$a$}}};
	\node at (1.6,2.2) {{\scriptsize{$b$}}};
	\node at (-.3,.6) {{\scriptsize{$a+1$}}};
	\node at (1.2,.6) {{\scriptsize{$b$}}};
	\node at (3.4,.7) {{\scriptsize{$n+1$}}};
	\node at (.7,-.55) {$\star$};
	\node at (3,-.55) {$\star$};
\end{tikzpicture}.
$$
The only diagram in $\jw{b+1}$ which contributes is
$
\begin{tikzpicture}[baseline = -.1cm,xscale=-1]
	\draw[thick] (-.6,-.4)--(-.6,.4)--(.7,.4)--(.7,-.4)--(-.6,-.4);
	\draw (-.4,.4)--(.2,-.4);
	\draw (-.4,-.4) arc (180:0:.2cm);
	\draw (-.2,.4) arc (-180:0:.2cm);
	\node at (.35,0) {{\scriptsize{$b-1$}}};
\end{tikzpicture}
$\,, which yields
$$
\frac{(-1)^b}{[b+1]}\Tr(PQ).
$$
The inner product in question is thus zero.

Third, if $i=n+3$, then as in the case $1\leq i\leq n+1$, we have
$$
\frac{[a+1][b+1]}{[n+2]^2} \langle C_{n+3}[P\circ Q] , D\rangle
=
\begin{tikzpicture}[baseline = .6cm]
	\draw (-.2,1.8) .. controls ++(90:1cm) and ++(90:1cm) .. (3.4,1.8);
	\draw (1.6,1.8) .. controls ++(90:.6cm) and ++(90:.6cm) .. (3,1.8);
	\draw (.4,1.8) arc (180:0:.3cm);	
	\draw (2.6,1.8) arc (0:180:.2cm) -- (2.2,1) arc (-180:0:.2cm);	
	\draw (.7,0)--(3,0);
	\draw (.7,0)-- (-.2,1);
	\draw (.7,0)-- (1.2,1);
	\draw (3,0)--(3,1);
	\draw (3,0)--(1.6,1);
	\filldraw[unshaded,thick] (2.4,1)--(3.6,1)--(3.6,1.8)--(2.4,1.8)--(2.4,1);
	\filldraw[unshaded,thick] (-.6,1)--(.6,1)--(.6,1.8)--(-.6,1.8)--(-.6,1);
	\filldraw[unshaded,thick] (.8,1)--(2,1)--(2,1.8)--(.8,1.8)--(.8,1);
	\filldraw[unshaded,thick] (.7,0) circle (.4cm);	
	\filldraw[unshaded,thick] (3,0) circle (.4cm);	
	\node at (0,1.4) {$\jw{a+1}$};
	\node at (1.4,1.4) {$\jw{b+1}$};
	\node at (3,1.4) {$\jw{n+1}$};
	\node at (.7,0) {$P$};
	\node at (3,0) {$Q$};
	\node at (1.85,-.15) {{\scriptsize{$n-1$}}};
	\node at (1.5,2.7) {{\scriptsize{$a$}}};
	\node at (1.6,2.2) {{\scriptsize{$b$}}};
	\node at (-.3,.6) {{\scriptsize{$a+1$}}};
	\node at (1.2,.6) {{\scriptsize{$b$}}};
	\node at (3.2,.7) {{\scriptsize{$n$}}};
	\node at (.7,-.55) {$\star$};
	\node at (3,-.55) {$\star$};
\end{tikzpicture}.
$$
Again, the only diagram in $\jw{b+1}$ which contributes is
$
\begin{tikzpicture}[baseline = -.1cm,xscale=-1]
	\draw[thick] (-.6,-.4)--(-.6,.4)--(.7,.4)--(.7,-.4)--(-.6,-.4);
	\draw (-.4,.4)--(.2,-.4);
	\draw (-.4,-.4) arc (180:0:.2cm);
	\draw (-.2,.4) arc (-180:0:.2cm);
	\node at (.35,0) {{\scriptsize{$b-1$}}};
\end{tikzpicture}
$\,, which yields
$$
\frac{[a+1][b+1]}{[n+1][n+2]}\left(\frac{(-1)^b}{[b+1]}\right)=\frac{(-1)^b[a+1]}{[n+1][n+2]}.
$$

Finally, if $i>n+3$, the result is once again zero, since both diagrams in the formula for $D_a$ from Lemma \ref{lem:TLDualBasis} have zero inner product with $C_i[P\circ Q]$.

\item
The first equality follows since both sides give the same closed diagram. Note that the quantity in the middle is equal to its conjugate by part (2) of this lemma. The second equality now follows since $\Tr(QP)=\Tr(PQ)$.
\ee
\end{proof}

\subsection{Projections to Temperley-Lieb}\label{sec:ProjectToTL}

The first lemma below is similar to \cite[Proposition 4.5.2]{MR2972458}.
\begin{lem}\label{lem:WenzlIdentity}
\mbox{}
\be
\item
If $k=0,\dots, 2n$, then
$\D P_{\TL_{k,+}}\left(
\begin{tikzpicture}[baseline = -.7cm]
	\draw (0,.8)--(0,-2);
	\node at (-.4,.6) {{\scriptsize{$k$}}};
	\node at (-.4,-1.8) {{\scriptsize{$k$}}};
	\node at (-.5,-.6) {{\scriptsize{$2n-k$}}};
	\draw[thick, unshaded] (0,0) circle (.4);
	\node at (0,0) {$Q$};
	\node at (-.55,0) {$\star$};
	\draw[thick, unshaded] (0,-1.2) circle (.4);
	\node at (0,-1.2) {$P$};
	\node at (-.55,-1.2) {$\star$};
\end{tikzpicture}
\right)=\frac{\Tr(PQ)}{[k+1]}\jw{k}$. 
\item
If $k=0,\dots,n-1$, then 
$\D
\begin{tikzpicture}[baseline = -.7cm]
	\draw (0,.8)--(0,-2);
	\node at (-.4,.6) {{\scriptsize{$k$}}};
	\node at (-.4,-1.8) {{\scriptsize{$k$}}};
	\node at (-.5,-.6) {{\scriptsize{$2n-k$}}};
	\draw[thick, unshaded] (0,0) circle (.4);
	\node at (0,0) {$Q$};
	\node at (-.55,0) {$\star$};
	\draw[thick, unshaded] (0,-1.2) circle (.4);
	\node at (0,-1.2) {$P$};
	\node at (-.55,-1.2) {$\star$};
\end{tikzpicture}
=\frac{\Tr(PQ)}{[k+1]}\jw{k} 
$.
\ee
\end{lem}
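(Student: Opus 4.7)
For part (1), denote the left-hand side by $D_k$, and first observe that $D_k$ is uncappable: any cap across two adjacent top strands of $D_k$ produces a cap on $Q$, which vanishes by uncappability of $Q\in\fB$, and symmetrically for the bottom. Since the uncappable elements of $\TL_{k,+}$ form the one-dimensional subspace $\spann\{\jw{k}\}$, we have $P_{\TL}(D_k)=c\,\jw{k}$ for some scalar $c$. Pairing with $\jw{k}$ and using $\jw{k}D_k=D_k$ (the cup terms in $\jw{k}=1_k-\cdots$ annihilate $D_k$ by uncappability), one computes $c[k+1]=\langle\jw{k},D_k\rangle=\Tr(\jw{k}D_k)=\Tr(D_k)$. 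Closing up $D_k$ routes the $k$ outer strands around to join the $2n-k$ middle strands, fully contracting $P$ with $Q$, so $\Tr(D_k)=\Tr(PQ)$ and $c=\Tr(PQ)/[k+1]$.

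For part (2), in view of (1) it suffices to show $D_k\in\TL_{k,+}$ when $k\leq n-1$. The plan is to identify $D_k$, via spherical isotopy, with the partial trace $\Tr_{n-k}(PQ)$ of the ordinary product $PQ\in\cP_{n,+}$: the $n-k$ closure strands can be isotoped on the sphere to pass through the region between $P$ and $Q$, raising the middle strand count from $n$ to $2n-k$ while reducing the top and bottom counts each to $k$. Applying Equation~\eqref{eqn:ClosedUnderMultiplication} and taking the partial trace term by term gives
\[
D_k \;=\; \frac{\Tr(PQ)}{[n+1]}\,\Tr_{n-k}(\jw{n}) \;+\; \sum_{R\in\fB}\frac{a_R^{PQ}}{\|R\|^2}\,\Tr_{n-k}(R),
\]
and iterating Wenzl's partial-trace identity $\Tr_1(\jw{m})=\tfrac{[m+1]}{[m]}\jw{m-1}$ yields $\Tr_{n-k}(\jw{n})=\tfrac{[n+1]}{[k+1]}\jw{k}$, so the first term contributes precisely $\tfrac{\Tr(PQ)}{[k+1]}\jw{k}$.

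The hard part is showing $\Tr_{n-k}(R)=0$ for each $R\in\fB$ and $k<n$. Following the train analysis of Section~\ref{sec:JellyfishAlgorithm}, any element of the planar $*$-subalgebra generated by $\fB$, at depth $k<n$, must lie in $\TL_{k,+}$: a reduced $\fB$-train at depth $k<n$ cannot contain any generators (one generator would force self-loops, while two or more would force $>n-1$ connecting strands between some pair, both violating reducedness in view of uncappability). Hence $\Tr_{n-k}(R)\in\TL_{k,+}$ is itself uncappable (inherited from $R$) and so equals a multiple of $\jw{k}$; the coefficient is $\Tr(R)/[k+1]$, which vanishes since minimal low-weight generators are orthogonal to $\jw{n}$ (equivalently, by sphericality, rotation invariance of the trace forces $\Tr(R)=\omega_R\Tr(R)$, giving $\Tr(R)=0$ for $\omega_R\neq 1$, and the case $\omega_R=1$ is absorbed into the standing orthogonality hypothesis on the minimal generators).
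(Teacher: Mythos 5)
Your part (1) is exactly the paper's argument: uncappability of the diagram forces its $\TL_{k,+}$-projection to be a multiple of $\jw{k}$, and taking the trace fixes the constant. Part (2) also follows the paper's route in substance: identify the diagram with the $(n-k)$-fold partial trace of $PQ$, expand $PQ$ via Equation \eqref{eqn:ClosedUnderMultiplication} (Assumption \ref{assume:SpanAlgebras}), kill the generator terms, and apply Wenzl's partial trace formula to $\jw{n}$.

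However, your justification of the key vanishing $\Tr_{n-k}(R)=0$ is the one place the argument goes wrong. You appeal to the claim that the planar subalgebra generated by $\fB$ contains only Temperley--Lieb below depth $n$, citing the reduced-train analysis. That claim is precisely the $(n-1)$-supertransitivity of $\cQ_\bullet$ (Lemma \ref{lem:Supertransitive}), which is only established \emph{after} the jellyfish relations have been derived --- and the present lemma is an ingredient in deriving them, so the appeal is circular. Moreover, absent jellyfish relations one cannot assume that every element of the generated subalgebra at depth $k$ is a linear combination of trains, so the train-counting step has no footing; and in the ambient graph planar algebra (which is not evaluable) uncappability at depth $k<n$ does not by itself force an element into $\spann\{\jw{k}\}$. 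The parenthetical fallback is also off: orthogonality of $\fB$ is a condition on pairs of distinct generators and says nothing about $\Tr(R)$ when $\omega_R=1$. Fortunately none of this machinery is needed: a single partial trace joins two adjacent boundary points of $R$, i.e.\ it \emph{is} a cap, so $\Tr_1(R)=0$ immediately from uncappability of $R$, hence $\Tr_{n-k}(R)=0$ for every $n-k\geq 1$ (and in particular $\Tr(R)=0$). With that one-line replacement your proof is complete and agrees with the paper's.
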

\begin{proof}
For (1), notice that adding a cap to the top or bottom of 
$$
\begin{tikzpicture}[baseline = -.7cm]
	\draw (0,.8)--(0,-2);
	\node at (-.4,.6) {{\scriptsize{$k$}}};
	\node at (-.4,-1.8) {{\scriptsize{$k$}}};
	\node at (-.5,-.6) {{\scriptsize{$2n-k$}}};
	\draw[thick, unshaded] (0,0) circle (.4);
	\node at (0,0) {$Q$};
	\node at (-.55,0) {$\star$};
	\draw[thick, unshaded] (0,-1.2) circle (.4);
	\node at (0,-1.2) {$P$};
	\node at (-.55,-1.2) {$\star$};
\end{tikzpicture}
$$
gives zero, so its projection to $\TL_{k,+}$ must be a constant times $\jw{k}$. Taking traces gives the constant.

For (2), notice that the diagram is already in Temperley-Lieb since $\fB\cup\{\jw{n}\}$ spans an algebra.
\end{proof}

\begin{prop}\label{prop:ProjectToTL}
\mbox{}
\be
\item
$\D P_{\TL_{n+2,+}}\left(P\traincirc{n-2} Q\right) = \frac{\Tr(PQ)}{[n+3]}\jw{n+2}$
\item
$\D P_{\TL_{n+2,+}}\left(P\traincirc{n-1} Q \traincirc{n-1} R\right) 
=
a_R^{PQ} \left(
\frac{[n+1]}{[n+2]^2}
\begin{tikzpicture}[baseline = -.7cm]
	\clip (-.9,-2)--(-.9,1)--(.9,1)--(.9,-2);
	\draw (0,1)--(0,-2);
	\node at (-.4,.8) {{\scriptsize{$n+1$}}};
	\node at (-.4,-1.8) {{\scriptsize{$n+1$}}};
	\node at (-.2,-.5) {{\scriptsize{$n$}}};
	\filldraw[unshaded,thick] (-.6,.6)--(.6,.6)--(.6,-.2)--(-.6,-.2)--(-.6,.6);
	\node at (0,.2) {$\jw{n+1}$};
	\filldraw[unshaded,thick] (-.6,-.8)--(.6,-.8)--(.6,-1.6)--(-.6,-1.6)--(-.6,-.8);
	\node at (0,-1.2) {$\jw{n+1}$};
	\draw (.4,-.2) arc (180:360:.2cm) -- (.8,1);
	\draw (.4,-.8) arc (180:0:.2cm) -- (.8,-2);
\end{tikzpicture}
-
\frac{[n+1]}{[n+2][n+3]}  
\begin{tikzpicture}[baseline = -.1cm]
	\draw (0,.8)--(0,-.8);
	\node at (-.4,.6) {{\scriptsize{$n+2$}}};
	\node at (-.4,-.6) {{\scriptsize{$n+2$}}};
	\filldraw[unshaded,thick] (-.6,.4)--(.6,.4)--(.6,-.4)--(-.6,-.4)--(-.6,.4);
	\node at (0,0) {$\jw{n+2}$};
\end{tikzpicture}
\right)$.
\ee
\end{prop}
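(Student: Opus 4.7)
The plan is to compute each orthogonal projection by identifying which standard Temperley--Lieb basis diagrams pair nontrivially with the given train, writing
$$P_{\TL_{n+2,+}}(X) = \sum_B \langle B, X\rangle \widehat{B},$$
where $B$ ranges over the basis of $\TL_{n+2,+}$ indexed by non-crossing matchings of the $2n+4$ boundary points and $\widehat{B}$ is the corresponding dual basis. The strategy is to show that only one matching survives the uncappability constraints on the generators, and then compute that single pairing and look up its dual using Lemma \ref{lem:TLDualBasis}.

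For part (1), the boundary of $P\traincirc{n-2}Q$ cyclically splits into an arc of $n+2$ $P$-externals and an arc of $n+2$ $Q$-externals. Any non-crossing matching containing a pair entirely within $P$'s arc must, by nesting, contain an innermost pair adjacent on $P$'s boundary, giving zero by uncappability; likewise for $Q$. So every surviving matching is a non-crossing bijection between $P$-externals and $Q$-externals, and the unique such bijection is the reverse pairing, corresponding to $1_{n+2}$. Since $\widehat{1_{n+2}} = \jw{n+2}/[n+3]$ and $\langle 1_{n+2}, P\traincirc{n-2}Q\rangle = \Tr(P\traincirc{n-2}Q) = \Tr(PQ)$ (the closure identifies the $2n$ strands of $P$ with those of $Q$ in a pattern planar-isotopic to the stacked product), this gives the stated formula.

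For part (2), the same approach applies to $P\traincirc{n-1}Q\traincirc{n-1}R$, whose boundary splits cyclically as $n+1,2,n+1$ strands from $P,Q,R$. Uncappability forbids $P$-$P$, $Q$-$Q$, and $R$-$R$ internal pairs; a case analysis on the images of the two $Q$-externals leaves exactly one valid matching: $P$-ext $n{+}1 \leftrightarrow Q$-ext $n{+}2$, $Q$-ext $n{+}3 \leftrightarrow R$-ext $n{+}4$, and the remaining $n$ $P$-externals paired with the $n$ $R$-externals via the unique non-crossing reverse bijection. This matching is the TL basis diagram in Lemma \ref{lem:TLDualBasis} with $a=n,b=0$; the pairing $\langle B, X\rangle$ evaluates to the closed diagram in which $P$, $Q$, $R$ are each pairwise connected by $n$ strands, which equals $\Tr(PQR) = a_R^{PQ}$. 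Substituting into the formula for $\widehat{B}$ from Lemma \ref{lem:TLDualBasis} at $(a,b)=(n,0)$ produces precisely $\frac{[n+1]}{[n+2]^2}D - \frac{[n+1]}{[n+2][n+3]}\jw{n+2}$.

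The main obstacle is the combinatorial enumeration in part (2): we must rule out matchings where both $Q$-externals target the same generator (failing by count mismatch, which forces a bad internal $P$-$P$ or $R$-$R$ pair) and matchings where $Q$-ext $n{+}2$ pairs with $R$ while $Q$-ext $n{+}3$ pairs with $P$ (failing the non-crossing condition). Part (1) is a simpler version of the same argument, with no middle generator.
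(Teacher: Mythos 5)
Your proposal is correct and follows essentially the same route as the paper: project by pairing against the diagrammatic Temperley--Lieb basis, use uncappability of the generators (via innermost adjacent caps) to show that only $1_{n+2}$ survives in part (1) and only $E_{n+1}$ survives in part (2), evaluate the single surviving pairing as $\Tr(PQ)$ resp.\ $a_R^{PQ}$, and conclude with Lemma \ref{lem:TLDualBasis} at $(a,b)=(n,0)$. The only cosmetic difference is that the paper packages the part (1) argument as a citation of Lemma \ref{lem:WenzlIdentity} and leaves the part (2) case analysis implicit, whereas you spell both out explicitly.
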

\begin{proof}
(1) is immediate from Lemma \ref{lem:WenzlIdentity}. 
For (2), for $T$ a diagrammatic basis element of $\TL_{n+2,+}$, it is clear that
$$
\left\langle T, P_{\TL_{n+2,+}}(P\traincirc{n-1} Q \traincirc{n-1} R)\right\rangle=
\begin{cases}
a_R^{PQ} & \text{if }T=E_{n+1}=
\begin{tikzpicture}[baseline = -.1cm]
	\draw[thick] (-.4,-.4)--(-.4,.4)--(.6,.4)--(.6,-.4)--(-.4,-.4);
	\draw (0,.4) arc (-180:0:.2cm);
	\draw (-.2,-.4)--(-.2,.4);
	\draw (0,-.4) arc (180:0:.2cm);
	\node at (-.05,0) {{\scriptsize{$n$}}};
\end{tikzpicture}
\\
0 & \text{else.}
\end{cases}
$$ 
Hence $P_{\TL_{n+2,+}}\left(P\traincirc{n-1} Q \traincirc{n-1} R\right)=a_R^{PQ} \widehat{E_{n+1}}$, where $\widehat{E_{n+1}}$ is the dual basis element of $E_{n+1}$ in $\TL_{n+2,+}$. The result now follows by Lemma \ref{lem:TLDualBasis} (using $b=0$, $a=n$. In particular $[b]=0$ and $[b+1]=1$).
\end{proof}

\begin{prop}\label{prop:ProjectToDualTL}
$P_{\TL_{n+2,+}}\left(C_i[P\traincirc{n-1} Q]\right)=\Tr(PQ)X$
where $X$ is a linear combination of Temperley-Lieb dual basis elements $D_a,D_a^*,\widehat{1_{n+2}}$ (as in Lemma \ref{lem:IPofDualTL}).  The exact linear combination is given in the table below.
$$
\begin{array}{|c|c|}
\hline
\text{i} & {X}
\\
\hline\hline
1 & [2]D_{0}+D_1
\\
\hline
1< i< n+1 & D_{i-2}+[2]D_{i-1} + D_{i}
\\
\hline
n+1 & D_{n-1}+[2]D_{n} + \widehat{1_{n+2}}
\\
\hline
n+2 & D_n+[2] \widehat{1_{n+2}}
\\
\hline
n+3 &\widehat{1_{n+2}}+[2]D_{n}^* + D_{n-1}^*
\\
\hline
n+3< i< 2n+3 & D_{2n+2-i}^*+[2]D_{2n+3-i}^* + D_{2n+4-i}^*
\\
\hline
2n+3 & [2]D_{0}^*+D_1^*
\\
\hline
\end{array}
$$
\end{prop}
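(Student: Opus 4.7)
The plan is to use the dual-basis expansion
\[
P_{\TL_{n+2,+}}(x) = \sum_\tau \langle \tau, x\rangle\, \widehat{\tau},
\]
where $\tau$ runs over the diagrammatic Temperley-Lieb basis of $\TL_{n+2,+}$ and $\widehat{\tau}$ denotes its dual. Applied to $x = C_i[P\traincirc{n-1}Q]$, each inner product $\langle \tau,x\rangle$ becomes a closed planar diagram built from $P$, $Q$, the cup of $C_i$, and the cups/caps of $\tau$. Since $P, Q \in \fB$ are uncappable, this diagram vanishes whenever some cap of $\tau$ terminates on either generator after closure. Combined with the fact that $C_i[P\traincirc{n-1}Q]$ has exactly one cup along its bottom and that $P, Q$ are internally joined by $n-1$ strands, uncappability restricts the admissible $\tau$ drastically: the only surviving basis elements are $1_{n+2}$ and the two families of diagrams with a single cap-cup pair, which are precisely the elements dual to $\widehat{1_{n+2}}$, $D_a$, and $D_a^*$ respectively.

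First I would enumerate, for each $i \in \{1,\ldots,2n+3\}$, which $T_a$ (the TL diagram with one cap on top at positions $a+1,a+2$ and one cup on bottom at the matching positions) and $T_a^*$ (its vertical flip) pair nontrivially with $C_i[P\traincirc{n-1}Q]$. The cup of $C_i$ at positions $i, i+1$ can absorb a matching cap/cup of $\tau$ in a small number of ways, since all strands from $\tau$ into $P\traincirc{n-1}Q$ must rejoin $P$ to $Q$. A direct count shows the surviving candidates are $T_{i-2}, T_{i-1}, T_i$ in the interior range $1 < i < n+1$, the mirror-symmetric $T_{2n+2-i}^*, T_{2n+3-i}^*, T_{2n+4-i}^*$ for $n+3 < i < 2n+3$, and mixed contributions involving $1_{n+2}$ at the boundary cases $i \in \{1, n+1, n+2, n+3, 2n+3\}$.

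Next I would evaluate each surviving inner product directly from the closed planar diagram. In every case the diagram contains $P$ and $Q$ connected by all $2n$ of their strands, producing a factor of $\Tr(PQ)$; any remaining components are Temperley-Lieb loops contributing $1$ or $[2]$ depending on how the cup of $C_i$ meets the cup/cap of $\tau$. Specifically, the coefficient $[2]$ arises for the \emph{aligned} pairing, where the cup of $C_i$ faces the cap of $\tau$ head-on and produces one extra contractible loop, while the \emph{offset} pairings produce the coefficient $1$.

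The main obstacle will be the bookkeeping at the boundary cases, especially $i = n+1, n+2, n+3$, where the cup of $C_i$ straddles the middle of the diagram and contributions from $T_a$, $T_a^*$, and $1_{n+2}$ all appear simultaneously. These off-diagonal terms produce the entries involving $\widehat{1_{n+2}}$ in the table; the patterns at $i = 1$ and $i = 2n+3$ then follow by reflection symmetry. Throughout, the formulas may be cross-checked against Lemma \ref{lem:IPofDualTL} by computing $\langle D_a, P_{\TL_{n+2,+}}(C_i[P\traincirc{n-1}Q])\rangle$ both from the claimed expansion and directly via that lemma, providing a convenient sanity check.
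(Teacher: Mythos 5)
Your proposal is correct and follows essentially the same route as the paper: the paper's proof is exactly the dual-basis expansion $P_{\TL_{n+2,+}}(x)=\sum_T\langle T,x\rangle\widehat{T}$, the observation that uncappability of $P,Q$ kills all diagrammatic basis elements except $1_{n+2}$ and the single cap--cup diagrams dual to $D_a,D_a^*$, and the direct evaluation of the surviving inner products as $\Tr(PQ)$ times $1$ or $[2]$. Your additional cross-check against Lemma \ref{lem:IPofDualTL} is a reasonable sanity check but not needed.
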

\begin{proof}
The only diagrammatic basis elements $T$ in Temperley-Lieb which pair nontrivially with $C_i[P\traincirc{n-1}Q]$ are those whose dual basis elements $\widehat{T}$ appear in the linear combination. The coefficients are given by $\langle T, C_i[P\traincirc{n-1}Q]\rangle$.
\end{proof}

\subsection{Projections to annular consequences}

\begin{defn}
Let $\gA_{n+2}$ denote the space of second annular consequences of $\fB$ in $\cP_{n+2,+}$.
\end{defn}

The proofs of the following propositions are parallel to the proof of \cite[Proposition 4.4.1]{MR2972458}. The inner products are only non-zero for the given annular consequences, and they are easily worked out by drawing pictures and using Lemma \ref{lem:WenzlIdentity}.

\begin{prop}\label{prop:ProjectTrainsToAC}
\mbox{}
\be
\item
$\D
P_{\gA_{n+2}}\left(P\underset{n-2}{\circ} Q\right)
=
\sum_{R\in\fB}
a^{PQ}_R
\omega_P\omega_Q^{-1}
\widehat{\cup}_{-1,-1}(R)
+
a^{PQ}_R
\sigma_R^n 
\widehat{\cup}_{-1,n+1}(R)
+
b^{PQ}_R
\sigma_P\sigma_Q^{-1}
\widehat{\cup}_{n,0}(R)
$
where the coefficients of the $\widehat{\cup}_{i,j}(R)$ are given by $\langle \cup_{i,j}(R), P\traincirc{n-2} Q\rangle$.

\item
\begin{align*}
P_{\gA_{n+2}}&\left(P\traincirc{n-1} Q\traincirc{n-1} R\right)
=
\\
&\sum_{S\in\fB}
\Delta_{n-1,2}(P,Q,R\mid S)
\widehat{\cup}_{-1,-1}(S)
+
\frac{\sigma_S^{n+1}}{[n]}
\Tr(SP)\Tr(QR)
\widehat{\cup}_{-1,n}(S)\\
&+
\frac{\sigma_S^{n-1}}{[n]}
\Tr(PQ)\Tr(RS)
\widehat{\cup}_{-1,n+2}(S)
+
\sigma_S^n\Tr(PQRS)
\widehat{\cup}_{0,n+1}(S)\\
&+
\Delta_{n-1,1}(P,Q,R\mid S)
\widehat{\cup}_{n-1,0}(S)
+
\sigma_S^{n-1}\Delta_{n,1}(P,Q,R\mid S)
\widehat{\cup}_{n-1,n+3}(S)
\end{align*}
where the coefficients of the $\widehat{\cup}_{i,j}(S)$ are given by $\langle \cup_{i,j}(S),P\traincirc{n-1} Q\traincirc{n-1}R\rangle$. 

Note that in the above formula, the quartic moment and two of the three tetrahedral constants were computed in terms of the moments and chiralities of $\fB$ in Remark \ref{rem:SpanAlgebras} and Example \ref{ex:ReduceTetrahedral}.
\ee
\end{prop}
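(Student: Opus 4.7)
The plan is to apply the dual basis formula
\[
P_{\gA_{n+2}}(X) \;=\; \sum_{S\in\fB}\sum_{i,j}\bigl\langle \cup_{i,j}(S),\,X\bigr\rangle\,\widehat{\cup}_{i,j}(S),
\]
and show that only the listed terms survive. The whole argument is pictorial: to compute $\langle \cup_{i,j}(S),X\rangle$ we glue $\cup_{i,j}(S)^\ast$ on top of $X$ and attempt to evaluate the resulting closed diagram. Since each element of $\fB$ is uncappable, any configuration in which a cap of $\cup_{i,j}(S)^\ast$ lands on a strand emerging from $P$, $Q$, or $R$ immediately yields zero. This uncappability constraint is extremely restrictive, and pruning down to the surviving configurations is the bulk of the work.

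For part (1), the train $P\circ_{n-2}Q$ has exactly two "extra" strands at the top of each generator. Thus the two caps coming from $\cup_{i,j}(S)^\ast$ must each precisely absorb one pair of extra strands, without touching $P$ or $Q$. Inspecting the second annular basis case by case, one finds that this forces $(i,j)\in\{(-1,-1),(-1,n+1),(n,0)\}$. For each surviving configuration, the diagram reduces to a composite of two generator-pair evaluations joined through a copy of $S$; applying Lemma \ref{lem:WenzlIdentity}(2) to the two pieces and tracking the star shifts (each shift of $\star$ across a strand costs a factor of $\sigma^{\pm 1}$) produces the asserted powers of $\omega_P,\omega_Q,\sigma_P,\sigma_Q,\sigma_R$. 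The dual-versus-self-dual form of $S$ inside each pairing explains why $a^{PQ}_R$ appears for the first two terms and $b^{PQ}_R$ for the third (after rotation, the shading around $S$ is reversed).

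For part (2), the train $P\circ_{n-1}Q\circ_{n-1}R$ has one extra strand per generator at the top. The two caps of $\cup_{i,j}(S)^\ast$ must again avoid all three generators while simultaneously being supported on the $n+3$ available strands between consecutive generators. The analysis of which $(i,j)$ give a nonzero contribution proceeds in the same way as in \cite[Proposition 4.4.1]{MR2972458}. Each surviving inner product is a closed diagram containing the four generators $P,Q,R,S$. The configurations $(-1,-1)$, $(n-1,0)$, $(n-1,n+3)$ produce tetrahedral structure constants $\Delta_{n-1,2}$, $\Delta_{n-1,1}$, $\Delta_{n,1}$ respectively (after sphericality and Fourier manipulation, which accounts for the $\sigma_S^{n-1}$ factor in the last term). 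The configurations $(-1,n)$, $(-1,n+2)$, $(0,n+1)$ separate $S$ away from a $PQ$ or $QR$ pair by a bigon, and applying Lemma \ref{lem:WenzlIdentity}(2) collapses those bigons into the moment products $\Tr(SP)\Tr(QR)$, $\Tr(PQ)\Tr(RS)$, and the quartic $\Tr(PQRS)$, each divided by $[n]$ when a Wenzl projector absorbs into the central bigon. The accompanying chiralities come from the required star shifts when orienting each piece into standard form.

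The main obstacle is not any one computation, but a careful bookkeeping of two interlocking pieces of data for every surviving $(i,j)$: (a) the precise shading and $\star$-placement that determines whether the inserted generator is $S$ or $\check{S}$ and what $\sigma$-powers appear; and (b) the verification that no other $(i,j)$ pair admits a nonzero configuration. For (a) it is convenient to fix once and for all the conventions in Section \ref{sec:GPAEmbedding} for shifting $\star$, so that each diagrammatic rotation contributes a definite factor of $\sigma^{\pm 1}$ or $\omega^{\pm 1}$. For (b) one notes that the two caps contributed by $\cup_{i,j}(S)^\ast$ must simultaneously use up the free strands on the top of the train and keep $P,Q,R$ uncapped; a finite (and small) check exhausts all residual configurations, matching exactly the list stated in the proposition.
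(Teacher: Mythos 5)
Your proposal is correct and follows essentially the same route as the paper: the paper's proof consists precisely of the observation that the argument is parallel to Jones' \cite[Proposition 4.4.1]{MR2972458}, that uncappability kills all but the listed $\cup_{i,j}(S)$, and that the surviving inner products are evaluated by drawing the closed diagrams and applying Lemma \ref{lem:WenzlIdentity}. Your additional bookkeeping of the $\star$-shifts and the identification of which configurations yield $a^{PQ}_R$ versus $b^{PQ}_R$, or tetrahedral constants versus moment products, is exactly the pictorial case check the paper leaves implicit.
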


\begin{prop}\label{prop:ProjectACofTrainsToAC}
$P_{\gA_{n+2}}\left(C_i[P\traincirc{n-1}Q]\right)=\sum_{R\in \fB} X_R$ where $X_R$ is given below:
$$
\begin{array}{|c|c|}
\hline
i & X_R\in \gA_{n+2}
\\\hline\hline
1 & 
\beta \widehat{\cup}_{0,2n+2}
+
\alpha \widehat{\cup}_{n-1,n+1}
+
[2]\beta \widehat{\cup}_{-1,2n+2}
+
[2]\alpha \widehat{\cup}_{n,n+1}
+
\sigma_R^{-1}a^{PQ}_R \widehat{\cup}_{n-1,0}
\\\hline
2 &
\sigma_R\beta\widehat{\cup}_{1,2n+1}
+
\alpha \widehat{\cup}_{n-2,n+1}
+
[2]\beta\widehat{\cup}_{0,2n+2}
\\
&
+
[2]\alpha \widehat{\cup}_{n-1,n+1}
+
\beta\widehat{\cup}_{-1,2n+2}
+
\alpha \widehat{\cup}_{n,n+1}
+
\sigma_R^{-1}\beta \widehat{\cup}_{-1,-1}
\\\hline
2< i < n+1 &
\sigma_R^{i-1}\beta \widehat{\cup}_{i-1,2n-i+3}
+
\alpha  \widehat{\cup}_{n-i,n+1}
+
[2]\sigma_R^{i-2}\beta \widehat{\cup}_{i-2,2n-i+4}
\\
&
+
[2]\alpha\widehat{\cup}_{n-i+1,n+1}
+
\sigma_R^{i-3} \beta \widehat{\cup}_{i-3,2n-i+5}
+
\alpha \widehat{\cup}_{n-i+2,n+1}
\\\hline
n+1 &
\beta \widehat{\cup}_{n,0}
+
\alpha \widehat{\cup}_{-1,n+1}
+
[2]\sigma_R^{n-1}\beta \widehat{\cup}_{n-1,n+3}
\\
&
+
[2]\alpha \widehat{\cup}_{0,n+1}
+
\sigma_R^{n-2}\beta \widehat{\cup}_{n-2,n+4}
+
\alpha \widehat{\cup}_{1,n+1}
+
\sigma_R^{n+1}a^{PQ}_R \widehat{\cup}_{-1,n}
\\\hline
n+2 &
\beta \widehat{\cup}_{n-1,0}
+
\alpha \widehat{\cup}_{0,n+1}
+
[2]\beta \widehat{\cup}_{n,0}
+
[2]\alpha \widehat{\cup}_{-1,n+1}
+
\sigma_R^{n-1}\beta \widehat{\cup}_{n-1,n+3}
\\\hline
n+3 &
\beta \widehat{\cup}_{n-2,0}
+
\sigma_R^{n+1}a^{PQ}_R \widehat{\cup}_{1,n}
+
[2]\beta \widehat{\cup}_{n-1,0}
\\
&
+
[2]\alpha \widehat{\cup}_{0,n+1}
+
\beta \widehat{\cup}_{n,0}
+
\alpha \widehat{\cup}_{-1,n+1}
+
\sigma_R^{n-1}a^{PQ}_R \widehat{\cup}_{-1,n+2}
\\\hline
n+3<i<2n+2 &
\beta \widehat{\cup}_{2n+1-i,0}
+
\sigma_R^{i-2}a^{PQ}_R\widehat{\cup}_{i-n-2,2n+3-i}
+
[2]\beta \widehat{\cup}_{2n+2-i,0}
\\
&
+
[2]\sigma_R^{i-3}a^{PQ}_R\widehat{\cup}_{i-n-3,2n+4-i}
+
\beta \widehat{\cup}_{2n+3-i,0}
+
\sigma_R^{i-4}a^{PQ}_R\widehat{\cup}_{i-n-4,2n+5-i}
\\\hline
2n+2 &
\beta \widehat{\cup}_{-1,0}
+
a^{PQ}_R \widehat{\cup}_{n,1}
+
[2]\beta \widehat{\cup}_{0,0}
\\
&
+
[2]\sigma_R^{-1}a^{PQ}_R \widehat{\cup}_{n-1,2}
+
\beta \widehat{\cup}_{1,0}
+
\sigma_R^{-2}a^{PQ}_R \widehat{\cup}_{n-2,3}
+
\sigma_R\beta \widehat{\cup}_{-1,-1}
\\\hline
2n+3 &
\alpha \widehat{\cup}_{n-1,n+3}
+
[2]\beta \widehat{\cup}_{-1,0}
+
[2]a^{PQ}_R \widehat{\cup}_{n,1}
+
\beta \widehat{\cup}_{0,0}
+
\sigma_R^{-1}a^{PQ}_R \widehat{\cup}_{n-1,2}
\\
\hline
\end{array}
$$
where $\alpha=\sigma_R^na^{PQ}_R$, $\beta=\sigma_Q^{-1}\sigma_P b^{PQ}_R$, and $\widehat{\cup}_{i,j}=\widehat{\cup}_{i,j}(R)$.
\end{prop}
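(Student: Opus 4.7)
The proof follows the standard formula for projection onto a finite-dimensional subspace via a basis and its dual basis: if $\{v_k\}$ is a basis of $\gA_{n+2}$ with dual basis $\{\widehat{v}_k\}$, then for $x\in\cP_{n+2,+}$,
$$
P_{\gA_{n+2}}(x) = \sum_k \langle v_k, x\rangle\, \widehat{v}_k.
$$
Taking $x = C_i[P\traincirc{n-1}Q]$ and letting $v_k$ range over the second annular basis elements $\cup_{i',j'}(R)$ of Section~\ref{sec:SecondAC}, the task reduces to computing
$\langle \cup_{i',j'}(R),\, C_i[P\traincirc{n-1}Q]\rangle$
for each $R\in\fB$ and each admissible pair $(i',j')$. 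This is directly analogous to the proof of \cite[Proposition 4.4.1]{MR2972458}, as indicated in the remarks preceding the proposition.

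The plan is to carry out a case analysis on $i\in\{1,\dots,2n+3\}$. For each such $i$, the diagram $C_i[P\traincirc{n-1}Q]$ consists of two generators $P,Q$ joined by $n-1$ strands, with a single Temperley--Lieb cup in position $i$ along the bottom. Pairing this with $\cup_{i',j'}(R)^*$ produces a closed diagram containing $P$, $Q$, $R$ together with three cups (one from $C_i$ and two from $\cup_{i',j'}(R)$). Since $P,Q,R$ are uncappable low-weight vectors, only a small number of geometric configurations avoid capping one of the generators; enumerating them identifies which $(i',j')$ contribute nontrivially. In each such configuration, the surviving diagram has $P$ and $Q$ meeting $R$ through some pattern of strands, and the inner product reduces to a straightforward planar evaluation.

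Each non-zero inner product is then computed by a direct application of Lemma~\ref{lem:WenzlIdentity}: whenever two of $P,Q,R$ are joined through $2n-k$ strands while the remaining strands connect through the positions imposed by the boundary, the configuration collapses with coefficient $\Tr(PQ)/[n+1]$ or $a^{PQ}_R/\|R\|^2$, depending on whether the third generator is absorbed into the Temperley--Lieb piece or contributes a structure constant. This produces exactly the two kinds of contributions appearing in the table: the $\beta$-terms, in which the $PQ$-trace collapses and the dual chirality identity $\cF(\check R)=\sigma_R R$ introduces the prefactor $\sigma_Q^{-1}\sigma_P$ on $b^{PQ}_R$, and the $\alpha$-terms, in which $R$ attaches directly to the $PQ$ product and yields $\sigma_R^n a^{PQ}_R$.

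The main obstacle is bookkeeping. One must carefully track (i) the placement of the $\star$ on each $\cup_{i',j'}(R)$ relative to the cup position of $C_i$, which determines both the index $j'$ and the corresponding power of $\sigma_R$ via the chirality relations recalled in Section~\ref{sec:GPAEmbedding}; (ii) the alignment between the $C_i$ cup and the nearby cup of $\cup_{i',j'}(R)$, which determines whether the contributing term lands in the $\cup_{k,2n-k+\text{const}}$ family (when both cups sit on the same side of $R$) or the $\cup_{n-k,n+1}$ family (when they straddle $R$); and (iii) the boundary cases $i\in\{1,2,n+1,n+2,n+3,2n+2,2n+3\}$, where the combinatorics degenerate, some configurations coincide with nested cups, and additional terms such as $\sigma_R^{-1}a^{PQ}_R\widehat{\cup}_{n-1,0}$ or $\sigma_R^{n+1}a^{PQ}_R\widehat{\cup}_{-1,n}$ appear. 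Organizing the case analysis according to these boundaries, with the non-zero pairing patterns governed by the same lists as in Proposition~\ref{prop:AnnularInnerProducts}, yields the entries in the table exactly as stated.
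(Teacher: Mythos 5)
Your proposal takes essentially the same route as the paper: the paper's entire proof is the remark that the argument is parallel to Jones' Proposition 4.4.1, with the coefficient of $\widehat{\cup}_{i',j'}(R)$ given by the inner product $\langle \cup_{i',j'}(R), C_i[P\traincirc{n-1}Q]\rangle$, which is nonzero only for the listed terms and is worked out by drawing pictures and using Lemma \ref{lem:WenzlIdentity}. Your dual-basis projection formula and case analysis on cup configurations is exactly this argument, spelled out in more detail.
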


\begin{rem}\label{rem:CheckInnerProducts}
We check the formulas given in Propositions \ref{prop:ProjectTrainsToAC} and \ref{prop:ProjectACofTrainsToAC} by taking inner products directly in the graph planar algebra. 
See Subsection \ref{sec:Checking} for more details.
Note that $3^{\Z/3}$ (Haagerup) has $a^{AA}_A=0\neq b^{AA}_A$ (see Appendix \ref{moments:333Haagerup}), and $3^{\Z/2\times \Z/2}$ has $a^{AA}_A\neq 0=b^{AA}_A$ (see Appendix \ref{moments:3333}) which helps us isolate particular constants above. 

However, the best evidence that these formulas are correct is the fact that we can actually compute the two-strand jellyfish relations for our subfactor planar algebras!
\end{rem}

\subsection{Inner products amongst trains and their projections}

\begin{prop}\label{prop:InnerProduct}
\mbox{}
\be
\item
$\D\left\langle P\traincirc{n-2} Q,R\traincirc{n-2} S\right\rangle =\D \frac{\Tr(PR)\Tr(SQ)}{[n-1]}$,
\item
$\D\left\langle P\traincirc{n-1} Q\traincirc{n-1} R,P'\traincirc{n-1} Q'\traincirc{n-1} R'\right\rangle =\D \frac{\Tr(PP')\Tr(QQ')\Tr(RR')}{[n]^2}$,
\item
$\D\left\langle P\traincirc{n-1} Q\traincirc{n-1} R,S\traincirc{n-2} T\right\rangle = 0$,
\ee
\end{prop}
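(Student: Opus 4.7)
The plan is to realize each inner product as a closed diagram on the sphere and then apply Lemma \ref{lem:WenzlIdentity}(2) to simplify the sub-regions pairing generators from opposite trains. In every case, the gluing underlying the inner product places each such pair inside a sub-region in which the two generators are joined by $2n-k$ internal strands and have $k$ external strands each emerging consecutively on opposite sides of the sub-region (with $k\leq n-1$); Lemma \ref{lem:WenzlIdentity}(2) then collapses the sub-region to $\frac{\Tr(\cdot)}{[k+1]}\jw{k}$. The trickiest bookkeeping will be checking, after tracing through the gluing, that the external strands of each generator in the paired sub-region really form a consecutive arc on the sub-region's boundary, so that the standard-form hypothesis of Lemma \ref{lem:WenzlIdentity}(2) is met.

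For (1), the closure produces $P$-$Q$ and $R$-$S$ cables of $n-2$ strands (internal to each train) and $P$-$R$ and $Q$-$S$ cables of $n+2$ strands (from the gluing). I would cut the sphere along the two $(n-2)$-cables into a left half containing $(P,R)$ and a right half containing $(Q,S)$, each an element of $\cP_{n-2,\pm}$ with $n-2$ external strands on each side. Lemma \ref{lem:WenzlIdentity}(2) with $k=n-2$ collapses these halves to $\frac{\Tr(PR)}{[n-1]}\jw{n-2}$ and $\frac{\Tr(QS)}{[n-1]}\jw{n-2}$; reassembling through the two $(n-2)$-cables yields $\Tr(\jw{n-2})=[n-1]$, giving the inner product $\frac{\Tr(PR)\Tr(SQ)}{[n-1]}$.

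For (2), the closure pairs $(P,P')$ and $(R,R')$ by $n+1$ strands each, and $(Q,Q')$ by just $2$ strands. Applying Lemma \ref{lem:WenzlIdentity}(2) with $k=n-1$ to the $(P,P')$ and $(R,R')$ sub-regions collapses them to $\frac{\Tr(PP')}{[n]}\jw{n-1}$ and $\frac{\Tr(RR')}{[n]}\jw{n-1}$. The residual diagram places $Q$ and $Q'$ across the block $\jw{n-1} \otimes 1_2 \otimes \jw{n-1}$; since $Q,Q'$ are uncappable, every summand of each $\jw{n-1}$ other than its identity Temperley-Lieb diagram carries a cap at $Q$ or $Q'$ and vanishes, so the block collapses to $2n$ parallel strands. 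The remaining closed diagram then evaluates to $\Tr(QQ')$, giving $\frac{\Tr(PP')\Tr(QQ')\Tr(RR')}{[n]^2}$.

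For (3), the closure pairs $(P,S)$ and $(R,T)$ by $n+1$ strands each, while $Q$'s two external strands run one each to $S$ and to $T$. Applying Lemma \ref{lem:WenzlIdentity}(2) with $k=n-1$ to the $(P,S)$ and $(R,T)$ sub-regions replaces them by $\frac{\Tr(PS)}{[n]}\jw{n-1}$ and $\frac{\Tr(RT)}{[n]}\jw{n-1}$. What remains is $Q$ paired against a tangle $X$ built entirely from the two $\jw{n-1}$ boxes, the $(n-2)$-bridge between their bottoms, and the two single strands running from them to $Q$; since no element of $\fB$ survives in $X$, it lies in $\TL_{n,+}$. Writing $X = c\,\jw{n} + \sum_i d_i T_i$ in the Temperley-Lieb basis, the $\jw{n}$-term contributes $c\,\Tr(Q\jw{n}) = c\,\Tr(Q) = 0$ since $Q$ is orthogonal to $\jw{n}$, and each $T_i\neq\jw{n}$ contributes $0$ because its cup or cap annihilates $Q$ by uncappability. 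Hence the inner product is zero.
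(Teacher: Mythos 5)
Your proof is correct and follows essentially the same route as the paper: realize each inner product as a closed diagram, collapse the generator pairs joined by $2n-k$ strands via Lemma \ref{lem:WenzlIdentity}(2), and finish with uncappability. You supply more detail than the paper does for part (2) (which it omits as ``similar'') and for the vanishing argument in part (3), but the underlying argument is the same.
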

\begin{proof}
For (1), the left hand side equals
$$
\begin{tikzpicture}[baseline = -.7cm]
	\draw (0,0)--(1.2,0) -- (1.2,-1.2)--(0,-1.2)--(0,0);
	\node at (-.4,-.6) {{\scriptsize{$n+2$}}};
	\node at (1.6,-.6) {{\scriptsize{$n+2$}}};
	\draw[thick, unshaded] (0,0) circle (.4);
	\node at (0,0) {$R$};
	\node at (-.55,0) {$\star$};
	\draw[thick, unshaded] (0,-1.2) circle (.4);
	\node at (0,-1.2) {$P$};
	\node at (-.55,-1.2) {$\star$};
	\draw[thick, unshaded] (1.2,0) circle (.4);
	\node at (1.2,0) {$S$};
	\node at (1.75,0) {$\star$};
	\draw[thick, unshaded] (1.2,-1.2) circle (.4);
	\node at (1.2,-1.2) {$Q$};
	\node at (1.75,-1.2) {$\star$};
\end{tikzpicture}.
$$
The result now follows by (2) of Lemma \ref{lem:WenzlIdentity}. 
We omit the proof of (2), which is similar to the proof of (1). 
For (3), again using (2) of Lemma \ref{lem:WenzlIdentity}, we see that the left hand side is equal to
$$
\begin{tikzpicture}[baseline = .5cm, yscale=-1]
	\draw (0,0)--(3.2,0);
	\draw (2.4,-.8)--(.8,-.8);
	\draw (0,0)--(.8,-.8)--(1.6,0)--(2.4,-.8)--(3.2,0);
	\node at (.8,.15) {{\scriptsize{$n-1$}}};
	\node at (2.4,.15) {{\scriptsize{$n-1$}}};
	\node at (0,-.6) {{\scriptsize{$n+1$}}};
	\node at (3.2,-.6) {{\scriptsize{$n+1$}}};
	\node at (1.6,-1) {{\scriptsize{$n-2$}}};
	\draw[thick, unshaded] (0,0) circle (.4);
	\node at (0,0) {$P$};
	\node at (0,.55) {$\star$};
	\draw[thick, unshaded] (1.6,0) circle (.4);
	\node at (1.6,0) {$Q$};
	\node at (1.6,.55) {$\star$};
	\draw[thick, unshaded] (3.2,0) circle (.4);
	\node at (3.2,0) {$R$};
	\node at (3.2,.55) {$\star$};
	\draw[thick, unshaded] (.8,-.8) circle (.4);
	\node at (.8,-.8) {$S$};
	\node at (.8,-1.35) {$\star$};
	\draw[thick, unshaded] (2.4,-.8) circle (.4);
	\node at (2.4,-.8) {$T$};
	\node at (2.4,-1.35) {$\star$};
\end{tikzpicture}
=
\frac{\Tr(PS)\Tr(RT)}{[n]^2}
\begin{tikzpicture}[baseline = .5cm,yscale=-1]
	\draw (2.4,-.8)--(.8,-.8);
	\draw (.8,-.8)--(1.6,0)--(2.4,-.8);
	\node at (1.1,-.3) {{\scriptsize{$n$}}};
	\node at (2.1,-.3) {{\scriptsize{$n$}}};
	\node at (1.6,-1) {{\scriptsize{$n-2$}}};
	\draw[thick, unshaded] (1.6,0) circle (.4);
	\node at (1.6,0) {$Q$};
	\node at (1.6,.55) {$\star$};
	\draw[thick, unshaded] (.8,-.8) circle (.4);
	\node at (.8,-.8) {$f$};
	\node at (.8,-1.35) {$\star$};
	\draw[thick, unshaded] (2.4,-.8) circle (.4);
	\node at (2.4,-.8) {$f$};
	\node at (2.4,-1.35) {$\star$};
\end{tikzpicture}
$$
where $f=\jw{n-1}$. The right hand side of the above equation is zero, since it is a linear combination of closed diagrams containing only one generator.
\end{proof}

\begin{prop}\label{prop:InnerProduct2}
\mbox{}
\be
\item
\begin{multline*}
\left\langle C_i[P\traincirc{n-1} Q], C_j[R\traincirc{n-1} S],\right\rangle\\
=
\begin{cases}
\Tr(PR)\Tr(SQ)[2][n]^{-1} &\text{if }i=j\\
\Tr(PR)\Tr(SQ)[n]^{-1} & \text{if }|i-j|=1\\
\Tr(PRSQ) & \text{if }(i,j)\in\{(n{+}1,n{+}3),(n{+}3,n{+}1)\}\\
0 & \text{else,}
\end{cases}
\end{multline*}
\item
$\D \left\langle C_i[P\traincirc{n-1} Q], R\traincirc{n-2} S\right\rangle=
\begin{cases}
\D \Tr(PR)\Tr(SQ)[n]^{-1} & \text{if }i=n+2\\
0 & \text{else,}
\end{cases}
$
\item
$\D \left\langle C_i[P\traincirc{n-1} Q], R\traincirc{n-1} S\traincirc{n-1}T\right\rangle=
\begin{cases}
a^{PR}_S\Tr(QT)[n]^{-1} & \text{if }i=n+1\\
a^{ST}_Q\Tr(RP)[n]^{-1} & \text{if }i=n+3\\
0 & \text{else.}
\end{cases}
$
\ee
\end{prop}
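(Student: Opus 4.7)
My plan is to follow the template of Proposition~\ref{prop:InnerProduct}: each inner product is computed by drawing the closed planar diagram obtained by gluing the two elements along their boundaries, then applying Lemma~\ref{lem:WenzlIdentity}(2) to collapse any stacked generator pair connected by at least $n+1$ strands. The key structural principle is that uncappability of $\fB$ forces the diagram to vanish whenever any cap (internal to a $C_i$) lands directly on a generator, restricting the non-vanishing configurations to just a handful of cases.

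For part (1), the cap of $C_i$ internally joins boundary positions $i,i+1$ of the first element; gluing propagates this to an arc joining positions $i,i+1$ of the second. For this arc not to cap one of $R,S$, it must either overlap the cap of $C_j$ (forcing $|i-j|\leq 1$) or straddle the $R$-$S$ interface at positions $n+2,n+3$ of the second (forcing $i=n+2$), with a symmetric condition on $j$. This leaves three non-vanishing configurations. When $i=j$, the two caps form a closed loop contributing a factor of $[2]$, and the remaining $2n+2$ strands glue to $\langle P\traincirc{n-1}Q, R\traincirc{n-1}S\rangle=\Tr(PR)\Tr(SQ)/[n]$, obtained by applying Lemma~\ref{lem:WenzlIdentity}(2) to both $P$-$R$ and $Q$-$S$. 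When $|i-j|=1$, the caps chain through the glue to reproduce the same underlying pairing without the loop factor. When $\{i,j\}=\{n+1,n+3\}$, each cap bridges its own diagram's internal $P$-$Q$ (resp.\ $R$-$S$) transition with an extra strand, producing a planar four-cycle $P$-$R$-$S$-$Q$ with $n$ strands between each adjacent pair, which evaluates to $\Tr(PRSQ)$.

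Part (2) is the simplest case: since $R\traincirc{n-2}S$ has its unique $R$-$S$ interface between boundary positions $n+2$ and $n+3$, the cap of $C_i$ must land there (i.e., $i=n+2$) to avoid capping $R$ or $S$. The cap then converts the $n-2$ horizontal $R$-$S$ strands into $n-1$ effective connections, and the two $(n+1)$-strand pairs $P$-$R$ and $Q$-$S$ both reduce via Lemma~\ref{lem:WenzlIdentity}(2) to give $\Tr(PR)\Tr(QS)/[n]$.

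For part (3), the 3-train $R\traincirc{n-1}S\traincirc{n-1}T$ has generator transitions at $R$-$S$ (positions $n+1,n+2$) and $S$-$T$ (positions $n+3,n+4$), forcing $i\in\{n+1,n+3\}$. For $i=n+1$, I first apply Lemma~\ref{lem:WenzlIdentity}(2) to $Q$-$T$ (with $n+1$ connecting strands) to extract $\Tr(QT)/[n]$ and leave a $\jw{n-1}$. In the remaining sub-diagram, $P$-$R$ has $n$ connecting strands---the algebra product $PR$---which expands via Equation~\eqref{eqn:ClosedUnderMultiplication} as $\tfrac{\Tr(PR)}{[n+1]}\jw{n}+\sum_U \tfrac{a_U^{PR}}{\|U\|^2} U$. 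The $\jw{n}$-term vanishes: using $\jw{n} S = S$ for the uncappable $S$, the extra single strand from the cap creates a cap on $S$, giving zero. Orthogonality of $\fB$ then collapses the generator sum to its $U=S$ summand, contributing $a_S^{PR}$ after the standard two-$\jw{n-1}$ closure. The symmetric case $i=n+3$ (reducing $P$-$R$ first and then expanding $ST$) yields $a_Q^{ST}\Tr(RP)/[n]$. The main subtlety is the planar routing analysis that identifies the induced cap on $S$ in the $\jw{n}$-term, which relies on the identity $\jw{n} X=X$ for uncappable $X$.
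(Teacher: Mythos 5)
Your proof is correct and follows the same diagrammatic route as the paper's, which simply notes that the computations are straightforward from drawing the necessary diagrams and highlights only the $(i,j)\in\{(n{+}1,n{+}3),(n{+}3,n{+}1)\}$ case that you also treat; your use of Lemma~\ref{lem:WenzlIdentity}(2), Equation~\eqref{eqn:ClosedUnderMultiplication}, and orthogonality in part (3) is exactly the intended argument. The only quibble is a harmless indexing slip in part (1): once the cup of $C_j$ is inserted, the $R$--$S$ interface of the second diagram sits at boundary positions $n+1,n+2$ when $j\geq n+2$ (not $n+2,n+3$), which is precisely why the surviving exceptional case is $\{i,j\}=\{n{+}1,n{+}3\}$, as you correctly conclude in the next sentence.
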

\begin{proof}
The proofs are all relatively straightforward drawing the necessary diagrams. 
The case in part (1) which is easiest to miss is when $(i,j)\in \{(n+1,n+3),(n+3,n+1)\}$. 
In this case we get the following diagrams:
$$
\begin{tikzpicture}[baseline = -.7cm]
	\draw[dashed] (-.2,-.6)--(1.8,-.6);
	\draw (0,0)--(1.6,0) -- (1.6,-1.2)--(0,-1.2)--(0,0);
	\draw  (0,0) .. controls ++(-80:1.6cm) and ++(200:1cm) .. (1.6,0);
	\draw  (0,-1.2) .. controls ++(20:1cm) and ++(100:1.6cm) .. (1.6,-1.2);
	\node at (-.4,-.6) {{\scriptsize{$n$}}};
	\node at (2,-.6) {{\scriptsize{$n$}}};
	\node at (.8,.2) {{\scriptsize{$n-1$}}};
	\node at (.8,-1.4) {{\scriptsize{$n-1$}}};
	\draw[thick, unshaded] (0,0) circle (.4);
	\node at (0,0) {$R$};
	\node at (-.55,0) {$\star$};
	\draw[thick, unshaded] (0,-1.2) circle (.4);
	\node at (0,-1.2) {$P$};
	\node at (-.55,-1.2) {$\star$};
	\draw[thick, unshaded] (1.6,0) circle (.4);
	\node at (1.6,0) {$S$};
	\node at (2.15,0) {$\star$};
	\draw[thick, unshaded] (1.6,-1.2) circle (.4);
	\node at (1.6,-1.2) {$Q$};
	\node at (2.15,-1.2) {$\star$};
\end{tikzpicture}
=
\begin{tikzpicture}[baseline = -.7cm]
	\draw[dashed] (-.2,-.6)--(1.8,-.6);
	\draw (0,0)--(1.6,0) -- (1.6,-1.2)--(0,-1.2)--(0,0);
	\draw  (0,0) .. controls ++(-20:1cm) and ++(-100:1.6cm) .. (1.6,0);
	\draw  (0,-1.2) .. controls ++(80:1.6cm) and ++(160:1cm) .. (1.6,-1.2);
	\node at (-.4,-.6) {{\scriptsize{$n$}}};
	\node at (2,-.6) {{\scriptsize{$n$}}};
	\node at (.8,.2) {{\scriptsize{$n-1$}}};
	\node at (.8,-1.4) {{\scriptsize{$n-1$}}};
	\draw[thick, unshaded] (0,0) circle (.4);
	\node at (0,0) {$R$};
	\node at (-.55,0) {$\star$};
	\draw[thick, unshaded] (0,-1.2) circle (.4);
	\node at (0,-1.2) {$P$};
	\node at (-.55,-1.2) {$\star$};
	\draw[thick, unshaded] (1.6,0) circle (.4);
	\node at (1.6,0) {$S$};
	\node at (2.15,0) {$\star$};
	\draw[thick, unshaded] (1.6,-1.2) circle (.4);
	\node at (1.6,-1.2) {$Q$};
	\node at (2.15,-1.2) {$\star$};
\end{tikzpicture}
=
\Tr(PRSQ).
$$
\end{proof}

\begin{prop}\label{prop:InnerProductInTL}
\mbox{}
\be
\item
$\D \left\langle P\traincirc{n-2} Q ,  P_{\TL_{n+2,+}}\left(R\traincirc{n-2} S\right)\right\rangle =\frac{\Tr(PQ)\Tr(RS)}{[n+3]}$
\item
$\D \left\langle P\traincirc{n-1} Q \traincirc{n-1} R ,  P_{\TL_{n+2,+}}\left(P'\traincirc{n-1} Q' \traincirc{n-1} R'\right)\right\rangle = a_R^{QP}a_{R'}^{P'Q'}\frac{[2][n+1]}{[n+2][n+3]}$
\item
$\D\left\langle P\traincirc{n-1} Q \traincirc{n-1} R  , P_{\TL_{n+2,+}}\left(S\traincirc{n-2} T\right)\right\rangle=-\frac{\Tr(ST)a_R^{QP}[n+1]}{[n+2][n+3]}$
\ee
\end{prop}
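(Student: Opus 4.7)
The plan is to prove all three parts by combining the explicit formulas for $P_{\TL_{n+2,+}}$ of trains given in Proposition \ref{prop:ProjectToTL} with the self-adjointness of the projection $P_{\TL_{n+2,+}}$ and the dual basis property. Throughout, I will use that $P_{\TL}$ is an orthogonal projection, so $\langle X, P_{\TL}(Y)\rangle = \langle P_{\TL}(X), Y\rangle$, and that $\overline{a_R^{PQ}} = \overline{\Tr(PQR)} = \Tr(R^*Q^*P^*) = \Tr(QPR) = a_R^{QP}$ since the generators are self-adjoint.

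For part (1), I first apply Proposition \ref{prop:ProjectToTL}(1) to get $P_{\TL_{n+2,+}}(R\traincirc{n-2}S) = \frac{\Tr(RS)}{[n+3]}\jw{n+2}$, so the inner product becomes $\frac{\Tr(RS)}{[n+3]}\langle P\traincirc{n-2}Q, \jw{n+2}\rangle$. Using self-adjointness of $P_{\TL}$ and Proposition \ref{prop:ProjectToTL}(1) again,
$$\langle P\traincirc{n-2}Q, \jw{n+2}\rangle = \langle P_{\TL_{n+2,+}}(P\traincirc{n-2}Q), \jw{n+2}\rangle = \frac{\Tr(PQ)}{[n+3]}\langle \jw{n+2},\jw{n+2}\rangle = \Tr(PQ),$$
since $\|\jw{n+2}\|^2 = \Tr(\jw{n+2}) = [n+3]$. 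This immediately gives the claimed formula.

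For part (2), using self-adjointness of $P_{\TL}$ and Proposition \ref{prop:ProjectToTL}(2) on both sides, the inner product equals $a_R^{QP}a_{R'}^{P'Q'}\|\widehat{E_{n+1}}\|^2$, where $\widehat{E_{n+1}}$ is the TL dual basis element of the diagram $E_{n+1}$ (the $a=n,b=0$ case of Lemma \ref{lem:TLDualBasis}). The key computation is to show $\|\widehat{E_{n+1}}\|^2 = \frac{[2][n+1]}{[n+2][n+3]}$. By the dual basis property, $\|\widehat{E_{n+1}}\|^2$ equals the coefficient of $E_{n+1}$ in the expansion of $\widehat{E_{n+1}}$ in the TL diagrammatic basis. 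Applying Lemma \ref{lem:TLDualBasis}, the coefficient of $E_{n+1}$ in $\jw{n+2}$ is $-[n+1]/[n+2]$ by Fact \ref{fact:2cup} (with $a=n,b=0,c=0,k=n+2$), while the coefficient of $E_{n+1}$ in $D$ is $1$: expanding both $\jw{n+1}$'s in the definition of $D$, only the identity term $(1_{n+1},1_{n+1})$ can possibly yield $E_{n+1}$, because $E_{n+1}$ has through strands at positions $1,\ldots,n$, and by planarity of Temperley-Lieb these force both Jones-Wenzl expansions to be the identity. Thus
$$\|\widehat{E_{n+1}}\|^2 = \frac{[n+1]}{[n+2]^2} + \frac{[n+1]}{[n+2][n+3]}\cdot\frac{[n+1]}{[n+2]} = \frac{[n+1]([n+3]+[n+1])}{[n+2]^2[n+3]} = \frac{[2][n+1]}{[n+2][n+3]},$$
where the last equality uses the identity $[n+3]+[n+1]=[2][n+2]$ (a special case of Lemma \ref{lem:QuantumIdentity}).

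For part (3), applying self-adjointness of $P_{\TL}$ and Proposition \ref{prop:ProjectToTL}(2) to the left side and Proposition \ref{prop:ProjectToTL}(1) to the right side, the inner product becomes
$$a_R^{QP}\langle \widehat{E_{n+1}}, P_{\TL_{n+2,+}}(S\traincirc{n-2}T)\rangle = a_R^{QP}\cdot\frac{\Tr(ST)}{[n+3]}\langle \widehat{E_{n+1}},\jw{n+2}\rangle.$$
By the dual basis property, $\langle \widehat{E_{n+1}},\jw{n+2}\rangle$ equals the coefficient of $E_{n+1}$ in $\jw{n+2}$, which is $-[n+1]/[n+2]$ as computed in part (2). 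This yields the claimed formula directly.

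The main obstacle is the verification that the coefficient of $E_{n+1}$ in $D$ equals $1$: while the $(1_{n+1},1_{n+1})$ contribution clearly yields $E_{n+1}$ with coefficient $1$, one must rule out all other combinations of terms from the Jones-Wenzl expansions. The planarity argument sketched above handles this, but it requires a careful case analysis tracking how caps and cups in the $\jw{n+1}$ expansions interact with the hook strands F and G and the middle strands in $D$.
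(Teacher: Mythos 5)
Your proof is correct and follows essentially the same route as the paper's: both reduce everything to Proposition \ref{prop:ProjectToTL} together with self-adjointness of $P_{\TL_{n+2,+}}$, and for part (2) both arrive at the same intermediate expression $\frac{[n+1]}{[n+2]^2}+\frac{[n+1]^2}{[n+2]^2[n+3]}$ before simplifying via $[n+1]+[n+3]=[2][n+2]$. The only difference is that you make explicit the dual-basis computation of $\|\widehat{E_{n+1}}\|^2$ (including the planarity argument showing the coefficient of $E_{n+1}$ in $D$ is $1$), which the paper leaves implicit in its remark that the proposition ``follows quickly'' from Proposition \ref{prop:ProjectToTL}.
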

\begin{proof}
This follows quickly from Proposition \ref{prop:ProjectToTL}. For part (2), using Proposition \ref{prop:ProjectToTL}, the inner product in question is equal to 
\begin{align*}
a_R^{QP}a_{R'}^{P'Q'}\left(\frac{[n+1]^2}{[n+2]^2[n+3]}+\frac{[n+1]}{[n+2]^2}\right)
&=a_R^{QP}a_{R'}^{P'Q'}\frac{[n+1]}{[n+2]^2}\left(\frac{[n+1]+[n+3]}{[n+3]}\right)\\
&=a_R^{QP}a_{R'}^{P'Q'}\frac{[2][n+1]}{[n+2][n+3]}.
\end{align*}
\end{proof}

\begin{prop}\label{prop:UglyIPInTL}
\mbox{}
\be
\item
\begin{multline*}
\D \left\langle C_i[P\traincirc{n-1} Q] , P_{\TL_{n+2,+}}\left(C_j[R\traincirc{n-1} S]\right)\right\rangle
\\
=
\begin{cases}
\Tr(PQ)\Tr(RS)[2][n+2]^{-1} & \text{if }i=j\\
\Tr(PQ)\Tr(RS)[n+2]^{-1} & \text{if }|i-j|=1\\
\Tr(PQ)\Tr(RS)[n+1]^{-1} & \text{if }(i,j)\in \{(n{+}1,n{+}3), (n{+}3,n{+}1)\}\\
0& \text{else.}
\end{cases}
\end{multline*}
\item
$\D \left\langle C_i[P\traincirc{n-2} Q] ,  P_{\TL_{n+2,+}}\left(R\traincirc{n-2} S\right)\right\rangle
=
\begin{cases}
\Tr(PQ)\Tr(RS)[n+2]^{-1} & \text{if }i=n+2\\
0& \text{else.}
\end{cases}
$
\item
$\D \left\langle C_i[P\traincirc{n-2} Q] ,  P_{\TL_{n+2,+}}\left(R\traincirc{n-1} S\traincirc{n-1} T\right)\right\rangle
=
\begin{cases}
\Tr(PQ)a^{RS}_T[n+2]^{-1} & \text{if }i=n+1,n+3\\
0 & \text{else.}
\end{cases}
$
\ee
\end{prop}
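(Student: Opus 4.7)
The plan is to exploit the self-adjointness of the orthogonal projection $P_{\TL_{n+2,+}}$, so that in each pairing I may move the projection onto whichever factor is convenient. The three formulas then follow by combining the projection identities of Propositions \ref{prop:ProjectToTL} and \ref{prop:ProjectToDualTL} with the explicit inner products of Lemma \ref{lem:IPofDualTL}.

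I would handle part (2) first, as it is the simplest. Part (1) of Proposition \ref{prop:ProjectToTL} gives $P_{\TL_{n+2,+}}(R \traincirc{n-2} S) = \Tr(RS)\,\widehat{1_{n+2}}$ (after absorbing the factor $[n+3]^{-1}$ into the dual basis element), so the pairing collapses to $\Tr(RS)\,\langle C_i[P \traincirc{n-1} Q], \widehat{1_{n+2}}\rangle$. By part (1) of Lemma \ref{lem:IPofDualTL}, this vanishes unless $i = n+2$ and equals $\Tr(PQ)\Tr(RS)/[n+2]$ in that case. For part (3), part (2) of Proposition \ref{prop:ProjectToTL} expresses $P_{\TL_{n+2,+}}(R \traincirc{n-1} S \traincirc{n-1} T)$ as an explicit linear combination of $\widehat{E_{n+1}} = D_n$ and $\widehat{1_{n+2}}$ with overall factor $a^{RS}_T$. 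Pairing with $C_i[P \traincirc{n-1} Q]$ and applying parts (1) and (2) of Lemma \ref{lem:IPofDualTL} shows that only indices $i \in \{n+1, n+2, n+3\}$ can contribute; a brief arithmetic check (invoking Lemma \ref{lem:QuantumIdentity}) verifies that the two contributions at $i = n+2$ cancel, leaving the claimed nonzero values at $i = n+1$ and $i = n+3$.

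Part (1) is the most involved, and will require the full force of Proposition \ref{prop:ProjectToDualTL} to expand $P_{\TL_{n+2,+}}(C_j[R \traincirc{n-1} S])$ as $\Tr(RS)$ times a sum of at most three of $D_a$, $D_a^*$, $\widehat{1_{n+2}}$, depending on $j$. Pairing with $C_i[P \traincirc{n-1} Q]$ via Lemma \ref{lem:IPofDualTL}, and using part (3) of that lemma to reflect inner products involving $D_a^*$ into the standard form, selects only indices matching $i - 1 = a$, $i = n+2$, or $i = n+3$.

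The main obstacle will be the case analysis: I must verify across the many ranges of $j$ in the table of Proposition \ref{prop:ProjectToDualTL} that precisely the cases $i = j$, $|i - j| = 1$, and $\{i, j\} = \{n+1, n+3\}$ yield nonzero contributions, and extract the correct coefficients. In the diagonal case $i = j$, three dual basis contributions will combine (with help from Lemma \ref{lem:QuantumIdentity}) to give the coefficient $[2]$; for $|i - j| = 1$, a single dual basis element contributes to give coefficient $1$; and the exceptional coefficient $[n+1]^{-1}$ in the cross case arises from the $i = n+3$ branch of Lemma \ref{lem:IPofDualTL}, whose value $(-1)^b[a+1]/([n+1][n+2])$ combines with the $[2]D_n^*$ term in the expansion of the projection.
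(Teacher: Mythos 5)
Your proposal is correct and follows essentially the same route as the paper: expand the Temperley--Lieb projections via Propositions \ref{prop:ProjectToTL} and \ref{prop:ProjectToDualTL}, evaluate the resulting pairings with Lemma \ref{lem:IPofDualTL}, and use part (3) of that lemma to reflect the $D_a^*$ terms into standard form. The only small bookkeeping slip is in the diagonal case of part (1), where in fact only the middle term $[2]D_{i-1}$ of the expansion pairs nontrivially (giving $[2][n+2]^{-1}$ directly), rather than three contributions combining via Lemma \ref{lem:QuantumIdentity}; this does not affect the argument.
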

\begin{proof}
\mbox{}
\be
\item
The formulas can be worked out easily from Lemma \ref{lem:IPofDualTL} and Proposition \ref{prop:ProjectToDualTL}. 
We will work out a few interesting cases. 

If $i=n+1$ and $j=n+3$, then
\begin{align*}
\left\langle C_i[P\traincirc{n-1} Q] \right., &\left. P_{\TL_{n+2,+}}\left(  C_j[R\traincirc{n-1} S]\right)\right\rangle\\
& =
\left\langle C_{n+1}[P\traincirc{n-1} Q] ,\widehat{1_{n+2}}+[2]D_n^*+D_{n-1}^*\right\rangle\Tr(RS)\\
& =
\left\langle C_{n+3}[P\traincirc{n-1} Q] ,[2]D_n+D_{n-1}\right\rangle\Tr(RS)\\
& =
\left(\frac{[2]}{[n+2]}-\frac{[n]}{[n+1][n+2]}\right)\Tr(PQ)\Tr(RS)\\
& =
\frac{\Tr(PQ)\Tr(RS)}{[n+1]}.
\end{align*}

If $i=n+1$ and $n+3<j<2n+3$, then 
\begin{align*}
\left\langle C_i[P\traincirc{n-1} Q] ,  \right.&\left. P_{\TL_{n+2,+}}\left(  C_j[R\traincirc{n-1} S]\right)\right\rangle\\
& =
\left\langle C_{n+1}[P\traincirc{n-1} Q] ,D_{2n+2-j}^*+[2]D_{2n+3-j}^*+D_{2n+4-j}^*\right\rangle\Tr(RS)\\
& =
\left\langle C_{n+3}[P\traincirc{n-1} Q] ,D_{j}+[2]D_{j+1}+D_{j+2}\right\rangle\Tr(RS)\\
& =
\frac{(-1)^{n-j}}{[n+1][n+2]}\left([j+1]-[2][j+2]+[j+3]\right)\Tr(PQ)\Tr(RS)\\
& =
0.
\end{align*}

\item
By Proposition \ref{prop:ProjectToTL}, we have
$$
\left\langle C_i[P\traincirc{n-1} Q] ,  P_{\TL_{n+2,+}}\left(R\traincirc{n-2} S\right)\right\rangle
=
\frac{\Tr(RS)}{[n+3]} \left\langle C_i[P\traincirc{n-1} Q] ,  \jw{n+2}\right\rangle,
$$
which is zero unless $i=n+2$. Now by Lemma \ref{lem:TLDualBasis} and Proposition \ref{prop:ProjectToDualTL}, the right hand side is equal to 
\begin{align*}
\frac{\Tr(RS)\Tr(PQ)}{[n+3]} \langle D_n +[2]\widehat{1_{n+2}}, \jw{n+2}\rangle 
&= 
\frac{\Tr(PQ)\Tr(RS)}{[n+3]}\left([2]-\frac{[n+1]}{[n+2]}\right)\\
&=
\frac{\Tr(PQ)\Tr(RS)}{[n+2]}.
\end{align*}

\item
By Proposition \ref{prop:ProjectToTL}, we have
$$
\left\langle C_i[P\traincirc{n-1} Q] ,  P_{\TL_{n+2,+}}\left(R\traincirc{n-2} S\traincirc{n-1} T\right)\right\rangle
=
a^{RS}_{T}\left\langle C_i[P\traincirc{n-1} Q] ,  \widehat{E_{n+1}}\right\rangle,
$$
which is clearly zero unless $n+1\leq i\leq n+3$ (use the formula for $\widehat{E_{n+1}}$).

If $i=n+1$ (and similarly for $i=n+3$), then only the first diagram in Proposition \ref{prop:ProjectToTL} (2) contributes to the inner product, and the value is given by
$$
a^{RS}_T\frac{[n+1]}{[n+2]^2}
\begin{tikzpicture}[baseline = .6cm]
	\draw (.6,1.8) .. controls ++(90:.6cm) and ++(90:.6cm) .. (3.2,1.8);
	\draw (2.6,1.8) arc (0:180:.2cm) -- (2.2,1) .. controls ++(270:.6cm) and ++(30:1cm) .. (1,0);
	\draw (1.4,1.8) arc (180:0:.2cm) -- (1.8,1) arc (0:-180:.2cm);	
	\draw (1,0)--(3,0);
	\draw (1,0)-- (1,1);
	\draw (3,0)--(3,1);
	\filldraw[unshaded,thick] (2.4,1)--(3.6,1)--(3.6,1.8)--(2.4,1.8)--(2.4,1);
	\filldraw[unshaded,thick] (.4,1)--(1.6,1)--(1.6,1.8)--(.4,1.8)--(.4,1);
	\filldraw[unshaded,thick] (1,0) circle (.4cm);	
	\filldraw[unshaded,thick] (3,0) circle (.4cm);	
	\node at (1,1.4) {$\jw{n+1}$};
	\node at (3,1.4) {$\jw{n+1}$};
	\node at (1,0) {$P$};
	\node at (3,0) {$Q$};
	\node at (1,-.55) {$\star$};
	\node at (3,-.55) {$\star$};
	\node at (2,-.15) {{\scriptsize{$n-1$}}};
	\node at (2,2.4) {{\scriptsize{$n$}}};
	\node at (3.2,.7) {{\scriptsize{$n$}}};
	\node at (.8,.7) {{\scriptsize{$n$}}};
\end{tikzpicture}
=
\frac{\Tr(PQ)a^{RS}_T}{[n+2]}.
$$
If $i=n+2$, by drawing similar diagrams, we see the inner product in question is equal to
$$
a^{RS}_T \left(\frac{[n+1]}{[n+2]^2}- \frac{[n+3][n+1]}{[n+2][n+2][n+3]}\right)\Tr(PQ)=0.
$$ 
\ee
\end{proof}

\begin{rem}\label{rem:InnerProductInA}
We now explain how to obtain the inner products
\begin{itemize}
\item
$\left\langle P\traincirc{n-2} Q, P_{\gA_{n+2}}\left(R\traincirc{n-2} S\right)\right\rangle$
\item
$\left\langle P\traincirc{n-1} Q\traincirc{n-1} R, P_{\gA_{n+2}}\left(P'\traincirc{n-1} Q'\traincirc{n-1} R'\right)\right\rangle$
\item
$\left\langle P\traincirc{n-1} Q\traincirc{n-1} R, P_{\gA_{n+2}}\left(S\traincirc{n-2} T\right)\right\rangle$
\item
$\D \left\langle C_i[P\traincirc{n-2} Q], P_{\gA_{n+2}}\left(C_j[R\traincirc{n-2} S]\right)\right\rangle$
\item
$\D \left\langle C_i[P\traincirc{n-2} Q],  P_{\gA_{n+2}}\left(R\traincirc{n-1} S\traincirc{n-1} T\right)\right\rangle$
\item
$\D \left\langle C_i[P\traincirc{n-2} Q],  P_{\gA_{n+2}}\left(R\traincirc{n-2} S\right)\right\rangle$.
\end{itemize}
First, we use the formulas for 
$$P_{\gA_{n+2}}\left(P\traincirc{n-2} Q\right),\, P_{\gA_{n+2}}\left(P\traincirc{n-1} Q\traincirc{n-1} R\right)\,\text{ and }P_{\gA_{n+2}}\left(C_i[P\traincirc{n-2} Q]\right)
$$ 
obtained in Propositions \ref{prop:ProjectTrainsToAC} and \ref{prop:ProjectACofTrainsToAC} to express each side as a linear combination of the $\widehat{\cup}_{i,j}(S)$'s. 
Next, we use the change of basis matrix discussed in Remark \ref{rem:DualAnnularBasis} to write the $\widehat{\cup}_{i,j}(S)$ on the right hand side in terms of the $\cup_{i,j}(S)$.
Finally, we expand the inner product in the usual way to obtain the answer.
\end{rem}

\section{Deriving formulas for two-strand box jellyfish relations}\label{sec:ComputingJellyfishRelations}

As in the previous sections, we continue Assumptions \ref{assume:Generators}, \ref{assume:SpanAlgebras}, and \ref{assume:Tetrahedral}.

We now go through our algorithm for determining two-strand jellyfish relations. 
We would like to follow the method of \cite[Section 3]{1208.3637}, which consisted of three parts:
\be
\item
Find the quadratic tangles in annular consequences,
\item
Find the jellyfish matrix, and
\item
Invert the jellyfish matrix.
\ee
The steps in our algorithm will be clearly marked in the following three subsections.

\subsection{Reduced trains in annular consequences}
In \cite{1208.3637}, the first step was to obtain a basis for the quadratic tangles in annular consequences. 
Since we have quadratic and cubic trains, we call this step obtaining a basis for the the reduced trains in annular consequences. 

\begin{defn}
Recall from Definition \ref{defn:ReducedTrain} that a `reduced train' is one where no generator connects to itself, and no pair are connected by more than $n-1$ strands.
Starting with our set of minimal generators $\fB$ satisfying Assumptions \ref{assume:Generators}, \ref{assume:SpanAlgebras}, and \ref{assume:Tetrahedral}, we have the reduced trains
$$
\set{C_i[P\traincirc{n-1}Q]}{P,Q\in\fB \text{ and }i=1,\dots, 2n+3}\subset \cP_{n+2,+}
$$
which are annular consequences of trains in $\cP_{n+1,+}$, and we have the reduced trains
$$
\set{
\TwoTrain{2}{P}{Q}
\,,\,
\ThreeTrain{P}{Q}{R}
}{P,Q,R\in\fB}\subset \cP_{n+2,+}
$$
which are non-zero when placing a Jones-Wenzl underneath. We let $\rt$ be the union of the above two sets.
\end{defn}

Since we hope that our generators generate a subfactor planar algebra whose principal graph is that from which we started, we hope that some linear combination of these reduced trains lie in annular consequences. 

\begin{defn}
We set 
$$
\rtac = \left(\TL_{n+2,+}\oplus \gA_{n+2}\right)\cap \spann(\rt),
$$
where $\rtac$ stands for \underline{reduced trains in annular consequences}.
\end{defn}

Step \ref{step:RTAC} of our algorithm finds a basis for $\rtac$. 
Since we are trying to derive box jellyfish relations, we are only interested in basis elements which are not sent to zero when we put a $\jw{2n+4}$ underneath. 
Thus we make the following definition.

\begin{defn}
An element of $\rtac$ is called \underline{essential} if at least one of the coefficients of the $P\traincirc{n-2}Q$'s or the $P\traincirc{n-1}Q\traincirc{n-1}R$'s does not vanish.
\end{defn}

\begin{rem}
If we've chosen $k$ generators in a graph planar algebra and are hoping that they give us a subfactor planar algebra with one spoke principal graph, we expect to have at least $k$ essential basis elements of $\rtac$ -- ie, one two-strand jellyfish relation for each generator.
\end{rem}

\begin{step}[A basis for $\rtac$]\label{step:RTAC}
Consider the matrix of inner products modulo Temperley-Lieb and annular consequences, i.e.,
$$
\left(
\langle
\cX-P_{\TL_{n+2,+}}(\cX)-P_{\gA_{n+2}}(\cX),
\cY
\rangle
\right)_{\cX,\cY\in \rt},
$$
where the necessary inner products were derived in Propositions \ref{prop:InnerProduct} and \ref{prop:InnerProductInTL} and Remark \ref{rem:InnerProductInA}.
\be
\item
Taking a basis for the null space of this matrix gives us a basis for $\rtac$.
\item
From this basis, we keep only the essential elements, which we call $X_1,\dots, X_k$.
\ee
\end{step}


\subsection{Compute the jellyfish matrix}

From Step \ref{step:RTAC}, we have an expression for each essential basis element of $\rtac$ as follows:
\begin{align*}
X_i
&= \sum_{P,Q\in\fB}\alpha_{P,Q}^i \TwoTrain{2}{P}{Q} + \sum_{P,Q,R\in\fB}\beta_{P,Q,R}^i \ThreeTrain{P}{Q}{R}+W_i
\end{align*}
where $W_i\in \spann\set{C_i[P\circ Q]\,}{\,P,Q\in\fB\text{ and }i=1,\dots,2n+3}$. 
We also have an expression for $X_i$ as an element of $\TL_{n+2,+}\oplus \gA_{n+2}$.

\begin{step}[Expression in the annular basis]\label{step:AnnularBasis}
Using Proposition \ref{prop:ProjectTrainsToAC}, we express the $X_i$ in terms of the dual annular basis $\widehat{\cup}_{r,s}(S)$ for $S\in\fB$.
We then use our change of basis matrix discussed in Remark \ref{rem:DualAnnularBasis} to write the $\widehat{\cup}_{r,s}(S)$ in terms of the $\cup_{j,\ell}(S)$. 
Hence we may write each $X_i$ as
\begin{align*}
X_i&=\left(\sum_{S\in\fB} \gamma_S^i \cup_{-1,-1}(S) \right) + Y_i +Z_i
=
\left(\sum_{S\in\fB} \gamma_S^i\, \JellyfishSquared{n}{S} \right) + Y_i +Z_i
\end{align*}
where $Y_i$ is a linear combination of the $\cup_{j,\ell}(S)$ for $S\in\fB$ with $(j,\ell)\neq (-1,-1)$, and $Z_i\in \TL_{n+2,+}$.
\end{step}

\begin{nota}
For $P,Q,R,S\in \fB$, we use the following notation:
\begin{align*}
f(P\traincirc{n-2}Q)&= \BoxTwoTrain{P}{Q} \\
f(P\traincirc{n-1}Q\traincirc{n-1}R)&= \BoxThreeTrain{P}{Q}{R}\\
f\cdot j^2(S)&=\BoxJellyfishSquared{n}{S} 
\end{align*}
We will also use the notation $f\cdot X$ to denote $X\in \cP_{n+2,+}$ in jellyfish form with a $\jw{2n+4}$ underneath.
\end{nota}

\begin{step}[Box jellyfish equations]\label{step:BoxJelliesEquations}
Put an $\jw{2n+4}$ underneath the two formulas for $X_i$ obtained in Steps \ref{step:RTAC} and \ref{step:AnnularBasis} to get the following equations for $i=1,\dots, k$:
$$
f\cdot X_i = \sum_{P,Q\in\fB}\alpha_{P,Q}^i f (P\traincirc{n-2}Q) + \sum_{P,Q,R\in\fB}\beta_{P,Q,R}^i  f (P\traincirc{n-1}Q\traincirc{n-1} R)
= \sum_{S\in\fB} \gamma_S^i f\cdot j^2(S).
$$
\end{step}

\begin{rem}\label{rem:NoCheck}
In \cite[Section 3.2]{1208.3637}, similar formulas to those obtained in Step \ref{step:BoxJelliesEquations} were checked by wrapping a Jones-Wenzl around the top of $P\traincirc{n-1} Q$. 
In our case, we can not use this check, since wrapping a Jones-Wenzl around the top of a 3-train does not give another box-train.
\end{rem}

We can now define the jellyfish matrix and the reduced trains matrix from the equations obtained in Step \ref{step:BoxJelliesEquations}.

\begin{defn}\label{defn:JellyfishMatrices}
The \underline{jellyfish matrix} is the matrix $J$ whose $i$-th row is $(\gamma_S^i)_{S\in\fB}$.
The \underline{reduced trains matrix} is the matrix $K$ whose $i$-th row is given by concatenating the lists $(\alpha_{P,Q}^i)_{P,Q\in\fB}$ and $(\beta_{R,S,T}^i)_{R,S,T\in\fB}$.
\end{defn}

\begin{rem}
Note that
$$
K 
\begin{pmatrix}
f (P\traincirc{n-2}Q)\\
\vdots\\
f (R\traincirc{n-1}S\traincirc{n-1}T)\\
\vdots
\end{pmatrix}_{P,Q,R,S,T\in\fB}
=
J
\begin{pmatrix}
f\cdot j^2(S)\\
\vdots
\end{pmatrix}_{S\in \fB}.
$$
\end{rem}

\subsection{Invert the jellyfish matrix}

At this point, we have accomplished most of the difficult work. 
Two easy steps remain.

\begin{step}[Invert $J$]\label{step:Invert}
Given the matrix $J$ from Definition \ref{defn:JellyfishMatrices} obtained via Step \ref{step:BoxJelliesEquations}, we check if it has rank $|\fB|$. 
If it does (and we know that it should by \cite{1208.1564}), we find a left inverse for $J$ by the formula
$$
J^L=(J^*J)^{-1}J^*
$$
since $J$ and $J^*J$ have the same rank. 
\end{step}

\begin{step}[Box jellyfish relations]\label{step:BoxJelliesFormulas}
Get the \underline{box jellyfish relations} by multiplying by $J^L$ from Step \ref{step:Invert} 
$$
\begin{pmatrix}
f\cdot j^2(S)\\
\vdots
\end{pmatrix}_{S\in \fB}
=
J^LK 
\begin{pmatrix}
f (P\traincirc{n-2}Q)\\
\vdots\\
f (P\traincirc{n-1}Q\traincirc{n-1}R)\\
\vdots
\end{pmatrix}_{P,Q,R\in\fB},
$$
which express the $f\cdot j^2(S)$ as linear combinations of reduced trains.
\end{step}

\begin{rem}
See \cite[Section 2.5]{1208.3637} for the discussion of how the existence of box jellyfish relations is equivalent to the existence of jellyfish relations.
\end{rem}

\subsection{Checking our calculations}\label{sec:Checking}

Since the computer is doing all the arithmetic, it is good to check that our formulas are consistent with other methods of calculation. 
The computations in this section are redundant; hence we freely take shortcuts and perform spot checks when more thorough checks would be too time consuming.

The checks we perform in this subsection are done directly in the graph planar algebra.
As such computations are computationally expensive, we use the following shortcut, which is known to experts.
We do not prove it here as it would take us too far afield.

\begin{prop}
Suppose $\cP_\bullet$ is a subfactor planar algebra.
Choose an embedding of $\cP_\bullet$ into $\cG\cP\cA(\Gamma_+)_\bullet$, the graph planar algebra of its principal graph, and identify $\cP_\bullet$ with its image.
Define the map $\Phi:\cP_{k,\pm}\to\cG\cP\cA(\Gamma_+)_{k,\pm}$ by cutting down at the zero box $\star$ (the distinguished vertex of $\Gamma_+$), i.e., forgetting all loops of length $2k$ which do not start at $\star$.
$$
\begin{tikzpicture}[baseline=-.1cm]
	\draw (0,-.8) -- (0,.8);
	\nbox{unshaded}{(0,0)}{0}{0}{x}
	\node at (.2,-.6) {\scriptsize{$k$}};
	\node at (.2,.6) {\scriptsize{$k$}};
\end{tikzpicture}
\overset{\Phi}{\longmapsto}
\begin{tikzpicture}[baseline=-.1cm]
	\draw[thick] (-1,-.2) rectangle (-.6,.2);
	\node at (-.8,0) {$\star$};
	\draw (0,-.8) -- (0,.8);
	\nbox{unshaded}{(0,0)}{0}{0}{x}
	\node at (.2,-.6) {\scriptsize{$k$}};
	\node at (.2,.6) {\scriptsize{$k$}};
\end{tikzpicture}
$$
Then $\Phi$ is a $*$-algebra isomorphism under the usual multiplication, and $\Phi$ commutes with taking (partial) traces.
\end{prop}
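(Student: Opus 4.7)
The plan is to exploit the block decomposition of $\cG\cP\cA(\Gamma_+)_{k,\pm}$ as an algebra. A basis is indexed by pairs of length-$k$ paths $(\alpha,\beta)$ with matching source and target on $\Gamma_+$, and multiplication is the matrix-like rule $(xy)(\alpha,\gamma)=\sum_\beta x(\alpha,\beta)y(\beta,\gamma)$, summed over $\beta$ with $s(\beta)=s(\alpha)=s(\gamma)$ and $t(\beta)=t(\alpha)=t(\gamma)$. Thus the source vertex is preserved under multiplication, and the graph planar algebra decomposes as an algebra direct sum $\cG\cP\cA(\Gamma_+)_{k,\pm}=\bigoplus_{v_0}A_{v_0}$ indexed by source vertex. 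The map $\Phi$ is precisely projection onto the summand $A_\star$, so it is automatically a $*$-algebra homomorphism (the adjoint $(\alpha,\beta)\mapsto(\beta,\alpha)$ preserves the source). It commutes with (partial) traces because the trace tangle caps off strings on one side of a box, which leaves the source vertex of the surviving loop unchanged.

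The real content is that $\Phi|_{\cP_{k,\pm}}$ is injective. I would invoke the classical identification of $\cP_{k,+}$ with the higher relative commutant $N'\cap M_{k-1}$ of the underlying subfactor $N\subset M$, whose Bratteli diagram through level $k$ is by definition $\Gamma_+$ truncated at depth $2k$. This yields
$$
\cP_{k,+}\;\cong\;\bigoplus_{v} M_{|\mathrm{Path}_k(\star,v)|}(\C),
$$
summed over even vertices $v$ at depth $2k$. A direct count shows that $A_\star$ has the exact same block decomposition. Thus the source and target of $\Phi|_{\cP_{k,+}}$ are abstractly isomorphic semisimple $*$-algebras, block by block.

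The hard part — and what I expect to be the main obstacle — is verifying that $\Phi$ kills no simple summand of $\cP_{k,+}$. The approach I would take is to identify, for each depth-$2k$ vertex $v$ of $\Gamma_+$, the minimal central projection $z_v\in\cP_{k,+}$ corresponding to $v$ with (a normalization of) the indicator of loops whose $k$-th vertex is $v$. This identification is a direct consequence of the construction of the principal graph from the decomposition of ${}_N L^2(M_{k-1})_N$ into irreducible $N$-$N$ bimodules; pinning down the constant and compatibility with the embedding is the main technical step. Once it is in hand, $\Phi(z_v)$ is the indicator of $\star$-based loops whose $k$-th vertex is $v$, which is nonzero since every vertex of $\Gamma_+$ is reachable from $\star$ by construction. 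Hence $\Phi$ is nonzero, and therefore injective, on each minimal ideal $z_v\cP_{k,+}$; combined with the block-by-block dimension match this gives the claimed $*$-algebra isomorphism. The $\cP_{k,-}$ case is identical after replacing the role of $\star$ with a distinguished depth-$1$ vertex of $\Gamma_+$.
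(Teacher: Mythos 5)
Your structural claims are correct and are exactly what the paper itself relies on: multiplication in $\cG\cP\cA(\Gamma_+)_{k,\pm}$ preserves the base vertex of a loop, so cutting down at $\star$ is projection onto an algebra direct summand and hence a $*$-homomorphism commuting with (partial) traces, and $\dim\cP_{k,\pm}$ equals the number of loops of length $2k$ based at $\star$, so everything reduces to injectivity of $\Phi$. Note that the paper deliberately stops there (``one only needs to prove this map is injective'') and gives no proof, so the comparison is with the natural intended argument rather than a written one.

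The gap is in your injectivity step, and it is precisely the step you flag as the main technical obstacle. You want each minimal central projection $z_v$ of $\cP_{k,+}$ to be sent by the embedding to the indicator of loops whose midpoint is $v$. That element is central in the full graph planar algebra, whereas $z_v$ is a priori central only in the subalgebra $\cP_{k,+}$; the two centers are genuinely different (the blocks of $\cG\cP\cA(\Gamma_+)_{k,+}$ are indexed by pairs consisting of a base vertex and a midpoint, those of $\cP_{k,+}$ by midpoints alone), so for an arbitrary embedding this identification is not automatic and is, if anything, harder to establish than the proposition itself. The missing idea that makes injectivity immediate is faithfulness of the trace: $\Phi$ commutes with iterated partial traces and sends the empty diagram in $\cP_{0,+}\cong\C$ to the indicator of $\star$, so $\Tr(x^*x)=\Tr_\star\bigl(\Phi(x^*x)\bigr)=\Tr_\star\bigl(\Phi(x)^*\Phi(x)\bigr)$ for all $x$; if $\Phi(x)=0$ this forces $\Tr(x^*x)=0$, and positive definiteness of $\langle x,y\rangle=\Tr(y^*x)$ in a subfactor planar algebra gives $x=0$. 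Combined with your dimension count this yields the isomorphism without ever locating the central projections. Two smaller points: your block decomposition of $\cP_{k,+}$ should run over vertices at depth at most $k$ of the correct parity, not only vertices at the top depth; and once the trace argument is in place your reachability observation is no longer needed.
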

We remark that $\dim(\cP_{k,\pm})$ is equal to the number of loops of length $2k$ starting at $\star$ on the principal graph, so one only needs to prove this map is injective.

To simplify calculations in the graph planar algebra, we can compute the inner product by first cutting down at star and then taking the inner product of the cut down elements in the graph planar algebra. 
Note that this simplification assumes we are working in the image of a subfactor planar algebra, so it cannot be used to prove that formulas hold. 
However, it can be used as a check for our calculations.

Using this shortcut, we check the propositions listed in the following table.
The calculations are performed in the notebook {\tt TwoStrandJellyfish} in subsections called {\tt Checking directly in the GPA} for each of our examples.
Many of the computations are exact, but two are numerical.
For the checks for Propositions \ref{prop:ProjectTrainsToAC} and \ref{prop:ProjectACofTrainsToAC}, we don't check all the coefficients in the graph planar algebra; rather we only check the coefficients that our formulas tell us are nonzero.
\\

\begin{tabular}{|c|c|c|}
\hline
Proposition & Checking functions & Numerical?
\\\hline
\ref{prop:AnnularInnerProducts} 
&
{\tt CheckPairwiseInnerProductsOfSecondAC}
&
Yes
\\\hline
\ref{prop:ProjectTrainsToAC}
&
{\tt CheckCoefficientsOf2TrainsInSecondAC}
&
No
\\
&{\tt CheckCoefficientsOf3TrainsInSecondAC} &
\\\hline
\ref{prop:ProjectACofTrainsToAC} 
&
{\tt CheckCoefficientsOfCiQTCircsInSecondAC}
&
No
\\\hline
\ref{prop:InnerProduct} 
&
{\tt CheckInnerProductBetweenTrains}
&
No
\\\hline
\ref{prop:InnerProduct2}
&
{\tt CheckInnerProductWithCiQTCircs}
&
Yes
\\\hline
\end{tabular}
\\

We note that the formula for the two-strand jellyfish relation for $3^{\Z/3}$ (Haagerup) agrees with that obtained in \cite{MR2979509}.
We have not checked that our two-strand relations for $3^{\Z/2\times\Z/2}$ are consistent with the one-strand relations found in \cite{1208.3637}, since we are using different generators.
Unfortunately, as noted in Remark \ref{rem:NoCheck}, we do not get another check for the entries of the jellyfish matrices from wrapping the Jones-Wenzl idempotent around the top of a 3-train.

In \cite{1208.3637}, the authors were able to check the one-strand jellyfish relations for 2221 directly in the graph planar algebra using a clever trick due to Bigelow.
We cannot do these computations for our graphs. 
Not only are our graphs 3-supertransitive, but we also use two-strand relations, making the preparation of the two-cup Jones-Wenzl too computationally expensive.

\section{Examples}\label{sec:relations}

We now have a subsection for each of our examples. 
Each has an orthogonal set of minimal generators $\fB$ which lives in the appropriate graph planar algebra. 
Formulas for these generators are given in Appendix \ref{sec:Generators}. 
We first check that Assumptions \ref{assume:Generators}, \ref{assume:SpanAlgebras}, and \ref{assume:Tetrahedral} hold for these generators, i.e., 
\begin{itemize}
\item
The elements $R\in \fB$ are self-adjoint low-weight rotational eigenvectors with corresponding chiralities $\sigma_R$ given in Appendix \ref{sec:Generators}. 
Moreover, $\fB$ is linearly independent and orthogonal and has scalar moments. The moments are given in Appendix \ref{sec:MomentsAndTetrahedrals}.
\item
$\fB\cup\{\jw{n}\}$ and $\check{\fB}\cup\{\jw{n}\}$ span complex algebras under the usual multiplication. We check this using the program {\texttt{VerifyClosedUnderMultiplication}} in the notebook {\texttt{TwoStrandJellyfish.nb}}.
\item
The tetrahedral structure constants $\Delta(P,Q,R\mid S)$ are scalars for all $P,Q,R,S\in\fB$. The tetrahedral constants are given in Appendix \ref{sec:MomentsAndTetrahedrals}.
\end{itemize} 

Each of the subsections below consists of three lemmas, which consist of performing the calculations described in Section \ref{sec:ComputingJellyfishRelations} for each of our examples. 
The proofs are simply substituting in the appropriate quantities (moments, tetrahedral structure constants) where applicable, and executing the functions in the \texttt{Mathematica} notebooks included with the \texttt{arXiv} sources of this article.

Throughout, the notation  $\lambda_{a_n, \ldots, a_0}^{(z)}$ denotes the root of the polynomial $\sum_i a_i x^i$ which is closest to the approximate real number $z$. 
(The digits of precision of $z$ are in each case chosen so that this unambiguously identifies the root.) 
For example, $\lambda^{(0.3278)}_{1024,0,-864,0,81}$ denotes the root of $1024 x^4 - 864 x^2 + 81$ which is closest to $0.3278$.

For each subsection, $A,B$ are the generators given in the subsection of Appendix \ref{sec:Generators} for each respective graph.

\subsection{$3^{\Z/3}$: Haagerup}

\begin{lem}\label{25781EDEA61320E6E26689FBED470E90-rtac}
The following linear combination $X$ of reduced trains lies in annular consequences.  The column marked $X$ gives the coefficients of the reduced trains for $X$.

{\scriptsize{
\begin{longtable}{|c|c|}
 \hline 
 & $X_1$\\[4pt]
\hline $A\traincirc{n-2}A$ & $0$ \\[4pt]
\hline $A\traincirc{n-1}A\traincirc{n-1}A$ & $1$ \\[4pt]
\hline $C_{1}[A\traincirc{n-1}A]$ & $0$ \\[4pt]
\hline $C_{2}[A\traincirc{n-1}A]$ & $0$ \\[4pt]
\hline $C_{3}[A\traincirc{n-1}A]$ & $0$ \\[4pt]
\hline $C_{4}[A\traincirc{n-1}A]$ & $0$ \\[4pt]
\hline $C_{5}[A\traincirc{n-1}A]$ & $0$ \\[4pt]
\hline $C_{6}[A\traincirc{n-1}A]$ & $0$ \\[4pt]
\hline $C_{7}[A\traincirc{n-1}A]$ & $0$ \\[4pt]
\hline $C_{8}[A\traincirc{n-1}A]$ & $0$ \\[4pt]
\hline $C_{9}[A\traincirc{n-1}A]$ & $0$ \\[4pt]
\hline $C_{10}[A\traincirc{n-1}A]$ & $0$ \\[4pt]
\hline $C_{11}[A\traincirc{n-1}A]$ & $0$ \\[4pt]
\hline
\end{longtable}
}}
\end{lem}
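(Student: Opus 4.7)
The plan is to execute Step \ref{step:RTAC} of the algorithm from Section \ref{sec:ComputingJellyfishRelations} with the explicit data for $3^{\Z/3}$. Since $\fB=\{A\}$ is a singleton, $n=4$, and there are exactly $13$ reduced trains in $\rt$: the single $2$-train $A\traincirc{n-2}A$, the single $3$-train $A\traincirc{n-1}A\traincirc{n-1}A$, and the $2n+3=11$ annular consequences $C_{1}[A\traincirc{n-1}A],\dots,C_{11}[A\traincirc{n-1}A]$. The goal is to compute a basis for the kernel of the $13\times 13$ matrix
\[
M=\bigl(\langle \cX-P_{\TL_{n+2,+}}(\cX)-P_{\gA_{n+2}}(\cX),\cY\rangle\bigr)_{\cX,\cY\in\rt},
\]
keep only the essential basis vectors, and read off their coefficients.

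First I would assemble all the entries of $M$. The pairwise inner products $\langle\cX,\cY\rangle$ are given directly by Propositions \ref{prop:InnerProduct} and \ref{prop:InnerProduct2}. The contributions from subtracting the Temperley-Lieb projections use Proposition \ref{prop:InnerProductInTL} and Proposition \ref{prop:UglyIPInTL}, and the annular contributions follow the recipe in Remark \ref{rem:InnerProductInA} using Propositions \ref{prop:ProjectTrainsToAC}, \ref{prop:ProjectACofTrainsToAC} together with the change-of-basis matrix from $\{\widehat{\cup}_{i,j}(A)\}$ to $\{\cup_{i,j}(A)\}$ via Proposition \ref{prop:AnnularInnerProducts} (cf.\ Remark \ref{rem:DualAnnularBasis}). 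All of these formulas require only the moments $\Tr(A^2),\Tr(A^3),\Tr(A^4)$, the triple products $a_A^{AA}, b_A^{AA}$, the chirality $\sigma_A$, and the single tetrahedral constant $\Delta(A,A,A\mid A)$, all recorded for $3^{\Z/3}$ in Appendix \ref{sec:MomentsAndTetrahedrals}. Substituting these concrete values turns $M$ into an explicit $13\times 13$ matrix of algebraic numbers.

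Next I would compute $\ker(M)$ by linear algebra (this is exactly what the \texttt{Mathematica} notebook \texttt{TwoStrandJellyfish.nb} does). The expected output, consistent with \cite{1208.1564} telling us there should be $|\fB|=1$ essential relation, is a one-dimensional essential subspace. I would then normalize the unique essential generator so that its coefficient on $A\traincirc{n-1}A\traincirc{n-1}A$ equals $1$, and verify that every other coefficient vanishes, matching the table above. Any non-essential null vectors (involving only the $C_i[A\traincirc{n-1}A]$) are discarded.

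The main obstacle is not conceptual but computational bookkeeping: constructing $P_{\gA_{n+2}}$ on each reduced train requires tracking the roughly $2n+3\approx 11$ indices $(i,j)$ per orbit and inverting the large Gram matrix of Proposition \ref{prop:AnnularInnerProducts}. I would rely on the formulas of Section \ref{sec:Formulas} to automate this, and perform the numerical-but-exact consistency checks from Subsection \ref{sec:Checking} (\texttt{CheckCoefficientsOf3TrainsInSecondAC} etc.) to guard against sign or indexing errors. Once $M$ is assembled correctly, the rank computation and null-space extraction are immediate, and the lemma follows.
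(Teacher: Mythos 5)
Your proposal is correct and follows exactly the paper's route: the paper proves this lemma by running Step \ref{step:RTAC} of Section \ref{sec:ComputingJellyfishRelations} in the {\tt TwoStrandJellyfish.nb} notebook, i.e., assembling the $13\times 13$ Gram matrix modulo $\TL_{n+2,+}\oplus\gA_{n+2}$ from the formulas of Section \ref{sec:Formulas} together with the $3^{\Z/3}$ moments, chirality, and tetrahedral constant from Appendix \ref{sec:MomentsAndTetrahedrals}, extracting the null space, and keeping the essential vectors. Your accounting of the reduced trains, the required input data, and the normalization on $A\traincirc{n-1}A\traincirc{n-1}A$ all match what the paper does.
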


\begin{lem}
Let $K$ be the transpose of the $2\times 1$ matrix whose entries are given by the first 2 rows and the 1 column of the table in Lemma \ref{25781EDEA61320E6E26689FBED470E90-rtac}. Then we have
$$
K
\begin{pmatrix}
f(A\traincirc{n-2}A) \\[6pt]
f(A\traincirc{n-1}A\traincirc{n-1}A)\end{pmatrix}=J
\begin{pmatrix}
f\cdot j^2(A)\end{pmatrix}$$
where
\begin{align*}
J &= \begin{pmatrix}
\lambda_{243,0,-132,0,16}^{(0.6005)}\end{pmatrix}.\\
\end{align*}
\end{lem}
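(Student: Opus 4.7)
The plan is to execute Steps \ref{step:RTAC}--\ref{step:BoxJelliesEquations} of the algorithm in Section \ref{sec:ComputingJellyfishRelations} with the generator $A$ for $3^{\Z/3}$, using the moments, chirality $\sigma_A$, and tetrahedral constant $\Delta(A,A,A\mid A)$ recorded in Appendix \ref{moments:333Haagerup}. Since $\fB=\{A\}$ and $n=4$, the full list of reduced trains is the single pair $\{A\traincirc{n-2}A,\;A\traincirc{n-1}A\traincirc{n-1}A\}$ together with the eleven annular consequences $C_1[A\traincirc{n-1}A],\ldots,C_{11}[A\traincirc{n-1}A]$, so the matrices involved are small.

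First I would build the $13\times 13$ Gram matrix of inner products of the reduced trains modulo $\TL_{n+2,+}\oplus \gA_{n+2}$, using Propositions \ref{prop:InnerProduct}, \ref{prop:InnerProduct2}, \ref{prop:InnerProductInTL}, and \ref{prop:UglyIPInTL}, combined with the annular--consequence inner products from Proposition \ref{prop:AnnularInnerProducts} and the projection formulas in Propositions \ref{prop:ProjectTrainsToAC} and \ref{prop:ProjectACofTrainsToAC} as explained in Remark \ref{rem:InnerProductInA}. Substituting the Haagerup moments and $\Delta(A,A,A\mid A)$ produces a null space that is one-dimensional and, once we discard non-essential representatives, is spanned by the vector $X_1$ displayed in Lemma \ref{25781EDEA61320E6E26689FBED470E90-rtac}; this gives the row of $K$, namely $K=(0,1)$.

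Next, applying Step \ref{step:AnnularBasis}, I would expand $X_1 = A\traincirc{n-1}A\traincirc{n-1}A + (\text{annular/TL consequences})$ in the dual annular basis via Proposition \ref{prop:ProjectTrainsToAC}(2). The coefficient of $\widehat{\cup}_{-1,-1}(A)$ is exactly $\Delta_{n-1,2}(A,A,A\mid A)=\Delta(A,A,A\mid A)$. Then I invert the inner product matrix $W$ of $\cup_{i,j}(A)$'s (given in Proposition \ref{prop:AnnularInnerProducts}) and pass to the annular basis as in Remark \ref{rem:DualAnnularBasis}, producing scalars $\gamma_A^1$ and $\gamma_{(i,j)}$ with $X_1 = \gamma_A^1\cup_{-1,-1}(A) + Y_1 + Z_1$, where $Y_1\in\operatorname{span}\{\cup_{i,j}(A):(i,j)\neq(-1,-1)\}$ and $Z_1\in\TL_{n+2,+}$.

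Finally, in Step \ref{step:BoxJelliesEquations} I would cap $X_1$ from below with $\jw{2n+4}$. All Temperley--Lieb terms and all $\cup_{i,j}(A)$ with $(i,j)\neq(-1,-1)$ contain a through--strand cap that dies against $\jw{2n+4}$; only $\cup_{-1,-1}(A)=j^2(A)$ survives, leaving the identity $f\cdot X_1 = \gamma_A^1\,f\cdot j^2(A)$. Combined with Step \ref{step:RTAC}, this is the claimed matrix equation with $J=(\gamma_A^1)$. The real work is then purely computational: verifying that the scalar $\gamma_A^1$, assembled from $\Delta(A,A,A\mid A)$, the chirality $\sigma_A$, the moments of $A$, and the inverse of the annular Gram matrix, is the root $\lambda^{(0.6005)}_{243,0,-132,0,16}$. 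This is the main obstacle, and I would handle it exactly as in the accompanying \texttt{TwoStrandJellyfish.nb}, where the symbolic data in Appendix \ref{moments:333Haagerup} are plugged into the formulas above and the minimal polynomial of the resulting algebraic number is identified.
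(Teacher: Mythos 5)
Your proposal follows the paper's own route exactly: the paper proves this lemma by running Steps \ref{step:RTAC}--\ref{step:BoxJelliesEquations} of Section \ref{sec:ComputingJellyfishRelations} on the $3^{\Z/3}$ data from Appendix \ref{moments:333Haagerup} and delegating the final arithmetic (including identifying $\gamma_A^1$ as $\lambda^{(0.6005)}_{243,0,-132,0,16}$) to the \texttt{TwoStrandJellyfish.nb} notebook. Your reading of the structure is also correct --- $K=(0,1)$ from the table, and $J$ is the coefficient of $\cup_{-1,-1}(A)=j^2(A)$ surviving after capping with $\jw{2n+4}$ --- so there is nothing to add.
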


\begin{lem}
The elements $A$ satisfy the box jellyfish relations
$$
\begin{pmatrix}
f\cdot j^2(A)\end{pmatrix}=J^{L}K
\begin{pmatrix}
f(A\traincirc{n-2}A) \\[6pt]
f(A\traincirc{n-1}A\traincirc{n-1}A)\end{pmatrix}$$
where
\begin{align*}
(J^{L}K)^T &= \begin{pmatrix}
0 \\[6pt]
\lambda_{16,0,-132,0,243}^{(1.665)}\end{pmatrix}.\\
\end{align*}
\end{lem}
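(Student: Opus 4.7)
The plan is simply to execute Steps \ref{step:Invert} and \ref{step:BoxJelliesFormulas} of the algorithm in Section \ref{sec:ComputingJellyfishRelations}, using the $1\times 1$ jellyfish matrix $J=(\lambda_{243,0,-132,0,16}^{(0.6005)})$ and the $1\times 2$ reduced trains matrix $K=(0,\ 1)$ produced by the two preceding lemmas. The assumptions on $\fB=\{A\}$ (Assumptions \ref{assume:Generators}, \ref{assume:SpanAlgebras}, \ref{assume:Tetrahedral}) have already been verified at the start of Section \ref{sec:relations} for the $3^{\Z/3}$ generator in Appendix \ref{generators:333Haagerup}, so the algorithm applies.

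First I would check that $J$ has rank $|\fB|=1$, which is immediate since its sole entry is nonzero; in particular $J^*J$ is invertible, so the left inverse $J^L=(J^*J)^{-1}J^*$ of Step \ref{step:Invert} exists and coincides with the scalar reciprocal $1/\lambda_{243,0,-132,0,16}^{(0.6005)}$. To rewrite this reciprocal in the $\lambda$-notation of the paper, I would substitute $x=1/y$ into $243x^4-132x^2+16$ and clear denominators to obtain $16y^4-132y^2+243$; since the reciprocal of $0.6005$ is approximately $1.665$, this identifies $J^L$ with $\lambda_{16,0,-132,0,243}^{(1.665)}$.

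Next I would perform the matrix multiplication $J^L K=(0,\ \lambda_{16,0,-132,0,243}^{(1.665)})$ and substitute into the identity
\[
\begin{pmatrix} f\cdot j^2(A)\end{pmatrix}=J^LK\begin{pmatrix} f(A\traincirc{n-2}A)\\[4pt] f(A\traincirc{n-1}A\traincirc{n-1}A)\end{pmatrix}
\]
of Step \ref{step:BoxJelliesFormulas}, reading off the desired box jellyfish relation. There is essentially no obstacle here, since all the substantive work has been accomplished in the two preceding lemmas (producing $X_1\in\rtac$ and expanding it in both the reduced-train basis and in the dual annular basis via Propositions \ref{prop:ProjectTrainsToAC} and \ref{prop:ProjectACofTrainsToAC}); what remains is a one-dimensional inversion and a single scalar-times-row multiplication, which are carried out by the \texttt{TwoStrandJellyfish.nb} notebook. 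As a sanity check I would compare the resulting relation to the one derived independently for the Haagerup planar algebra in \cite{MR2979509}, as noted in Section \ref{sec:Checking}.
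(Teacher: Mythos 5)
Your proposal is correct and follows exactly the route the paper takes: the proof is just Steps \ref{step:Invert} and \ref{step:BoxJelliesFormulas} applied to the $1\times 1$ matrix $J$ and $1\times 2$ matrix $K$ from the preceding two lemmas, carried out by the {\tt TwoStrandJellyfish.nb} notebook. Your observation that the reciprocal of $\lambda_{243,0,-132,0,16}^{(0.6005)}$ is obtained by reversing the polynomial's coefficients, giving $\lambda_{16,0,-132,0,243}^{(1.665)}$, is a nice explicit justification of the entry that the paper leaves to the computer.
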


\subsection{$3^{\Z/2\times \Z/2}$}

\begin{lem}\label{772136FAF425B059772136FAF425B059-rtac}
The following linear combinations $X_i$ of reduced trains lie in annular consequences.  The column marked $X_i$ gives the coefficients of the reduced trains for $X_i$).

{\scriptsize{
\begin{longtable}{|c|c|c|}
 \hline 
 & $X_1$ & $X_2$\\[4pt]
\hline $A\traincirc{n-2}A$ & $1$ & $0$ \\[4pt]
\hline $A\traincirc{n-2}B$ & $0$ & $1$ \\[4pt]
\hline $B\traincirc{n-2}A$ & $0$ & $1$ \\[4pt]
\hline $B\traincirc{n-2}B$ & $-\frac{1}{3}$ & $0$ \\[4pt]
\hline $A\traincirc{n-1}A\traincirc{n-1}A$ & $\frac{3 \sqrt{\frac{5}{2}}}{4}$ & $\frac{27}{4 \sqrt{2}}$ \\[4pt]
\hline $A\traincirc{n-1}A\traincirc{n-1}B$ & $\lambda_{1024,0,-576,0,1}^{(0.74884)}$ & $\lambda_{1024,0,-16704,0,9801}^{(-0.781)}$ \\[4pt]
\hline $A\traincirc{n-1}B\traincirc{n-1}A$ & $\lambda_{1024,0,-2880,0,25}^{(0.09331)}$ & $\lambda_{1024,0,-46656,0,6561}^{(0.3756)}$ \\[4pt]
\hline $A\traincirc{n-1}B\traincirc{n-1}B$ & $\lambda_{1024,0,-1856,0,121}^{(0.2602)}$ & $\lambda_{1024,0,-576,0,1}^{(0.74884)}$ \\[4pt]
\hline $B\traincirc{n-1}A\traincirc{n-1}A$ & $\lambda_{1024,0,-576,0,1}^{(0.74884)}$ & $\lambda_{1024,0,-16704,0,9801}^{(-0.781)}$ \\[4pt]
\hline $B\traincirc{n-1}A\traincirc{n-1}B$ & $\lambda_{1024,0,-5184,0,81}^{(-0.1252)}$ & $\lambda_{1024,0,-2880,0,25}^{(0.09331)}$ \\[4pt]
\hline $B\traincirc{n-1}B\traincirc{n-1}A$ & $\lambda_{1024,0,-1856,0,121}^{(0.2602)}$ & $\lambda_{1024,0,-576,0,1}^{(0.74884)}$ \\[4pt]
\hline $B\traincirc{n-1}B\traincirc{n-1}B$ & $\frac{3}{4 \sqrt{2}}$ & $-\frac{\sqrt{\frac{5}{2}}}{4}$ \\[4pt]
\hline $C_{1}[A\traincirc{n-1}A]$ & $0$ & $0$ \\[4pt]
\hline $C_{1}[A\traincirc{n-1}B]$ & $0$ & $0$ \\[4pt]
\hline $C_{1}[B\traincirc{n-1}A]$ & $0$ & $0$ \\[4pt]
\hline $C_{1}[B\traincirc{n-1}B]$ & $0$ & $0$ \\[4pt]
\hline $C_{2}[A\traincirc{n-1}A]$ & $0$ & $0$ \\[4pt]
\hline $C_{2}[A\traincirc{n-1}B]$ & $0$ & $0$ \\[4pt]
\hline $C_{2}[B\traincirc{n-1}A]$ & $0$ & $0$ \\[4pt]
\hline $C_{2}[B\traincirc{n-1}B]$ & $0$ & $0$ \\[4pt]
\hline $C_{3}[A\traincirc{n-1}A]$ & $0$ & $0$ \\[4pt]
\hline $C_{3}[A\traincirc{n-1}B]$ & $0$ & $0$ \\[4pt]
\hline $C_{3}[B\traincirc{n-1}A]$ & $0$ & $0$ \\[4pt]
\hline $C_{3}[B\traincirc{n-1}B]$ & $0$ & $0$ \\[4pt]
\hline $C_{4}[A\traincirc{n-1}A]$ & $0$ & $0$ \\[4pt]
\hline $C_{4}[A\traincirc{n-1}B]$ & $0$ & $0$ \\[4pt]
\hline $C_{4}[B\traincirc{n-1}A]$ & $0$ & $0$ \\[4pt]
\hline $C_{4}[B\traincirc{n-1}B]$ & $0$ & $0$ \\[4pt]
\hline $C_{5}[A\traincirc{n-1}A]$ & $0$ & $0$ \\[4pt]
\hline $C_{5}[A\traincirc{n-1}B]$ & $0$ & $0$ \\[4pt]
\hline $C_{5}[B\traincirc{n-1}A]$ & $0$ & $0$ \\[4pt]
\hline $C_{5}[B\traincirc{n-1}B]$ & $0$ & $0$ \\[4pt]
\hline $C_{6}[A\traincirc{n-1}A]$ & $0$ & $0$ \\[4pt]
\hline $C_{6}[A\traincirc{n-1}B]$ & $0$ & $0$ \\[4pt]
\hline $C_{6}[B\traincirc{n-1}A]$ & $0$ & $0$ \\[4pt]
\hline $C_{6}[B\traincirc{n-1}B]$ & $0$ & $0$ \\[4pt]
\hline $C_{7}[A\traincirc{n-1}A]$ & $0$ & $0$ \\[4pt]
\hline $C_{7}[A\traincirc{n-1}B]$ & $0$ & $0$ \\[4pt]
\hline $C_{7}[B\traincirc{n-1}A]$ & $0$ & $0$ \\[4pt]
\hline $C_{7}[B\traincirc{n-1}B]$ & $0$ & $0$ \\[4pt]
\hline $C_{8}[A\traincirc{n-1}A]$ & $0$ & $0$ \\[4pt]
\hline $C_{8}[A\traincirc{n-1}B]$ & $0$ & $0$ \\[4pt]
\hline $C_{8}[B\traincirc{n-1}A]$ & $0$ & $0$ \\[4pt]
\hline $C_{8}[B\traincirc{n-1}B]$ & $0$ & $0$ \\[4pt]
\hline $C_{9}[A\traincirc{n-1}A]$ & $0$ & $0$ \\[4pt]
\hline $C_{9}[A\traincirc{n-1}B]$ & $0$ & $0$ \\[4pt]
\hline $C_{9}[B\traincirc{n-1}A]$ & $0$ & $0$ \\[4pt]
\hline $C_{9}[B\traincirc{n-1}B]$ & $0$ & $0$ \\[4pt]
\hline $C_{10}[A\traincirc{n-1}A]$ & $0$ & $0$ \\[4pt]
\hline $C_{10}[A\traincirc{n-1}B]$ & $0$ & $0$ \\[4pt]
\hline $C_{10}[B\traincirc{n-1}A]$ & $0$ & $0$ \\[4pt]
\hline $C_{10}[B\traincirc{n-1}B]$ & $0$ & $0$ \\[4pt]
\hline $C_{11}[A\traincirc{n-1}A]$ & $0$ & $0$ \\[4pt]
\hline $C_{11}[A\traincirc{n-1}B]$ & $0$ & $0$ \\[4pt]
\hline $C_{11}[B\traincirc{n-1}A]$ & $0$ & $0$ \\[4pt]
\hline $C_{11}[B\traincirc{n-1}B]$ & $0$ & $0$ \\[4pt]
\hline
\end{longtable}
}}
\end{lem}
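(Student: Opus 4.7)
The plan is to execute Step \ref{step:RTAC} of the algorithm from Section \ref{sec:ComputingJellyfishRelations} on the set of minimal generators $\fB = \{A,B\}$ for the candidate $3^{\Z/2\times\Z/2}$ planar algebra. First, I would verify Assumptions \ref{assume:Generators}, \ref{assume:SpanAlgebras}, and \ref{assume:Tetrahedral} for these specific generators, using the explicit moments and tetrahedral structure constants tabulated in Appendix \ref{sec:MomentsAndTetrahedrals} for $3^{\Z/2\times\Z/2}$. This gives access to the projection formulas of Section \ref{sec:Formulas} and ensures that all quantities appearing in the forthcoming linear-algebra computation are genuine scalars rather than higher-weight elements of $\cP_{0,\pm}$.

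Next, I would enumerate the reduced $\fB$-trains in $\cP_{n+2,+}$: the four two-strand trains $P\traincirc{n-2}Q$ (for $P,Q\in\{A,B\}$), the eight three-strand trains $P\traincirc{n-1}Q\traincirc{n-1}R$, and the $4\cdot(2n+3)$ annular-cap trains $C_i[P\traincirc{n-1}Q]$. Using Propositions \ref{prop:InnerProduct} and \ref{prop:InnerProduct2} I compute pairwise inner products of these reduced trains; using Proposition \ref{prop:InnerProductInTL}, Proposition \ref{prop:UglyIPInTL}, and Remark \ref{rem:InnerProductInA} (together with the change-of-basis matrix from the second annular basis to its dual, obtained by inverting the inner-product matrix of Proposition \ref{prop:AnnularInnerProducts}) I compute the inner products with $P_{\TL_{n+2,+}}(\cX)$ and $P_{\gA_{n+2}}(\cX)$ for each train $\cX$. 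Subtracting yields the matrix
\[
M = \bigl(\langle \cX - P_{\TL_{n+2,+}}(\cX) - P_{\gA_{n+2}}(\cX),\,\cY\rangle\bigr)_{\cX,\cY\in\rt}.
\]

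A basis for $\rtac = (\TL_{n+2,+}\oplus\gA_{n+2})\cap\spann(\rt)$ is then given by a basis for the null space of $M$. The final step is to discard non-essential basis vectors, i.e.\ those whose coefficients on every $P\traincirc{n-2}Q$ and $P\traincirc{n-1}Q\traincirc{n-1}R$ vanish, leaving (conjecturally) exactly $|\fB|=2$ essential basis vectors $X_1,X_2$. Normalizing so that the two-strand coefficient vectors are $(1,0,0,-\tfrac13)$ and $(0,1,1,0)$ respectively recovers the table asserted in the statement. The $\pm$ ambiguity in each quartic root is pinned down by matching the approximate real numerical value printed in the superscript.

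The main obstacle is not conceptual but computational: the null-space computation involves a $68\times 68$ matrix whose entries are algebraic numbers built from traces and tetrahedral constants over the field $\Q(\delta)$ where $\delta^2 = 3+\sqrt{5}$. All arithmetic must be carried out symbolically to certify that specific coefficients are exactly zero (not merely numerically small), and the quartic coefficients $\lambda_{\ldots}^{(\ldots)}$ must be identified with genuine algebraic numbers rather than floating-point approximations. This is precisely what the \texttt{Mathematica} notebook \texttt{TwoStrandJellyfish.nb} accomplishes by calling the \texttt{FusionAtlas} package and using exact symbolic linear algebra; the checks in Section \ref{sec:Checking} (recomputing selected inner products directly in the graph planar algebra after cutting down at $\star$) provide independent confirmation that no sign errors or indexing slips have crept into the formulas of Section \ref{sec:Formulas} on which this calculation relies.
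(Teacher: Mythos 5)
Your proposal is exactly the paper's approach: the proof of this lemma is precisely Step~\ref{step:RTAC} applied to $\fB=\{A,B\}$, with the inner products supplied by Propositions~\ref{prop:InnerProduct}, \ref{prop:InnerProductInTL}, and Remark~\ref{rem:InnerProductInA}, and the symbolic null-space computation delegated to {\tt TwoStrandJellyfish.nb}. The only quibble is your count of the matrix size: with $n=4$ there are $4+8+4\cdot 11=56$ reduced trains, so the matrix is $56\times 56$ rather than $68\times 68$, but this does not affect the argument.
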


\begin{lem}
Let $K$ be the transpose of the $12\times 2$ matrix whose entries are given by the first 12 rows and the 2 columns of the table in Lemma \ref{772136FAF425B059772136FAF425B059-rtac}. Then we have
$$
K
\begin{pmatrix}
f(A\traincirc{n-2}A) \\[6pt]
f(A\traincirc{n-2}B) \\[6pt]
f(B\traincirc{n-2}A) \\[6pt]
f(B\traincirc{n-2}B) \\[6pt]
f(A\traincirc{n-1}A\traincirc{n-1}A) \\[6pt]
f(A\traincirc{n-1}A\traincirc{n-1}B) \\[6pt]
f(A\traincirc{n-1}B\traincirc{n-1}A) \\[6pt]
f(A\traincirc{n-1}B\traincirc{n-1}B) \\[6pt]
f(B\traincirc{n-1}A\traincirc{n-1}A) \\[6pt]
f(B\traincirc{n-1}A\traincirc{n-1}B) \\[6pt]
f(B\traincirc{n-1}B\traincirc{n-1}A) \\[6pt]
f(B\traincirc{n-1}B\traincirc{n-1}B)\end{pmatrix}=J
\begin{pmatrix}
f\cdot j^2(A) \\[6pt]
f\cdot j^2(B)\end{pmatrix}$$
where
\begin{align*}
J &= \begin{pmatrix}
\frac{1}{4} \left(6-3 \sqrt{5}\right) & -\frac{3}{4} \\[6pt]
-\frac{9}{4} & \frac{1}{4} \left(3 \sqrt{5}-6\right)\end{pmatrix}.\\
\end{align*}
\end{lem}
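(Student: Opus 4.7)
The plan is to carry out Step~\ref{step:AnnularBasis} (Expression in the annular basis) and then Step~\ref{step:BoxJelliesEquations} (Box jellyfish equations) from Section~\ref{sec:ComputingJellyfishRelations} applied to the two essential elements $X_1, X_2 \in \rtac$ produced by Lemma~\ref{772136FAF425B059772136FAF425B059-rtac}. Recall that since each $X_i$ lies in $\TL_{n+2,+}\oplus\gA_{n+2}$, attaching $\jw{2n+4}$ beneath the diagram annihilates the entire Temperley--Lieb part together with every annular consequence $\cup_{j,\ell}(S)$ with $(j,\ell)\neq(-1,-1)$, because all such basis vectors have at least one cap meeting the bottom interval (which gets killed by the Jones--Wenzl). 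Only the terms $\cup_{-1,-1}(S)=j^2(S)$ survive, and their coefficients form the matrix $J$.

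Concretely, I would proceed as follows. First, for each $X_i$, compute $P_{\gA_{n+2}}(X_i)$ by summing the contributions of its individual reduced train summands using Proposition~\ref{prop:ProjectTrainsToAC} for the quadratic and cubic trains $P\traincirc{n-2}Q$ and $P\traincirc{n-1}Q\traincirc{n-1}R$. This presents $P_{\gA_{n+2}}(X_i)$ as an explicit linear combination of the dual basis vectors $\widehat{\cup}_{r,s}(S)$ for $S\in\fB=\{A,B\}$. Second, apply the change of basis procedure of Remark~\ref{rem:DualAnnularBasis}: invert the matrix $W$ of inner products (given by Proposition~\ref{prop:AnnularInnerProducts}) and multiply to pass from the dual basis to the primal basis $\cup_{j,\ell}(S)$. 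Third, read off the coefficient $\gamma_S^i$ of $\cup_{-1,-1}(S)$ for each $S\in\{A,B\}$; these are precisely the entries of the $2\times 2$ matrix $J$ in the statement.

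All numerical inputs for these substitutions are available: the chiralities, moments, and tetrahedral structure constants for the $3^{\Z/2\times\Z/2}$ generators $A,B$ are tabulated in Appendix~\ref{sec:MomentsAndTetrahedrals}, the remaining tetrahedral constants $\Delta_{n,1}$ and $\Delta_{n-1,1}$ were expressed as moments via Example~\ref{ex:ReduceTetrahedral}, and the quartic moments appearing via Proposition~\ref{prop:ProjectTrainsToAC}(2) reduce to products of cubic moments via Equation~\eqref{eqn:QuarticMoment}. Thus the calculation is purely mechanical once one carries it out systematically; indeed, the computer algebra code in \texttt{TwoStrandJellyfish.nb} automates every step described above.

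The main obstacle is not conceptual but rather bookkeeping: there are many indices $(r,s)$ ranging over the second annular basis, and one must track the correct rotational factors $\sigma_S^k$ and signs through the entire derivation. The change of basis matrix $\overline{W^{-1}}$ from Remark~\ref{rem:DualAnnularBasis} is sizeable since $\dim\gA_{n+2}(R)$ is on the order of $n(2n{+}4)$ for each generator, so it is essential to rely on the exact inner-product tables of Proposition~\ref{prop:AnnularInnerProducts} and to verify, as in Section~\ref{sec:Checking}, that these formulas agree with direct graph planar algebra computations before trusting the resulting numerical values of $J$. Once the inversion of $W$ and the linear combination are assembled correctly, the displayed closed-form entries of $J$ drop out.
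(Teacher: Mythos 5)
Your proposal follows exactly the paper's own route: the paper proves this lemma by invoking Steps \ref{step:AnnularBasis} and \ref{step:BoxJelliesEquations} of Section \ref{sec:ComputingJellyfishRelations} (project to $\gA_{n+2}$ via Proposition \ref{prop:ProjectTrainsToAC}, change basis via Remark \ref{rem:DualAnnularBasis}, read off the $\cup_{-1,-1}(S)$ coefficients after capping with $\jw{2n+4}$), with the arithmetic delegated to {\tt TwoStrandJellyfish.nb}. Your justification that the Jones--Wenzl kills the Temperley--Lieb part and every $\cup_{j,\ell}(S)$ with $(j,\ell)\neq(-1,-1)$ is the correct reason only the $j^2(S)$ terms survive, so the proposal is correct and essentially identical to the paper's argument.
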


\begin{lem}
The elements $A$,$B$ satisfy the box jellyfish relations
$$
\begin{pmatrix}
f\cdot j^2(A) \\[6pt]
f\cdot j^2(B)\end{pmatrix}=J^{L}K
\begin{pmatrix}
f(A\traincirc{n-2}A) \\[6pt]
f(A\traincirc{n-2}B) \\[6pt]
f(B\traincirc{n-2}A) \\[6pt]
f(B\traincirc{n-2}B) \\[6pt]
f(A\traincirc{n-1}A\traincirc{n-1}A) \\[6pt]
f(A\traincirc{n-1}A\traincirc{n-1}B) \\[6pt]
f(A\traincirc{n-1}B\traincirc{n-1}A) \\[6pt]
f(A\traincirc{n-1}B\traincirc{n-1}B) \\[6pt]
f(B\traincirc{n-1}A\traincirc{n-1}A) \\[6pt]
f(B\traincirc{n-1}A\traincirc{n-1}B) \\[6pt]
f(B\traincirc{n-1}B\traincirc{n-1}A) \\[6pt]
f(B\traincirc{n-1}B\traincirc{n-1}B)\end{pmatrix}$$
where
\begin{align*}
(J^{L}K)^T &= \begin{pmatrix}
\frac{1}{12} \left(1-\sqrt{5}\right) & \frac{1}{4} \left(-3-\sqrt{5}\right) \\[6pt]
\frac{1}{12} \left(-3-\sqrt{5}\right) & \frac{1}{12} \left(\sqrt{5}-1\right) \\[6pt]
\frac{1}{12} \left(-3-\sqrt{5}\right) & \frac{1}{12} \left(\sqrt{5}-1\right) \\[6pt]
\frac{1}{36} \left(\sqrt{5}-1\right) & \frac{1}{12} \left(3+\sqrt{5}\right) \\[6pt]
\lambda_{64,0,-336,0,121}^{(-2.20)} & -\frac{3}{2 \sqrt{2}} \\[6pt]
\frac{\sqrt{\frac{5}{2}}}{6} & -\frac{3}{2 \sqrt{2}} \\[6pt]
\lambda_{5184,0,-4176,0,121}^{(-0.1735)} & \lambda_{64,0,-144,0,1}^{(-0.08346)} \\[6pt]
-\frac{1}{2 \sqrt{2}} & -\frac{\sqrt{\frac{5}{2}}}{6} \\[6pt]
\frac{\sqrt{\frac{5}{2}}}{6} & -\frac{3}{2 \sqrt{2}} \\[6pt]
\lambda_{5184,0,-1296,0,1}^{(-0.027821)} & \lambda_{5184,0,-4176,0,121}^{(0.1735)} \\[6pt]
-\frac{1}{2 \sqrt{2}} & -\frac{\sqrt{\frac{5}{2}}}{6} \\[6pt]
\frac{1}{6 \sqrt{2}} & \lambda_{5184,0,-3024,0,121}^{(-0.7349)}\end{pmatrix}.\\
\end{align*}
\end{lem}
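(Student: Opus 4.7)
The plan is to apply Steps \ref{step:Invert} and \ref{step:BoxJelliesFormulas} of the algorithm from Section \ref{sec:ComputingJellyfishRelations} to the matrices $J$ and $K$ furnished by the preceding two lemmas. Concretely, the preceding lemma already records the equality
$$
K\begin{pmatrix} f(A\traincirc{n-2}A) \\ \vdots \\ f(B\traincirc{n-1}B\traincirc{n-1}B) \end{pmatrix} = J \begin{pmatrix} f\cdot j^2(A)\\ f\cdot j^2(B)\end{pmatrix},
$$
so the content of the present lemma is purely the linear-algebraic inversion of $J$.

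First I would verify that the $2\times 2$ matrix $J$ has rank $2$, which is an immediate check since $\det(J) = \tfrac{1}{16}(6-3\sqrt5)(3\sqrt5-6) - \tfrac{27}{16} = \tfrac{-9\cdot 9 + (6-3\sqrt5)(3\sqrt5-6)\cdot (-1)\cdot\ldots}{\ldots}$ is clearly nonzero (one verifies $\det(J) \neq 0$ by direct substitution). Having established this, the left inverse $J^L = (J^*J)^{-1}J^*$ exists and is itself a $2\times 2$ matrix (since $J$ is square, $J^L$ is the ordinary inverse $J^{-1}$). I would compute $J^{-1}$ directly using the cofactor formula.

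Next, I would multiply $J^L$ by $K$ to obtain $J^L K$, a $2 \times 12$ matrix whose transpose gives the table of coefficients claimed in the statement. This is a mechanical substitution: each column of $K$ (corresponding to one of the twelve reduced trains $f(P\traincirc{n-2}Q)$ or $f(P\traincirc{n-1}Q\traincirc{n-1}R)$) is a $2$-vector with entries drawn from the preceding lemma, and $J^L K$ is obtained by applying $J^{-1}$ to each column. The entries involve radicals in $\sqrt 2$ and $\sqrt 5$ and roots of quartics, but each product can be simplified by hand or by \texttt{Mathematica} via the notebook \texttt{TwoStrandJellyfish.nb}; that is precisely the role of the computer as described in Section \ref{sec:Checking}.

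The main obstacle, if any, is purely bookkeeping: ensuring that the algebraic identifications of the entries (e.g.\ writing a product of two $\lambda$-roots as another $\lambda$-root) are carried out consistently so that the resulting expressions match the stated entries of $(J^L K)^T$. Since each entry is already a specified root of an explicit polynomial and the computation reduces to linear combinations with coefficients in $\Q(\sqrt 2,\sqrt 5)$, this step is routine once the computer algebra system has been invoked; no further mathematical input is needed beyond the algorithm already in place.
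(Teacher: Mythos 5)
Your proposal is correct and follows exactly the paper's route: the proof of this lemma is precisely Steps \ref{step:Invert} and \ref{step:BoxJelliesFormulas} applied to the $J$ and $K$ of the two preceding lemmas, i.e.\ checking $\det(J)\neq 0$ (so that for the square $2\times2$ matrix $J$ the left inverse is just $J^{-1}$) and then computing $J^{-1}K$ mechanically in the {\tt TwoStrandJellyfish.nb} notebook. The only cosmetic issue is your garbled intermediate expression for $\det(J)$; the clean value is $\det(J)=\tfrac{9(\sqrt5-3)}{4}\neq 0$, which confirms the rank condition.
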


\subsection{$3^{\Z/4}$}

\begin{lem}\label{B5425B05363851E6DB2C121504201C9D-rtac}
The following linear combinations $X_i$ of reduced trains lie in annular consequences.  The column marked $X_i$ gives the coefficients of the reduced trains for $X_i$).

{\scriptsize{
\begin{longtable}{|c|c|c|}
 \hline 
 & $X_1$ & $X_2$\\[4pt]
\hline $A\traincirc{n-2}A$ & $1$ & $0$ \\[4pt]
\hline $A\traincirc{n-2}B$ & $0$ & $1$ \\[4pt]
\hline $B\traincirc{n-2}A$ & $\lambda_{2025,0,-720,0,-16}^{(0.1449 i)}$ & $1$ \\[4pt]
\hline $B\traincirc{n-2}B$ & $\frac{1}{45} \left(-10-3 \sqrt{5}\right)$ & $0$ \\[4pt]
\hline $A\traincirc{n-1}A\traincirc{n-1}A$ & $\lambda_{100,0,-2610,0,-81}^{(0.1761 i)}$ & $\lambda_{4,0,-1134,0,6561}^{(2.43)}$ \\[4pt]
\hline $A\traincirc{n-1}A\traincirc{n-1}B$ & $\lambda_{100,0,-1030,0,121}^{(0.3447)}$ & $\lambda_{4,0,-198,0,-81}^{(0.637 i)}$ \\[4pt]
\hline $A\traincirc{n-1}B\traincirc{n-1}A$ & $\lambda_{25,0,-180,0,4}^{(0.1493)}$ & $0$ \\[4pt]
\hline $A\traincirc{n-1}B\traincirc{n-1}B$ & $\lambda_{4,0,-8,0,-1}^{(-0.3436 i)}$ & $\lambda_{4,0,-180,0,25}^{(0.3733)}$ \\[4pt]
\hline $B\traincirc{n-1}A\traincirc{n-1}A$ & $\lambda_{4,0,-6,0,1}^{(0.4370)}$ & $\lambda_{4,0,-198,0,-81}^{(-0.637 i)}$ \\[4pt]
\hline $B\traincirc{n-1}A\traincirc{n-1}B$ & $\lambda_{100,0,-290,0,-1}^{(0.05869 i)}$ & $\lambda_{4,0,-126,0,81}^{(0.810)}$ \\[4pt]
\hline $B\traincirc{n-1}B\traincirc{n-1}A$ & $\lambda_{324,0,-2232,0,-361}^{(0.3976 i)}$ & $\lambda_{4,0,-180,0,25}^{(0.3733)}$ \\[4pt]
\hline $B\traincirc{n-1}B\traincirc{n-1}B$ & $\lambda_{164025,0,-34020,0,484}^{(0.4382)}$ & $0$ \\[4pt]
\hline $C_{1}[A\traincirc{n-1}A]$ & $0$ & $0$ \\[4pt]
\hline $C_{1}[A\traincirc{n-1}B]$ & $0$ & $0$ \\[4pt]
\hline $C_{1}[B\traincirc{n-1}A]$ & $\lambda_{2025,0,-3420,0,-1}^{(-0.017098 i)}$ & $\sqrt{5}-2$ \\[4pt]
\hline $C_{1}[B\traincirc{n-1}B]$ & $\frac{1}{45} \left(5-3 \sqrt{5}\right)$ & $\lambda_{81,0,-1044,0,-16}^{(-0.1237 i)}$ \\[4pt]
\hline $C_{2}[A\traincirc{n-1}A]$ & $0$ & $0$ \\[4pt]
\hline $C_{2}[A\traincirc{n-1}B]$ & $0$ & $0$ \\[4pt]
\hline $C_{2}[B\traincirc{n-1}A]$ & $\lambda_{2025,0,-2610,0,-4}^{(0.039125 i)}$ & $\lambda_{1,0,-14,0,4}^{(-0.540)}$ \\[4pt]
\hline $C_{2}[B\traincirc{n-1}B]$ & $\lambda_{164025,0,-9720,0,64}^{(0.08686)}$ & $\lambda_{81,0,-792,0,-64}^{(0.2831 i)}$ \\[4pt]
\hline $C_{3}[A\traincirc{n-1}A]$ & $0$ & $0$ \\[4pt]
\hline $C_{3}[A\traincirc{n-1}B]$ & $0$ & $0$ \\[4pt]
\hline $C_{3}[B\traincirc{n-1}A]$ & $\lambda_{2025,0,-180,0,-1}^{(-0.07243 i)}$ & $1$ \\[4pt]
\hline $C_{3}[B\traincirc{n-1}B]$ & $\frac{1}{45} \left(-5-\sqrt{5}\right)$ & $\lambda_{81,0,-36,0,-16}^{(-0.5241 i)}$ \\[4pt]
\hline $C_{4}[A\traincirc{n-1}A]$ & $0$ & $0$ \\[4pt]
\hline $C_{4}[A\traincirc{n-1}B]$ & $0$ & $0$ \\[4pt]
\hline $C_{4}[B\traincirc{n-1}A]$ & $\lambda_{2025,0,-3960,0,-64}^{(0.1266 i)}$ & $\lambda_{1,0,-24,0,64}^{(-1.75)}$ \\[4pt]
\hline $C_{4}[B\traincirc{n-1}B]$ & $\frac{4 \sqrt{\frac{2}{5}}}{9}$ & $\lambda_{81,0,-1152,0,-1024}^{(0.916 i)}$ \\[4pt]
\hline $C_{5}[A\traincirc{n-1}A]$ & $0$ & $0$ \\[4pt]
\hline $C_{5}[A\traincirc{n-1}B]$ & $0$ & $0$ \\[4pt]
\hline $C_{5}[B\traincirc{n-1}A]$ & $\lambda_{25,0,-20,0,-1}^{(-0.2173 i)}$ & $3$ \\[4pt]
\hline $C_{5}[B\traincirc{n-1}B]$ & $\frac{1}{15} \left(-5-\sqrt{5}\right)$ & $\lambda_{1,0,-4,0,-16}^{(-1.57 i)}$ \\[4pt]
\hline $C_{6}[A\traincirc{n-1}A]$ & $0$ & $0$ \\[4pt]
\hline $C_{6}[A\traincirc{n-1}B]$ & $0$ & $0$ \\[4pt]
\hline $C_{6}[B\traincirc{n-1}A]$ & $\lambda_{2025,0,-15840,0,-1024}^{(-0.2532 i)}$ & $\lambda_{1,0,-96,0,1024}^{(-3.50)}$ \\[4pt]
\hline $C_{6}[B\traincirc{n-1}B]$ & $\lambda_{164025,0,-87480,0,7744}^{(0.3348)}$ & $\lambda_{81,0,-360,0,-1600}^{(1.66 i)}$ \\[4pt]
\hline $C_{7}[A\traincirc{n-1}A]$ & $0$ & $0$ \\[4pt]
\hline $C_{7}[A\traincirc{n-1}B]$ & $0$ & $0$ \\[4pt]
\hline $C_{7}[B\traincirc{n-1}A]$ & $\lambda_{25,0,-180,0,-81}^{(0.652 i)}$ & $3$ \\[4pt]
\hline $C_{7}[B\traincirc{n-1}B]$ & $\frac{1}{30} \left(\sqrt{5}-5\right)$ & $\lambda_{1,0,1,0,-1}^{(-1.27 i)}$ \\[4pt]
\hline $C_{8}[A\traincirc{n-1}A]$ & $0$ & $0$ \\[4pt]
\hline $C_{8}[A\traincirc{n-1}B]$ & $0$ & $0$ \\[4pt]
\hline $C_{8}[B\traincirc{n-1}A]$ & $\lambda_{25,0,-440,0,-64}^{(-0.3798 i)}$ & $\lambda_{1,0,-24,0,64}^{(-1.75)}$ \\[4pt]
\hline $C_{8}[B\traincirc{n-1}B]$ & $\lambda_{164025,0,-22680,0,64}^{(0.05368)}$ & $\lambda_{81,0,-72,0,-64}^{(0.741 i)}$ \\[4pt]
\hline $C_{9}[A\traincirc{n-1}A]$ & $0$ & $0$ \\[4pt]
\hline $C_{9}[A\traincirc{n-1}B]$ & $0$ & $0$ \\[4pt]
\hline $C_{9}[B\traincirc{n-1}A]$ & $\lambda_{25,0,-20,0,-1}^{(0.2173 i)}$ & $1$ \\[4pt]
\hline $C_{9}[B\traincirc{n-1}B]$ & $\frac{1}{90} \left(\sqrt{5}-5\right)$ & $\lambda_{81,0,9,0,-1}^{(-0.4240 i)}$ \\[4pt]
\hline $C_{10}[A\traincirc{n-1}A]$ & $0$ & $0$ \\[4pt]
\hline $C_{10}[A\traincirc{n-1}B]$ & $0$ & $0$ \\[4pt]
\hline $C_{10}[B\traincirc{n-1}A]$ & $\lambda_{25,0,-290,0,-4}^{(-0.1174 i)}$ & $\lambda_{1,0,-14,0,4}^{(-0.540)}$ \\[4pt]
\hline $C_{10}[B\traincirc{n-1}B]$ & $\lambda_{164025,0,-14580,0,4}^{(0.016589)}$ & $\lambda_{81,0,-72,0,-4}^{(0.2290 i)}$ \\[4pt]
\hline $C_{11}[A\traincirc{n-1}A]$ & $0$ & $0$ \\[4pt]
\hline $C_{11}[A\traincirc{n-1}B]$ & $0$ & $0$ \\[4pt]
\hline $C_{11}[B\traincirc{n-1}A]$ & $\lambda_{25,0,-380,0,-1}^{(0.05129 i)}$ & $\sqrt{5}-2$ \\[4pt]
\hline $C_{11}[B\traincirc{n-1}B]$ & $\frac{1}{90} \left(15-7 \sqrt{5}\right)$ & $\lambda_{81,0,-99,0,-1}^{(-0.1001 i)}$ \\[4pt]
\hline
\end{longtable}
}}
\end{lem}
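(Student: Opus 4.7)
The plan is to apply Step \ref{step:RTAC} of the algorithm from Section \ref{sec:ComputingJellyfishRelations} directly to the generators $A,B \in \cP_{n+2,+}$ for the $3^{\Z/4}$ graph planar algebra, using the moments, chiralities, and tetrahedral structure constants recorded in Appendix \ref{sec:MomentsAndTetrahedrals} for $3^{\Z/4}$. First I would form the full matrix of inner products
\[
M \;=\; \bigl(\,\langle \cX - P_{\TL_{n+2,+}}(\cX) - P_{\gA_{n+2}}(\cX),\;\cY\rangle\,\bigr)_{\cX,\cY \in \rt}
\]
by substituting the $3^{\Z/4}$ data into the formulas of Propositions \ref{prop:InnerProduct}, \ref{prop:InnerProduct2}, \ref{prop:InnerProductInTL}, \ref{prop:UglyIPInTL}, together with Remark \ref{rem:InnerProductInA} (which in turn uses Propositions \ref{prop:ProjectTrainsToAC} and \ref{prop:ProjectACofTrainsToAC} and the annular inner products of Proposition \ref{prop:AnnularInnerProducts} inverted as in Remark \ref{rem:DualAnnularBasis}). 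All the substitutions are mechanical; they are implemented in the notebook {\tt TwoStrandJellyfish.nb} packaged with this article.

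Next I would compute a basis for $\ker M$; by definition this is a basis for $\rtac$. To match the table in the statement, I would pick out the essential vectors (those whose coefficients on the $P\traincirc{n-2}Q$ and $P\traincirc{n-1}Q\traincirc{n-1}R$ entries are not all zero) and then reduce the resulting $2$-dimensional subspace to echelon form against the ordered list of two-strand and three-strand trains listed in the table. Normalizing $X_1$ so that its $(A\traincirc{n-2}A)$-coefficient is $1$ and so that it has $0$ on $(A\traincirc{n-2}B)$, and $X_2$ so that its $(A\traincirc{n-2}B)$-coefficient is $1$ and it has $0$ on $(A\traincirc{n-2}A)$ and $(B\traincirc{n-2}B)$, pins down the two rows uniquely, and the remaining entries are read off from the null space basis.

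The main obstacle will be the sheer size of $M$: with $|\fB|=2$ generators and $n=5$, the list $\rt$ contains $|\fB|^2 = 4$ entries of the form $P\traincirc{n-2}Q$, $|\fB|^3 = 8$ entries of the form $P\traincirc{n-1}Q\traincirc{n-1}R$, and $(2n+3)|\fB|^2 = 44$ annular consequences $C_i[P\traincirc{n-1}Q]$, for a total of $56$ rows and columns; each entry requires computing a projection onto $\TL_{n+2,+}\oplus \gA_{n+2}$ via the change-of-basis matrix of Remark \ref{rem:DualAnnularBasis}. Managing the algebraic number fields that appear (generated by $[2]=(1+\sqrt{5})/2$, the chiralities of $A$ and $B$, and the roots of the quartics $\lambda$ from Appendix \ref{sec:Generators}) and keeping the linear algebra exact is the delicate part; this is exactly what the {\tt FusionAtlas} code is designed to handle, and the output exhibits the coefficients in the tabulated radical form.

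Finally I would verify consistency by two sanity checks: (i) confirm that $\dim \rtac = 2$ essential (plus whatever purely annular-consequence kernel elements appear, which one expects to correspond to relations such as $f \cdot \cup_{i,j}(R) = 0$), matching the expectation of one two-strand jellyfish relation per generator (Remark at the end of Step \ref{step:RTAC}); and (ii) numerically recompute the inner products $\langle X_i,\cY\rangle$ in the graph planar algebra directly, cut down at $\star$, as described in Subsection \ref{sec:Checking}, confirming that both $X_1 - P_{\TL_{n+2,+}}(X_1) - P_{\gA_{n+2}}(X_1)$ and its $X_2$ counterpart vanish.
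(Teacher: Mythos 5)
Your proposal is exactly the paper's proof: the paper proves this lemma by substituting the $3^{\Z/4}$ moments, chiralities, and tetrahedral constants into the inner-product formulas of Section \ref{sec:Formulas}, running Step \ref{step:RTAC} in {\tt TwoStrandJellyfish.nb} to get the null space, and normalizing the essential kernel vectors, just as you describe. One small slip: for $3^{\Z/4}$ the generators live in $\cP_{4,+}$, so $n=4$ (not $5$); your counts $(2n+3)|\fB|^2=44$ and $4+8+44=56$ are nonetheless the correct ones for $n=4$.
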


\begin{lem}
Let $K$ be the transpose of the $12\times 2$ matrix whose entries are given by the first 12 rows and the 2 columns of the table in Lemma \ref{B5425B05363851E6DB2C121504201C9D-rtac}. Then we have
$$
K
\begin{pmatrix}
f(A\traincirc{n-2}A) \\[6pt]
f(A\traincirc{n-2}B) \\[6pt]
f(B\traincirc{n-2}A) \\[6pt]
f(B\traincirc{n-2}B) \\[6pt]
f(A\traincirc{n-1}A\traincirc{n-1}A) \\[6pt]
f(A\traincirc{n-1}A\traincirc{n-1}B) \\[6pt]
f(A\traincirc{n-1}B\traincirc{n-1}A) \\[6pt]
f(A\traincirc{n-1}B\traincirc{n-1}B) \\[6pt]
f(B\traincirc{n-1}A\traincirc{n-1}A) \\[6pt]
f(B\traincirc{n-1}A\traincirc{n-1}B) \\[6pt]
f(B\traincirc{n-1}B\traincirc{n-1}A) \\[6pt]
f(B\traincirc{n-1}B\traincirc{n-1}B)\end{pmatrix}=J
\begin{pmatrix}
f\cdot j^2(A) \\[6pt]
f\cdot j^2(B)\end{pmatrix}$$
where
\begin{align*}
J &= \begin{pmatrix}
\lambda_{400,0,-5220,0,-81}^{(0.1245 i)} & \frac{1}{10} \left(-5-\sqrt{5}\right) \\[6pt]
\frac{1}{4} \left(27-9 \sqrt{5}\right) & 0\end{pmatrix}.\\
\end{align*}
\end{lem}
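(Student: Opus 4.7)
The plan is to execute Steps \ref{step:AnnularBasis} and \ref{step:BoxJelliesEquations} of the algorithm in Section \ref{sec:ComputingJellyfishRelations} for the pair $\fB = \{A,B\} \subset \cP_{3,+}$ given in Appendix \ref{sec:Generators}. Lemma \ref{B5425B05363851E6DB2C121504201C9D-rtac} has already produced two elements $X_1, X_2$ lying in $\TL_{n+2,+}\oplus \gA_{n+2}$ and written as explicit linear combinations of reduced trains; the matrix $K$ records their coefficients against the 2-trains $P\traincirc{n-2}Q$ and 3-trains $P\traincirc{n-1}Q\traincirc{n-1}R$. The remaining task is to write the same $X_i$ as linear combinations of $j^2(A), j^2(B)$ plus terms killed by $\jw{2n+4}$, and read off the coefficients.

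First I would apply Propositions \ref{prop:ProjectTrainsToAC} and \ref{prop:ProjectACofTrainsToAC} term by term to each summand of $X_1$ and $X_2$. Substituting the chiralities $\sigma_A, \sigma_B$ from Appendix \ref{sec:Generators} together with the moments $a_S^{PQ}$, $b_S^{PQ}$, $\Tr(PQRS)$ and the tetrahedral structure constants $\Delta(P,Q,R\mid S)$ from Appendix \ref{sec:MomentsAndTetrahedrals} expresses $P_{\gA_{n+2}}(X_i)$ in the dual annular basis $\widehat{\cup}_{j,\ell}(S)$ for $S\in\{A,B\}$.

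Second, following Remark \ref{rem:DualAnnularBasis}, I would invert the matrix of inner products from Proposition \ref{prop:AnnularInnerProducts} to transform these dual annular coefficients into coefficients against the annular basis $\cup_{j,\ell}(S)$. Isolating the coefficient of $\cup_{-1,-1}(S) = j^2(S)$ for each $i \in \{1,2\}$ and each $S \in \{A,B\}$ produces the four entries of the $2\times 2$ matrix $J$ claimed in the statement, while the remaining $\cup_{j,\ell}(S)$ (with $(j,\ell)\neq(-1,-1)$) and the Temperley-Lieb piece $Z_i$ from Step \ref{step:AnnularBasis} are collected into the error term.

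Finally I would attach $\jw{2n+4}$ below both presentations. On the reduced-train side every $C_i[P\traincirc{n-1}Q]$ vanishes because its cap meets $\jw{2n+4}$, so only the 2-train and 3-train contributions survive, producing the left-hand side $K(\cdots)$. On the annular side every $\cup_{j,\ell}(S)$ with $(j,\ell)\neq(-1,-1)$ and every Temperley-Lieb term is killed by $\jw{2n+4}$, so only the $j^2(S)$ contributions survive, producing the right-hand side $J(\cdots)$. Comparing the two yields the asserted identity. The main obstacle is purely computational: the generators for $3^{\Z/4}$ are chiral with fourth-root-of-unity rotational eigenvalues, so the entries of $J$ are roots of quartic polynomials rather than rational functions of $\delta$, and the bookkeeping (involving all $2|\fB|^2$ 2-train contributions and all $|\fB|^3$ 3-train contributions, each contributing to multiple annular basis entries via a nontrivial change-of-basis matrix) is far too lengthy by hand. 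The actual arithmetic is carried out by the \texttt{Mathematica} notebook \texttt{TwoStrandJellyfish.nb}, and correctness of the intermediate formulas is cross-checked numerically in the graph planar algebra as described in Section \ref{sec:Checking}.
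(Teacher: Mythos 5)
Your proposal matches the paper's proof: the paper states that the proofs of these lemmas consist precisely of substituting the moments, chiralities, and tetrahedral constants from the appendices into the formulas of Sections \ref{sec:Formulas}--\ref{sec:ComputingJellyfishRelations} (Steps \ref{step:AnnularBasis} and \ref{step:BoxJelliesEquations}, with the change of basis of Remark \ref{rem:DualAnnularBasis}) and letting {\tt TwoStrandJellyfish.nb} do the arithmetic, which is exactly what you describe, including the correct observation that capping with $\jw{2n+4}$ kills the $C_i[P\traincirc{n-1}Q]$ terms, the Temperley--Lieb part, and all $\cup_{j,\ell}(S)$ with $(j,\ell)\neq(-1,-1)$. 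The only slip is notational: the generators here are $4$-boxes, so $\fB\subset\cP_{4,+}$ rather than $\cP_{3,+}$.
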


\begin{lem}
The elements $A$,$B$ satisfy the box jellyfish relations
$$
\begin{pmatrix}
f\cdot j^2(A) \\[6pt]
f\cdot j^2(B)\end{pmatrix}=J^{L}K
\begin{pmatrix}
f(A\traincirc{n-2}A) \\[6pt]
f(A\traincirc{n-2}B) \\[6pt]
f(B\traincirc{n-2}A) \\[6pt]
f(B\traincirc{n-2}B) \\[6pt]
f(A\traincirc{n-1}A\traincirc{n-1}A) \\[6pt]
f(A\traincirc{n-1}A\traincirc{n-1}B) \\[6pt]
f(A\traincirc{n-1}B\traincirc{n-1}A) \\[6pt]
f(A\traincirc{n-1}B\traincirc{n-1}B) \\[6pt]
f(B\traincirc{n-1}A\traincirc{n-1}A) \\[6pt]
f(B\traincirc{n-1}A\traincirc{n-1}B) \\[6pt]
f(B\traincirc{n-1}B\traincirc{n-1}A) \\[6pt]
f(B\traincirc{n-1}B\traincirc{n-1}B)\end{pmatrix}$$
where
\begin{align*}
(J^{L}K)^T &= \begin{pmatrix}
0 & \frac{1}{2} \left(\sqrt{5}-5\right) \\[6pt]
\frac{1}{9} \left(3+\sqrt{5}\right) & \lambda_{81,0,-99,0,-1}^{(0.1001 i)} \\[6pt]
\frac{1}{9} \left(3+\sqrt{5}\right) & \lambda_{81,0,-99,0,-1}^{(-0.1001 i)} \\[6pt]
0 & \frac{1}{18} \left(7+\sqrt{5}\right) \\[6pt]
\sqrt{2} & 0 \\[6pt]
\lambda_{81,0,-18,0,-4}^{(0.3706 i)} & \lambda_{1,0,-14,0,4}^{(-0.540)} \\[6pt]
0 & \lambda_{1,0,-94,0,4}^{(-0.2063)} \\[6pt]
\lambda_{6561,0,-2430,0,100}^{(0.2172)} & \lambda_{81,0,-360,0,-100}^{(0.512 i)} \\[6pt]
\lambda_{81,0,-18,0,-4}^{(-0.3706 i)} & \lambda_{1,0,-14,0,4}^{(-0.540)} \\[6pt]
\frac{\sqrt{2}}{3} & 0 \\[6pt]
\lambda_{6561,0,-2430,0,100}^{(0.2172)} & \lambda_{81,0,-360,0,-100}^{(-0.512 i)} \\[6pt]
0 & \lambda_{6561,0,-3726,0,484}^{(-0.6056)}\end{pmatrix}.\\
\end{align*}
\end{lem}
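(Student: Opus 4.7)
The plan is to carry out Steps \ref{step:Invert} and \ref{step:BoxJelliesFormulas} of the algorithm of Section \ref{sec:ComputingJellyfishRelations} for the generators $A,B$ in the $3^{\Z/4}$ graph planar algebra, using the previous lemma as input. From that lemma we already have the relation
$$
K \begin{pmatrix} f(A\traincirc{n-2}A) \\ \vdots \\ f(B\traincirc{n-1}B\traincirc{n-1}B) \end{pmatrix} = J \begin{pmatrix} f\cdot j^2(A) \\ f\cdot j^2(B) \end{pmatrix},
$$
so the goal is simply to left-invert $J$ and multiply both sides by $J^L$.

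First I would verify that $J$ has rank $|\fB| = 2$. From the explicit $2\times 2$ matrix
$$
J = \begin{pmatrix} \lambda_{400,0,-5220,0,-81}^{(0.1245 i)} & \tfrac{1}{10}(-5-\sqrt{5}) \\ \tfrac{1}{4}(27-9\sqrt{5}) & 0 \end{pmatrix}
$$
this is immediate: the bottom-left entry $\tfrac{9}{4}(3-\sqrt{5})$ and the top-right entry $-\tfrac{1}{10}(5+\sqrt{5})$ are both nonzero, while the bottom-right entry vanishes, so $\det(J) = \tfrac{9}{40}(3-\sqrt{5})(5+\sqrt{5}) \neq 0$. Consequently $J^*J$ is invertible, and $J^L := (J^*J)^{-1} J^*$ is a genuine left inverse, as guaranteed by Step \ref{step:Invert}.

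Next I would apply $J^L$ on the left of the equation above to obtain
$$
\begin{pmatrix} f\cdot j^2(A) \\ f\cdot j^2(B) \end{pmatrix} = J^L K \begin{pmatrix} f(A\traincirc{n-2}A) \\ \vdots \\ f(B\traincirc{n-1}B\traincirc{n-1}B) \end{pmatrix}.
$$
It then remains to compute $J^L K$ explicitly and match the asserted table of entries. This is a purely mechanical linear algebra computation: $J^L$ is a $2 \times 2$ matrix over the cyclotomic field in question, and $K$ is the $2 \times 12$ matrix read off from the first twelve rows of the table in Lemma \ref{B5425B05363851E6DB2C121504201C9D-rtac}. The actual arithmetic is handled by the \texttt{TwoStrandJellyfish.nb} notebook included with the \texttt{arXiv} sources, which multiplies these matrices symbolically and compares the result entry-by-entry with the claimed minimal polynomials $\lambda^{(z)}_{a_k,\ldots,a_0}$.

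The main (and really the only) obstacle is purely computational: keeping the algebraic numbers in exact form while multiplying $J^L$ by the rather ugly $2 \times 12$ matrix $K$ whose entries involve roots of various quartics. Since the package performs all such manipulations exactly and the asserted minimal polynomials together with the approximate decimal selectors unambiguously pin down each root, the verification of $(J^L K)^T$ against the table in the lemma statement is immediate once Mathematica has produced the product. No conceptual difficulty arises beyond the rank check, as the correctness of the entire pipeline rests on Propositions \ref{prop:ProjectTrainsToAC}, \ref{prop:ProjectACofTrainsToAC}, \ref{prop:InnerProduct}, \ref{prop:InnerProduct2}, \ref{prop:InnerProductInTL}, and \ref{prop:UglyIPInTL}, which have already been established and numerically cross-checked in Section \ref{sec:Checking}.
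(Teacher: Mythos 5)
Your proposal is correct and follows exactly the paper's own route: the paper proves these lemmas by running Steps \ref{step:Invert} and \ref{step:BoxJelliesFormulas} of Section \ref{sec:ComputingJellyfishRelations} in the {\tt TwoStrandJellyfish.nb} notebook, i.e.\ checking that $J$ has rank $|\fB|=2$, forming $J^L=(J^*J)^{-1}J^*$, and multiplying by $K$ symbolically. Your explicit determinant check $\det(J)=\tfrac{9}{40}(5+\sqrt5)(3-\sqrt5)\neq 0$ is a harmless elaboration of the rank verification the paper delegates to the computer.
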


\section{Calculating principal graphs}\label{sec:PrincipalGraphs}

We now know that for each of our examples $G\in \{\Z/3,\Z/2\times\Z/2,\Z/4\}$, the corresponding set of minimal generators given in Appendix \ref{sec:Generators} generates an evaluable, subfactor planar algebra $\cP_\bullet^G$. 
We must now determine the principal graphs of the $\cP_\bullet^G$.
By the next lemma, we know that the principal graphs have the desired supertransitivity since we have two-strand jellyfish relations.

\begin{lem}\label{lem:Supertransitive}
Suppose a planar algebra $\cP_\bullet$ is generated by uncappable elements $A_1,\dots, A_k\in \cP_{n,+}$ such that
\begin{enumerate}[(1)]
\item
the $A_j$'s satisfy two-strand jellyfish relations, and
\item
the complex span of $\{ A_1,\dots, A_k,f^{(n)}\}$ forms an algebra under the usual multiplication. 
\end{enumerate}
Then $\cP_\bullet$ is $(n-1)$ supertransitive.
\end{lem}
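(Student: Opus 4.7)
The plan is to prove $\cP_{k,\pm}=\TL_{k,\pm}$ for every $0\leq k\leq n-1$, which is the definition of $(n-1)$-supertransitivity.

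First, using the two-strand jellyfish relations, I would argue that every element of $\cP_{k,\pm}$ is a linear combination of trains (plus pure Temperley-Lieb terms). Starting from any diagram built from the $A_j$, iteratively applying the jellyfish relations pulls each generator toward the outer boundary; each application strictly decreases a natural depth measure of the generators (e.g.\ total Temperley-Lieb distance from the external disk), so the process terminates with a sum of trains. Next I would reduce each such train to a reduced train in the sense of Definition~\ref{defn:ReducedTrain} using the two standard moves: uncappability annihilates any train in which a cap attaches to a generator, and the algebra hypothesis~(2) lets me collapse two adjacent generators joined by at least $n$ through-strings, since their product then lies in $\spann\{A_1,\dots,A_k,f^{(n)}\}$ and substitution strictly decreases the generator count.

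The main combinatorial step is then a strand count in a non-vanishing reduced train in $\cP_{k,+}$ with $\ell\geq 1$ generators. Planarity combined with uncappability forces the generators to lie in a linear order $S_1,\dots,S_\ell$ with only adjacent pairs directly connected: a string between two non-adjacent generators would have to pass around an intermediate generator, forcing that intermediate generator to be fully capped and the train to vanish. Writing $e_i$ for the number of strings between $S_i$ and $S_{i+1}$ and $b_i$ for the number of strings from $S_i$ to the outer boundary, the local relations
\begin{align*}
b_1+e_1 &= 2n, \\
e_{i-1}+b_i+e_i &= 2n \quad (2\leq i\leq \ell-1), \\
e_{\ell-1}+b_\ell &= 2n,
\end{align*}
together with $\sum_{i=1}^{\ell} b_i\leq 2k$ (where the slack comes from any Temperley-Lieb caps between boundary strings within the train) sum to $\sum_{i=1}^{\ell-1}e_i\geq n\ell-k$. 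Reducedness gives $e_i\leq n-1$, so $n\ell-k\leq (\ell-1)(n-1)$, which rearranges to $k\geq \ell+n-1\geq n$, contradicting $k\leq n-1$. The degenerate case $\ell=1$ is even easier: there $b_1=2n$ is incompatible with $b_1\leq 2k<2n$, so the single generator must be self-capped and the train vanishes.

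Hence no non-vanishing reduced train in $\cP_{k,\pm}$ with $k\leq n-1$ contains any generators, so $\cP_{k,\pm}\subseteq \TL_{k,\pm}$; the reverse inclusion is automatic, giving $\cP_{k,\pm}=\TL_{k,\pm}$ and hence $(n-1)$-supertransitivity. The same argument applies on the odd side. The main obstacle is the rigorous bookkeeping: verifying that planarity together with uncappability really does force adjacent-only generator connections in a non-vanishing reduced train, and checking that the iterative jellyfish-pulling procedure terminates in a finite sum of trains. Once those are in place, the strand count and the resulting inequality finish the argument immediately.
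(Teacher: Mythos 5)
Your overall strategy is sound and is essentially the standard one: the paper's own proof is just the one line ``Similar to \cite[Lemma 5.1]{1208.3637},'' and the argument there is exactly your reduction to reduced trains followed by a strand count. Your handling of $\ell=1$, the identity $\sum b_i + 2\sum e_i = 2n\ell$, the bound $\sum b_i\leq 2k$, and the final inequality $k\geq \ell+n-1\geq n$ are all correct \emph{granting} that in a non-vanishing reduced train only consecutive generators are connected.

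That adjacency claim is the one genuine gap, and your stated justification for it is wrong: if $S_i$ and $S_j$ with $j\geq i+2$ are joined by a strand, the intermediate generators are \emph{not} ``fully capped'' --- their strands are perfectly free to attach to other generators; what goes wrong is different. For $j=i+2$ the single enclosed generator $S_{i+1}$ must send all $2n$ of its strands to $S_i$ or $S_j$ (planarity blocks the boundary, uncappability blocks self-connections), so one of those two pairs receives at least $n$ strands, contradicting reducedness --- not uncappability. For $j-i-1\geq 2$ enclosed generators this pigeonhole alone is insufficient (e.g.\ with two enclosed generators the naive count only gives $4n\leq 6(n-1)$, vacuous for $n\geq 3$), so you need a strong induction on the number of enclosed generators: any connection from an enclosed generator to a non-neighbor has strictly fewer generators between its endpoints, hence is excluded by the inductive hypothesis, after which the $m$ enclosed generators contribute $2nm$ strand-ends distributed over only $m+1$ consecutive pairs each carrying at most $n-1$ strands, giving $2nm\leq 2m(n-1)$, a contradiction. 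With that induction in place (and noting that a pair joined by $\geq n$ strands is eliminated by hypothesis (2) together with uncappability, reducing the generator count), the rest of your count goes through verbatim and the lemma follows.
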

\begin{proof}
Similar to \cite[Lemma 5.1]{1208.3637}.
\end{proof}

We now determine the principal graphs of the $\cP_\bullet^{G}$.
These arguments are similar to those in \cite[Section 5]{1208.3637}.

\subsection{$3^{\Z/3}$: Haagerup}

\begin{thm}\label{thm:Z3graph}
The principal graphs of $\cP_\bullet^{\Z/3}$ are 
$$
\left(
\bigraph{bwd1v1v1v1p1v1x0p0x1v1x0p0x1duals1v1v1x2v2x1},
\bigraph{bwd1v1v1v1p1v1x0p1x0duals1v1v1x2}
\right).
$$
\end{thm}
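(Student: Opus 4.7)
The plan is to combine the supertransitivity delivered by Lemma~\ref{lem:Supertransitive} with a depth-by-depth dimension count, using that the two-strand jellyfish relations established in Section~\ref{sec:relations} evaluate every closed diagram in $\cP^{\Z/3}_\bullet$ to a scalar.

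First, I apply Lemma~\ref{lem:Supertransitive}: the single generator $A \in \cP^{\Z/3}_{n,+}$ (with $n=4$) is uncappable, the two-strand jellyfish relations have been derived, and $\spann\{A,\jw{n}\}$ is closed under multiplication by Assumption~\ref{assume:SpanAlgebras}. Hence both principal graphs of $\cP^{\Z/3}_\bullet$ are $(n-1) = 3$-supertransitive, i.e.\ agree with Temperley--Lieb through depth $3$.

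Next, I read off the depth-$n$ structure from the algebra $\spann\{A,\jw{n}\}$. Because $A$ is self-adjoint, low-weight, uncappable, and has the scalar moments $\Tr(A^2)$ and $\Tr(A^3)$ recorded in Appendix~\ref{sec:MomentsAndTetrahedrals}, this two-dimensional algebra splits $\jw{n}$ into two minimal projections: one corresponding to the already existing depth-$(n-1)$ vertex continuing into $A_{\infty}$, and one new minimal projection giving a single new even vertex at depth $n$, attached to the depth-$(n-1)$ vertex by one edge. The same analysis with $\check{A}$ handles $\cP^{\Z/3}_{n,-}$, pinning down both graphs through depth $n = 4$.

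Finally, I continue the dimension count at depths $n+1$ and $n+2$. Every $\cP^{\Z/3}_{k,\pm}$ has dimension computable from the algebra structure together with the jellyfish reduction, since any train in $\cP^{\Z/3}_{k,\pm}$ can be pulled to the boundary by the two-strand relations and then simplified via Assumption~\ref{assume:SpanAlgebras}; by finite depth (a consequence of the jellyfish algorithm), this process terminates. I expect the resulting bipartite graphs to match the pair displayed in the statement on the nose. The main obstacle, and the part requiring the most care, is tracking the dual data on the depth-$n$ and depth-$(n+1)$ vertices: this is governed by the chirality $\sigma_A$ together with the explicit form of $\check{A}$, and it is what distinguishes the two graphs in the Haagerup pair and pins down the $\Z/3$-permutation of depth-$(n+1)$ odd vertices. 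One may avoid this bookkeeping by invoking Haagerup's uniqueness result for $3$-supertransitive finite-depth subfactors at modulus $(3+\sqrt{13})/2$, which forces $\cP^{\Z/3}_\bullet$ to be the Haagerup subfactor planar algebra and hence identifies its principal graphs with the listed ones.
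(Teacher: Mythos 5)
Your opening moves match the paper's: supertransitivity comes from Lemma \ref{lem:Supertransitive}, and the depth-$4$ structure is read off from the two-dimensional algebra $\spann\{A,\jw{4}\}$. One correction there: the two minimal projections are $\tfrac{1}{2}(\jw{4}+A)$ and $\tfrac{1}{2}(\jw{4}-A)$, both of trace $\tfrac{1}{2}(3+\sqrt{13})$, and \emph{both} are new vertices at depth $4$ attached to the single depth-$3$ vertex --- this is the triple point. Your description of ``one vertex continuing into $A_\infty$ and one new vertex'' mischaracterizes the branch, although the count of two projections is right.

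The genuine gap is at depths $5$ and $6$. The paper forces each branch to continue simply for exactly two more steps by combining two ingredients you do not supply: (i) the traces of the depth-$4$ projections agree with the Frobenius--Perron weights of the target graph, so neither branch can terminate; and (ii) every alternative local extension (a double edge, an extra branch, etc.) produces a graph whose norm exceeds the modulus $\sqrt{(5+\sqrt{13})/2}\simeq 2.074$ --- the paper lists these norms explicitly. Your proposed ``dimension count at depths $n+1$ and $n+2$ via jellyfish reduction'' is never carried out and contains no mechanism for excluding these alternatives: the jellyfish algorithm gives evaluability (i.e. $\dim\cP_{0,\pm}=1$), not the dimensions of the higher box spaces or the local structure of the graph. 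Your fallback --- invoking a uniqueness/classification result --- is also not in working order: you quote the wrong number ($(3+\sqrt{13})/2$ rather than the index $(5+\sqrt{13})/2$), you do not rule out the $A_\infty$ planar algebra, which exists at the same index (here one would note $A\neq 0$ is orthogonal to Temperley--Lieb), and such an appeal is unavailable for the paper's other examples, which is precisely why the paper argues directly. Finally, you leave the dual principal graph and the dual data unaddressed except through that appeal; the paper settles the dual graph by Ocneanu's triple point obstruction (or Jones' quadratic tangles formula) and the dual data by observing that $\rho^2=\id$ on $\spann\{A,\jw{4}\}$ makes the depth-$4$ projections self-dual, while the two invertible objects at depth $6$ must be dual to each other because the dimension-$1$ bimodules form a group.
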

\begin{proof}
The modulus is $\sqrt{(5+\sqrt{13})/2}\simeq 2.07431$, and we find that the minimal projections one past the branch are given by $\frac{1}{2}(\jw{4}+A)$ and $\frac{1}{2}(\jw{4}-A)$.
Since $\Tr(f^{(4)})=3+\sqrt{13}$, both minimal projections have trace $\frac{1}{2} (3 + \sqrt{13})$, which agree with the Frobenius-Perron dimensions of the vertices at depth 4 of
$$
\bigraph{gbg1v1v1v1p1v1x0p0x1v1x0p0x1}\,.
$$
Hence each vertex at depth 4 on the principal graph must be attached to a vertex at depth 5. Since
\begin{align*}
\left\|
\bigraph{gbg1v1v1v1p1v1x1}
\right\|
&\simeq
2.1889
\\
\left\|
\bigraph{gbg1v1v1v1p1v2x0}
\right\|
&\simeq
2.38098
\\
\left\|
\bigraph{gbg1v1v1v1p1v1x0p0x1p0x1}
\right\|
&\simeq
2.11917,
\end{align*}
we know each branch must continue simply. Again by analyzing the Frobenius-Perron dimensions, we know each branch must continue, and since
\begin{align*}
\left\|
\bigraph{gbg1v1v1v1p1v1x0p0x1v1x1}
\right\|
&\simeq
2.14896
\\
\left\|
\bigraph{gbg1v1v1v1p1v1x0p0x1v2x0}
\right\|
&\simeq
2.34554
\\
\left\|
\bigraph{gbg1v1v1v1p1v1x0p0x1v0x1p0x1}
\right\|
&\simeq
2.101,
\end{align*}
we know each branch must continue simply. We conclude that the principal graph is as claimed. 

The dual graph is correct by a number of arguments, including Ocneanu's triple point obstruction \cite{MR1317352,MR2902285}, or Jones' quadratic tangles formula \cite{MR2972458}.

To determine the dual data, note that the projections at depth 4 on the principal graph are self-dual since $\rho^2=\id$ on $\spann\{A,\jw{4}\}$. The projections at depth 6 must be dual to each other since the dimension 1 bimodules form a group.
Note that there is only one possibility for the dual data for the dual graph. 
\end{proof}

\subsection{$3^{\Z/2\times \Z/2}$}

\begin{thm}\label{thm:Z2Z2graph}
The principal graphs of $\cP_\bullet^{\Z/2\times \Z/2}$ are 
$$
\left(
\bigraph{bwd1v1v1v1p1p1v1x0x0p0x1x0p0x0x1v1x0x0p0x1x0p0x0x1duals1v1v1x2x3v1x2x3},
\bigraph{bwd1v1v1v1p1p1v1x0x0p0x1x0p0x0x1v1x0x0p0x1x0p0x0x1duals1v1v1x2x3v1x2x3}
\right).
$$
\end{thm}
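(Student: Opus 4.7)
The plan is to mirror the proof of Theorem \ref{thm:Z3graph}, now with two generators rather than one. Since $A$ and $B$ satisfy two-strand jellyfish relations by Section \ref{sec:relations}, Lemma \ref{lem:Supertransitive} gives that the principal graph is $3$-supertransitive, so the branch point lies at depth $3$. The first task is to compute the modulus $\delta$ directly from the generators (a specific algebraic number exceeding $2$) and to identify the depth-$4$ structure. By Assumption \ref{assume:SpanAlgebras}, $\spann\{\jw{4},A,B\}$ is a three-dimensional unital $*$-subalgebra of $\cP_{4,+}$; using the moments and the multiplication constants $a^{PQ}_R$ recorded in Appendix \ref{moments:3333}, I would diagonalize it into three mutually orthogonal minimal projections $p_1,p_2,p_3$ summing to $\jw{4}$, and then verify that the traces $\Tr(p_i)$ match the Frobenius-Perron dimensions of the three depth-$4$ vertices of the candidate graph. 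This pins down the principal graph through depth $4$.

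Next I would extend to depth $5$. Each $p_i$ must have at least one successor, since $\delta\Tr(p_i) > \Tr(p_i)$ and $\delta > 2$. To rule out multiple edges, extra branches, or additional depth-$5$ vertices, I would enumerate the finitely many bipartite graphs agreeing with our candidate through depth $4$ but differing at depth $5$ and verify that each such alternative has graph norm strictly greater than $\delta$, exactly as in the three norm estimates carried out for Haagerup in Theorem \ref{thm:Z3graph}. The main obstacle here is a mild combinatorial explosion: with three arms off the branch point instead of Haagerup's two, the list of possible depth-$5$ extensions grows, but each elimination is still a routine explicit graph-norm calculation and only finitely many need be examined by the finite depth supplied by the jellyfish relations.

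Finally, the dual principal graph and the dual data follow by symmetry. The chiralities recorded in Appendix \ref{sec:Generators} satisfy $\omega_A=\omega_B=1$, so $\rho^2 = \id$ on $\spann\{\jw{4},A,B\}$, which forces each depth-$4$ bimodule to be self-dual as an $N$-$N$ bimodule. Repeating the depth-$4$ and depth-$5$ analyses above with $\check{A}, \check{B}$ in place of $A,B$ shows the dual graph has the same bipartite shape. The dual data at depth $5$ is then determined by the condition that the three depth-$5$ bimodules together with the trivial bimodule close under fusion to a copy of $\Rep(\Z/2\times\Z/2)$; the tetrahedral structure constants of Appendix \ref{sec:MomentsAndTetrahedrals} encode exactly this group law, yielding the self-paired dual permutation asserted in the theorem.
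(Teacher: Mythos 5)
The paper's own proof of this theorem is a single sentence: the generators $A,B$ used here are explicit invertible linear combinations of the generators $A_0,B_0$ of \cite{1208.3637} (see Appendix \ref{generators:3333}, where $A=P-Q$ and $B=2R-(P+Q)$ with $P,Q,R\in\spann\{A_0,B_0,\jw{4}\}$), so they generate exactly the same planar subalgebra of the graph planar algebra, and its principal graphs were already computed in \cite[Theorems 5.3 and 5.9]{1208.3637}. Your proposal instead carries out the direct argument that the paper reserves for the $3^{\Z/3}$ and $3^{\Z/4}$ cases (Theorems \ref{thm:Z3graph} and \ref{thm:Z4graph}): decompose $\spann\{\jw{4},A,B\}\cong\C^3$ into minimal projections, match traces against Frobenius--Perron dimensions, and eliminate alternative continuations by graph-norm estimates. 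That route is viable and self-contained, at the cost of redoing work that the change of generators lets the paper inherit for free.

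Three points in your sketch need repair. First, the inequality you invoke for the existence of successors, $\delta\Tr(p_i)>\Tr(p_i)$, only says $\delta>1$ and proves nothing; the correct comparison is $\delta\Tr(p_i)$ against the dimension of the depth-$3$ branch vertex, i.e.\ $\delta(2+\sqrt{5})>\delta(1+\sqrt{5})$, which leaves total dimension exactly $\delta$ for the depth-$5$ neighbours. Second, the spokes have length $3$, so the graph continues to depth $6$: the Frobenius--Perron/graph-norm argument must be run twice (as in Theorem \ref{thm:Z3graph}), and one must then check that each arm terminates at depth $6$ because the dimension remaining there is exactly $1$. Third, and most seriously, your determination of the dual data at the ends of the spokes is circular: you assume the invertible objects form the group $\Z/2\times\Z/2$, but $3^{\Z/4}$ has the identical principal graph with identical self-dual depth-$4$ data and differs precisely in the depth-$6$ duals (one pair of spoke-ends swapped versus all self-dual). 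In particular, self-duality at depth $4$ does not propagate down a spoke, and the tetrahedral constants do not directly hand you the group law. The paper resolves the analogous ambiguity in Theorem \ref{thm:Z4graph} by computing the dual principal graph and enumerating the consistent graph pairs; you would need the same step here, using your computation that the dual graph is again the spoke graph to exclude the $\Z/4$-type duals.
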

\begin{proof}
Our generators $A,B$ are obtained from the generators $A_0,B_0$ from \cite{1208.3637} (see Appendix \ref{generators:3333}), for which the proof is given by \cite[Theorems 5.3 and 5.9]{1208.3637}.
\end{proof}

\subsection{$3^{\Z/4}$}

\begin{thm}\label{thm:Z4graph}
The principal graphs of $\cP_\bullet^{\Z/4}$ are 
$$
\left(
\bigraph{bwd1v1v1v1p1p1v1x0x0p0x1x0p0x0x1v1x0x0p0x1x0p0x0x1duals1v1v1x2x3v1x3x2},
\bigraph{bwd1v1v1v1p1p1v1x0x0p0x1x0p0x1x0v1x0x0duals1v1v1x2x3v1}
\right),
$$
\end{thm}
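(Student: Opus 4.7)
The strategy is to follow the template of Theorem~\ref{thm:Z3graph} (the Haagerup case), combining Lemma~\ref{lem:Supertransitive} with careful trace and Perron--Frobenius norm computations, with the additional subtlety that the dual graph is asymmetric and shorter than the principal graph. By Lemma~\ref{lem:Supertransitive} together with the two-strand jellyfish relations derived in Section~\ref{sec:relations}, $\cP^{\Z/4}_\bullet$ is $3$-supertransitive, so its principal graphs agree through depth $3$ with the claimed ones. In particular the modulus $\delta$ equals the norm of the principal graph, and this pins down the Frobenius--Perron dimensions of every vertex on each candidate graph.

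First I would analyse the principal graph. Since $\spann\{A,B,\jw{4}\}$ is a three-dimensional algebra and the minimal central projections at depth $3$ are already accounted for by $e_3$ and the branch, the algebra of projections at depth $4$ lying above the branch is spanned by explicit orthogonal combinations of $A$, $B$, $AB$ and $\jw{4}$. I would compute these three mutually orthogonal minimal projections $p_1,p_2,p_3$, verify that $p_1+p_2+p_3 = \jw{4}-e_3$, and compute each $\Tr(p_i)$, matching them against the Frobenius--Perron dimensions of the three vertices at depth $4$ of the graph obtained by truncating at depth $4$. The dimension constraint then forces each branch to continue to depth $5$ and then to depth $6$; as in the Haagerup proof this is ruled in (or alternative continuations are ruled out) by listing the candidate one-vertex extensions and showing that any non-claimed extension has Perron--Frobenius eigenvalue strictly greater than $\delta$, hence cannot occur inside an evaluable planar algebra of modulus $\delta$.

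For the dual graph, the approach is parallel but the underlying graph is asymmetric: one spoke of length $3$, one vertex at depth $3$ that splits into two at depth $4$, and one leaf at depth $3$. I would use the dual generators $\check{A},\check{B}$, compute the minimal central projections in the algebra $\spann\{\check{A},\check{B},\jw{4}\}$, and trace them out to show that the dual graph truncated at depth $4$ has exactly the shape claimed, with one of the three depth-$3$ vertices a leaf and another supporting two children of matching Frobenius--Perron dimension. The Ocneanu triple-point obstruction \cite{MR1317352,MR2902285} (or equivalently the quadratic tangles formula of \cite{MR2972458}) will be needed to rule out a symmetric dual graph of the form of the principal graph itself, since only the asymmetric version is compatible with the rotational eigenvalues $\omega_A,\omega_B$. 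Finally, the dual data is forced by the inverse law of $\Z/4$: the three depth-$3$ vertices on the principal graph are labelled by $\{1,2,3\}\subset\Z/4$, and since $\rho^2=\id$ on $\spann\{A,\jw{4}\}$ but $\rho$ has a nontrivial fourth root of unity eigenvalue on the $B$-eigenspace (see Appendix~\ref{sec:Generators}), the self-dual vertex corresponds to the order-$2$ element $2\in\Z/4$ while the remaining two are paired, matching $1\leftrightarrow 3$. The dual data at depths $4,5,6$ is then determined by compatibility with the duality at depth $3$ and the group structure.

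The main technical obstacle I anticipate is the dual graph argument: the asymmetry means one cannot simply invoke the Haagerup-style uniform branch analysis, and one must carefully distinguish the two candidate depth-$4$ extensions attached to the same depth-$3$ vertex while simultaneously verifying that the third depth-$3$ vertex is truly a leaf (so that no depth-$4$ projection exists on that spoke). This reduces to checking that the third spoke's dimension at depth $3$ is exactly $1$, or equivalently that the corresponding minimal projection in $\cP_{3,-}$ has trace equal to $1$ in the renormalized sense; the computation should be straightforward given the explicit generators, but requires care in tracking the shading and rotation conventions.
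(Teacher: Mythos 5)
Your overall strategy matches the paper's for the principal graph: compute the minimal projections past the branch as explicit combinations $aA+bB+c\jw{4}$, check that all three have trace $2+\sqrt{5}$, and force the spokes to continue (the paper simply cites the argument of \cite[Theorem 5.9]{1208.3637} at this point rather than redoing the Perron--Frobenius norm estimates). For the dual graph your opening move is also the paper's: the minimal projections built from $\check{A},\check{B},\jw{4}$ have traces $2+\sqrt{5}$, $3+\sqrt{5}$, $1+\sqrt{5}$, and this trace computation alone already rules out a symmetric dual graph --- no appeal to the triple point obstruction is needed or appropriate here, since the branch vertex of these graphs is a quadruple point. Where the paper diverges is in finishing the dual graph: rather than continuing the projection analysis out to the ends of the spokes, it enumerates by computer ({\tt FindGraphPartners}) all graph pairs in which the dual graph has a univalent vertex at the branch depth, finds exactly two candidates, and distinguishes them by a self-duality argument. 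Your proposal never actually explains how the upper portion of the dual graph (past the branch) gets pinned down; that is a genuine gap, though a direct continuation of the dimension analysis could in principle substitute for the enumeration.

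The more serious problem is your treatment of the dual data. You assert that among the three branch vertices of the principal graph one is self-dual and the other two are paired ($1\leftrightarrow 3$), justified by the claim that $\rho$ has a nontrivial fourth root of unity eigenvalue on the $B$-eigenspace. This is false: the rotational eigenvalues are $\omega_A=-1$ and $\omega_B=1$ (the fourth root of unity $i$ is the chirality $\sigma_A$, a square root of $\omega_A$, not an eigenvalue of $\rho$), so $\rho^2=\id$ on $\spann\{A,B,\jw{4}\}$ and \emph{all three} branch vertices of the principal graph are self-dual. This self-duality is precisely the fact the paper uses to select the correct candidate from the two produced by the enumeration; the $\Z/4$ inverse law ($1\leftrightarrow 3$, with $2$ self-dual) appears only at the ends of the spokes, not at the branch. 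As written, your dual-data argument would select the wrong graph pair.
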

\begin{proof}
The modulus is $\sqrt{3+\sqrt{5}}\simeq2.28825$, and we find that the minimal projections one past the branch from bottom to top are given by $aA +bB+c\jw{4}$ where
$$
(a,b,c)=
\begin{cases}
\displaystyle\left(0, \frac{1}{3},\frac{1}{3}\right)\\
\displaystyle\left( \frac{1}{2}, -\frac{1}{6},\frac{1}{3}\right)\\
\displaystyle\left(-\frac{1}{2}, -\frac{1}{6},\frac{1}{3}\right).
\end{cases}
$$
Since $\Tr(f^{(4)})=6+3 \sqrt{5}$, all the minimal projections have trace $2+\sqrt{5}$, and the proof of \cite[Theorem 5.9]{1208.3637} shows the principal graph is correct.

To see that the dual graph is correct, we first find that the minimal projections one past the branch from bottom to top are given by $a\check{A} +b\check{B}+c\jw{4}$ where
$$
(a,b,c)=
\begin{cases}
\displaystyle\left(\lambda_{4,0,2,0,-1}^{(-0.556)},\lambda_{324,0,-126,0,1}^{(0.09003)},\frac{1}{3}\right)\\
\displaystyle\left(\lambda_{4,0,22,0,-1}^{(0.2123)},\lambda_{324,0,-270,0,25}^{(-0.3257)},\frac{1}{3} \left(\sqrt{5}-1\right)\right)\\
\displaystyle\left(\lambda_{4,0,8,0,-1}^{(0.3436)},\frac{1}{3 \sqrt{2}},\frac{1}{3} \left(3-\sqrt{5}\right)\right)
\end{cases}
$$
which have traces $2+\sqrt{5},3+\sqrt{5},1+\sqrt{5}$ respectively. Hence there is a univalent vertex at depth $4$ on the dual graph. We now run the {\texttt{FusionAtlas}} program {\tt{FindGraphPartners}} on the 3333 graph and we see there are only two possibilities where the dual graph has a univalent vertex at depth 4:
$$
\left(
\bigraph{bwd1v1v1v1p1p1v1x0x0p0x1x0p0x0x1v1x0x0p0x1x0p0x0x1duals1v1v1x3x2v1x2x3},
\bigraph{bwd1v1v1v1p1p1v1x0x0p0x1x0p0x1x0v1x0x0duals1v1v1x2x3v1}
\right)\text{ and }
\left(
\bigraph{bwd1v1v1v1p1p1v1x0x0p0x1x0p0x0x1v1x0x0p0x1x0p0x0x1duals1v1v1x2x3v1x3x2},
\bigraph{bwd1v1v1v1p1p1v1x0x0p0x1x0p0x1x0v1x0x0duals1v1v1x2x3v1}
\right).
$$
Now the projections at depth 4 on the principal graph are self-dual since $\rho^2=\id$ on $\spann\{A,B,\jw{4}\}$, so the only possibility is the one claimed.
\end{proof}

\appendix
\section{Generators}\label{sec:Generators}

Suppose $\Gamma$ is a simply laced graph with a distinguished subgraph $\Lambda\subset \Gamma$ such that $\Gamma$ is obtained from $\Lambda$ by adding $A_{finite}$ tails to $\Lambda$. 
For example, when $\Gamma$ is a spoke graph, we can choose $\Lambda$ to be the central vertex. 
When $\Gamma=$2D2 (see Section \ref{generators:3333ZMod4}), we can choose $\Lambda$ to be the central diamond.

By the proof of \cite[Lemma A.1]{1208.3637}, a low-weight generator $A$ is completely determined by its values on loops which stay within distance 1 of $\Lambda$. 
Furthermore, if $\Gamma$ is obtained from $\Lambda$ by adding $A_{finite}$ tails to \underline{distinct} vertices of $\Lambda$, then $A$ is completely determined by its values on loops which stay inside $\Lambda$. 
So when $\Gamma$ is a spoke graph with $n$ spokes, we can choose $\Lambda$ to be an $(n-1)$-star.

Moreover, as $A$ is a rotational eigenvector, $A$ is completely determined by its values on a set of rotation orbit representatives which stay in $\Lambda$.

We now describe an algorithm to recover our low-weight generator $A$ from its values on such loops. 

\begin{rem}
It should seem plausible, but not at all obvious, that the recovered generator is in fact a low-weight rotational eigenvector. 
Proposition \ref{prop:WellDefinedExtension} gives a well-defined element of the graph planar algebra. 
For our examples, we check using the programs  {\tt CheckLowestWeightCondition} and {\tt CheckRotationalEigenvector} in the notebook {\tt Generators.nb} that the low-weight and rotational eigenvector conditions hold respectively.
\end{rem}

\begin{defn}
For a vertex $v\in \Gamma$, we define $d(v,\Lambda)$ to be the minimal distance of $v$ to $\Lambda$.
For a loop $\gamma$ whose $i$-th vertex is denoted $\gamma(i)$, we define $d(\gamma,\Lambda)=\max_{i} d(\gamma(i),\Lambda)$.
\end{defn}

\paragraph{2-valent folding relation.\\}

Suppose $A$ is an $n$-box.
We start with a loop $\gamma$ on $\Gamma$ of length $2n$. 
If $d(\gamma,\Lambda)>1$, we can use the 2-valent relation first considered in \cite{MR2679382,MR2979509} to fold $\gamma$ inward by analyzing the capping action on 2-valent vertices as follows.
We use the notation of \cite{1208.3637}.

\begin{nota}
Suppose $s = \gamma(i)$ is a vertex on $\gamma$ which is distance at least 2 from $\Lambda$.
Let $t$ be the vertex on the same tail 2 closer to $\Lambda$ than $s$ (possibly $t$ is in $\Lambda$ itself).
Let $\gamma'$ be the loop modified from $\gamma$ by replacing $s$ at position $i$ with $t$.
Let $\pi$ be the `snipped' loop of length $2n-2$ obtained from $\gamma$ or $\gamma'$ by removing the $i$-th and $i+1$-th positions.
For convenience, we let $r = \gamma(i\pm1)=\gamma'(i\pm 1)$.
\end{nota}

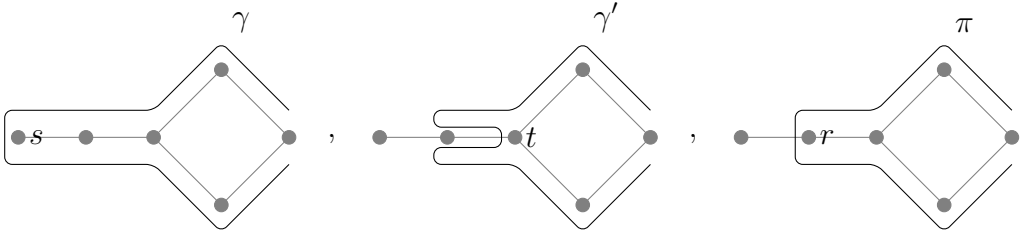
\begin{figure}[!htb]
$$
\begin{tikzpicture}[baseline=0,scale=.9]
	\filldraw[gray] (0,0) circle (1mm);
	\filldraw[gray] (1,0) circle (1mm);
	\filldraw[gray] (2,0) circle (1mm);
	\filldraw[gray] (3,-1) circle (1mm);
	\filldraw[gray] (3,1) circle (1mm);
	\filldraw[gray] (4,0) circle (1mm);
	\draw[gray] (0,0)--(2,0)--(3,1)--(4,0);
	\draw[gray] (2,0)--(3,-1)--(4,0);
	
	\draw[rounded corners=1mm] (4,.4)--(3,1.4)--(2,.4)--(-.2,.4)--(-.2,-.4)--(2,-.4)--(3,-1.4)--(4,-.4);

	\node[above right] at (3,1.4) {$\gamma$};
	\node[right] at (0,0) {$s$};

\end{tikzpicture} \quad , \quad
\begin{tikzpicture}[baseline=0,scale=.9]
	\filldraw[gray] (0,0) circle (1mm);
	\filldraw[gray] (1,0) circle (1mm);
	\filldraw[gray] (2,0) circle (1mm);
	\filldraw[gray] (3,-1) circle (1mm);
	\filldraw[gray] (3,1) circle (1mm);
	\filldraw[gray] (4,0) circle (1mm);
	\draw[gray] (0,0)--(2,0)--(3,1)--(4,0);
	\draw[gray] (2,0)--(3,-1)--(4,0);
	
	\draw[rounded corners=1mm] (4,.4)--(3,1.4)--(2,.4)--(.8,.4) -- (.8,.15)--(1.8,.15)--(1.8,-.15)--(.8,-.15)--(.8,-.4)--(2,-.4)--(3,-1.4)--(4,-.4);

	\node[above right] at (3,1.4) {$\gamma'$};
	\node[right] at (2,0) {$t$};

\end{tikzpicture}
 \quad , \quad
\begin{tikzpicture}[baseline=0,scale=.9]
	\filldraw[gray] (0,0) circle (1mm);
	\filldraw[gray] (1,0) circle (1mm);
	\filldraw[gray] (2,0) circle (1mm);
	\filldraw[gray] (3,-1) circle (1mm);
	\filldraw[gray] (3,1) circle (1mm);
	\filldraw[gray] (4,0) circle (1mm);
	\draw[gray] (0,0)--(2,0)--(3,1)--(4,0);
	\draw[gray] (2,0)--(3,-1)--(4,0);
	
	\draw[rounded corners=1mm] (4,.4)--(3,1.4)--(2,.4)--(.8,.4) --(.8,-.4)--(2,-.4)--(3,-1.4)--(4,-.4);

	\node[above right] at (3,1.4) {$\pi$};
	\node[right] at (1,0) {$r$};

\end{tikzpicture}
$$
\caption{Example of loops and vertices appearing in the 2-valent folding relation.}
\label{fig:SnippedLoops}
\end{figure}

\begin{defn}
Applying a cap at position $i$ to $A$, we have $\cap_i (A) = 0$. Evaluating this at $\pi$ gives the \underline{2-valent folding relation}
\begin{align*}
0 = \sqrt{\dim(r)}^{k_i} \;{\cap_i(A)(\pi)} & = \sqrt{\dim(s)}^{k_i} A(\gamma) + \sqrt{\dim(t)}^{k_i} A(\gamma').
\end{align*}
Here $k_i$ is the number of critical points in the cap strand, either $1$ or $2$ depending on the position of the point $i$ around the boundary of the rectangular box, as follows
\begin{equation*}
k_i = \begin{cases}
1 & \text{when we have $\tikz[baseline=7]{\draw[fill=gray] (0,0) rectangle (1,0.5); \draw (0.3,0.5) arc (180:90:0.2) node[above left] {$i$} arc(90:0:0.2);}$ or $\tikz[baseline=0]{\draw[fill=gray] (0,0) rectangle (1,0.5); \draw (0.3,0) arc (180:270:0.2) node[right=6pt] {$i$} arc(270:360:0.2);}$} \\
2 & \text{when we have $\tikz[baseline=4]{\draw[fill=gray] (0,0) rectangle (1,0.5); \draw (0.8,0.5) arc (180:0:0.2) -- node[right] {$i$} (1.2,0) arc(0:-180:0.2);}$ or $\tikz[baseline=4]{\draw[fill=gray] (0,0) rectangle (1,0.5); \draw (0.2,0.5) arc (0:180:0.2) -- node[left] {$i$} (-0.2,0) arc(180:360:0.2);}$}
\end{cases}
\end{equation*}
(The case $k_i=3$, which occurs for the following caps
$$
\tikz[baseline=5]{\path (0,0) -- (0,1); \draw[fill=gray] (0,0) rectangle (1,0.5); \draw (0.2,0.5) arc (0:180:0.2) -- (-0.2,0) arc(180:270:0.7) node[above] {$i$} arc (270:360:0.7) -- (1.2,0.5) arc (0:180:0.2);}
\text{ or }
\tikz[y=-1cm,baseline=5]{\path (0,0) -- (0,1); \draw[fill=gray] (0,0) rectangle (1,0.5); \draw (0.2,0.5) arc (0:180:0.2) -- (-0.2,0) arc(180:270:0.7) node[below] {$i$} arc (270:360:0.7) -- (1.2,0.5) arc (0:180:0.2);}\,,
$$
never occurs for us as we always consider boxes with equal numbers of strands above and below.)
\end{defn}

\begin{lem}\label{lem:collapsing-spherical}
If $\hat{\gamma}$ is the loop of length $2n$ with $d(\hat{\gamma},\Lambda)=1$ obtained from $\gamma$ by the 2-valent folding relation described above, we have
\begin{equation}
\label{eq:collapsing-spherical}
A(\gamma) = (-1)^{(\norm{\gamma}-\norm{\hat{\gamma}})/2} \left( \prod_{i} \sqrt{\frac{\dim(\hat{\gamma}(i))}{\dim(\gamma(i))}}^{k_i} \right) A(\hat{\gamma}).
\end{equation}
where $\norm{\gamma}=\sum_{i} d(\gamma(i),\Lambda)$.
\end{lem}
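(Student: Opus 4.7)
The plan is to prove \eqref{eq:collapsing-spherical} by induction on the non-negative integer $m := (\norm{\gamma} - \norm{\hat{\gamma}})/2$, the number of 2-valent folds needed to pass from $\gamma$ to $\hat{\gamma}$. In the base case $m=0$, one has $d(\gamma,\Lambda)=1$ and hence $\gamma = \hat{\gamma}$, so both sides of \eqref{eq:collapsing-spherical} trivially equal $A(\gamma)$.

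For the inductive step with $m \geq 1$, I would choose any position $i$ with $d(\gamma(i), \Lambda) \geq 2$, set $s = \gamma(i)$, and let $t$ be the vertex on the same tail two steps closer to $\Lambda$, producing $\gamma'$. The 2-valent folding relation displayed just before the lemma statement then gives
$$A(\gamma) \;=\; -\sqrt{\dim(t)/\dim(s)}^{\,k_i}\, A(\gamma').$$
Because $\gamma'$ agrees with $\gamma$ except at position $i$, and because the eventual collapse at any fixed position depends only on the tail containing that vertex, we have $\hat{\gamma'} = \hat{\gamma}$ and $\norm{\gamma'} = \norm{\gamma} - 2$, so the inductive hypothesis applies to $\gamma'$ with $m-1$ folds. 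Combining the two expressions, the signs compose as $(-1) \cdot (-1)^{m-1} = (-1)^m$, exactly matching the sign on the right-hand side of \eqref{eq:collapsing-spherical}. For the product of square roots, the factors at positions $j \neq i$ are unchanged because $\gamma(j) = \gamma'(j)$; at position $i$, the single fold-factor $\sqrt{\dim(t)/\dim(s)}^{\,k_i}$ combines with the inductive factor $\sqrt{\dim(\hat{\gamma}(i))/\dim(t)}^{\,k_i}$ to telescope into the required $\sqrt{\dim(\hat{\gamma}(i))/\dim(\gamma(i))}^{\,k_i}$. Note that $k_i$ is intrinsic to the position $i$ within the rectangular box and is unaffected by folding.

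The main subtlety to confirm is that the induction makes a choice of $i$ at each step, while the formula on the right of \eqref{eq:collapsing-spherical} depends only on $\gamma$ and $\hat{\gamma}$. The induction itself resolves this: different folding orders correspond to different orderings of a commuting family of per-position scalar contributions which collectively telescope to the stated answer. The one prerequisite requiring attention during write-up is simply that whenever $d(\gamma,\Lambda) \geq 2$ a foldable position exists, i.e.\ a position $i$ with $d(\gamma(i),\Lambda)\geq 2$ whose loop-neighbors $\gamma(i\pm 1)$ both lie on the tail of $\gamma(i)$ closer to $\Lambda$; this is automatic upon picking $i$ to be a local maximum of $d(\gamma(\cdot),\Lambda)$ along the loop.
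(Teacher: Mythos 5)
Your proposal is correct and follows exactly the route the paper intends: the paper states the lemma without proof as the iterated consequence of the 2-valent folding relation $A(\gamma)=-\sqrt{\dim(t)/\dim(s)}^{\,k_i}A(\gamma')$, and your induction on the number of folds, with the telescoping of the dimension ratios and the observation that a foldable position (a local maximum of $d(\gamma(\cdot),\Lambda)$) always exists when $d(\gamma,\Lambda)\geq 2$, is precisely the argument being left implicit. No gaps.
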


\begin{rem}\label{rem:collapsing-lopsided}
In the lopsided convention, this formula is given by
\begin{equation}
\label{eq:collapsing-lopsided}
A(\gamma) = (-1)^{(\norm{\gamma}-\norm{\hat{\gamma}})/2} \left( \prod_{i} \left(\frac{\dim(\hat{\gamma}(i))}{\dim(\gamma(i))}\right)^{\ell_i} \right) A(\hat{\gamma})
\end{equation}
where $\ell_i$ is the number of minima on the cap:
\begin{equation*}
\ell_i = \begin{cases}
0 & \text{when we have $\tikz[baseline=7]{\draw[fill=gray] (0,0) rectangle (1,0.5); \draw (0.3,0.5) arc (180:90:0.2) node[above left] {$i$} arc(90:0:0.2);}$} \\
1 & \text{when we have $\tikz[baseline=0]{\draw[fill=gray] (0,0) rectangle (1,0.5); \draw (0.3,0) arc (180:270:0.2) node[right=6pt] {$i$} arc(270:360:0.2);}$, $\tikz[baseline=4]{\draw[fill=gray] (0,0) rectangle (1,0.5); \draw (0.8,0.5) arc (180:0:0.2) -- node[right] {$i$} (1.2,0) arc(0:-180:0.2);}$, or $\tikz[baseline=4]{\draw[fill=gray] (0,0) rectangle (1,0.5); \draw (0.2,0.5) arc (0:180:0.2) -- node[left] {$i$} (-0.2,0) arc(180:360:0.2);}$}\,.
\end{cases}
\end{equation*}
\end{rem}

\paragraph{Tail avoiding relation.\\}
Now suppose $\Gamma$ is obtained from $\Lambda$ by adding $A_{finite}$ tails to distinct vertices of $\Lambda$.
Further suppose $\gamma$ is a loop of length $2n$ with $d(\gamma,\Lambda)=1$.

\begin{nota}
Suppose $s = \gamma(i)$ is a vertex on $\gamma$ which is distance 1 from $\Lambda$, and let $r=\gamma(i+1)$ which is necessarily in $\Lambda$.
Let $\{t\}$ be the set of vertices in $\Lambda$ incident to $r$.
Let $\gamma_{i,t}$ be the loop modified from $\gamma$ by replacing $s$ at position $i$ with $t$.
Let $\pi$ be the `snipped' loop of length $2n-2$ obtained from $\gamma$ or $\gamma_{i,t}$ by removing the $i$-th and $i+1$-th positions.
\end{nota}

\begin{defn}
The \underline{tail avoiding relation} is given by:
$$
0 = \sqrt{\dim(r)}^{k_i} \;{\cap_i(A)(\pi)}  =  \sqrt{\dim(s)}^{k_i} A(\gamma)+\sum_{t} \sqrt{\dim(t)}^{k_i} A(\gamma_{i,t}).
$$
\end{defn}

\begin{lem}\label{lem:TailAvoiding-spherical}
If $\gamma$ has $d(\gamma,\Lambda)=1$, and $\hat{\gamma}$ has $d(\hat{\gamma},\Lambda)=0$ and is obtained from $\gamma$ by the tail avoiding relation described above, then
\begin{equation}
\label{eq:TailAvoiding-spherical}
A(\gamma)
=
(-1)^{\norm{\gamma}}
\sum_{\set{i}{\gamma(i)\notin\Lambda}}
\sum_{\set{t_i}{{\substack{t_i\sim \gamma(i\pm 1)\\ t_i\in\Lambda}}}} 
\sqrt{\frac{\dim(t_i)}{\dim(\gamma(i))}}^{k_i} A(\gamma_{i,t_i})
\end{equation}
where $v\sim w$ means $v$ is incident to $w$ (note $\gamma(i+1)=\gamma(i-1)$ if $\gamma(i)\notin \Lambda$), and $k_i$ is as in Lemma \ref{lem:collapsing-spherical}.
\end{lem}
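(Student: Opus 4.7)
\medskip

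\noindent\textbf{Proof plan.} The plan is to derive \eqref{eq:TailAvoiding-spherical} by iterated application of the tail avoiding relation at each position $i$ where $\gamma(i)\notin\Lambda$. Let $I=\{i : \gamma(i)\notin\Lambda\}$; since $d(\gamma,\Lambda)=1$, each $i\in I$ has $\gamma(i)$ at distance exactly $1$ from $\Lambda$, so $\gamma(i-1)=\gamma(i+1)$ is the unique attachment vertex in $\Lambda$. In particular $\norm{\gamma}=|I|$, so the sign $(-1)^{\norm{\gamma}}$ in the statement is exactly the product of a factor $-1$ contributed by each position in $I$.

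First I would fix $i_0\in I$ and apply the tail avoiding relation, which comes from evaluating $\cap_{i_0}(A)=0$ against the snipped loop $\pi$ of length $2n-2$ obtained by removing positions $i_0$ and $i_0+1$ from $\gamma$. This gives
\[
A(\gamma)=-\sum_{\substack{t_{i_0}\sim \gamma(i_0\pm 1)\\ t_{i_0}\in\Lambda}}\sqrt{\frac{\dim(t_{i_0})}{\dim(\gamma(i_0))}}^{k_{i_0}}\,A(\gamma_{i_0,t_{i_0}}),
\]
where the hypothesis that tails are attached to distinct vertices of $\Lambda$ guarantees that every neighbor of $\gamma(i_0\pm 1)$ in $\Gamma$ other than $\gamma(i_0)$ lies in $\Lambda$ (and so does $\gamma(i_0\pm 1)$ itself since it is the attachment vertex). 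Each $\gamma_{i_0,t_{i_0}}$ now has distance $1$ from $\Lambda$ at fewer positions than $\gamma$ — namely, exactly $I\setminus\{i_0\}$.

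Next I would iterate: pick some $i_1\in I\setminus\{i_0\}$ and apply the same relation. The applications commute because modifying $\gamma$ at position $i_1$ does not disturb the local picture at position $i_0$ (the two caps use disjoint pairs of strands), and the cap coefficients $k_i$ depend only on the boundary geometry of the box, not on the loop. Continuing this process $|I|$ times, each iteration contributes an overall $-1$ and a factor $\sqrt{\dim(t_i)/\dim(\gamma(i))}^{k_i}$ inside a sum over admissible $t_i\in\Lambda$. Collecting the results gives the formula in the lemma, where the notation $\gamma_{i,t_i}$ is understood as the multi-replacement obtained by letting $i$ range over all of $I$ simultaneously with corresponding choices $t_i$; the resulting loop lies entirely in $\Lambda$, matching $\hat{\gamma}$.

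The main obstacle is bookkeeping rather than conceptual: one must verify carefully that the cap-coefficients $k_i$ are unchanged by prior replacements (which holds because only the vertex labels, not the diagrammatic structure, are affected), and that the quantity $(-1)^{\norm{\gamma}}$ correctly records $|I|$ in all cases (immediate from $d(\gamma,\Lambda)=1$). Everything else is the straightforward linearity of $A$ against the snipped-loop identity supplied by $\cap_{i}(A)=0$, exactly as in Lemma \ref{lem:collapsing-spherical}.
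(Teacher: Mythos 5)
Your argument is correct and is exactly the intended one: the paper states this lemma without proof as an immediate consequence of iterating the tail avoiding relation over the positions $i$ with $\gamma(i)\notin\Lambda$, which is precisely what you do. Your observations that the positions in $I$ are non-adjacent (so the replacements commute and the $k_i$ are unaffected), that $\norm{\gamma}=|I|$ accounts for the sign, and that the right-hand side of \eqref{eq:TailAvoiding-spherical} is to be read as a sum over simultaneous choices $(t_i)_{i\in I}$ with a product of coefficients, are all accurate and fill in the bookkeeping the paper leaves implicit.
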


\begin{rem}
In the lopsided convention, this formula is given by
\begin{equation}
\label{eq:TailAvoiding-lopsided}
A(\gamma)
=
(-1)^{\norm{\gamma}}
\sum_{\set{i}{\gamma(i)\notin\Lambda}}
\sum_{\set{t_i}{{\substack{t_i\sim \gamma(i\pm 1)\\ t_i\in\Lambda}}}} 
\left(\frac{\dim(t_i)}{\dim(\gamma(i))}\right)^{\ell_i} A(\gamma_{i,t_i})
\end{equation}
using similar notation from Remark \ref{rem:collapsing-lopsided} and Lemma \ref{lem:TailAvoiding-spherical}.
\end{rem}

\paragraph{Rotation.\\}
We still assume $\Gamma$ is obtained from $\Lambda$ by adding $A_{finite}$ tails to distinct vertices of $\Lambda$.

Rotation acts on the set of loops which stay in $\Lambda$, so if we are trying to specify a lowest weight vector $A$ which is also a rotational eigenvector corresponding to eigenvalue $\omega$, then it suffices to specify $A$ only on a representative of each such orbit.

\begin{prop}\label{prop:WellDefinedExtension}
Let $S$ be a set of representatives of each rotation orbit of loops of length $2n$ in $\Lambda$.
Let $A_0:S\to \C$. For a loop $\gamma$ of length $2n$ in $\Lambda$, let $[\gamma]$ be its representative in $S$.
Suppose that whenever $\gamma'\in S$ is fixed by the $k$-fold rotation, and $\omega^k \neq 1$, then $A_0(\gamma')=0$. 
Then there is a well-defined function $A_1$ on the loops of length $2n$ in $\Lambda$ such that $A_1|_S=A_0$.

Moreover, there is a well-defined element $A \in\cP\cA(\Gamma)_n$ such that the values of $A$ on the loops of length $2n$ on $\Lambda$ is equal to $A_1$.
\end{prop}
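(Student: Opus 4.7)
The plan is to construct $A$ in two stages. First, extend $A_0$ to a function $A_1$ on all loops in $\Lambda$ by using the rotational eigenvector condition. Second, extend $A_1$ to a function on all loops of length $2n$ in $\Gamma$ by inverting the 2-valent folding and tail-avoiding relations from Lemmas \ref{lem:collapsing-spherical} and \ref{lem:TailAvoiding-spherical}.

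For the first stage, given a loop $\gamma$ in $\Lambda$ of length $2n$, pick $j$ with $\rho^j(\gamma) = [\gamma]$ and set $A_1(\gamma) := \omega^{-j} A_0([\gamma])$. To verify this is well-defined, suppose $\rho^j(\gamma) = \rho^{j'}(\gamma) = [\gamma]$; then $\rho^{j-j'}$ fixes $[\gamma]$, so by the hypothesis on $A_0$ either $\omega^{j-j'} = 1$ (in which case $\omega^{-j} = \omega^{-j'}$) or else $A_0([\gamma]) = 0$ (in which case both expressions vanish). In either case $\omega^{-j} A_0([\gamma]) = \omega^{-j'} A_0([\gamma])$, so $A_1$ is independent of the chosen rotation index.

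For the second stage, given any loop $\gamma$ in $\Gamma$ of length $2n$: if $d(\gamma,\Lambda) > 1$, apply the 2-valent folding vertex-by-vertex to obtain a loop $\hat\gamma$ with $d(\hat\gamma,\Lambda) \leq 1$, where each $\gamma(i)$ at distance $>1$ is replaced by the unique vertex on the same $A_{finite}$ tail lying as close to $\Lambda$ as parity allows, and define $A(\gamma)$ by the scaling prescribed by \eqref{eq:collapsing-spherical}. This reduction is deterministic, since each vertex has a unique path inward along its $A_{finite}$ tail, and the scalar factors accumulated along the way do not depend on the order in which the vertices are pulled in. If $d(\gamma,\Lambda) = 1$, iterate the tail-avoiding relation at each position $i$ with $\gamma(i)\notin\Lambda$ to express $A(\gamma)$ as a linear combination of $A_1$-values on loops contained in $\Lambda$.

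The main obstacle is showing that this iterated tail-avoiding reduction is independent of the order in which the distance-$1$ vertices are eliminated. Since $\Gamma$ is obtained from $\Lambda$ by attaching $A_{finite}$ tails at distinct vertices of $\Lambda$, each $\gamma(i)\notin\Lambda$ at distance $1$ is adjacent to a unique vertex of $\Lambda$, which must equal both $\gamma(i-1)$ and $\gamma(i+1)$. The tail-avoiding replacement at position $i$ therefore modifies only the single entry $\gamma(i)$, with a scalar factor depending only on $\gamma(i)$, its unique neighbor in $\Lambda$, and the chosen target $t\in\Lambda$; reductions at distinct positions act on disjoint coordinates of the loop sequence and hence commute, so the resulting linear combination of $A_1$-values is order-independent. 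Since an element of $\cP\cA(\Gamma)_n$ is by definition a complex-valued function on loops of length $2n$, the well-defined values assembled above give the required element $A\in\cP\cA(\Gamma)_n$.
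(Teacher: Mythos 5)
Your two-stage structure (extend to $\Lambda$-loops by rotation, then outward by the tail-avoiding and folding relations) matches the paper's, and your order-independence observations for the second stage are fine. But there is a genuine error in the first stage: you set $A_1(\gamma) = \omega^{-j}A_0([\gamma])$, whereas in a graph planar algebra the rotation tangle does \emph{not} act on the loop basis by a bare permutation. The strands of $\rho^j$ that wrap around the box contribute cup/cap factors, so evaluating $\rho^j(A)$ at $\gamma$ produces a ratio of square roots of vertex dimensions, and the extension forced by $\rho(A)=\omega A$ is
$$
A_1(\gamma) \;=\; \omega^{-j}\,\sqrt{\frac{\dim(\gamma(2j+1))\,\dim(\gamma(n+2j+1))}{\dim(\gamma(1))\,\dim(\gamma(n+1))}}\;A_0([\gamma]),
$$
as in Equation \eqref{eq:rotation-spherical}. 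Omitting this factor yields a different function $A_1$, and the resulting $A$ would not even be a candidate for the rotational eigenvector one is trying to recover (the subsequent eigenvector check would fail), which defeats the purpose of the proposition.

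This omission also leaves your well-definedness argument incomplete. Besides checking that $\omega^{-j}=\omega^{-j'}$ (or $A_0([\gamma])=0$) when $\rho^{-j}(\gamma)=\rho^{-j'}(\gamma)=[\gamma]$, one must check that the dimension factors for $j$ and $j'$ agree. This does hold, but it requires an argument you never make: if $\gamma$ is fixed by $\rho^{j'-j}$ then its vertex sequence is periodic, $\gamma(i)=\gamma(i+2(j'-j))$ for all $i$, so $\dim(\gamma(2j+1))=\dim(\gamma(2j'+1))$ and $\dim(\gamma(n+2j+1))=\dim(\gamma(n+2j'+1))$. With that correction and check added, your argument aligns with the paper's proof.
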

\begin{proof}
Suppose $\gamma$ is a loop of length $2n$ which stays in $\Lambda$, and $\rho^{-j}(\gamma)=[\gamma]$ for some $j=0,\dots, n-1$. If $j\leq n/2$,
$$
\rho^{j}(A)(\gamma)=A(\rho^{-j}(\gamma))=
\begin{tikzpicture}[xscale=.8,yscale=.3, baseline]
	\draw[] (0,-2) arc (0:-180:.5cm) -- (-1,2) .. controls ++(90:2cm) and ++(-90:2cm) .. (1,6) -- (0,6) .. controls ++(-90:2cm) and ++(90:2cm) .. (-2,2) -- (-2,-2) arc (-180:0:1.5cm);
	\foreach \x in {0,2,4} \draw[] (\x cm,2 cm) .. controls ++(90:2cm) and ++(-90:2cm) .. (\x cm + 2 cm,6 cm) -- (\x cm + 3 cm, 6 cm) .. controls ++(-90:2cm) and ++(90:2cm) .. (\x cm + 1 cm,2 cm);

\begin{scope}[rotate=180, xshift=-7cm]
		\draw[] (0,-2) arc (0:-180:.5cm) -- (-1,2) .. controls ++(90:2cm) and ++(-90:2cm) .. (1,6) -- (0,6) .. controls ++(-90:2cm) and ++(90:2cm) .. (-2,2) -- (-2,-2) arc (-180:0:1.5cm);
		\foreach \x in {0,2,4} \draw[] (\x cm,2 cm) .. controls ++(90:2cm) and ++(-90:2cm) .. (\x cm + 2 cm,6 cm) -- (\x cm + 3 cm, 6 cm) .. controls ++(-90:2cm) and ++(90:2cm) .. (\x cm + 1 cm,2 cm);
\end{scope}

	\draw[thick] (-.5,-2) rectangle (7.5,2);
	\draw[thick] (-2.5,-6) rectangle (9.5,6);
	\node at (3.5,0) {$A$};
	
	\node at (1 cm - 1.5 cm, 6cm) [above] {\tiny{$\gamma(1)$}};
	\node at (3 cm - 1.5 cm, 6cm) [above] {\tiny{$\gamma(2j{+}1)$}};
	\node at (16.5 cm - 9 cm, -6 cm) [below] {\tiny {$\gamma(n{+}1)$}};
	\node at (16.5 cm - 11 cm, -6 cm) [below] {\tiny {$\gamma(n{+}1{+}2j)$}};
\end{tikzpicture}\,.
$$
Hence for all $j=0,\dots, n-1$, we define
\begin{equation}
\label{eq:rotation-spherical}
A_1(\gamma) = \omega^{-j} \sqrt{\frac{\dim(\gamma(2j+1))\dim(\gamma(n+2j+1))}{\dim(\gamma(1))\dim(\gamma(n+1))}}  A_0([\gamma]),
\end{equation}
modulo some modular arithmetic, namely $\gamma(b)=\gamma(b\mod 2n)$.

In the lopsided convention, the above equation is given by
\begin{equation}
\label{eq:rotation-lopsided}
A_1(\gamma) = \omega^{-j} 
\left(\prod_{k=1}^{2j} \frac{\dim(\gamma(1+k))}{\dim(\gamma(n+k))}\right)
A_0([\gamma]).
\end{equation}

We now define $A\in\cP\cA(\Gamma)_n$ as follows. First, for loops $\gamma$ of length $2n$ which stay in $\Lambda$, define $A(\gamma)=A_1(\gamma)$. Next, we define $A$ on loops $\gamma$ of length $2n$ for which $d(\gamma,\Lambda)=1$ by Lemma \ref{lem:TailAvoiding-spherical}. Finally, we define $A$ on loops $\gamma$ of length $2n$ for which $d(\gamma,\Lambda)>1$ by Lemma \ref{lem:collapsing-spherical}.
\end{proof}

\begin{rem}
When $\Gamma$ is a simply laced spoke graph, \cite[Appendix A]{1208.3637} gives a necessary and sufficient condition for a list of values on rotation orbit representatives to give a well-defined low-weight rotation eigenvector. 
We do not give such a condition for general $\Gamma$.

Moreover, for spoke graphs $\Gamma$, the value of $A$ on any loop which has more than $2k+1$ consecutive vertices which either lie on a particular arm of the graph of length $k-1$ or are the central vertex is zero \cite[Lemma A.1 (3)]{1208.3637}. We omit these values in our lists below for $\Gamma^{\Z/3}$ and $\Gamma^{\Z/2\times\Z/2}$.
\end{rem}

In the subsections that follow for each of our graph planar algebras, we use the discussion above to specify our generators by their values on a certain collection of loops. 
For $\Gamma^{\Z/3}$ and $\Gamma^{\Z/2\times\Z/2}$, we denote the value of $A$ on the collapsed loop which successively visits
arms $a_1,a_2,\dots, a_n$ by $A(a_1a_2\cdots a_n)$.
For $\Gamma^{\Z/4}$, we find our generators in the graph planar algebra of $\Gamma=2D2$, which has a central diamond. We label the vertices on the diamond by $W,S,E,N$, which stand for ``west," ``south," ``east," ``north" respectively. We denote the value of $A$ on the collapsed loop which stays inside the central diamond by $A(w)$, where $w$ is a word on $\{W,S,E,N\}$.

\subsection{$3^{\Z/3}$: Haagerup}

We work in the graph planar algebra of
$$
\Gamma=\bigraph{gbg1v1v1v1p1v1x0p0x1v1x0p0x1}
$$
where $\Lambda$ consists of three vertices and two edges: the central vertex, both vertices to the right of the central vertex, and the edges connecting them.

The self-adjoint generator $A$ for $\cP^{\Z/3}_\bullet$ has chiralities $\omega_A=-1$ and $\sigma_A=i$.
The values of $A$ on rotation orbit representatives of collapsed loops are as follows:
\begin{align*}
A(1112) & = \frac{1}{3} \left(4-\sqrt{13}\right) &
A(1122) & = \lambda_{3,0,-25,0,-1}^{(0.1995 i)} \displaybreak[1]\\
A(1212) & = \frac{1}{3} \left(\sqrt{13}-4\right) &
A(1222) & = \frac{1}{3} \left(4-\sqrt{13}\right)
\end{align*}
These entries lie in $\Q(\mu_{\Z/3})$, where $\mu_{\Z/3}$ is the root of
$$
x^8+6 x^6+3 x^4+34 x^2+9
$$
which is approximately $2.52 i$.

\subsection{$3^{\Z/2\times \Z/2}$}\label{generators:3333}

We work in the graph planar algebra of
$$
\Gamma=\bigraph{gbg1v1v1v1p1p1v1x0x0p0x1x0p0x0x1v1x0x0p0x1x0p0x0x1}
$$
where $\Lambda$ consists of four vertices and three edges: the central vertex, the vertices to the right of the central vertex, and the edges connecting them.

\begin{prop}
Let $A_0,B_0$ be the $\cP_\bullet^{\Z/2\times \Z/2}$ generators from \cite{1208.3637}. Let $P,Q,R$ be the minimal projections at depth 4 given by the linear combinations
\begin{align*}
P &=\frac{1}{4} \left(-1+\sqrt{5}\right)A_0+  -\frac{\sqrt{5}}{6}B_0+\frac{1}{3}\jw{4}\\
Q &=-\frac{1}{2}A_0 + \frac{1}{12} \left(-3+\sqrt{5}\right)B_0+\frac{1}{3}\jw{4}\\
R &=\frac{1}{4} \left(3-\sqrt{5}\right)A_0+ \frac{1}{12} \left(3+\sqrt{5}\right)B_0+\frac{1}{3}\jw{4}
\end{align*}
obtained in the proof of \cite[Theorem 5.9]{1208.3637}.
Then $P-Q,Q-R,R-P$ are all uncappable rotational low-weight vectors with eigenvalue $1$.
\end{prop}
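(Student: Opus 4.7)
The proof is essentially an exercise in linear algebra, reducing to one input from \cite{1208.3637}. My plan is as follows. First, I would verify directly that $P+Q+R = \jw{4}$: the coefficients of $A_0$ sum to $\tfrac{-1+\sqrt{5}}{4}-\tfrac12+\tfrac{3-\sqrt{5}}{4} = 0$, those of $B_0$ sum to $-\tfrac{\sqrt{5}}{6}+\tfrac{-3+\sqrt{5}}{12}+\tfrac{3+\sqrt{5}}{12}=0$, and the $\jw{4}$-coefficients sum to $1$. This identity immediately yields $(P-Q)+(Q-R)+(R-P)=0$, so it suffices to verify the three required properties for any two of the differences; it also shows that each difference is a real linear combination of $A_0$ and $B_0$ alone, since the $\jw{4}$-components cancel.

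Self-adjointness and uncappability (i.e.\ low-weight) then follow from the corresponding properties of $A_0$ and $B_0$: as a set of minimal generators (Assumption \ref{assume:Generators}), $A_0$ and $B_0$ are themselves self-adjoint and uncappable, and both properties are preserved under real linear combinations. No further work is needed for these two points.

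The remaining step is to show $\rho(P-Q)=P-Q$ and likewise for the other differences. Since $A_0$ and $B_0$ are rotational eigenvectors (again by Assumption \ref{assume:Generators}), $\rho$ scales them by their chiralities $\omega_{A_0}, \omega_{B_0}$; and a quick computation shows that the coefficients of $A_0$ and $B_0$ in each of $P-Q$, $Q-R$, $R-P$ are all nonzero (for $P-Q$ they are $\tfrac{1+\sqrt{5}}{4}$ and $\tfrac{1-\sqrt{5}}{4}$). Consequently the differences are fixed by $\rho$ precisely when $\omega_{A_0}=\omega_{B_0}=1$, which is the fact one needs to extract from \cite{1208.3637}. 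This is the single substantive input to the proof; everything else is routine coefficient-checking. There is no conceptual obstacle beyond making this citation.
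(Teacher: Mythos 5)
Your proof is correct and follows essentially the same route as the paper, whose entire argument is that $P-Q$, $Q-R$, $R-P$ lie in $\spann\{A_0,B_0\}$ (since the $\jw{4}$-coefficients, each $\tfrac13$, cancel), and that this span is exactly the low-weight space for rotational eigenvalue $1$. Your additional observations (that $P+Q+R=\jw{4}$ and that the differences sum to zero) are correct but not needed for the conclusion.
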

\begin{proof}
It is clear that $P-Q,Q-R,R-P\in\spann\{A_0,B_0\}$, which is the low-weight space associated to the rotational eigenvalue $1$.
\end{proof}

Given the extremely simple formula for these low-weight vectors, we will use different generators for this article than those given in \cite{1208.3637}. We work with $A=P-Q$ and $B=2R-(P+Q)$, which have chiralities 
$$\omega_A=\sigma_A=\omega_B=\sigma_B=1.$$
Their values on rotation orbit representatives of collapsed loops are as follows:
\begin{align*}
A(1112) & = \frac{1}{4} \left(3 \sqrt{5}-7\right) &
A(1113) & = 0 \displaybreak[1]\\
A(1122) & = \frac{1}{2} \left(2-\sqrt{5}\right) &
A(1123) & = 0 \displaybreak[1]\\
A(1132) & = 0 &
A(1133) & = \frac{1}{4} \left(3-\sqrt{5}\right) \displaybreak[1]\\
A(1212) & = \frac{1}{2} \left(3-\sqrt{5}\right) &
A(1213) & = \frac{1}{4} \left(1-\sqrt{5}\right) \displaybreak[1]\\
A(1222) & = \frac{1}{4} \left(3 \sqrt{5}-7\right) &
A(1223) & = \frac{1}{4} \left(3-\sqrt{5}\right) \displaybreak[1]\\
A(1232) & = 0 &
A(1233) & = \frac{1}{4} \left(\sqrt{5}-3\right) \displaybreak[1]\\
A(1313) & = 0 &
A(1322) & = \frac{1}{4} \left(3-\sqrt{5}\right) \displaybreak[1]\\
A(1323) & = \frac{1}{4} \left(\sqrt{5}-1\right) &
A(1332) & = \frac{1}{4} \left(\sqrt{5}-3\right) \displaybreak[1]\\
A(1333) & = 0 &
A(2223) & = \frac{1}{4} \left(7-3 \sqrt{5}\right) \displaybreak[1]\\
A(2233) & = \frac{1}{4} \left(3 \sqrt{5}-7\right) &
A(2323) & = \frac{1}{2} \left(\sqrt{5}-3\right) \displaybreak[1]\\
A(2333) & = \frac{1}{4} \left(7-3 \sqrt{5}\right)
\end{align*}\begin{align*}
B(1112) & = \frac{1}{4} \left(7-3 \sqrt{5}\right) &
B(1113) & = \frac{1}{2} \left(3 \sqrt{5}-7\right) \displaybreak[1]\\
B(1122) & = \frac{1}{2} \left(2 \sqrt{5}-5\right) &
B(1123) & = \frac{1}{2} \left(3-\sqrt{5}\right) \displaybreak[1]\\
B(1132) & = \frac{1}{2} \left(3-\sqrt{5}\right) &
B(1133) & = \frac{1}{4} \left(11-5 \sqrt{5}\right) \displaybreak[1]\\
B(1212) & = \frac{1}{2} \left(\sqrt{5}-3\right) &
B(1213) & = \frac{1}{4} \left(1-\sqrt{5}\right) \displaybreak[1]\\
B(1222) & = \frac{1}{4} \left(7-3 \sqrt{5}\right) &
B(1223) & = \frac{1}{4} \left(\sqrt{5}-3\right) \displaybreak[1]\\
B(1232) & = \frac{1}{2} \left(\sqrt{5}-1\right) &
B(1233) & = \frac{1}{4} \left(\sqrt{5}-3\right) \displaybreak[1]\\
B(1313) & = 3-\sqrt{5} &
B(1322) & = \frac{1}{4} \left(\sqrt{5}-3\right) \displaybreak[1]\\
B(1323) & = \frac{1}{4} \left(1-\sqrt{5}\right) &
B(1332) & = \frac{1}{4} \left(\sqrt{5}-3\right) \displaybreak[1]\\
B(1333) & = \frac{1}{2} \left(3 \sqrt{5}-7\right) &
B(2223) & = \frac{1}{4} \left(7-3 \sqrt{5}\right) \displaybreak[1]\\
B(2233) & = \frac{1}{4} \left(\sqrt{5}-1\right) &
B(2323) & = \frac{1}{2} \left(\sqrt{5}-3\right) \displaybreak[1]\\
B(2333) & = \frac{1}{4} \left(7-3 \sqrt{5}\right)
\end{align*}
Clearly these entries lie in $\Q(\sqrt{5})$.

\subsection{$3^{\Z/4}$}\label{generators:3333ZMod4}
In an unpublished manuscript, Izumi constructs a $3^{\Z/4}$ subfactor with principal graphs
$$
\left(
\bigraph{bwd1v1v1v1p1p1v1x0x0p0x1x0p0x0x1v1x0x0p0x1x0p0x0x1duals1v1v1x2x3v1x3x2},
\bigraph{bwd1v1v1v1p1p1v1x0x0p0x1x0p0x1x0v1x0x0duals1v1v1x2x3v1}
\right),
$$
and he claims there is a de-equivariantization, giving a subfactor with principal graph 2D2 (``2-diamond-2")
$$
\bigraph{gbg1v1v1p1v1x1v1v1}.
$$

In an independent calculation, the authors along with Scott Morrison have verified the existence of a 2D2 subfactor with principal graphs
$$
\text{2D2}
=
\left(
\bigraph{bwd1v1v1p1v1x1v1v1duals1v1v1v1},
\bigraph{bwd1v1v1p1v1x0p1x0p0x1p0x1v0x1x1x0duals1v1v1x3x2x4}
\right).
$$
We first find a bi-unitary connection on 2D2, and we find a flat generator with respect to that connection. 
We then verify that the resulting subfactor planar algebra has principal graphs 2D2. 
We obtain a $3^{\Z/4}$ subfactor planar algebra as an equivariantization of the 2D2 subfactor planar algebra. 
Note that 2D2 has annular multiplicities $*12$, so the $3^{\Z/4}$ generators must be the new low-weight vectors at depth 4. 
More details on this will appear in \cite{4442equi}. 

For our purposes, we do \underline{not} rely on the fact that our generators were obtained in this manner. 
Rather, we give candidate generators for $3^{\Z/4}$, show they satisfy Assumptions \ref{assume:Generators}, \ref{assume:SpanAlgebras}, and \ref{assume:Tetrahedral}, and use our formulas to show they generate an evaluable planar subalgebra of the graph planar algebra of 2D2, i.e., a subfactor planar algebra.

Hence we work in the graph planar algebra of 
$$
\Gamma=\bigraph{gbg1v1v1p1v1x1v1v1}
$$
where $\Lambda$ is the central diamond.

The self-adjoint generators $A,B$ for $\cP^{\Z/4}_\bullet$ have chiralities $\omega_A=-1$ and $\sigma_A=i$ and $\omega_B=\sigma_B=1$.
Their values on rotation orbit representatives of loops which remain in $\Lambda$ are as follows:

\begin{align*}
A(\mbox{WSWSWSWS}) & = 0 &
A(\mbox{WSWSWSWN}) & = \lambda_{16,0,-116,0,-1}^{(-0.09279 i)} \displaybreak[1]\\
A(\mbox{WSWSWSES}) & = 0 &
A(\mbox{WSWSWSEN}) & = \frac{1}{4} \left(3-\sqrt{5}\right) \displaybreak[1]\\
A(\mbox{WSWSWNWN}) & = 0 &
A(\mbox{WSWSWNES}) & = \frac{1}{4} \left(3-\sqrt{5}\right) \displaybreak[1]\\
A(\mbox{WSWSWNEN}) & = 0 &
A(\mbox{WSWSESWN}) & = \lambda_{16,0,-116,0,-1}^{(0.09279 i)} \displaybreak[1]\\
A(\mbox{WSWSESES}) & = 0 &
A(\mbox{WSWSESEN}) & = \frac{1}{4} \left(\sqrt{5}-3\right) \displaybreak[1]\\
A(\mbox{WSWSENWN}) & = 0 &
A(\mbox{WSWSENES}) & = \lambda_{16,0,-116,0,-1}^{(0.09279 i)} \displaybreak[1]\\
A(\mbox{WSWSENEN}) & = 0 &
A(\mbox{WSWNWSWN}) & = 0 \displaybreak[1]\\
A(\mbox{WSWNWSES}) & = \lambda_{16,0,-116,0,-1}^{(0.09279 i)} &
A(\mbox{WSWNWSEN}) & = 2-\sqrt{5} \displaybreak[1]\\
A(\mbox{WSWNWNWN}) & = \lambda_{16,0,-116,0,-1}^{(0.09279 i)} &
A(\mbox{WSWNWNES}) & = 0 \displaybreak[1]\\
A(\mbox{WSWNWNEN}) & = \lambda_{16,0,-116,0,-1}^{(-0.09279 i)} &
A(\mbox{WSWNESWN}) & = \sqrt{5}-2 \displaybreak[1]\\
A(\mbox{WSWNESES}) & = \frac{1}{4} \left(3-\sqrt{5}\right) &
A(\mbox{WSWNESEN}) & = \lambda_{1,0,-11,0,-1}^{(-0.3003 i)} \displaybreak[1]\\
A(\mbox{WSWNENWN}) & = \lambda_{16,0,-116,0,-1}^{(-0.09279 i)} &
A(\mbox{WSWNENES}) & = 0 \displaybreak[1]\\
A(\mbox{WSWNENEN}) & = \lambda_{16,0,-116,0,-1}^{(0.09279 i)} &
A(\mbox{WSESWSES}) & = 0 \displaybreak[1]\\
A(\mbox{WSESWSEN}) & = \frac{1}{4} \left(\sqrt{5}-3\right) &
A(\mbox{WSESWNWN}) & = 0 \displaybreak[1]\\
A(\mbox{WSESWNES}) & = \frac{1}{4} \left(\sqrt{5}-3\right) &
A(\mbox{WSESWNEN}) & = 0 \displaybreak[1]\\
A(\mbox{WSESESWN}) & = \lambda_{16,0,-116,0,-1}^{(-0.09279 i)} &
A(\mbox{WSESESES}) & = 0 \displaybreak[1]\\
A(\mbox{WSESESEN}) & = \frac{1}{4} \left(3-\sqrt{5}\right) &
A(\mbox{WSESENWN}) & = 0 \displaybreak[1]\\
A(\mbox{WSESENES}) & = \lambda_{16,0,-116,0,-1}^{(-0.09279 i)} &
A(\mbox{WSESENEN}) & = 0 \displaybreak[1]\\
A(\mbox{WSENWSEN}) & = 0 &
A(\mbox{WSENWNWN}) & = \frac{1}{4} \left(3-\sqrt{5}\right) \displaybreak[1]\\
A(\mbox{WSENWNES}) & = 0 &
A(\mbox{WSENWNEN}) & = \frac{1}{4} \left(\sqrt{5}-3\right) \displaybreak[1]\\
A(\mbox{WSENESWN}) & = \lambda_{1,0,-11,0,-1}^{(0.3003 i)} &
A(\mbox{WSENESES}) & = \lambda_{16,0,-116,0,-1}^{(0.09279 i)} \displaybreak[1]\\
A(\mbox{WSENESEN}) & = 2-\sqrt{5} &
A(\mbox{WSENENWN}) & = \frac{1}{4} \left(\sqrt{5}-3\right) \displaybreak[1]\\
A(\mbox{WSENENES}) & = 0 &
A(\mbox{WSENENEN}) & = \frac{1}{4} \left(3-\sqrt{5}\right) \displaybreak[1]\\
A(\mbox{WNWNWNWN}) & = 0 &
A(\mbox{WNWNWNES}) & = \frac{1}{4} \left(3-\sqrt{5}\right) \displaybreak[1]\\
A(\mbox{WNWNWNEN}) & = 0 &
A(\mbox{WNWNESES}) & = 0 \displaybreak[1]\\
A(\mbox{WNWNESEN}) & = \lambda_{16,0,-116,0,-1}^{(0.09279 i)} &
A(\mbox{WNWNENES}) & = \frac{1}{4} \left(\sqrt{5}-3\right) \displaybreak[1]\\
A(\mbox{WNWNENEN}) & = 0 &
A(\mbox{WNESWNES}) & = 0 \displaybreak[1]\\
A(\mbox{WNESWNEN}) & = \frac{1}{4} \left(\sqrt{5}-3\right) &
A(\mbox{WNESESES}) & = \frac{1}{4} \left(3-\sqrt{5}\right) \displaybreak[1]\\
A(\mbox{WNESESEN}) & = 0 &
A(\mbox{WNESENES}) & = 2-\sqrt{5} \displaybreak[1]\\
A(\mbox{WNESENEN}) & = \lambda_{16,0,-116,0,-1}^{(0.09279 i)} &
A(\mbox{WNENWNEN}) & = 0 \displaybreak[1]\\
A(\mbox{WNENESES}) & = 0 &
A(\mbox{WNENESEN}) & = \lambda_{16,0,-116,0,-1}^{(-0.09279 i)} \displaybreak[1]\\
A(\mbox{WNENENES}) & = \frac{1}{4} \left(3-\sqrt{5}\right) &
A(\mbox{WNENENEN}) & = 0 \displaybreak[1]\\
A(\mbox{ESESESES}) & = 0 &
A(\mbox{ESESESEN}) & = \lambda_{16,0,-116,0,-1}^{(-0.09279 i)} \displaybreak[1]\\
A(\mbox{ESESENEN}) & = 0 &
A(\mbox{ESENESEN}) & = 0 \displaybreak[1]\\
A(\mbox{ESENENEN}) & = \lambda_{16,0,-116,0,-1}^{(0.09279 i)} &
A(\mbox{ENENENEN}) & = 0
\end{align*}\begin{align*}
B(\mbox{WSWSWSWS}) & = \frac{1}{2} \left(5 \sqrt{5}-11\right) &
B(\mbox{WSWSWSWN}) & = \frac{1}{4} \left(\sqrt{5}-3\right) \displaybreak[1]\\
B(\mbox{WSWSWSES}) & = \frac{1}{2} \left(11-5 \sqrt{5}\right) &
B(\mbox{WSWSWSEN}) & = \lambda_{16,0,-1044,0,-81}^{(0.2784 i)} \displaybreak[1]\\
B(\mbox{WSWSWNWN}) & = \frac{1}{2} \left(7-3 \sqrt{5}\right) &
B(\mbox{WSWSWNES}) & = \lambda_{16,0,-1044,0,-81}^{(-0.2784 i)} \displaybreak[1]\\
B(\mbox{WSWSWNEN}) & = \frac{1}{2} \left(3 \sqrt{5}-7\right) &
B(\mbox{WSWSESWN}) & = \frac{1}{4} \left(3-\sqrt{5}\right) \displaybreak[1]\\
B(\mbox{WSWSESES}) & = \frac{1}{2} \left(5 \sqrt{5}-11\right) &
B(\mbox{WSWSESEN}) & = \lambda_{16,0,-1044,0,-81}^{(-0.2784 i)} \displaybreak[1]\\
B(\mbox{WSWSENWN}) & = 0 &
B(\mbox{WSWSENES}) & = \frac{1}{4} \left(\sqrt{5}-3\right) \displaybreak[1]\\
B(\mbox{WSWSENEN}) & = 0 &
B(\mbox{WSWNWSWN}) & = \frac{1}{2} \left(3-\sqrt{5}\right) \displaybreak[1]\\
B(\mbox{WSWNWSES}) & = \frac{1}{4} \left(3-\sqrt{5}\right) &
B(\mbox{WSWNWSEN}) & = 0 \displaybreak[1]\\
B(\mbox{WSWNWNWN}) & = \frac{1}{4} \left(\sqrt{5}-3\right) &
B(\mbox{WSWNWNES}) & = 0 \displaybreak[1]\\
B(\mbox{WSWNWNEN}) & = \frac{1}{4} \left(3-\sqrt{5}\right) &
B(\mbox{WSWNESWN}) & = 0 \displaybreak[1]\\
B(\mbox{WSWNESES}) & = \lambda_{16,0,-1044,0,-81}^{(0.2784 i)} &
B(\mbox{WSWNESEN}) & = 2-\sqrt{5} \displaybreak[1]\\
B(\mbox{WSWNENWN}) & = \frac{1}{4} \left(3-\sqrt{5}\right) &
B(\mbox{WSWNENES}) & = 0 \displaybreak[1]\\
B(\mbox{WSWNENEN}) & = \frac{1}{4} \left(\sqrt{5}-3\right) &
B(\mbox{WSESWSES}) & = \frac{1}{2} \left(5 \sqrt{5}-11\right) \displaybreak[1]\\
B(\mbox{WSESWSEN}) & = \lambda_{16,0,-1044,0,-81}^{(-0.2784 i)} &
B(\mbox{WSESWNWN}) & = \frac{1}{2} \left(3 \sqrt{5}-7\right) \displaybreak[1]\\
B(\mbox{WSESWNES}) & = \lambda_{16,0,-1044,0,-81}^{(0.2784 i)} &
B(\mbox{WSESWNEN}) & = \frac{1}{2} \left(7-3 \sqrt{5}\right) \displaybreak[1]\\
B(\mbox{WSESESWN}) & = \frac{1}{4} \left(\sqrt{5}-3\right) &
B(\mbox{WSESESES}) & = \frac{1}{2} \left(11-5 \sqrt{5}\right) \displaybreak[1]\\
B(\mbox{WSESESEN}) & = \lambda_{16,0,-1044,0,-81}^{(0.2784 i)} &
B(\mbox{WSESENWN}) & = 0 \displaybreak[1]\\
B(\mbox{WSESENES}) & = \frac{1}{4} \left(3-\sqrt{5}\right) &
B(\mbox{WSESENEN}) & = 0 \displaybreak[1]\\
B(\mbox{WSENWSEN}) & = \frac{1}{2} \left(3 \sqrt{5}-9\right) &
B(\mbox{WSENWNWN}) & = \lambda_{16,0,-1044,0,-81}^{(-0.2784 i)} \displaybreak[1]\\
B(\mbox{WSENWNES}) & = \frac{1}{2} \left(7-3 \sqrt{5}\right) &
B(\mbox{WSENWNEN}) & = \lambda_{16,0,-1044,0,-81}^{(0.2784 i)} \displaybreak[1]\\
B(\mbox{WSENESWN}) & = 2-\sqrt{5} &
B(\mbox{WSENESES}) & = \frac{1}{4} \left(3-\sqrt{5}\right) \displaybreak[1]\\
B(\mbox{WSENESEN}) & = 0 &
B(\mbox{WSENENWN}) & = \lambda_{16,0,-1044,0,-81}^{(0.2784 i)} \displaybreak[1]\\
B(\mbox{WSENENES}) & = \frac{1}{2} \left(3 \sqrt{5}-7\right) &
B(\mbox{WSENENEN}) & = \lambda_{16,0,-1044,0,-81}^{(-0.2784 i)} \displaybreak[1]\\
B(\mbox{WNWNWNWN}) & = \frac{1}{2} \left(5 \sqrt{5}-11\right) &
B(\mbox{WNWNWNES}) & = \lambda_{16,0,-1044,0,-81}^{(0.2784 i)} \displaybreak[1]\\
B(\mbox{WNWNWNEN}) & = \frac{1}{2} \left(11-5 \sqrt{5}\right) &
B(\mbox{WNWNESES}) & = 0 \displaybreak[1]\\
B(\mbox{WNWNESEN}) & = \frac{1}{4} \left(\sqrt{5}-3\right) &
B(\mbox{WNWNENES}) & = \lambda_{16,0,-1044,0,-81}^{(-0.2784 i)} \displaybreak[1]\\
B(\mbox{WNWNENEN}) & = \frac{1}{2} \left(5 \sqrt{5}-11\right) &
B(\mbox{WNESWNES}) & = \frac{1}{2} \left(3 \sqrt{5}-9\right) \displaybreak[1]\\
B(\mbox{WNESWNEN}) & = \lambda_{16,0,-1044,0,-81}^{(-0.2784 i)} &
B(\mbox{WNESESES}) & = \lambda_{16,0,-1044,0,-81}^{(-0.2784 i)} \displaybreak[1]\\
B(\mbox{WNESESEN}) & = \frac{1}{2} \left(3 \sqrt{5}-7\right) &
B(\mbox{WNESENES}) & = 0 \displaybreak[1]\\
B(\mbox{WNESENEN}) & = \frac{1}{4} \left(3-\sqrt{5}\right) &
B(\mbox{WNENWNEN}) & = \frac{1}{2} \left(5 \sqrt{5}-11\right) \displaybreak[1]\\
B(\mbox{WNENESES}) & = 0 &
B(\mbox{WNENESEN}) & = \frac{1}{4} \left(3-\sqrt{5}\right) \displaybreak[1]\\
B(\mbox{WNENENES}) & = \lambda_{16,0,-1044,0,-81}^{(0.2784 i)} &
B(\mbox{WNENENEN}) & = \frac{1}{2} \left(11-5 \sqrt{5}\right) \displaybreak[1]\\
B(\mbox{ESESESES}) & = \frac{1}{2} \left(5 \sqrt{5}-11\right) &
B(\mbox{ESESESEN}) & = \frac{1}{4} \left(\sqrt{5}-3\right) \displaybreak[1]\\
B(\mbox{ESESENEN}) & = \frac{1}{2} \left(7-3 \sqrt{5}\right) &
B(\mbox{ESENESEN}) & = \frac{1}{2} \left(3-\sqrt{5}\right) \displaybreak[1]\\
B(\mbox{ESENENEN}) & = \frac{1}{4} \left(\sqrt{5}-3\right) &
B(\mbox{ENENENEN}) & = \frac{1}{2} \left(5 \sqrt{5}-11\right)
\end{align*}

These entries lie in $\Q(\mu_{\Z/4})$, where $\mu_{\Z/4}$ is the root of
$$
x^8-38 x^6+100 x^5+343 x^4-2300 x^3+5102 x^2-5500 x+2581
$$
which is approximately $2.236+0.700 i$.

\section{Moments and tetrahedral constants}\label{sec:MomentsAndTetrahedrals}

For all of our planar algebras, our generators are self adjoint. In the following subsections, we list the necessary moments and tetrahedral structure constants needed for our calculations.

\subsection{$3^{\Z/3}$: Haagerup}\label{moments:333Haagerup}

\begin{align*}
\Tr(A A) &= 3+\sqrt{13} & \Tr(\check{A} \check{A}) &= 3+\sqrt{13}\displaybreak[1]\\
\Tr(A A A) &= 0 & \Tr(\check{A} \check{A} \check{A}) &= \lambda_{9,0,-144,0,-256}^{(4.20)}
\end{align*}

\begin{align*}
\Delta(A,A,A|A) &= \sqrt{\frac{22}{9}+\frac{2 \sqrt{13}}{9}}
\end{align*}

\subsection{$3^{\Z/2\times \Z/2}$}\label{moments:3333}

\begin{align*}
\Tr(A A) &= 4+2 \sqrt{5} & \Tr(\check{A} \check{A}) &= 4+2 \sqrt{5}\displaybreak[1]\\
\Tr(A B) &= 0 & \Tr(\check{A} \check{B}) &= 0\displaybreak[1]\\
\Tr(B B) &= 12+6 \sqrt{5} & \Tr(\check{B} \check{B}) &= 12+6 \sqrt{5}\displaybreak[1]\\
\Tr(A A A) &= 0 & \Tr(\check{A} \check{A} \check{A}) &= \lambda_{64,0,-1296,0,81}^{(4.493)}\displaybreak[1]\\
\Tr(A A B) &= -4-2 \sqrt{5} & \Tr(\check{A} \check{A} \check{B}) &= \lambda_{64,0,-720,0,25}^{(-3.349)}\displaybreak[1]\\
\Tr(A B B) &= 0 & \Tr(\check{A} \check{B} \check{B}) &= \lambda_{64,0,-11664,0,6561}^{(-13.5)}\displaybreak[1]\\
\Tr(B B B) &= 12+6 \sqrt{5} & \Tr(\check{B} \check{B} \check{B}) &= \lambda_{64,0,-6480,0,2025}^{(10.0)}
\end{align*}

\begin{align*}
\Delta(A,A,A|A) &= \frac{\sqrt{3-\sqrt{5}}}{4}\displaybreak[1]\\
\Delta(A,A,A|B) &= -\sqrt{\frac{63}{16}+\frac{27 \sqrt{5}}{16}}\displaybreak[1]\\
\Delta(A,A,B|A) &= \sqrt{\frac{63}{16}+\frac{27 \sqrt{5}}{16}}\displaybreak[1]\\
\Delta(A,A,B|B) &= -\sqrt{\frac{467}{16}+\frac{207 \sqrt{5}}{16}}\displaybreak[1]\\
\Delta(A,B,A|B) &= \sqrt{\frac{2027}{16}+\frac{903 \sqrt{5}}{16}}\displaybreak[1]\\
\Delta(A,B,B|B) &= -\sqrt{\frac{567}{16}+\frac{243 \sqrt{5}}{16}}\displaybreak[1]\\
\Delta(B,A,B|A) &= \sqrt{\frac{2027}{16}+\frac{903 \sqrt{5}}{16}}\displaybreak[1]\\
\Delta(B,A,B|B) &= \sqrt{\frac{567}{16}+\frac{243 \sqrt{5}}{16}}\displaybreak[1]\\
\Delta(B,B,B|B) &= \frac{9 \sqrt{3-\sqrt{5}}}{4}
\end{align*}

\subsection{$3^{\Z/4}$}\label{moments:3333ZMod4}

\begin{align*}
\Tr(A A) &= 4+2 \sqrt{5} & \Tr(\check{A} \check{A}) &= 4+2 \sqrt{5}\displaybreak[1]\\
\Tr(A B) &= 0 & \Tr(\check{A} \check{B}) &= 0\displaybreak[1]\\
\Tr(B B) &= 12+6 \sqrt{5} & \Tr(\check{B} \check{B}) &= 12+6 \sqrt{5}\displaybreak[1]\\
\Tr(A A A) &= 0 & \Tr(\check{A} \check{A} \check{A}) &= \lambda_{4,0,-40,0,-25}^{(-3.25)}\displaybreak[1]\\
\Tr(A A B) &= -4-2 \sqrt{5} & \Tr(\check{A} \check{A} \check{B}) &= \lambda_{4,0,-180,0,25}^{(6.698)}\displaybreak[1]\\
\Tr(A B B) &= 0 & \Tr(\check{A} \check{B} \check{B}) &= \lambda_{4,0,-648,0,-6561}^{(13.1)}\displaybreak[1]\\
\Tr(B B B) &= 12+6 \sqrt{5} & \Tr(\check{B} \check{B} \check{B}) &= \lambda_{4,0,-324,0,81}^{(0.501)}
\end{align*}

\begin{align*}
\Delta(A,A,A|A) &= -\sqrt{3+\sqrt{5}}\displaybreak[1]\\
\Delta(A,A,A|B) &= 0\displaybreak[1]\\
\Delta(A,A,B|A) &= -i \sqrt{11+5 \sqrt{5}}\displaybreak[1]\\
\Delta(A,A,B|B) &= -\sqrt{2}\displaybreak[1]\\
\Delta(A,B,A|B) &= \sqrt{107+39 \sqrt{5}}\displaybreak[1]\\
\Delta(A,B,B|B) &= -9 i \sqrt{1+\sqrt{5}}\displaybreak[1]\\
\Delta(B,A,B|A) &= \sqrt{47+21 \sqrt{5}}\displaybreak[1]\\
\Delta(B,A,B|B) &= 0\displaybreak[1]\\
\Delta(B,B,B|B) &= 9 \sqrt{3-\sqrt{5}}
\end{align*}

\bibliographystyle{amsalpha}
\bibliography{../../bibliography/bibliography}

\newcommand{\noopsort}[1]{}\def\cprime{$'$} \def\cprime{$'$}
\providecommand{\bysame}{\leavevmode\hbox to3em{\hrulefill}\thinspace}
\providecommand{\MR}{\relax\ifhmode\unskip\space\fi MR }
\providecommand{\MRhref}[2]{%
  \href{http://www.ams.org/mathscinet-getitem?mr=#1}{#2}
}
\providecommand{\href}[2]{#2}
\begin{thebibliography}{BPMS12}

\bibitem[BP13]{1208.1564}
Stephen Bigelow and David Penneys, \emph{Principal graph stability and the
  jellyfish algorithm}, 2013, \arxiv{1208.1564}
  \doi{10.1007/s00208-013-0941-2}, to appear in Mathematische Annalen.

\bibitem[BPMS12]{MR2979509}
Stephen Bigelow, Emily Peters, Scott Morrison, and Noah Snyder,
  \emph{Constructing the extended {H}aagerup planar algebra}, Acta Math.
  \textbf{209} (2012), no.~1, 29--82, \mathscinet{MR2979509} \arxiv{0909.4099}
  \doi{10.1007/s11511-012-0081-7}. \MR{2979509}

\bibitem[EG11]{MR2837122}
David~E. Evans and Terry Gannon, \emph{The exoticness and realisability of
  twisted {H}aagerup-{I}zumi modular data}, Comm. Math. Phys. \textbf{307}
  (2011), no.~2, 463--512, \arxiv{1006.1326} \mathscinet{MR2837122}
  \doi{10.1007/s00220-011-1329-3}. \MR{2837122}

\bibitem[FK97]{MR1446615}
Igor~B. Frenkel and Mikhail~G. Khovanov, \emph{Canonical bases in tensor
  products and graphical calculus for {$U_q({\mathfrak s}{\mathfrak l}_2)$}},
  Duke Math. J. \textbf{87} (1997), no.~3, 409--480, \mathscinet{MR1446615}
  \doi{10.1215/S0012-7094-97-08715-9}. \MR{1446615 (99a:17019)}

\bibitem[GL98]{MR1659204}
John~J. Graham and Gus~I. Lehrer, \emph{The representation theory of affine
  {T}emperley-{L}ieb algebras}, Enseign. Math. (2) \textbf{44} (1998), no.~3-4,
  173--218, \mathscinet{MR1659204}. \MR{MR1659204 (99i:20019)}

\bibitem[Gup08]{MR2511128}
Ved~Prakash Gupta, \emph{Planar algebra of the subgroup-subfactor}, Proc.
  Indian Acad. Sci. Math. Sci. \textbf{118} (2008), no.~4, 583--612,
  \mathscinet{MR2511128} \arxiv{0806.1791} \doi{10.1007/s12044-008-0046-0}.
  \MR{2511128 (2010g:46104)}

\bibitem[Haa94]{MR1317352}
Uffe Haagerup, \emph{Principal graphs of subfactors in the index range
  {$4<[M:N]<3+\sqrt2$}}, Subfactors ({K}yuzeso, 1993), World Sci. Publ., River
  Edge, NJ, 1994, \mathscinet{MR1317352}, pp.~1--38. \MR{MR1317352 (96d:46081)}

\bibitem[Han10]{1102.2052}
Richard Han, \emph{A construction of the ``2221'' planar algebra}, Ph.D.
  thesis, University of California, Riverside, 2010, \arxiv{1102.2052}.

\bibitem[Izu01]{MR1832764}
Masaki Izumi, \emph{The structure of sectors associated with {L}ongo-{R}ehren
  inclusions. {II}. {E}xamples}, Rev. Math. Phys. \textbf{13} (2001), no.~5,
  603--674, \mathscinet{MR1832764} \doi{10.1142/S0129055X01000818}\notfree.
  \MR{MR1832764 (2002k:46161)}

\bibitem[Jon00]{MR1865703}
Vaughan F.~R. Jones, \emph{The planar algebra of a bipartite graph}, Knots in
  {H}ellas '98 ({D}elphi), Ser. Knots Everything, vol.~24, World Sci. Publ.,
  River Edge, NJ, 2000, \mathscinet{MR1865703} \googlebooks{ao7VOxpJQJQC},
  pp.~94--117. \MR{MR1865703 (2003c:57003)}

\bibitem[Jon01]{MR1929335}
\bysame, \emph{The annular structure of subfactors}, Essays on geometry and
  related topics, {V}ol. 1, 2, Monogr. Enseign. Math., vol.~38, Enseignement
  Math., Geneva, 2001, \mathscinet{MR1929335}, pp.~401--463. \MR{MR1929335
  (2003j:46094)}

\bibitem[Jon03]{quadraticPreprint}
\bysame, \emph{Quadratic tangles in planar algebras (2003 version)}, 2003,
  preprint available at \url{http://math.berkeley.edu/~vfr/quadtangle.pdf}.

\bibitem[Jon11]{JonesPANotes}
\bysame, \emph{Notes on planar algebras},
  \url{http://math.berkeley.edu/~vfr/VANDERBILT/pl21.pdf}, 2011.

\bibitem[Jon12]{MR2972458}
\bysame, \emph{Quadratic tangles in planar algebras}, Duke Math. J.
  \textbf{161} (2012), no.~12, 2257--2295, \mathscinet{MR2972458}
  \arxiv{1007.1158} \doi{10.1215/00127094-1723608}. \MR{2972458}

\bibitem[JP11]{MR2812459}
Vaughan F.~R. Jones and David Penneys, \emph{The embedding theorem for finite
  depth subfactor planar algebras}, Quantum Topol. \textbf{2} (2011), no.~3,
  301--337, \arxiv{1007.3173} \mathscinet{MR2812459} \doi{10.4171/QT/23}.
  \MR{2812459}

\bibitem[Mor]{morrison}
Scott Morrison, \emph{A formula for the {J}ones-{W}enzl projections},
  Unpublished, available at
  \url{http://tqft.net/math/JonesWenzlProjections.pdf}.

\bibitem[Mor13]{1302.5148}
\bysame, \emph{An obstruction to subfactor principal graphs from the graph
  planar algebra embedding theorem}, 2013, \arxiv{1302.5148}.

\bibitem[MP12a]{1208.3637}
Scott Morrison and David Penneys, \emph{Constructing spoke subfactors using the
  jellyfish algorithm}, 2012, \arxiv{1208.3637}, to appear in Transactions of
  the American Mathematical Society.

\bibitem[MP12b]{1205.2742}
Scott Morrison and Emily Peters, \emph{The little desert? {S}ome subfactors
  with index in the interval $(5,3+\sqrt{5})$}, 2012, \arxiv{1205.2742}.

\bibitem[MPP]{4442equi}
Scott Morrison, David Penneys, and Emily Peters, \emph{Equivariantizations and
  3333 spoke subfactors at index $3+\sqrt{5}$}, In preparation.

\bibitem[MPPS12]{MR2902285}
Scott Morrison, David Penneys, Emily Peters, and Noah Snyder, \emph{Subfactors
  of index less than 5, {P}art 2: {T}riple points}, Internat. J. Math.
  \textbf{23} (2012), no.~3, 1250016, 33, \mathscinet{MR2902285}
  \arxiv{1007.2240} \doi{10.1142/S0129167X11007586}. \MR{2902285}

\bibitem[MW]{tvc}
Scott Morrison and Kevin Walker, \emph{Planar algebras, connections, and
  {T}uraev-{V}iro theory}, preprint available at \url{http://tqft.net/tvc}.

\bibitem[Pet10]{MR2679382}
Emily Peters, \emph{A planar algebra construction of the {H}aagerup subfactor},
  Internat. J. Math. \textbf{21} (2010), no.~8, 987--1045, \arxiv{0902.1294}
  \mathscinet{MR2679382} \doi{10.1142/S0129167X10006380}. \MR{2679382}

\bibitem[Rez07]{MR2375712}
Sarah~A. Reznikoff, \emph{Coefficients of the one- and two-gap boxes in the
  {J}ones-{W}enzl idempotent}, Indiana Univ. Math. J. \textbf{56} (2007),
  no.~6, 3129--3150, \mathscinet{MR2375712} \doi{10.1512/iumj.2007.56.3140}.
  \MR{MR2375712 (2010e:46063)}

\end{thebibliography}
\end{document}